\documentclass[11pt,reqno]{amsart}
\usepackage{amsmath,amssymb,amsthm,latexsym,cite,cancel}
\usepackage[small]{caption}
\usepackage{graphicx,wasysym,overpic,tikz,color}
\usepackage{subfigure,color}
\usepackage{cite}
\usepackage[colorlinks=true,urlcolor=blue,
citecolor=red,linkcolor=blue,linktocpage,pdfpagelabels,
bookmarksnumbered,bookmarksopen]{hyperref}
\usepackage[italian,english]{babel}
\usepackage{units}
\usepackage{enumitem}
\usepackage[left=2.1cm,right=2.1cm,top=2.71cm,bottom=2.71cm]{geometry}
\usepackage[hyperpageref]{backref}

\usepackage[colorinlistoftodos]{todonotes}

\allowdisplaybreaks

\newtheorem{theorem}{Theorem}[section]

\newtheorem{proposition}[theorem]{Proposition}
\newtheorem{lemma}[theorem]{Lemma}
\newtheorem{remark}[theorem]{Remark}

\newtheorem{corollary}[theorem]{Corollary}

\numberwithin{equation}{section}

\title[Magnetic nonlinear Schr\"odinger equation in $\mathbb{R}^2$]{Multi-peak solutions for a magnetic nonlinear Schr\"{o}dinger equation with exponential critical growth in $\mathbb{R}^{2}$}

\author[P. d'Avenia]{Pietro d'Avenia}

\address[P. d'Avenia]{\newline\indent
	Dipartimento di Meccanica, Matematica e Management
	\newline\indent
	Politecnico di Bari
	\newline\indent
	Via Orabona 4,  70125  Bari, Italy}
\email{\href{mailto:pietro.davenia@poliba.it}{pietro.davenia@poliba.it}}

\author[C. Ji]{Chao Ji}

\address[C. Ji]{\newline\indent
	School of Mathematics
	\newline\indent
	East China University of Science and Technology
	\newline\indent
	Shanghai 200237, PR China }
\email{\href{mailto:jichao@ecust.edu.cn}{jichao@ecust.edu.cn}}

\author[Q. Li]{Qingyue Li}
\address[Q. Li]{\newline\indent
	School of Mathematics
	\newline\indent
	East China University of Science and Technology
	\newline\indent
	Shanghai 200237, PR China }
\email{\href{mailto:lzulqy2021@163.com}{lzulqy2021@163.com}}

\subjclass[2020]{35J20,
	35Q55,
	35B33,
	35B25.
}
\keywords{Nonlinear Schr\"{o}dinger equation, Magnetic field, Multi-peak solutions, Exponential critical growth.}

\begin{document}
	
	\begin{abstract}
		In this paper, we study the following magnetic nonlinear  Schr\"odinger equation
		\[
		\left(\frac{\varepsilon}{i}\nabla-A(x)\right)^{2}u+V(x)u=f(|u|^{2})u\,\ \text{in}\,\ \mathbb{R}^{2},
		\]
		where $\varepsilon>0$ is a parameter, $A\in C(\mathbb{R}^{2}, \mathbb{R}^{2})$ is a  magnetic potential, $V\in C(\mathbb{R}^{2}, \mathbb{R})$ is a multi-well potential, and $f\in C^1(\mathbb{R}, \mathbb{R})$ exhibits exponential critical growth. By variational methods and a penalization technique, 
		we prove  the existence and multiplicity of nontrivial solutions for sufficiently small $\varepsilon>0$. If, in addition, $A\in C^1(\mathbb{R}^2,\mathbb{R}^2)$, we show that these solutions have multiple peaks and we establish their concentration behavior
		and exponential decay.
	\end{abstract}
	
	\maketitle
	
	\begin{center}
		\begin{minipage}{11cm}
			\tableofcontents
		\end{minipage}
	\end{center}

	\section{Introduction and main results}

	The nonlinear Schr\"odinger equation (NLS) is a fundamental model in mathematical physics, arising in various areas such as 
	quantum systems subject to external potentials, including complex and magnetic potentials.

	In this paper, we consider the following magnetic NLS equation 
	\begin{equation}\label{p1.1}
		\left(\frac{\varepsilon}{i}\nabla-A(x)\right)^{2}u+V(x)u=f(|u|^{2})u\quad \text{in } \mathbb{R}^{2},
	\end{equation}
	where $V\in C(\mathbb{R}^{2},\mathbb{R})$ is a multi-well potential, $A\in C(\mathbb{R}^{2},\mathbb{R}^2)$ is a magnetic potential, and $f\in C^1(\mathbb{R},\mathbb{R})$ has exponential critical growth at infinity.
	For small $\varepsilon>0$, we study the existence, multiplicity, concentration, and decay of multi-peak solutions.
	
	Equation \eqref{p1.1} arises when looking for standing wave solutions $\psi(x,t):=e^{-i\omega t/\hbar}u(x)$, with $\omega\in \mathbb{R}$, to the nonlinear Schr\"odinger equation
	\[i\hbar\frac{\partial \psi}{\partial t}=\left(\frac{\hbar}{i}\nabla-A(x)\right)^{2}\psi+U(x)\psi-f(|\psi|^{2})\psi, \quad (x,t)\in\mathbb{R}^{N}\times\mathbb{R},\]
	taking $N=2$. In particular, the operator
	\[\psi\mapsto \left(i\hbar\frac{\partial}{\partial t}-U(x)\right)\psi-\left(\frac{\hbar}{i}\nabla-A(x)\right)^{2}\psi\]
	appears through the minimal coupling rule 
	(see e.g. \cite[Section 4.1]{Peskin})
	when the interaction between the field $\psi$ and the external electromagnetic field (with electrostatic potential $U$ and magnetostatic potential $A$) is considered. Then, taking $\varepsilon = \hbar$ and defining $V = U - \omega$, we recover \eqref{p1.1}.
	
	In the semiclassical limit $\varepsilon\to 0^+$, the non-magnetic Schr\"odinger equation
	\[
	-\varepsilon^2 \Delta u+V(x)u=f(|u|^{2})u\quad \text{in } \mathbb{R}^{N},
	\]
	has been extensively studied. Existence and multiplicity results for bound state solutions go back to the pioneering work of Floer and Weinstein \cite{Floer}, who considered the case $N=1$ and $f=\gamma i_{\mathbb{R}}$ with $\gamma>0$.
	Subsequently, many authors used various methods to study the high-dimensional cases. In \cite{Pino}, del Pino and Felmer studied the existence and concentration behavior of the solutions for the following semilinear elliptic problem
	\[
	\begin{cases}
		-\varepsilon^2\Delta u+V(x)u=f(u) \text{ in } \Omega,\\
		u=0 \text{ on } \partial\Omega,  u>0 \text{ in } \Omega,
	\end{cases}
	\]
	on a (possibly unbounded) domain  $\Omega\subset\mathbb{R}^N$ with $N\geq 3$. The potential function $V$ is assumed to be locally H\"older continuous bounded below away from zero and satisfying the condition
	\begin{equation}\label{VassdPF}
		\mathop{\inf}\limits_{x\in \Lambda} V(x)<\mathop{\min}\limits_{x\in \partial\Lambda} V(x),
	\end{equation}
	where $\Lambda$ is  a bounded open set compactly contained in $\Omega$ and the nonlinearity $f$ satisfies certain subcritical growth conditions.
	In \cite{Gui}, Gui investigated the existence and concentration of multi-bump solutions for  $\Omega = \mathbb{R}^N$, replacing \eqref{VassdPF} with
	\[\inf_{x\in \Omega_{i}} V(x)<\inf_{x\in \partial\Omega_{i}} V(x),\quad i=1,2,...,k,\]
	where $\Omega_1,\ldots, \Omega_k$ are $k$ disjoint bounded regions. The
	nonlinearity $f$ satisfies  $0\leq f^{\prime}(u)\leq a_1+a_2u^{p-1}$ for constants $a_1,a_2>0$ and $ 1 < p < \frac{N+2}{N-2}$ (for $N > 2$) or $p>1$ (for $N = 1, 2$). The proofs of both these results are variational and use {\em penalization} techniques. For further results on the existence, multiplicity and concentration of solutions under  various assumptions on the potential  $V$ and the nonlinearity $f$, see
	\cite{Byeon2007, Arma, Pino1998, Byeon2013, Byeon2013b, Byeon2003, d'Avenia2012} and the references therein.

	The first result on the magnetic NLS equation was obtained by Esteban and Lions \cite{-Esteban1989}, who used the concentration-compactness principle and minimization approach to find solutions for fixed $\varepsilon>0$  and $N=2,3$. For $N\geq 3$, in \cite{jeanjean}, Cingolani, Jeanjean, and Secchi  proved the existence of a semiclassical multi-peak solution $u:\mathbb{R}^N\rightarrow \mathbb{C}$ under nearly optimal conditions on the nonlinearity. For a more complete overview when $N\geq 2$, we refer to \cite{-Alves2011,Bartsch,-Cingolani2003} and their references.
	
	For $N=2$ (namely for \eqref{p1.1}) and for the exponential critical  growth case, in \cite{-Barile}, the existence has been proved for $\varepsilon>0$ {\em fixed}. In \cite{d'Avenia2022}, when $A\in C^{\alpha}(\mathbb{R}^2,\mathbb{R}^2)$, with $\alpha\in (0,1]$, and
	\[
	0<V_{0}=\mathop{\min}\limits_{x\in \Lambda}V(x)<\mathop{\min}\limits_{x\in \partial\Lambda} V(x),
	\]
	where $\Lambda\subset \mathbb{R}^2$ is a bounded open set, using the penalization technique as in \cite{Pino} and the Ljusternik-Schnirelmann theory, multiplicity and concentration of single-peak solutions for problem \eqref{p1.1} have been obtained when $\varepsilon>0$ is small.
	Later,  in \cite{d'Avenia2023}, under the  condition $A\in C(\mathbb{R}^2,\mathbb{R}^2)$ and a global assumption on the potential $V$, namely,
	\begin{equation*}
		V_{\infty}=\mathop{\liminf}\limits_{|x|\rightarrow +\infty} V(x)>V_0=\mathop{\inf}\limits_{x\in \mathbb{R}^2} V(x)>0,
	\end{equation*}
	the authors investigated the existence, multiplicity, and, with the additional assumption $A\in C^1(\mathbb{R}^2,\mathbb{R}^2)$, concentration, and decay of nontrivial solutions for small $\varepsilon>0$.

	Motivated by these results, 
	as a continuation of our joint research project,
	a natural problem is to study multi-peak solutions of the magnetic nonlinear  Schr\"odinger equation \eqref{p1.1} with exponential critical growth.
	In this paper, the term $k$-peak refers to the presence of $k$ distinguished local maxima of the modulus, located in $k$ different potential wells; these are not asserted to be the only local maxima.
	On the potential $V\in C(\mathbb{R}^2,\mathbb{R})$ we assume:
	\begin{enumerate}[label=($V_\arabic*$),ref=$V_\arabic*$]
		
		\item \label{V1} there exists \( V_0 > 0 \) such that \( V(x) \geq V_0 \) for all \( x \in \mathbb{R}^2 \);
		
		\item \label{V2} there exist \(m\) bounded domains\footnote{Here, by {\em domain} we mean an open and connected set.} \(\Lambda_j\subset\mathbb{R}^2\) (\(j=1,2,\ldots,m\)) whose closures are mutually disjoint and such that
		\[
		V_j:=\min_{x\in\overline{\Lambda}_j}V(x)
		<a_j:=\min_{x\in\partial\Lambda_j}V(x),
		\quad \forall j\in\{1,2,\ldots,m\}.
		\]
	\end{enumerate}
	
	Observe that
	\begin{equation}\label{Kj}
		K_j:=\{x\in \Lambda_{j}: V(x)=V_{j}\}\neq \emptyset
		\quad \text{and}\quad K_j\Subset\Lambda_j,\quad \forall j\in \{1,2,...,m\}.
	\end{equation}
	
	In addition, we denote $V^{*}:=\max\{ V_{j}:j=1,2,...,m\}$.

	Moreover, our nonlinearity $f$ satisfies:
	\begin{enumerate}[label=($f_\arabic*$),ref=$f_\arabic*$]
		
		\item \label{f1} $f\in C^1(\mathbb{R},\mathbb{R})$ and \( f(t) = 0 \) if \( t \leq 0 \);
		
		\item \label{f2} there holds
		\[
		\lim_{t \to +\infty} \frac{t f(t^2)}{e^{\alpha t^2}} =
		\begin{cases}
			0, & \text{if } \alpha > 4\pi, \\
			+\infty, & \text{if } 0 < \alpha < 4\pi;
		\end{cases}
		\]
		
		\item \label{f3} there exists \( \Theta > 2 \) such that, for every \( t > 0 \),
		\[
		0 < \Theta F(t) \leq 2 t f(t), \text{ where } F(t) = \int_0^t f(s) \, ds;
		\]
		
		\item \label{f4} there exist \( p > 2 \) and
		\[
		C_p >  \left( \frac{8m\Theta}{\Theta-2} \right)^{\frac{p-2}{2}} \left( \frac{p - 2}{p} \right)^{\frac{p-2}{2}} V^*S_p^{\frac{p}{2}}  > 0
		\]
		such that
		\[
		f'(t) \geq \frac{p - 2}{2} C_p t^{\frac{p-4}{2}}, \quad \forall t > 0,
		\]
		where 
		\[
		S_p := \inf_{u \in H^1(\mathbb{R}^2, \mathbb{R}) \setminus \{0\}} 
		\frac{1}{|u|_p^2}\int_{\mathbb{R}^2} (|\nabla u|^2 + |u|^2) \, dx;
		\]
		
		\item \label{f5} \( f'(t) \leq (e^{4\pi t} - 1) \) for any \( t \geq 0 \).
		
	\end{enumerate}

	By conditions \eqref{f4} and \eqref{f5},  there exist two positive constants $C_{1}$ and $C_{2}$ such that
	\[C_{1}t^{\frac{p-4}{2}}\leq f^{\prime}(t)< C_{2}t\,\ \text{as}\,\ t\rightarrow 0^{+},\] which implies that $p\geq 6$.

	Our main result is the following.
	\begin{theorem}\label{1.1}
		For each nonempty $\Gamma\subset \{1,2,...,m\}$, there exist $\varepsilon_{0},\gamma>0$ such that, for every $\varepsilon\in(0,\varepsilon_{0})$, problem \eqref{p1.1} has a nontrivial solution $u_{\varepsilon}$ satisfying $\mathop{\sup}\limits_{x\in \mathbb{R}^{2}}|u_{\varepsilon}(x)|^2\geq \gamma$.
		Moreover, if $A\in C^1(\mathbb{R}^2,\mathbb{R}^2)$, then 
		there exists $R>0$ such that, for every $\varepsilon\in(0,\varepsilon_0)$, there exist mutually distinct local maximum points 	$y_{\varepsilon,j}\in\Lambda_j$, $j\in\Gamma$, of $|u_\varepsilon|$ such
		that
		\begin{equation}\label{zhendang}
			|u_{\varepsilon}|^2_{L^{\infty}(\mathbb{R}^{2}\setminus\cup_{j\in \Gamma}B_{\varepsilon R}(y_{\varepsilon,j}))}<\gamma,\quad |u_{\varepsilon}|^2_{L^{\infty}(B_{\varepsilon R}(y_{\varepsilon,j}))}\geq \gamma,
		\end{equation}	
		$\operatorname{dist}(y_{\varepsilon,j},K_j)\to0$ as
		$\varepsilon\to0^+$ for every $j\in\Gamma$, and, in particular,
		$V(y_{\varepsilon,j})\to V_j$. Moreover there exist constants $C,\sigma>0$
		(independent of $x$ and $\varepsilon$) such that
		\begin{equation}\label{decay}
			|u_\varepsilon(x)|\leq C e^{-\frac{\sigma}{\varepsilon}d_\varepsilon(x)},\quad x\in\mathbb{R}^2,\qquad
			d_\varepsilon(x):=\min_{j\in\Gamma}|x-y_{\varepsilon,j}|.
		\end{equation}
	\end{theorem}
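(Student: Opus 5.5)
We follow the del Pino--Felmer / Gui penalization scheme, adapted to the magnetic setting as in \cite{d'Avenia2022,d'Avenia2023,jeanjean}. First we rescale $x\mapsto\varepsilon x$. This gives the equation $(\frac1i\nabla-A_\varepsilon)^2u+V_\varepsilon u=f(|u|^2)u$ with $A_\varepsilon(x)=A(\varepsilon x)$ and $V_\varepsilon(x)=V(\varepsilon x)$. We work in the space $H_\varepsilon$, the completion of $C_c^\infty(\mathbb{R}^2,\mathbb{C})$ under $\int|\nabla_{A_\varepsilon}u|^2+V_\varepsilon|u|^2$, and use the diamagnetic inequality $|\nabla|u||\le|\nabla_{A_\varepsilon}u|$ to transfer real estimates.

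Fix $\Gamma$, set $\Lambda=\cup_{j\in\Gamma}\Lambda_j$, choose $\ell_0<V_0$ small, and penalize the nonlinearity outside $\Lambda_\varepsilon=\Lambda/\varepsilon$. Concretely, $g_\varepsilon(x,t)=\chi_{\Lambda}(\varepsilon x)f(t)+(1-\chi_\Lambda(\varepsilon x))\tilde f(t)$, where $\tilde f(t)=\min\{f(t),\ell_0/\kappa\}$ (suitably smoothed). This makes the modified functional $J_\varepsilon$ satisfy Palais--Smale below the Trudinger--Moser threshold: outside $\Lambda_\varepsilon$ the nonlinearity is subquadratic, and inside we combine (\ref{f3}) with (\ref{f2}) and Trudinger--Moser. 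Following Gui / Cingolani--Jeanjean--Secchi, we then take the minimax level $c_\varepsilon$ over paths $\gamma(t_1,\dots,t_k)=\sum_{j\in\Gamma}\gamma_j(t_j)$, with $\gamma_j$ supported near $\Lambda_j/\varepsilon$. We impose the localization constraint that the local energies $J_{\varepsilon,j}(u)$ (the functional restricted to a neighbourhood of $\Lambda_j/\varepsilon$) stay positive with mass $\int_{\Lambda_j/\varepsilon}|u|^2$ bounded below. A deformation argument, using that critical points of the limit problems $-\Delta w+V_jw=f(w^2)w$ have energy $c_{V_j}$, gives a critical point $u_\varepsilon$ of $J_\varepsilon$ with $J_\varepsilon(u_\varepsilon)\to\sum_{j\in\Gamma}c_{V_j}$.

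The key quantitative input is (\ref{f4}): it gives $c_{V_j}\le\frac{p-2}{2p}V_j^{p/(p-2)}S_p^{p/(p-2)}C_p^{-2/(p-2)}$. Hence $\sum_jc_{V_j}\le m\cdot(\dots)<\frac{\Theta-2}{2\Theta}\cdot\frac{1}{4}\cdot(\text{const})$, which ensures $\limsup\|u_\varepsilon\|^2$ is strictly below the threshold where Trudinger--Moser estimates (with $4\pi$) give uniform $L^q$ bounds on $e^{\alpha|u|^2}$. This compactness below the critical level, needed both for (PS) and for the profile decomposition, is the main obstacle. We would handle it exactly as the factor $8m\Theta/(\Theta-2)$ in (\ref{f4}) suggests, via $\|u\|^2\le \frac{2\Theta}{\Theta-2}J(u)$.

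Next comes the profile analysis. For $\varepsilon_n\to0$, Lions' lemma and the energy identity show that $|u_{\varepsilon_n}|$ splits into exactly $k=|\Gamma|$ bumps centered at $x_{n,j}$ with $\varepsilon_nx_{n,j}\to K_j$. Energy comparison forces $V(\varepsilon_n x_{n,j})\to V_j$, since otherwise the total energy would exceed $\sum c_{V_j}$; this uses the upper bound from the test paths and the lower bound from Fatou. Each translated profile converges in $H^1$ to a ground state. If $A\in C^1$, gauge invariance, $e^{-iA(\varepsilon y)\cdot x}u(x+y)$, together with the Kato inequality $\Delta|u|\ge\dots$, gives that $|u_\varepsilon|$ satisfies $-\Delta|u|+V_\varepsilon|u|\le g|u|$ weakly. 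Moser iteration on balls then yields uniform $L^\infty$ bounds and $|u_{\varepsilon}(x)|\to0$ uniformly away from the centers. This gives the threshold $\gamma$, the local maxima $y_{\varepsilon,j}$ within $R$ of $x_{n,j}$, and (\ref{zhendang}). In particular $|u_\varepsilon|^2<\ell_0/\kappa$ outside $\Lambda_\varepsilon$, so $g_\varepsilon=f$ there and $u_\varepsilon$ solves the original problem.

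Finally, for the decay we use the comparison principle. Outside $\cup B_R(y_{\varepsilon,j}/\varepsilon)$ we have $-\Delta|u|+\frac{V_0}{2}|u|\le0$, and the barrier $Ce^{-\sigma\min_j|x-y_j|}$ with $\sigma^2<V_0/2$ is a supersolution there. Scaling back gives (\ref{decay}).
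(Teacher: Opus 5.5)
Your overall architecture (penalization, a Gui-type multi-bump minimax with a localized deformation, splitting of $(PS)^*$ sequences, Moser iteration, an exponential barrier) matches the paper's. However, there is a genuine gap in how you remove the penalization.

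The first assertion of the theorem is for $A\in C(\mathbb{R}^2,\mathbb{R}^2)$ only. Yet you get the subsolution inequality for $|u_\varepsilon|$ from Kato's inequality explicitly under $A\in C^1$. Everything downstream depends on it: the uniform $L^\infty$ bound, $|u_\varepsilon|\to0$ away from the centers, $|u_\varepsilon|^2$ below the penalization threshold outside $\Lambda_\varepsilon$ (hence $g_\varepsilon=f$), and the constant $\gamma$. So for merely continuous $A$ your argument yields only a critical point of the penalized functional, not a solution of \eqref{p1.1}.

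The missing idea is that Moser iteration can be run directly on the magnetic weak formulation, with no derivative of $A$. Test with $z=\eta^2u_L^{2(\beta-1)}u$, where $u_L=\min\{|u|,L\}$. For real $\phi$ one has $\operatorname{Re}\bigl(\nabla_Au\cdot\overline{\nabla_A(\phi u)}\bigr)=\phi|\nabla_Au|^2+|u|\nabla|u|\cdot\nabla\phi$, because the $A$-contribution to the cross term is purely imaginary. Then the diamagnetic inequality gives $\phi|\nabla_Au|^2\geq\phi|\nabla|u||^2$.

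Feed this with the smallness of the $H_\varepsilon$-norm away from $\bigcup_jB_R(y_{n,j})$, which comes from the profile decomposition. This gives uniform decay and $|u_\varepsilon|^2\leq a$ outside $\Lambda_{\Gamma,\varepsilon}$. The constant $\gamma$ then comes from testing the equation with $\zeta_{j,\varepsilon}^2u_\varepsilon$, using $f(t)\leq V_0/k$ for $t\leq\gamma$, against the lower bound on the local norm in each well. In the paper, $A\in C^1$ enters only afterwards. It is used for $W^{2,\tau}_{\rm loc}$ regularity, which you never address but need in order to speak of local maxima of $|u_\varepsilon|$. It is also used for Kato's inequality in the barrier argument.

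A secondary point concerns your minimax class. As written, it consists of maps $\sum_j\gamma_j(t_j)$ with each $\gamma_j$ localized near $\Lambda_j/\varepsilon$. This class is not invariant under a deformation flow, so the deformation cannot be run inside it. You need all continuous maps of the cube that coincide with the reference sum on its boundary, with the localized pieces nonvanishing.

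Then the lower bound $\liminf_{\varepsilon\to0^+}S_\varepsilon\geq\sum_jc_{V_j}$ is no longer a trivial decoupling. The paper obtains it from a Poincar\'e--Miranda argument giving a parameter at which every $\zeta_{j,\varepsilon}E(\theta)$ lies on its single-well Nehari manifold. This is combined with the estimate $J_\varepsilon(u)\geq(1-C\varepsilon^2)\sum_jJ_{\varepsilon,j}(\zeta_{j,\varepsilon}u)$. The gradient bound in the transition regions comes from the splitting proposition plus the uniform Nehari lower bound $\kappa_*$. The mechanism is not that limit critical points have energy $c_{V_j}$. Rather, profiles have energy at least $c_{V(y_j)}$, and energy counting forces equality.

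Two minor slips. First, $c_\mu$ scales like $\mu^{2/(p-2)}$, not $\mu^{p/(p-2)}$. Second, with penalization the coercivity constant is $\bigl(\frac12-\frac1\Theta-\frac1{2k}\bigr)^{-1}$, not $\frac{2\Theta}{\Theta-2}$.
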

	As an immediate consequence, we have
	\begin{corollary}
		If the assumptions of Theorem \ref{1.1} hold and
		\(A\in C^1(\mathbb R^2,\mathbb R^2)\), then
		there exists $\varepsilon^{*}>0$ such that, for any $\varepsilon\in (0,\varepsilon^{*})$, problem \eqref{p1.1} has at least $2^{m}-1$ multi-peak solutions, among which $m$ concentrate in a single potential well.
	\end{corollary}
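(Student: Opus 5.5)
The corollary follows by applying Theorem \ref{1.1} once for each nonempty $\Gamma\subset\{1,2,\dots,m\}$ and then checking that the resulting solutions are pairwise distinct. There are exactly $2^m-1$ such subsets. For each one, Theorem \ref{1.1} gives constants $\varepsilon_0(\Gamma)$, $\gamma(\Gamma)$, $R(\Gamma)$, $C(\Gamma)$ and $\sigma(\Gamma)$. Since there are finitely many $\Gamma$, I set
\[
\varepsilon^*:=\min_{\Gamma}\varepsilon_0(\Gamma)>0 .
\]
Fix $\varepsilon\in(0,\varepsilon^*)$. With $A\in C^1$, Theorem \ref{1.1} gives, for every $\Gamma$, a solution $u_\varepsilon^\Gamma$ with local maxima $y^\Gamma_{\varepsilon,j}\in\Lambda_j$ for $j\in\Gamma$, satisfying \eqref{zhendang}. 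In particular $u_\varepsilon^\Gamma$ is a multi-peak solution whose peaks lie in the wells $\Lambda_j$, $j\in\Gamma$.

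To show distinctness, I read off $\Gamma$ from $u_\varepsilon^\Gamma$. By \eqref{zhendang} and \eqref{decay}, $|u^\Gamma_\varepsilon|^2\ge\gamma(\Gamma)$ somewhere in $B_{\varepsilon R}(y^\Gamma_{\varepsilon,j})\subset\Lambda_j$ for each $j\in\Gamma$. This inclusion holds for small $\varepsilon$, because $\operatorname{dist}(y^\Gamma_{\varepsilon,j},K_j)\to0$ and $K_j\Subset\Lambda_j$. On the other hand, for $l\notin\Gamma$ we have $\operatorname{dist}(\Lambda_l,\{y^\Gamma_{\varepsilon,j}\}_{j\in\Gamma})\ge \delta>0$, uniformly in $\varepsilon$, since the closures $\overline\Lambda_j$ are mutually disjoint and the peaks stay near $K_j$. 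Hence \eqref{decay} gives
\[
\sup_{\Lambda_l}|u^\Gamma_\varepsilon|\le C(\Gamma)\,e^{-\sigma(\Gamma)\delta/\varepsilon}.
\]

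Now set $\gamma_*:=\min_\Gamma\gamma(\Gamma)$ and shrink $\varepsilon^*$ so that $C(\Gamma)^2e^{-2\sigma(\Gamma)\delta/\varepsilon}<\gamma_*$ for all $\Gamma$. Then
\[
\Gamma=\Bigl\{j:\ \sup_{\Lambda_j}|u^\Gamma_\varepsilon|^2\ge\gamma_*\Bigr\}.
\]
So if $\Gamma\ne\Gamma'$, the functions $|u^\Gamma_\varepsilon|$ and $|u^{\Gamma'}_\varepsilon|$ differ on some $\Lambda_j$, and therefore $u^\Gamma_\varepsilon\neq u^{\Gamma'}_\varepsilon$. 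The argument uses only moduli, so these solutions are not equal even up to a constant phase factor $e^{i\theta}$, which is the natural gauge ambiguity. This gives at least $2^m-1$ distinct solutions.

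Finally, the $m$ singletons $\Gamma=\{j\}$, $j=1,\dots,m$, give solutions with a single distinguished peak, located at $y_{\varepsilon,j}\to K_j$. Each of these concentrates in the single well $\Lambda_j$ and decays exponentially away from it by \eqref{decay}. This yields the stated $m$ solutions concentrating in one potential well.

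The only delicate step is the uniform separation $\delta$ between the peaks and the wells outside $\Gamma$. This separation comes from the convergence $\operatorname{dist}(y_{\varepsilon,j},K_j)\to0$ in Theorem \ref{1.1}, together with the compactness of $K_j$ and the disjointness of the $\overline\Lambda_j$. Once $\delta$ is fixed, the remaining constants are finite minima over the $2^m-1$ choices of $\Gamma$.
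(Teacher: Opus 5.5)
Your proof is correct and is essentially the argument the paper leaves implicit when it calls the corollary an immediate consequence of Theorem \ref{1.1}: apply the theorem to each of the $2^m-1$ nonempty $\Gamma$, take the minimum of the $\varepsilon_0(\Gamma)$, and observe that the singletons $\Gamma=\{j\}$ give the single-well solutions. Your distinctness check, which reads off $\Gamma$ from the moduli on the wells via \eqref{zhendang} and \eqref{decay}, spells out a step the paper does not write; note that the separation $\delta$ already follows from $y_{\varepsilon,j}\in\Lambda_j$ and the disjointness of the compact closures $\overline\Lambda_j$, so the convergence to $K_j$ is not needed there.
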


	Our approach relies on variational methods combined with a penalization technique. The proof of Theorem \ref{1.1} is inspired by the approach in \cite{Gui}, but it requires a more delicate analysis due to the presence of the magnetic potential $A$. As $\varepsilon\rightarrow 0^+$, our solutions become highly concentrated near the local minimum points of the potential $V$, forming sharp peaks and decaying rapidly to zero elsewhere. We construct solutions with $s=|\Gamma|$ spatially separated peaks, one near each $K_j$ for $j\in\Gamma$, whose corresponding local profiles approach ground state solutions of \eqref{fude} with $\mu=V_j$.
	Moreover, the presence of the multi-well potential requires  delicate estimates and a careful and deep analysis of some technical aspects,  especially as shown in Proposition \ref{fengjie}.

	This paper is organized as follows.
	In Section \ref{Sec2}, we introduce some preliminary results and study the limit problem.
	In Section \ref{Secthree}, we introduce and investigate a modified problem by verifying the Palais-Smale condition for the corresponding functional and proving a {\em Splitting Theorem}, which plays a crucial role in obtaining our result. 
	Section \ref{Sec3} is devoted to proving the existence result for the modified problem. 
	Finally, we conclude the paper in Section \ref{Sec4} with the proof of Theorem \ref{1.1}. Appendix \ref{proofexistenceC} contains the proof of Lemma \ref{existenceC}.

	\subsection*{Notation}
	\begin{itemize}
		\item $C, C_1, C_2, \ldots$ denote positive constants whose exact values can change from line to line;
		\item $B_{R}(y)$ denotes the open disk centered at $y\in\mathbb{R}^2$ with radius $R>0$ and $B^{c}_{R}(y)$ denotes its complement in $\mathbb{R}^{2}$;
		\item $\Vert \cdot \Vert$, $| \cdot |_{q}$, and $|\cdot |_{\infty}$ denote the usual norms of the spaces $H^{1}(\mathbb{R}^{2}, \mathbb{K})$, $L^{q}(\mathbb{R}^{2}, \mathbb{K})$, and $L^{\infty}(\mathbb{R}^{2}, \mathbb{K})$, respectively, where $\mathbb{K}=\mathbb{R}$ or $\mathbb{K}=\mathbb{C}$.
	\end{itemize}

	\section{Preliminaries and the limit problem}\label{Sec2}

	Let us start this section by introducing our functional setting. 
	Following \cite{Lieb}, if $A\in L_{\rm loc}^2(\mathbb{R}^2,\mathbb{R}^2)$, let
	\[H^{1}_{A}(\mathbb{R}^{2},\mathbb{C}):=\left\{u\in L^{2}(\mathbb{R}^{2},\mathbb{C}): |\nabla_{A}u|\in L^{2}(\mathbb{R}^{2},\mathbb{R})\right\}\]
	where
	\[\nabla_{A}u:=\left(\frac{\nabla}{i}-A\right)u,\]
	which is a Hilbert space equipped with the inner product
	\[
	\langle u, v \rangle_{A} := \text{Re}\int_{\mathbb{R}^2} (\nabla_{A} u \overline{\nabla_{A} v} +  u\overline{v})dx.
	\]
	Here, $\operatorname{Re}$ denotes the real part of a complex number, and the bar represents complex conjugation, respectively. Then the norm is given by $\|u\|_{A} := \sqrt{\langle u, u \rangle_{A}}$.
	
	It is well known that the presence of the magnetic potential implies some technical difficulties with respect to the case with $A=0$.
	Even if, by
	\[
	\frac{1}{i}\nabla u=\nabla_{A}u+A(x)u
	\quad
	\text{a.e. in }\mathbb{R}^2,
	\]
	we have that
	\begin{equation}\label{eaineq}
		|\nabla u|\le |\nabla_{A}u|+|A(x)u|
		\quad
		\text{a.e. in }\mathbb{R}^2,
	\end{equation}
	as observed in \cite[Section 7.20]{Lieb}, if $u\in H_A^{1}(\mathbb{R}^{2},\mathbb{C})$, then $u$ does not necessarily belong to $H^{1}(\mathbb{R}^{2},\mathbb{C})$. Moreover, if we take $A(x)=x$  and $u(x)=(1+|x|^2)^{-1}$ for $x\in\mathbb{R}^2$,
	then, $u\in H^{1}(\mathbb{R}^{2},\mathbb{C})$ but not in $H_A^{1}(\mathbb{R}^{2},\mathbb{C})$. Thus, to handle the usual Sobolev spaces, it is classical to use the {\em diamagnetic inequality}
	\begin{equation}\label{p2.1}
		|\nabla |u|| \leq |\nabla_{A} u|
	\end{equation}
	that holds a.e. in $\mathbb{R}^2$ and for every $\ u\in H^{1}_{A}(\mathbb{R}^{2},\mathbb{C})$ (see \cite[Theorem 7.21]{Lieb}).
	This inequality is a crucial tool in our subsequent analysis.
	
	Let, now, \[H^1_{A,V}(\mathbb{R}^{2},\mathbb{C}):=\left\{u\in H^{1}_{A}(\mathbb{R}^{2},\mathbb{C}): \int_{\mathbb{R}^2}   V(x)|u|^2dx<+\infty\right\}\]
	endowed with the inner product
	\[
	\langle u, v \rangle_{A,V} := \text{Re}\int_{\mathbb{R}^2} (\nabla_Au \overline{\nabla_A v} +  V(x)u\overline{v})dx.
	\]
	$H^1_{A,V}(\mathbb{R}^{2},\mathbb{C})$ is a Hilbert space and on it we will consider the norm $\|u\|_{A,V} := \sqrt{\langle u, u \rangle_{A,V}}$. 
	
	Following classical arguments (see e.g. \cite{Lieb}), we have the following result.
	
	\begin{lemma}\label{dense}
		If	$A \in L^2_{\rm loc}(\mathbb{R}^2,\mathbb{R}^2)$ and $V\in L^1_{\rm loc}(\mathbb{R}^2,\mathbb{R})$ with \( V \geq V_0>0 \) a.e. in $\mathbb{R}^2$, the space $C_c^\infty(\mathbb{R}^2,\mathbb{C})$ is dense in $H^1_{A,V}(\mathbb{R}^{2},\mathbb{C})$.
	\end{lemma}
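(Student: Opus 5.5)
The density follows the classical three-stage scheme of \cite[Theorem 7.22]{Lieb}: truncate in modulus, cut off in space, then mollify. Throughout, $u\in H^1_{A,V}(\mathbb{R}^2,\mathbb{C})$ is fixed and we approximate it in the norm $\|\cdot\|_{A,V}$.

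\emph{Step 1 (bounded functions).} Take a smooth $G:[0,\infty)\to[0,\infty)$ with $G(t)=t$ for $t\le 1$, $G(t)=2$ for $t\ge 3$, and $0\le G'\le 1$. Set
\[
u_n:=\frac{n\,G(|u|/n)}{|u|}\,u \quad(u_n:=0 \text{ where } u=0).
\]
Then $|u_n|\le\min\{|u|,3n\}$ and $u_n=u$ on $\{|u|\le n\}$. Writing $u_n=\phi_n(|u|)\,u$ with $\phi_n(t)=nG(t/n)/t$, we have
\[
\nabla_A u_n=\phi_n(|u|)\nabla_A u+\frac{1}{i}\,\phi_n'(|u|)\,u\,\nabla|u|.
\]
Here $|t\phi_n'(t)|\le C\mathbf{1}_{\{t>n\}}$, and the diamagnetic inequality \eqref{p2.1} gives $|\nabla|u||\le|\nabla_Au|$. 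Hence $|\nabla_A u_n|\le C|\nabla_A u|$, and $\nabla_A(u_n-u)$ is supported in $\{|u|>n\}$. Dominated convergence then yields $\|u_n-u\|_{A,V}\to0$; the weighted $L^2$ part is handled the same way, using the bound $V|u_n-u|^2\le 4V|u|^2\in L^1$. So we may assume $u\in L^\infty$.

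\emph{Step 2 (compact support).} Let $\eta_R=\eta(\cdot/R)$ with $\eta\in C_c^\infty$ and $\eta=1$ on $B_1$. Then
\[
\nabla_A(\eta_Ru)=\eta_R\nabla_Au+\frac1i\,u\nabla\eta_R,
\]
and $|\nabla\eta_R|\le C/R$. Using $u\in L^2$, $V|u|^2\in L^1$ and dominated convergence, $\eta_R u\to u$ in $H^1_{A,V}$ as $R\to\infty$. So we may assume $u\in L^\infty$ with compact support.

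\emph{Step 3 (mollification).} This is the main point. We first check that such a $u$ lies in $H^1$. Since $u$ is bounded with compact support and $A\in L^2_{\rm loc}$, we have $Au\in L^2$. As $\nabla u=i(\nabla_Au+Au)$ in distributions, and both $A\in L^2_{\rm loc}$ and $u\in L^\infty$ make the product a distribution, \eqref{eaineq} gives $\nabla u\in L^2$. Hence $u\in H^1$ with compact support.

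Now set $u_\delta=\rho_\delta*u\in C_c^\infty$, supported in a fixed compact $K$, with $|u_\delta|\le|u|_\infty$. Then $u_\delta\to u$ in $H^1$ and, along a subsequence, a.e. For the magnetic term,
\[
\nabla_A(u_\delta-u)=\frac1i\nabla(u_\delta-u)-A(u_\delta-u).
\]
The first piece tends to zero in $L^2$. The second tends to zero by dominated convergence, since $|A(u_\delta-u)|^2\le 4|u|_\infty^2|A|^2\mathbf{1}_K\in L^1$.

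The step I expect to be most delicate is the potential term. Because $V$ is only in $L^1_{\rm loc}$, it is not bounded on $K$. However, $V|u_\delta-u|^2\le 4|u|_\infty^2V\mathbf{1}_K\in L^1$ and the integrand tends to zero a.e., so dominated convergence again applies. Here boundedness from Step 1 is essential, and this is exactly why the truncation comes first. The positivity $V\ge V_0$ is only needed to make $\|\cdot\|_{A,V}$ a norm controlling $L^2$.

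Finally, a diagonal argument across the three steps concludes the proof.
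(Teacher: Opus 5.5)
Your proposal is correct and is essentially the paper's proof. It uses the same three ingredients: truncation in modulus through a smooth function of $|u|$, a spatial cut-off, and mollification of bounded compactly supported functions, where boundedness makes $|u|_\infty^2|A|^2\mathbf{1}_K$ and $|u|_\infty^2V\mathbf{1}_K$ integrable dominating functions; the only differences are that you truncate before the spatial cut-off and saturate $|u|$ rather than cutting off large values. Two small points: the chain rule in your Step 1 is applied in $W^{1,1}_{\rm loc}$, which you should state and which holds because $\nabla u=i(\nabla_Au+Au)$ with $Au\in L^1_{\rm loc}$ (the paper says this explicitly), while your passage to an a.e.-convergent subsequence in Step 3 is a detail the paper leaves implicit.
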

	\begin{proof}
		Let us divide the proof into three steps.
		
		Step 1. Approximation of bounded, compactly supported functions.
		
		Assume first that $u\in H^1_{A,V}(\mathbb{R}^{2},\mathbb{C})$ is bounded and has compact support. Then $u\in H^1(\mathbb{R}^2,\mathbb{C})$,  because, using \eqref{eaineq},
		\begin{align*}
			\int_{\mathbb{R}^2}(|\nabla u|^2+|u|^2) dx
			&\le
			\int_{\operatorname{supp} u}
			(2|\nabla_{A}u|^2 + 2|A(x)u|^2+|u|^2) dx\\
			&\leq
			2\int_{\operatorname{supp} u}
			|\nabla_{A}u|^2 dx
			+C+\frac{1}{V_0}\int_{\operatorname{supp} u}V_0|u|^2dx
			\\&\le
			C\left(1+\int_{\mathbb{R}^2}
			(|\nabla_{A}u|^2+V(x)|u|^2)
			dx\right).
		\end{align*}   
		Let $j$ be a standard mollifier, i.e.
		\[j \in C_c^\infty(\mathbb{R}^2,\mathbb{R}),
		\quad j \ge 0,
		\quad \int_{\mathbb{R}^2} j dx = 1,
		\quad \operatorname{supp}  j \subset B_1(0),\]
		and, if $R>0$, let us set $j_R(x) := R^2 j(Rx)$ and $u_R := j_R * u$.
		By \cite[Theorem 2.16]{Lieb} we have $u_R\in C^\infty(\mathbb{R}^2,\mathbb{C})$ and
		\begin{equation}\label{convmoll}
			u_R \to u \ \text{in } L^2(\mathbb{R}^2,\mathbb{C}) \quad \text{and} \quad  \nabla u_R \to \nabla u \ \text{in } L^2(\mathbb{R}^2,\mathbb{C}^{2})  \text{ as } R\to+\infty.
		\end{equation}
		If $\Omega$ is a bounded domain such that 
		$\operatorname{supp} u \subset \Omega$, for $R$ sufficiently large, by \cite[Proposition 4.18]{Brezis}, we also have 
		$\operatorname{supp}( u_R) \subset \overline{\operatorname{supp}(u) + \operatorname{supp}(j_R)} \subset \Omega$ (so that $u_R\in C_c^\infty(\mathbb{R}^2,\mathbb{C})$).
		Moreover, for any $x \in \mathbb{R}^2$,
		\[
		|u_R(x)|
		\le |u|_\infty \int_{\mathbb{R}^2} j_R(x-y) \, dy
		= |u|_\infty.
		\]
		Then, since
		$|A|^2 |u-u_R|^2 \le C |A|^2 \in L^1(\Omega,\mathbb{R})$,
		by dominated convergence we obtain $|A(u-u_R)|_2 \to 0$ and so, using \eqref{convmoll}, $|\nabla_{A}(u-u_R)|_2 \to 0$.
		Similarly, 
		we also have
		\[
		\int_{\mathbb{R}^2} V(x) |u-u_R|^2 dx \to 0
		\]
		so that $\|u-u_R\|_{A,V}\to 0$ as $R\to+\infty$.

		Step 2. The set of compactly supported functions in $H^1_{A,V}(\mathbb{R}^{2},\mathbb{C})$ is dense in $H^1_{A,V}(\mathbb{R}^{2},\mathbb{C})$.
		
		Let $u\in H^1_{A,V}(\mathbb{R}^{2},\mathbb{C})$, 
		$R\geq1$, and
		$\varphi_R \in C_c^\infty(\mathbb{R}^2,[0,1])$ be such that 
		$\varphi_R \equiv 1$ on $B_R(0)$, $\varphi_R \equiv 0$ on $\mathbb{R}^2\setminus B_{2R}(0)$, and $|\nabla\varphi_R|\leq C$, where $C>0$ is independent of $R$. One has
		\[
		\int_{\mathbb{R}^2} V(x) |u-\varphi_R u|^2 \,dx \leq \int_{|x|\ge R} V(x)|u|^2\,dx \to 0 \quad\text{as }R\to+\infty
		\]
		and
		\begin{align*}
			\int_{\mathbb{R}^2}|\nabla_{A}(\varphi_R u-u)|^2 dx
			& \le 	2 \int_{\mathbb{R}^2}\left((1-\varphi_R)^2|\nabla_{A}u|^2 + |\nabla\varphi_R|^2|u|^2\right)dx\\
			&\leq 
			2 \int_{|x|\geq R }|\nabla_{A}u|^2 dx + C
			\int_{ |x|\geq R} |u|^2 dx
			\to 0\quad \text{as}\,\ R\to +\infty.
		\end{align*}
		Hence $\|\varphi_R u-u\|_{A,V}\to0$ as $R\to +\infty$.

		Step 3. Conclusion.
		
		Given $u \in H^1_{A,V}(\mathbb{R}^{2},\mathbb{C})$, by the previous step we may assume that $u$ has compact support.  Let us show that $u$ can be approximated by a sequence of bounded functions ${u_k}\in H^1_{A,V}(\mathbb{R}^{2},\mathbb{C})$ such that $|u_k| \leq |u|$ a.e. in $\mathbb{R}^2$. Combined with Step 1, this completes the proof.
		
		Let $\phi \in C_c^\infty(\mathbb{R},[0,1])$ be such that $\phi(t)\equiv 1$ for $|t|\le 1$, $\phi(t)\equiv 0$ for $|t|\ge 2$, and define 
		\[\phi_k:=\phi(\cdot/k)
		\quad
		\text{and}
		\quad
		u_k:=u\phi_k(|u|)
		\quad
		\text{for } k\in\mathbb{N}.
		\]
		Then $|u_k|\le |u|$ and $|u_k|\leq 2k$ a.e. in $\mathbb{R}^2$.
		Since $A\in L^2_{\rm loc}(\mathbb R^2,\mathbb{R}^2)$ and $u\in L^2_{\rm loc}(\mathbb R^2,\mathbb{C})$, H\"older's inequality gives
		$Au\in L^1_{\rm loc}(\mathbb R^2,\mathbb{C}^2)$. Hence
		$\nabla u=i(\nabla_Au+Au)\in L^1_{\rm loc}(\mathbb R^2,\mathbb{C}^2)$ and
		$u\in W^{1,1}_{\rm loc}(\mathbb R^2,\mathbb C)$. Applying the 
		chain rule in $W^{1,1}_{\rm loc}$ to the Lipschitz map
		$z\mapsto z\phi_k(|z|)$, and using the diamagnetic inequality so that
		$|u|\in H^1(\mathbb R^2,\mathbb R)$, we obtain
		\[\nabla_{A} u_k = \phi_k(|u|)\nabla_{A} u + \frac{u}{i}\phi'_k(|u|)\nabla |u|
		\quad
		\text{a.e. in }\mathbb{R}^2.\]
		Now, since $|\phi_k|\le 1$ and $\phi_k(|u|)\to 1$ a.e. in $\mathbb{R}^2$, by the Lebesgue Dominated Convergence Theorem
		$
		\phi_k(|u|)\nabla_{A} u
		\to
		\nabla_{A} u$
		in $L^2(\mathbb{R}^2,\mathbb{C}^2)$.
		
		Moreover, being
		$\phi_k'=\phi'(\cdot/k)/k$ it follows that 
		$|u\,\phi_k'(|u|)| \le C\,\mathbf{1}_{\{k\leq |u|\leq 2 k\}},$
		for some constant $C>0$, so that $
		|u\,\phi_k'(|u|)\nabla |u|| \le C\,\mathbf{1}_{\{|u|\ge k\}}\,|\nabla |u||$ a.e. in $\mathbb{R}^2$. Then, the diamagnetic inequality and the Lebesgue Dominated Convergence Theorem imply
		$|u\,\phi_k'(|u|)\nabla |u||_{2} \to 0$ as $k\to+\infty$.
		Therefore, $\nabla_{A}u_k
		\to
		\nabla_{A}u$ in $L^2(\mathbb{R}^2,\mathbb{C}^2)$.
		Finally, since $|u_k|\le |u|$ and $u_k\to u$ a.e., 
		again by the Lebesgue Dominated Convergence Theorem,
		we can conclude.
	\end{proof}
	
	Additionally, by a standard change of variables, equation \eqref{p1.1} is equivalent to
	\begin{equation}\label{origi}
		\left(\frac{1}{i}\nabla-A_{\varepsilon}(x)\right)^{2}u+V_{\varepsilon}(x)u=f(|u|^{2})u\,\ \text{in}\,\ \mathbb{R}^{2},
	\end{equation}
	where $A_{\varepsilon}(x)=A({\varepsilon}x)$ and $V_{\varepsilon}(x)=V({\varepsilon}x)$.
	Thus, for any fixed $\varepsilon>0$ we consider $H_\varepsilon:=H^1_{A_\varepsilon,V_\varepsilon}(\mathbb{R}^2,\mathbb{C})$, denoting  
	\(
	\langle u, v \rangle_{\varepsilon} := \langle u, v \rangle_{A_\varepsilon,V_\varepsilon}
	\)
	and
	$\|u\|_{\varepsilon} := \sqrt{\langle u, u \rangle_{\varepsilon}}$.

	Since our main interest is in the case $A\in C(\mathbb{R}^2,\mathbb{R}^2)$, for simplicity we restrict ourselves to this setting from now on.
	
	Thanks to the diamagnetic inequality \eqref{p2.1}, if $u\in H_{\varepsilon}$, then $|u|\in H^{1}(\mathbb{R}^{2},\mathbb{R})$ and $\||u|\|\leq C\|u\|_{\varepsilon}$.
	Consequently, $H_{\varepsilon}$ is continuously embedded in
	$L^{r}(\mathbb{R}^{2},\mathbb{C})$ for every $2\leq r<+\infty$ and compactly
	embedded in $L_{\rm loc}^{r}(\mathbb{R}^{2},\mathbb{C})$ for every
	$1\leq r<+\infty$ (see, e.g., \cite[Lemma 2.1]{d'Avenia2023}).
	
	Regarding the nonlinearity, by \cite{d'Avenia2022} and assumptions 
	\eqref{f1}--\eqref{f2}, it follows that for a fixed $q>2$,
	for any $\xi>0$ and $\alpha>4\pi$, there exists a constant 
	$C=C(\alpha,\xi,q)>0$ such that
	\begin{equation}\label{p2.2}
		f(t)\leq\xi+Ct^{(q-2)/2}(e^{\alpha t}-1) \text{ for any }  t\geq0
	\end{equation}
	and by (\ref{f3}), we have
	\begin{equation}\label{p2.3}
		F(t)\leq\xi t+Ct^{q/2}(e^{\alpha t}-1) \text{ for any } t\geq0.
	\end{equation}
	In addition, by \eqref{p2.2} and \eqref{p2.3}, we deduce that
	\begin{equation}\label{p2.4}
		f(t^{2})t^{2}\leq\xi t^{2}+C|t|^{q}(e^{\alpha t^{2}}-1)  \text{ for any } t\in \mathbb{R}
	\end{equation}
	and
	\begin{equation}\label{p2.5}
		F(t^{2})\leq\xi t^{2}+C|t|^{q}(e^{\alpha t^{2}}-1)  \text{ for any }  t\in \mathbb{R}.
	\end{equation}
	A useful tool in the study of nonlinearities with exponential critical growth is the following version of the Trudinger-Moser inequality, as stated in \cite[Lemma 1.2]{Alves2016} (see also \cite[Lemma 2.1]{Cao} or \cite[Lemma 1]{J}).
	\begin{lemma}\label{2.1}
		If $\beta>0$ and $u\in H^{1}(\mathbb{R}^{2},\mathbb{R})$, then
		\[\int_{\mathbb{R}^{2}}(e^{\beta u^{2}}-1)dx<+\infty.\]
		Additionally, if $|\nabla u|_2^2\leq1$, $|u|_{2}\leq M<+\infty$, and $0<\beta<4\pi$, then there exists a positive constant $C=C(\beta,M)$ such that
		\[\int_{\mathbb{R}^{2}}(e^{\beta u^{2}}-1)dx\leq C.\]
	\end{lemma}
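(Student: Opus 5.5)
The plan is to reduce to radially symmetric nonincreasing functions via rearrangement, split $\mathbb{R}^2$ into a ball and its complement, and use the classical Moser inequality on the ball. The first (qualitative) statement then follows from the second by a density argument.

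\emph{Reduction.} Replacing $u$ by $|u|$ changes neither $\int(e^{\beta u^2}-1)\,dx$ nor $|u|_2$, and does not increase $|\nabla u|_2$. So we may assume $u\ge0$. Let $u^*$ be the Schwarz symmetrization of $u$. Then:
\begin{itemize}
\item $\int(e^{\beta (u^*)^2}-1)\,dx=\int(e^{\beta u^2}-1)\,dx$, since the integrand is a nondecreasing function of $u$ vanishing at $0$ and $u^*$ is equimeasurable with $u$;
\item $|u^*|_2=|u|_2$;
\item $|\nabla u^*|_2\le|\nabla u|_2$ by the P\'olya--Szeg\H{o} inequality.
\end{itemize}
Hence it suffices to treat radial nonincreasing $u$. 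For such $u$ the elementary estimate
\[
\pi|x|^2u(x)^2\le\int_{B_{|x|}}u^2\,dy\le M^2
\]
gives the pointwise bound $u(x)\le M/(\sqrt{\pi}|x|)$.

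\emph{Second statement.} Fix $R=1$ and split $\mathbb{R}^2=B_1\cup B_1^c$.

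On $B_1^c$ we have $u\le M/\sqrt\pi$. Using $e^{s}-1\le se^{s}$, this gives $e^{\beta u^2}-1\le \beta e^{\beta M^2/\pi}u^2$, so the integral over $B_1^c$ is at most $C(\beta,M)M^2$.

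On $B_1$, set $v:=u-u(1)$. Then $v\in H^1_0(B_1)$ and $|\nabla v|_2\le1$. Choose $\delta>0$ with $\beta(1+\delta)<4\pi$. Since $u(1)\le M/\sqrt\pi$, Young's inequality gives
\[
u^2\le(1+\delta)v^2+(1+\delta^{-1})M^2/\pi .
\]
Therefore
\[
\int_{B_1}e^{\beta u^2}\,dx\le e^{\beta(1+\delta^{-1})M^2/\pi}\int_{B_1}e^{4\pi v^2}\,dx\le C(\beta,M),
\]
where the last step is the classical Moser inequality on bounded domains. This is the step I expect to be the key one: the whole point is that the rearrangement puts the critical constant $4\pi$ only on the compactly supported part $v$, while the tail is bounded by $M$.

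\emph{First statement.} Let $\beta>0$ be arbitrary and $u\in H^1(\mathbb{R}^2,\mathbb{R})$. By density write $u=\varphi+w$ with $\varphi\in C_c^\infty$ and $\|w\|$ so small that $2\beta|\nabla w|_2^2<4\pi$. Since $u^2\le2\varphi^2+2w^2$, the identity $e^{a+b}-1=(e^a-1)e^b+(e^b-1)$ yields
\[
e^{\beta u^2}-1\le(e^{2\beta\varphi^2}-1)e^{2\beta w^2}+(e^{2\beta w^2}-1).
\]
\begin{itemize}
\item For the second term, apply the already proved bound to $w/|\nabla w|_2$ with exponent $2\beta|\nabla w|_2^2<4\pi$. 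This shows it is integrable (if $w$ is constant in gradient, i.e.\ $\nabla w=0$, then $w=0$ and there is nothing to prove).
\item The first term is bounded by $C\mathbf 1_{\operatorname{supp}\varphi}e^{2\beta w^2}$. Its integral is at most
\[
C\Bigl(|\operatorname{supp}\varphi|+\int_{\mathbb{R}^2}(e^{2\beta w^2}-1)\,dx\Bigr)<+\infty .
\]
\end{itemize}
This proves the first statement.
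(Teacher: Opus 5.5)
Your proof is correct. The paper gives no proof of its own and simply quotes the lemma from Cao, do \'O and Alves et al.; your argument (Schwarz symmetrization, the decay bound $u^*(x)\le M/(\sqrt{\pi}|x|)$, Moser's inequality on $B_1$ for $u^*-u^*(1)$ after Young's inequality, and the density splitting $u=\varphi+w$ for the qualitative part) is essentially the classical argument given in those references, with the one implicit step, $u^*-u^*(1)\in H^1_0(B_1)$, justified because a radially nonincreasing $H^1$ function is continuous on $\mathbb{R}^2\setminus\{0\}$.
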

	
	In the same spirit, since for every $u\in H^{1}(\mathbb{R}^{2},\mathbb{C})$, by \cite[Theorem 6.17]{Lieb},
	$|u|\in H^{1}(\mathbb{R}^{2},\mathbb{R})$ and $|\nabla |u||_2\leq |\nabla u|_2$, applying the previous lemma to $|u|$ we get its {\em complex version} as follows.
	\begin{corollary}\label{2.1c}
		If $\beta>0$ and $u\in H^{1}(\mathbb{R}^{2},\mathbb{C})$, then
		\[\int_{\mathbb{R}^{2}}(e^{\beta |u|^{2}}-1)dx<+\infty.\]
		Additionally, if $|\nabla u|_2^2\leq1$, $|u|_{2}\leq M<+\infty$, and $0<\beta<4\pi$, then there exists a positive constant $C=C(\beta,M)$ such that
		\[\int_{\mathbb{R}^{2}}(e^{\beta |u|^{2}}-1)dx\leq C.\]
	\end{corollary}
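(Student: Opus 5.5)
Corollary \ref{2.1c} follows by reducing to the real case, Lemma \ref{2.1}, applied to the modulus $v:=|u|$. The whole argument is a check that $v$ satisfies the hypotheses of Lemma \ref{2.1} whenever $u$ satisfies those of the corollary, and that the integrals coincide.

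First, fix $u\in H^{1}(\mathbb{R}^{2},\mathbb{C})$. By \cite[Theorem 6.17]{Lieb}, $v=|u|$ belongs to $H^{1}(\mathbb{R}^{2},\mathbb{R})$. Moreover $\nabla|u|=\operatorname{Re}(\bar u\nabla u)/|u|$ on $\{u\neq0\}$ and $\nabla|u|=0$ a.e. on $\{u=0\}$, so pointwise a.e.
\[
|\nabla |u||\le|\nabla u|, \qquad\text{hence}\qquad |\nabla |u||_{2}\le|\nabla u|_{2}.
\]
Trivially $|v|_{2}=|u|_{2}$ and $v^{2}=|u|^{2}$. Therefore
\[
\int_{\mathbb{R}^{2}}\bigl(e^{\beta|u|^{2}}-1\bigr)\,dx=\int_{\mathbb{R}^{2}}\bigl(e^{\beta v^{2}}-1\bigr)\,dx ,
\]
and the first part of Lemma \ref{2.1}, applied to $v$, gives finiteness for every $\beta>0$.

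For the second part, assume $|\nabla u|_{2}^{2}\le1$, $|u|_{2}\le M$ and $0<\beta<4\pi$. The inequalities above give $|\nabla v|_{2}^{2}\le|\nabla u|_{2}^{2}\le 1$ and $|v|_{2}\le M$. The uniform bound in Lemma \ref{2.1} then yields the estimate with the same constant $C=C(\beta,M)$; in particular $C$ does not depend on $u$.

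There is no real obstacle here. The only point needing care is the justification that $|u|\in H^{1}$ with the pointwise gradient bound, including on the zero set of $u$. This is exactly the content of \cite[Theorem 6.17]{Lieb}, and we invoke it rather than reprove it.
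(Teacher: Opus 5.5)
Your proposal is correct and follows the same approach as the paper. The paper also applies Lemma~\ref{2.1} to $|u|$, using \cite[Theorem 6.17]{Lieb} to get $|u|\in H^{1}(\mathbb{R}^{2},\mathbb{R})$ with $|\nabla|u||_2\le|\nabla u|_2$; since $|u|_2$ and the integrand $e^{\beta|u|^2}-1$ are unchanged, both parts of the corollary follow.
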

	With such inequalities in hand, the following generalization of the Dominated Convergence Theorem will be useful.
	\begin{lemma}\label{GDCT}
		Let $E\subset\mathbb{R}^N$ be a measurable set, $\{f_n\}$ and $\{g_n\}$ be two sequences of measurable
		functions, and $f$ and $g$ be measurable functions such that $f_n,f:E\longrightarrow\mathbb{C}$, $g_n,g:E\longrightarrow[0,+\infty)$, $ |f_n|\leq g_n$, $f_n\to f$, and $g_n\to g$ a.e. in $E$.
		Assume, in addition, that $g_n,g\in L^1(E,\mathbb{R})$ and
		\[
		\lim_{n\to\infty}\int_E g_n\,dx=\int_E g\,dx.
		\]
		Then $ f_n\to f $ in $L^1(E,\mathbb{C})$.
	\end{lemma}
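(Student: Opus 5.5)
The idea is to reduce to the scalar generalized dominated convergence theorem by Fatou's lemma, applied to a nonnegative auxiliary sequence that controls $|f_n-f|$. First I record that $f\in L^1(E,\mathbb{C})$. Since $|f_n|\le g_n$ and both sides converge a.e., passing to the limit gives $|f|\le g$ a.e. in $E$, and $g\in L^1(E,\mathbb{R})$ by assumption.

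Next I introduce
\[
h_n:=g_n+g-|f_n-f|\geq 0 \quad \text{a.e. in } E,
\]
which is nonnegative because $|f_n-f|\le |f_n|+|f|\le g_n+g$. All three terms are integrable, so $h_n\in L^1(E,\mathbb{R})$. Moreover $h_n\to 2g$ a.e. in $E$, since $g_n\to g$ and $f_n\to f$ a.e.

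Fatou's lemma applied to $h_n$ then gives
\[
\int_E 2g\,dx\le \liminf_{n\to\infty}\int_E h_n\,dx
=\int_E g\,dx+\lim_{n\to\infty}\int_E g_n\,dx-\limsup_{n\to\infty}\int_E|f_n-f|\,dx .
\]
Here I split the $\liminf$ of the sum using the hypothesis that $\int_E g_n\,dx$ converges. By that same hypothesis, $\lim_n\int_E g_n\,dx=\int_E g\,dx$, and this quantity is finite. Subtracting $2\int_E g\,dx$ from both sides yields $\limsup_{n\to\infty}\int_E|f_n-f|\,dx\le 0$. Hence $f_n\to f$ in $L^1(E,\mathbb{C})$.

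There is no real obstacle in this argument. The only points needing care are the a.e. inequality $|f|\le g$, which guarantees integrability, and the finiteness of $\int_E g\,dx$, which is what makes the subtraction legitimate. Complex values cause no difficulty, because only $|f_n-f|$ enters the estimates.
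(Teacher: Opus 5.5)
Your proposal is correct and follows the paper's proof essentially verbatim: you derive $|f|\le g$ a.e., note that $g_n+g-|f_n-f|\ge 0$, apply Fatou's lemma, and use the convergence of $\int_E g_n\,dx$ to obtain $\limsup_{n\to\infty}\int_E|f_n-f|\,dx\le 0$. Your extra remarks on integrability and on the finiteness of $\int_E g\,dx$ make explicit what the paper leaves implicit.
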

	
	\begin{proof}
		Passing to the pointwise limit we obtain $|f|\leq g$ a.e. in $E$. Then, $g_n+g-|f_n-f|\geq 0$  a.e. in $E$, and so Fatou's lemma and the assumed convergence of the integrals of $g_n$ gives 
		\begin{align*}
			2\int_E g\,dx
			&=\int_E \liminf_{n\to\infty} \bigl(g_n+g-|f_n-f|\bigr)\,dx
			\leq \liminf_{n\to\infty}\int_E\bigl(g_n+g-|f_n-f|\bigr)\,dx\\
			&= 2\int_E g\,dx -\limsup_{n\to\infty}\int_E|f_n-f|\,dx.
		\end{align*}
		Therefore
		\[
		\limsup_{n\to\infty}\int_E|f_n-f|\,dx\leq0
		\]
		and we can conclude.
	\end{proof}
	
	Next, we recall some tools that will be used in the sequel.

	In the spirit of \cite[Lemma 2.3]{-Arioli2003} and \cite[Lemma 2.1, Corollary 2.2]{jeanjean}, the continuity (and, in particular, the local boundedness) of both $A$ and $V$ implies the following useful results.
	It is worth noting that, although the present proof is carried out in the case $N=2$, it actually applies to all dimensions 
	$N\geq3$ as well, and provides a simpler argument than that in \cite{-Arioli2003,jeanjean}.
	
	\begin{lemma}\label{coro}
		If $A\in C(\mathbb{R}^2,\mathbb{R}^2)$ and $V\in C(\mathbb{R}^2,\mathbb{R})$ satisfies (\ref{V1}), the following equivalences hold.
		\begin{enumerate}[label=\textup{(\roman*)},ref=\roman*]
			\item \label{equivi} For  \( \varepsilon  \) small enough,  the norm
			\[\left(\int_{B_{2|\log\varepsilon|}(0)}(|\nabla_{A_{\varepsilon}} u|^2+V_{\varepsilon}(x)|u|^{2})dx\right)^{\frac{1}{2}}\]
			is equivalent to the standard norm of $H^{1}(B_{2|\log\varepsilon|}(0), \mathbb{C})$. Moreover, this equivalence is uniform, i.e.
			there exist $\varepsilon_0\in(0,1)$ and $C_{1},C_{2}>0$ such that, for every $\varepsilon\in (0,\varepsilon_0]$, 
			\begin{align*}
				C_1 \int_{B_{2|\log\varepsilon|}(0)}(|\nabla u|^2+|u|^{2})dx
				&\leq
				\int_{B_{2|\log\varepsilon|}(0)}(|\nabla_{A_{\varepsilon}} u|^2+V_{\varepsilon}(x)|u|^{2})dx
				\leq 
				C_2 \int_{B_{2|\log\varepsilon|}(0)}(|\nabla u|^2+|u|^{2})dx.
			\end{align*}
			\item \label{equivii} If $K\subset \mathbb{R}^2$ is a bounded open set, then for  $\varepsilon>0$ small enough, the norm
			\[\left(\int_{K}(|\nabla_{A_{\varepsilon}} u|^2+V_{\varepsilon}(x)|u|^{2})dx\right)^{\frac{1}{2}}\]
			is uniformly equivalent to the standard norm of $H^{1}(K, \mathbb{C})$.
			\item \label{equiviii} For $A_{0}\in \mathbb{R}^{2}$ and $b>0$ fixed, the norm
			\[\left(\int_{\mathbb{R}^{2}}(|\nabla_{A_0} u|^2+b|u|^{2})dx\right)^{\frac{1}{2}}\]
			is  equivalent to the standard norm of $H^{1}(\mathbb{R}^{2}, \mathbb{C})$.
		\end{enumerate}
	\end{lemma}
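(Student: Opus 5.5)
The plan is to prove the two-sided bound directly, pointwise a.e., starting from the identity $\frac{1}{i}\nabla u=\nabla_{A}u+A u$. For (i), set $B_\varepsilon:=B_{2|\log\varepsilon|}(0)$. If $x\in B_\varepsilon$, then $|\varepsilon x|\le 2\varepsilon|\log\varepsilon|$, and this tends to $0$ as $\varepsilon\to0^+$. Choose $\varepsilon_0\in(0,1)$ with $2\varepsilon|\log\varepsilon|\le1$ for all $\varepsilon\in(0,\varepsilon_0]$. Then $\varepsilon x\in \overline{B_1(0)}$ for every $x\in B_\varepsilon$, and by continuity
\[
|A_\varepsilon(x)|\le M_A:=\max_{\overline{B_1(0)}}|A|,\qquad V_0\le V_\varepsilon(x)\le M_V:=\max_{\overline{B_1(0)}}V .
\]
The key point is that these constants do not depend on $\varepsilon$, even though the ball $B_\varepsilon$ grows.

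For the upper bound, use $|\nabla_{A_\varepsilon}u|^2\le 2|\nabla u|^2+2|A_\varepsilon|^2|u|^2$ a.e. Integrating over $B_\varepsilon$ gives
\[
\int_{B_\varepsilon}(|\nabla_{A_\varepsilon}u|^2+V_\varepsilon|u|^2)\,dx\le \int_{B_\varepsilon}\bigl(2|\nabla u|^2+(2M_A^2+M_V)|u|^2\bigr)\,dx,
\]
so one can take $C_2=\max\{2,2M_A^2+M_V\}$.

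For the lower bound, use \eqref{eaineq}, i.e. $|\nabla u|\le |\nabla_{A_\varepsilon}u|+|A_\varepsilon u|$, which gives
\[
|\nabla u|^2\le 2|\nabla_{A_\varepsilon}u|^2+2M_A^2|u|^2\le 2|\nabla_{A_\varepsilon}u|^2+\frac{2M_A^2}{V_0}V_\varepsilon|u|^2 .
\]
Adding $|u|^2\le V_0^{-1}V_\varepsilon|u|^2$ (from \eqref{V1}) and integrating yields
\[
\int_{B_\varepsilon}(|\nabla u|^2+|u|^2)\,dx\le C_1^{-1}\int_{B_\varepsilon}(|\nabla_{A_\varepsilon}u|^2+V_\varepsilon|u|^2)\,dx,
\]
with $C_1^{-1}=\max\{2,(2M_A^2+1)/V_0\}$.

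One technical point needs care: for $u\in H^1(B_\varepsilon)$ the magnetic gradient is in $L^2$, and conversely, if $u$ has finite magnetic norm on $B_\varepsilon$, then $\nabla u=i(\nabla_{A_\varepsilon}u+A_\varepsilon u)\in L^2(B_\varepsilon)$ because $A_\varepsilon$ is bounded there. So both sides are finite simultaneously, and the two norms are defined on the same space.

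Part (ii) follows the same argument. Since $K$ is bounded, $K\subset B_\rho(0)$ for some $\rho>0$. For $\varepsilon\le1$ we have $\varepsilon K\subset B_\rho(0)$, so $A_\varepsilon$ and $V_\varepsilon$ are bounded on $K$ by $\max_{\overline{B_\rho(0)}}|A|$ and $\max_{\overline{B_\rho(0)}}V$, and $V_\varepsilon\ge V_0$. The constants from (i) then work with these maxima in place of $M_A,M_V$.

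Part (iii) is the same computation on all of $\mathbb{R}^2$: the constant vector $A_0$ is trivially bounded by $|A_0|$, and $V$ is replaced by the constant $b>0$. This gives
\[
\min\Bigl\{\tfrac12,\tfrac{b}{2|A_0|^2+1}\Bigr\}\|u\|^2\le\int_{\mathbb{R}^2}(|\nabla_{A_0}u|^2+b|u|^2)\,dx\le \max\{2,2|A_0|^2+b\}\|u\|^2 .
\]

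The only genuinely delicate step is in (i): one must make the constants uniform in $\varepsilon$ even though the domain $B_{2|\log\varepsilon|}(0)$ expands. This is resolved by the observation $\varepsilon|\log\varepsilon|\to0$, so the rescaled region $\varepsilon B_\varepsilon$ stays inside a fixed compact set. Once that is in place, everything else is elementary pointwise algebra.
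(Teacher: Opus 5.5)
Your proposal is correct and follows essentially the same route as the paper: you use $\varepsilon|\log\varepsilon|\to0$ to trap $\varepsilon B_{2|\log\varepsilon|}(0)$ in a fixed compact set, so that $A_\varepsilon$ and $V_\varepsilon$ are bounded there independently of $\varepsilon$, and then apply the pointwise bounds $|\nabla_{A_\varepsilon}u|^2\le 2|\nabla u|^2+2|A_\varepsilon u|^2$, \eqref{eaineq}, and (\ref{V1}). Parts (ii) and (iii) are handled analogously, and your explicit constants check out. (Incidentally, since $\varepsilon|\log\varepsilon|\le 1/e$ on $(0,1)$, your requirement $2\varepsilon|\log\varepsilon|\le1$ actually holds for every $\varepsilon\in(0,1)$.)
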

	\begin{proof}
		Let us prove (\ref{equivi}).
		Observe that, if $x\in B_{2|\log\varepsilon|}(0)$, then $\varepsilon x\in B_{2\varepsilon|\log\varepsilon|}(0)$, with $\varepsilon|\log\varepsilon|\to 0^+$ as $\varepsilon\to 0^+$. Then, for $\varepsilon$ small enough, by the continuity of $A$ and $V$, there exists a constant $C>0$ (independent of $\varepsilon$) such that
		\begin{equation}\label{bddAeVe}
			\int_{B_{2|\log\varepsilon|}(0)} |A_\varepsilon(x)u|^2 dx \leq C \int_{B_{2|\log\varepsilon|}(0)} |u|^2 dx\,\ \text{and}\,\ \int_{B_{2|\log\varepsilon|}(0)} V_\varepsilon(x)|u|^2 dx \leq C \int_{B_{2|\log\varepsilon|}(0)} |u|^2 dx.
		\end{equation}
		Consequently, 
		\[
		\begin{aligned}
			\int_{B_{2|\log\varepsilon|}(0)} (\left|  \nabla_{A_\varepsilon} u \right|^2 + V_\varepsilon(x)|u|^2) dx & \leq \int_{B_{2|\log\varepsilon|}(0)} \big(2\left( |\nabla u|^2 + |A_\varepsilon(x)u|^2 \right) + V_\varepsilon(x)|u|^2\big) dx \\
			& \leq 2 \int_{B_{2|\log\varepsilon|}(0)} |\nabla u|^2 dx + C\int_{B_{2|\log\varepsilon|}(0)} |u|^2 dx.
		\end{aligned}
		\]
		Moreover, using \eqref{eaineq}, \eqref{bddAeVe}, and (\ref{V1}),
		\begin{align*}
			\int_{B_{2|\log\varepsilon|}(0)}(|\nabla u|^2+|u|^2) dx
			&\le
			\int_{B_{2|\log\varepsilon|}(0)}
			(2|\nabla_{A_\varepsilon}u|^2
			+2|A_\varepsilon(x)u|^2+|u|^2)
			dx\\
			&\leq
			2\int_{B_{2|\log\varepsilon|}(0)}
			|\nabla_{A_\varepsilon}u|^2 dx
			+\frac{2C+1}{V_0}\int_{B_{2|\log\varepsilon|}(0)}V_0|u|^2dx
			\\
			&\le
			C\int_{B_{2|\log\varepsilon|}(0)}
			(|\nabla_{A_\varepsilon}u|^2+V_\varepsilon(x)|u|^2)
			dx.
		\end{align*}

		To prove (\ref{equivii}) we proceed analogously, using that, if $\varepsilon$ is small enough, due to the continuity of $A$ and $V$, there exists $C>0$ such that
		\[
		\int_{K} |A_\varepsilon(x)u|^2 dx  \leq C \int_{K} |u|^2 dx\,\ \text{and}\,\ \int_{K} V_\varepsilon(x)|u|^2 dx\leq C \int_{K} |u|^2 dx.
		\]

		Finally the same arguments can be used to prove (\ref{equiviii}).
	\end{proof}

	To conclude this section, for every
	$\mu>0$, let us consider the following real valued limit problem
	\begin{equation}\label{shide}
		-\Delta u+\mu u=f(u^{2})u,
		\quad u\in H^1(\mathbb{R}^{2},\mathbb{R}),
	\end{equation}
	whose associated $C^{1}$-functional, defined on $H^{1}(\mathbb{R}^{2},\mathbb{R})$, is given by
	\[I_{\mu}(u)=\frac{1}{2}\int_{\mathbb{R}^{2}}(|\nabla u|^{2}+\mu u^{2})dx-\frac{1}{2}\int_{\mathbb{R}^{2}}F(u^{2})dx,\]
	and the corresponding complex valued limit problem
	\begin{equation}\label{fude}
		-\Delta u+\mu u=f(|u|^2)u,
		\quad u\in H^1(\mathbb{R}^{2},\mathbb{C}),
	\end{equation}
	whose associated $C^{1}$-functional defined on $H^{1}(\mathbb{R}^{2},\mathbb{C})$ is given by
	\[I^c_{\mu}(u)=\frac{1}{2}\int_{\mathbb{R}^{2}}(|\nabla u|^{2}+\mu|u|^{2})dx-\frac{1}{2}\int_{\mathbb{R}^{2}}F(|u|^{2})dx.\]
	By assumptions (\ref{f1}) and (\ref{f4}), for any $u\in H^{1}(\mathbb{R}^{2},\mathbb{R})\setminus\{0\}$ (respectively $u\in H^{1}(\mathbb{R}^{2},\mathbb{C})\setminus\{0\}$), there exists a unique $t_{\mu}(u)>0$ (respectively $t^c_{\mu}(u)>0$) such that 
	\begin{equation*}
		I_{\mu}(t_{\mu}(u)u)=\max_{t\geq0}I_{\mu}(tu) \text{ and } t_{\mu}(u)u\in \mathcal{N}_{\mu}
	\end{equation*}
	where 
	\[\mathcal{N}_{\mu}=\{u\in H^{1}(\mathbb{R}^{2},\mathbb{R})\setminus\{0\}:I_\mu{}'(u)[u]=0\}\]
	(respectively
	\begin{equation}\label{IcNeh}
		I^c_{\mu}(t^c_{\mu}(u)u)=\max_{t\geq0}I^c_{\mu}(tu) \text{ and } t^c_{\mu}(u)u\in \mathcal{N}^c_{\mu}
	\end{equation}
	with 
	\[\mathcal{N}^c_{\mu}=\{u\in H^{1}(\mathbb{R}^{2},\mathbb{C})\setminus\{0\}:I_{\mu}^c{}'(u)[u]=0\}\]
	in the complex valued case).
	
	Then, arguing as in \cite[Lemma 4.1 and Theorem 4.2]{willem1997}, we also have that
	\begin{equation*}
		0< c_{\mu}
		:=\inf_{u\in\mathcal{N}_{\mu} } I_{\mu}(u)
		=\inf_{u\in H^{1}(\mathbb{R}^{2},\mathbb{R})\setminus\{0\}} \max_{t\geq0}I_{\mu}(tu)
	\end{equation*}
	and
	\begin{equation}\label{jitaic}
		0
		< c_{\mu}^c
		:=\inf_{u\in\mathcal{N}_{\mu}^c } I^c_{\mu}(u)
		=\inf_{u\in H^{1}(\mathbb{R}^{2},\mathbb{C})\setminus\{0\}} \max_{t\geq0}I^c_{\mu}(tu).
	\end{equation}
	
	In addition, recalling that a ground state solution is a nontrivial solution that minimizes the associated energy functional among all possible nontrivial solutions,
	using in particular (\ref{f4}), we know the following result.
	
	\begin{lemma}[{\cite[Lemma 3.1]{d'Avenia2023}}]\label{existence}
		For any $\mu\in(0, V^{*}]$, there exists a positive and radially symmetric ground state solution $w_{\mu}\in H^1(\mathbb{R}^2,\mathbb{R})$ for problem \eqref{shide}.
	\end{lemma}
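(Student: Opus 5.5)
The plan is to run the mountain pass argument on the Nehari manifold for $I_\mu$, using \eqref{f4} to push the minimax level below the Trudinger--Moser compactness threshold. The positivity and radial symmetry will then come from symmetrization. Fix $\mu\in(0,V^*]$.

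\emph{Geometry.} By \eqref{p2.4}, \eqref{p2.5} and Lemma~\ref{2.1}, $I_\mu$ has mountain pass geometry: $I_\mu(u)\ge c\|u\|^2$ for $\|u\|$ small, and $I_\mu(tu)\to-\infty$ by \eqref{f3}. Hence $c_\mu>0$ coincides with the minimax value \eqref{jitaic}. We may restrict to nonnegative functions by replacing $u$ with $|u|$, which leaves $I_\mu$ unchanged and does not increase $|\nabla u|_2$. By Schwarz symmetrization, which preserves $\int F(u^2)$ and $|u|_2$ and does not increase $|\nabla u|_2$, we may further restrict to radial nonincreasing functions. So we work on $H^1_{\rm rad}$ by the principle of symmetric criticality, and later identify the level with $c_\mu$.

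\emph{Upper bound on the level.} Integrating \eqref{f4} gives $f(t)\ge C_p t^{(p-2)/2}$, hence $F(t^2)\ge \frac{2C_p}{p}|t|^p$. Comparing with the functional $\frac12\|u\|_\mu^2-\frac{C_p}{p}|u|_p^p$, and taking a minimizer of $S_p$ rescaled to weight $\mu\le V^*$, one obtains
\[
c_\mu\le \frac{p-2}{2p}\,\frac{(\max\{1,\mu\}S_p)^{p/(p-2)}}{C_p^{2/(p-2)}}.
\]
Here we use $\|u\|_\mu^2\le\max\{1,\mu\}\|u\|^2$. The lower bound on $C_p$ in \eqref{f4} is designed so that this gives
\[
c_\mu<\frac{\Theta-2}{2\Theta}\cdot\frac{1}{m}\cdot(\text{small factor}),
\]
and in particular $\frac{2\Theta}{\Theta-2}c_\mu<1$. (The factor $m$ is reserved for the multi-peak sum later.)

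\emph{Compactness.} Take a Cerami or PS sequence $(u_n)$ at level $c_\mu$, radial and nonnegative. By \eqref{f3}, $\limsup\|u_n\|_\mu^2\le\frac{2\Theta}{\Theta-2}c_\mu<1$. Choose $\alpha>4\pi$ close to $4\pi$ and $r>1$ close to $1$ with $r\alpha\|u_n\|^2<4\pi$. Then Lemma~\ref{2.1} bounds $\int(e^{r\alpha u_n^2}-1)$ uniformly. Combined with \eqref{p2.4}, H\"older's inequality, and the compact embedding $H^1_{\rm rad}\hookrightarrow L^s$ for $s>2$ (Strauss), this gives $\int f(u_n^2)u_n^2\to\int f(u^2)u^2$ and $\int F(u_n^2)\to\int F(u^2)$ for the weak limit $u$. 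Lemma~\ref{GDCT} handles the term $\int f(u_n^2)u_n\varphi$. Then $u$ is a critical point, and from $I'_\mu(u_n)[u_n]\to0$ we get $\|u_n\|_\mu\to\|u\|_\mu$, so $u_n\to u$. Since $c_\mu>0$, $u\neq0$, and $I_\mu(u)=c_\mu$.

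\emph{Ground state and positivity.} Every nontrivial critical point lies on $\mathcal N_\mu$, so $u$ attains $\inf_{\mathcal N_\mu}I_\mu$ and is a ground state. Here one uses that the radial infimum equals $c_\mu$ by the symmetrization argument above together with uniqueness of $t_\mu$. Elliptic regularity, where the exponential nonlinearity is in $L^q_{\rm loc}$ by Lemma~\ref{2.1}, gives $u\in C^{1,\beta}$. The strong maximum principle applied to $-\Delta u+(\mu-f(u^2))u=0$ with $u\ge0$, $u\ne0$ yields $u>0$. Set $w_\mu:=u$.

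\emph{Main obstacle.} The key step is the compactness at the critical exponent. It hinges on the sharp energy bound $\limsup\|u_n\|^2<1$, which must come from the explicit constant in \eqref{f4} with the correct power of $S_p$. Checking that this inequality is implied by the stated condition on $C_p$ for all $\mu\le V^*$, including $\mu<1$ versus $\mu\ge1$, is the delicate point.
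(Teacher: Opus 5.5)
Your overall route (mountain pass in $H^1_{\rm rad}(\mathbb R^2,\mathbb R)$, identifying the radial level with $c_\mu$ by rearrangement, Trudinger--Moser compactness below the threshold, symmetric criticality) is workable. However, the step you flagged as delicate does not close as written.

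\textbf{The level estimate.} With an unscaled extremal for $S_p$ and the crude bound $\|u\|_\mu^2\le\max\{1,\mu\}\|u\|^2$, your estimate falls below $\frac{\Theta-2}{16m\Theta}$ only if $C_p>K\max\{1,\mu\}^{p/2}S_p^{p/2}$, where $K=\bigl(\frac{8m\Theta}{\Theta-2}\cdot\frac{p-2}{p}\bigr)^{(p-2)/2}$. But \eqref{f4} only gives $C_p>KV^*S_p^{p/2}$. Since $p\ge6$, this is not implied as soon as $V^*\neq1$: take any $\mu$ if $V^*<1$, or $\mu=V^*$ if $V^*>1$. For $V^*$ very small or very large, even the weaker existence threshold $c_\mu<\frac{\Theta-2}{2\Theta}$ is not reached.

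The fix is the exact two-dimensional scaling. For $u=v(\sqrt\mu\,\cdot)$ one has $|\nabla u|_2^2+\mu|u|_2^2=|\nabla v|_2^2+|v|_2^2$ and $|u|_p^2=\mu^{-2/p}|v|_p^2$. Hence $S_p^\mu:=\inf_{u\neq0}(|\nabla u|_2^2+\mu|u|_2^2)/|u|_p^2=\mu^{2/p}S_p$, and
\[
c_\mu\le\max_{t\ge0}\Bigl(\frac{t^2}{2}S_p^\mu-\frac{C_p}{p}t^p\Bigr)=\frac{p-2}{2p}\Bigl(\frac{\mu}{C_p}\Bigr)^{\frac{2}{p-2}}S_p^{\frac{p}{p-2}}<\frac{\Theta-2}{16m\Theta}\Bigl(\frac{\mu}{V^*}\Bigr)^{\frac{2}{p-2}}\le\frac{\Theta-2}{16m\Theta}.
\]
This is exactly what \eqref{f4} is calibrated for; it is the computation in Lemma~\ref{lem5.2}. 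An almost-minimizer suffices, since the inequality in \eqref{f4} is strict. It gives $\limsup_n|\nabla u_n|_2^2\le\frac{2\Theta}{\Theta-2}c_\mu<\frac1{8m}$.

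Only $|\nabla u_n|_2^2$, plus an $L^2$ bound, enters Lemma~\ref{2.1}. Your condition $r\alpha\|u_n\|^2<4\pi$ with the full $H^1$ norm does not follow from the bound on $\|u_n\|_\mu$ when $\mu<1$, and it is not needed.

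\textbf{Nonnegativity.} The mountain pass theorem in $H^1_{\rm rad}$ gives a radial Palais--Smale sequence, but not a nonnegative one. So $u\ge0$ is not justified when you invoke the strong maximum principle. This is easy to repair:
\begin{itemize}
\item $|u|\in\mathcal N_\mu$ and $I_\mu(|u|)=I_\mu(u)=c_\mu$.
\item $\mathcal N_\mu$ is a natural constraint: for $v\in\mathcal N_\mu$, the derivative of $t\mapsto I_\mu'(tv)[tv]$ at $t=1$ equals $-2\int_{\mathbb R^2}f'(v^2)v^4\,dx<0$ by \eqref{f4}.
\item Hence $|u|$ is again a critical point, and you can take $w_\mu:=|u|$.
\end{itemize}
Alternatively, run the whole argument for $\frac12\|u\|_\mu^2-\frac12\int F(u_+^2)\,dx$, whose critical points are nonnegative.

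\textbf{Comparison with the paper.} The paper does not prove this lemma; it takes it from \cite{d'Avenia2023}. The argument it does write out, in Appendix~\ref{proofexistenceC} for the complex analogue and following \cite{Alves2012}, is a Pohozaev-type constrained minimization rather than a mountain pass. One minimizes $|\nabla u|_2^2$ over $\{u\neq0:\int_{\mathbb R^2}(F(|u|^2)-\mu|u|^2)\,dx=0\}$.
\begin{itemize}
\item In $\mathbb R^2$ the dilations $u(\cdot/\lambda)$ leave both $|\nabla u|_2$ and the constraint unchanged. So a minimizing sequence can be taken symmetric-decreasing with $|u_n|_2=1$.
\item The same level estimate gives $A_\mu^c\le2c_\mu^c<1$, hence a uniform Trudinger--Moser bound.
\item The constraint passes to the limit by the compactness lemma of \cite{Berestycki}.
\item A further dilation absorbs the Lagrange multiplier, whose sign comes from \eqref{f3}.
\end{itemize}
That route needs no Palais--Smale sequences, no symmetric criticality and no separate positivity step. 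It also directly yields $c_\mu=c_\mu^c$ for complex-valued functions, which the paper uses later. Your route is the more standard critical-point argument: it uses \eqref{f3} to bound the sequence and convert the level bound into the gradient threshold, in the same way as the later compactness argument for $J_\varepsilon$ in Lemma~\ref{3.1}. With the two repairs above, it is a valid alternative.
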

	
	\begin{remark}\label{reggs}
		The ground state solution $w_{\mu}$ belongs to $C^{2}(\mathbb{R}^{2},\mathbb{R})\cap L^{\infty}(\mathbb{R}^{2},\mathbb{R})$, and both $w_{\mu}$ and its gradient decay exponentially at infinity (see \cite[Proposition 2.1]{Zhang}).
		Moreover, for any $0<\mu_{1}<\mu_{2}\leq V^{*}$, 
		we have that 
		\begin{equation*}
			0<c_{\mu_1}=I_{\mu_{1}}(w_{\mu_{1}})\leq \mathop{\max}\limits_{t\geq0}I_{\mu_{1}}(tw_{\mu_{2}})<\mathop{\max}\limits_{t\geq0}I_{\mu_{2}}(tw_{\mu_{2}})=I_{\mu_{2}}(w_{\mu_{2}})=c_{\mu_{2}}.
		\end{equation*}
	\end{remark}
	
	Analogously, in the complex case we have the following result. 
	
	\begin{lemma}\label{existenceC}
		For any $\mu\in(0, V^{*}]$, there exists a real-valued, nonnegative, radially nonincreasing ground state solution of \eqref{fude}. Moreover
		\begin{equation}\label{cmu}
			c_\mu=c_\mu^c<\frac{\Theta-2}{16m\Theta}.
		\end{equation}
	\end{lemma}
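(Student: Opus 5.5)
The proof has two parts: show that the complex and real ground state levels coincide and are attained by a real nonnegative radial function, and then bound the level using \eqref{f4}.

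\textbf{Step 1: $c_\mu=c^c_\mu$ and the radial ground state.} Since $H^1(\mathbb{R}^2,\mathbb{R})\subset H^1(\mathbb{R}^2,\mathbb{C})$ and $I^c_\mu=I_\mu$ on real functions, the minimax characterization \eqref{jitaic} gives $c^c_\mu\le c_\mu$. For the reverse inequality, take $u\in H^1(\mathbb{R}^2,\mathbb{C})\setminus\{0\}$. By \cite[Theorem 6.17]{Lieb}, $|u|\in H^1(\mathbb{R}^2,\mathbb{R})$ with $|\nabla|u||_2\le|\nabla u|_2$. Hence $I_\mu(t|u|)\le I^c_\mu(tu)$ for all $t\ge0$, so
\[
\max_{t\ge0}I_\mu(t|u|)\le\max_{t\ge0}I^c_\mu(tu).
\]
Taking the infimum over $u$ gives $c_\mu\le c^c_\mu$.

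Now let $w_\mu$ be the positive radial ground state of \eqref{shide} given by Lemma \ref{existence}. It lies in $\mathcal{N}^c_\mu$ and satisfies $I^c_\mu(w_\mu)=c_\mu=c^c_\mu$. A Nehari minimizer is a critical point of $I^c_\mu$; this is the usual Lagrange multiplier argument, using the strict monotonicity of $f$ from \eqref{f4}. So $w_\mu$ is a ground state of \eqref{fude}. To obtain a radially nonincreasing one, replace $w_\mu$ by its Schwarz symmetrization $w_\mu^*$ and use three facts:
\begin{itemize}
\item Pólya--Szegő: $|\nabla w_\mu^*|_2\le|\nabla w_\mu|_2$;
\item equimeasurability preserves $\int\mu |w|^2\,dx$ and $\int F(|w|^2)\,dx$;
\item rescaling onto the Nehari manifold does not raise the maximum along the ray.
\end{itemize}
Together these give a nonnegative, radially nonincreasing Nehari minimizer, hence a ground state.

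\textbf{Step 2: the bound on $c_\mu$.} Set $\phi$ to be a real minimizer for $S_p$, which exists by standard concentration-compactness in $\mathbb{R}^2$ for $p>2$; alternatively, work with a near-minimizer and pass to the limit. From \eqref{f4} and \eqref{f1}, integrating twice gives $f(t)\ge C_p t^{(p-2)/2}$ and $F(t)\ge \frac{2C_p}{p}t^{p/2}$. Using $\mu\le V^*$ and $V^*\ge\mu$, the latter written as $\max(1,V^*)$, we get
\[
c_\mu\le \max_{t\ge0}\Big(\frac{t^2}{2}\int(|\nabla\phi|^2+\mu\phi^2)\,dx-\frac{C_p}{p}t^p|\phi|_p^p\Big).
\]
Normalize $|\phi|_p=1$ and scale space so that the quadratic part is bounded by $V^* S_p$. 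Concretely, use $\phi(\lambda\cdot)$ and the two-dimensional scaling: $|\nabla\phi(\lambda\cdot)|_2$ is invariant, while the $L^2$ and $L^p$ norms scale by $\lambda^{-2}$. With the appropriate choice, the quadratic part is at most $V^*S_p$ up to the convention. The one-variable maximum then equals
\[
\frac{p-2}{2p}\,(V^*S_p)^{\frac{p}{p-2}}\,C_p^{-\frac{2}{p-2}}.
\]
Inserting the lower bound on $C_p$ from \eqref{f4} gives
\[
c_\mu<\frac{p-2}{2p}\cdot\frac{p}{p-2}\cdot\frac{\Theta-2}{8m\Theta}=\frac{\Theta-2}{16m\Theta}.
\]

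\textbf{Main obstacle.} The delicate point is the bookkeeping in Step 2. One must match the constant $V^*S_p^{p/2}$ in \eqref{f4}, where $S_p$ is defined with coefficient $1$ in front of $|u|^2$, with the quadratic form carrying $\mu$ in front of $|u|^2$. I would handle this by using the scaling $u\mapsto u(\lambda\cdot)$ to trade the $L^2$ coefficient, and, if necessary, by the crude bound $\int(|\nabla\phi|^2+\mu\phi^2)\,dx\le\max\{1,V^*\}\,S_p$. Checking that the exponent $\frac{p-2}{2}$ on $V^*$ in \eqref{f4} is exactly what this computation produces is where the care is needed.
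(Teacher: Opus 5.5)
Your Step 1 is correct and takes a different, shorter route than the paper. You get $c_\mu=c^c_\mu$ directly from $|\nabla|u||_2\le|\nabla u|_2$ along rays. You then take the real ground state $w_\mu$ of Lemma \ref{existence}, symmetrize it and project it back onto $\mathcal{N}_\mu$. For that projected function (not for $w_\mu$, which already solves \eqref{shide}) you need $\mathcal{N}_\mu$ to be a natural constraint, which uses $f'>0$ from \eqref{f4} and the growth \eqref{f5}. The paper instead builds the ground state from scratch. It minimizes $|\nabla u|_2^2$ on the Pohozaev-type set $\mathcal{P}^c_\mu$, gets monotonicity by rearranging the minimizing sequence, and recovers a solution via a Lagrange multiplier and a dilation; $c_\mu=c^c_\mu$ only appears at the end. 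Your route leans on Lemma \ref{existence}, while the paper's is self-contained.

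Step 2, however, has a genuine gap, exactly where you flagged it. The quadratic part $V^*S_p$ is the wrong quantity. With $a=V^*S_p$ the one-variable maximum is $\frac{p-2}{2p}a^{\frac{p}{p-2}}C_p^{-\frac{2}{p-2}}$. On the other hand, \eqref{f4} gives $C_p^{\frac{2}{p-2}}>\frac{8m\Theta}{\Theta-2}\cdot\frac{p-2}{p}\,(V^*)^{\frac{2}{p-2}}S_p^{\frac{p}{p-2}}$; note that $V^*$ enters \eqref{f4} to the first power, not with exponent $\frac{p-2}{2}$. Combining these yields only $c_\mu<\frac{\Theta-2}{16m\Theta}\,V^*$. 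Your final display silently drops the factor $(V^*)^{\frac{p}{p-2}-\frac{2}{p-2}}=V^*$. The fallback $\max\{1,V^*\}S_p$ fails too: it leaves a factor $V^*$ if $V^*\ge1$, and $(V^*)^{-\frac{2}{p-2}}>1$ if $V^*<1$.

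What you need is the exact homogeneity in $\mu$. Under $u\mapsto u(\lambda\cdot)$ in $\mathbb{R}^2$, the integrals $\int|u|^2$ and $\int|u|^p$ both pick up $\lambda^{-2}$, so $|u|_p^2$ scales like $\lambda^{-4/p}$. Take $\psi:=\mu^{1/p}\phi(\sqrt{\mu}\,\cdot)$, where $\phi$ is a minimizer for $S_p$ with $|\phi|_p=1$. Then $|\psi|_p=1$ and $|\nabla\psi|_2^2+\mu|\psi|_2^2=\mu^{2/p}S_p$. The maximum becomes $\frac{p-2}{2p}\bigl(\frac{\mu}{C_p}\bigr)^{\frac{2}{p-2}}S_p^{\frac{p}{p-2}}$, and \eqref{f4} gives $c_\mu<\frac{\Theta-2}{16m\Theta}\bigl(\frac{\mu}{V^*}\bigr)^{\frac{2}{p-2}}\le\frac{\Theta-2}{16m\Theta}$. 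This is exactly the computation in Lemma \ref{lem5.2}, which uses $S^\mu_p=\mu^{2/p}S_p$.
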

	For completeness we give the proof of the previous Lemma in Appendix \ref{proofexistenceC}, which is based on the proof of \cite[Theorem 1.4, Corollary 1.5]{Alves2012}.
	
	Hence, combining Lemmas \ref{existence} and \ref{existenceC} we have the following consequence. 
	\begin{corollary}\label{existencecor}
		For any $\mu\in(0, V^{*}]$, there exists a positive and radially symmetric ground state solution $w_{\mu}\in H^1(\mathbb{R}^2,\mathbb{R})\cap C^{2}(\mathbb{R}^{2},\mathbb{R})\cap L^{\infty}(\mathbb{R}^{2},\mathbb{R})$ for problem \eqref{fude} which decays exponentially at infinity together with its gradient. Moreover, its energy level $c_\mu$ satisfies \eqref{cmu} and is increasing with respect to $\mu$.
	\end{corollary}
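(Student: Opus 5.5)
Corollary \ref{existencecor} follows by combining Lemma \ref{existence}, Remark \ref{reggs} and Lemma \ref{existenceC}. Its only real content is that the real ground state $w_\mu$ of \eqref{shide} is also a ground state of the complex problem \eqref{fude}. The plan is to first identify the two ground state levels, and then transfer the regularity, decay and monotonicity properties already known in the real setting.

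\emph{Step 1.} Fix $\mu\in(0,V^*]$ and let $w_\mu$ be the positive radially symmetric ground state from Lemma \ref{existence}, so $I_\mu(w_\mu)=c_\mu$. Any real $u\in H^1(\mathbb{R}^2,\mathbb{R})$ is a critical point of $I_\mu^c$ exactly when it solves \eqref{shide}, because $f(|u|^2)u=f(u^2)u$ for real $u$. Hence $w_\mu$ solves \eqref{fude}, lies in $\mathcal N_\mu^c$, and has $I^c_\mu(w_\mu)=I_\mu(w_\mu)=c_\mu$.

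\emph{Step 2.} By Lemma \ref{existenceC}, $c_\mu=c_\mu^c$. Every nontrivial solution $u$ of \eqref{fude} lies in $\mathcal N^c_\mu$, so by \eqref{jitaic} it satisfies $I^c_\mu(u)\ge c^c_\mu=c_\mu=I^c_\mu(w_\mu)$. Therefore $w_\mu$ minimizes $I^c_\mu$ among nontrivial solutions, i.e.\ it is a ground state of \eqref{fude}. The estimate \eqref{cmu} is inherited directly from Lemma \ref{existenceC}.

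\emph{Step 3.} Remark \ref{reggs} gives $w_\mu\in C^2(\mathbb{R}^2,\mathbb{R})\cap L^\infty(\mathbb{R}^2,\mathbb{R})$ and exponential decay of $w_\mu$ and $\nabla w_\mu$. It also gives the strict inequality $c_{\mu_1}<c_{\mu_2}$ for $0<\mu_1<\mu_2\le V^*$, which is the claimed monotonicity in $\mu$.

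The only point needing care is Step 2, namely that the complex level does not drop below the real one. This is exactly the identity $c_\mu=c_\mu^c$ supplied by Lemma \ref{existenceC}, proved in Appendix \ref{proofexistenceC}, so no further obstacle remains.
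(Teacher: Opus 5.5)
Your proposal is correct and follows the paper's route. The paper obtains the corollary by combining Lemma \ref{existence} and Remark \ref{reggs} (the positive radial real ground state, its regularity, decay and monotonicity of $c_\mu$) with Lemma \ref{existenceC} (the identity $c_\mu=c_\mu^c$ and the bound \eqref{cmu}). Your Step 1 check that real solutions of \eqref{shide} are critical points of $I_\mu^c$, and your Step 2 use of \eqref{jitaic}, simply make explicit the details the paper leaves implicit.
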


	\section{The modified problem}\label{Secthree}
	From now on, let us fix a nonempty set $\Gamma\subset \{1,2,...,m\}$. Without loss of generality,  we may assume that $\Gamma=\{1,2,...,s\}$ for some $1\leq s\leq m$.
	Following the approach in \cite{Pino}, we introduce an appropriate penalization by modifying  the nonlinearity $f$ outside the domain $\Lambda_\Gamma:=\bigcup_{j\in \Gamma}\Lambda_{j}$ so that the concentration outside the domain $\Lambda_{\Gamma}$ is avoided and an adequate balance in the concentration is achieved. We then show that the modified functional satisfies the Palais-Smale (PS) condition. 
	
	First observe that, according to Corollary \ref{existencecor}, there exists $k>0$ sufficiently large such that, for $\mu\in (0,V^{*}]$,
	\begin{equation}\label{p3.1}
		0<c_{\mu}\leq c_{V^{*}}<\frac{1}{8m}\left(\frac{1}{2}-\frac{1}{\Theta}-\frac{1}{2k}\right)<\frac{\Theta-2}{16m\Theta}.
	\end{equation}
	Then, by the assumptions on $f$, there exists a unique $a>0$ such that $k f(a)=V_{0}$, where $V_0$ is given in \eqref{V1}.
	Hence we consider the function
	\begin{align*}
		\widehat{f}(t):=
		\begin{cases}
			f(t), &  t\leq a,\\
			V_0/k, &  t>a,
		\end{cases}
		\qquad
		\widehat{F}(t):=\int_0^t\widehat{f}(s)ds,
	\end{align*}
	and introduce the penalized nonlinearity
	\[g(x, t):=\chi_{\Lambda_\Gamma}(x)f(t)+(1-\chi_{\Lambda_\Gamma}(x))\widehat{f}(t),\]
	where $\chi_{\Lambda_\Gamma}$ denotes the characteristic function on $\Lambda_\Gamma$.
	
	In view of \eqref{f1}-\eqref{f5}, the function $g(x,t)$ satisfies the following properties:
	
	\begin{enumerate}[label=($g_\arabic*$),ref=$g_\arabic*$]
		
		\item \label{g1} \( g(x, t) = 0 \) if \( t \leq 0 \);
		
		\item \label{g2} \( \mathop{\lim}\limits_{t \to 0^+}  g(x, t) = 0 \) uniformly in \( x \in \mathbb{R}^2 \);
		
		\item \label{g3} \( g(x, t) \leq f(t) \) for all \( t \geq 0 \) and uniformly in \( x \in \mathbb{R}^2 \);
		
		\item \label{g4} \( 0 < \Theta G(x, t) \leq 2 g(x, t) t \) for all \( t > 0 \), \( x \in \Lambda_\Gamma \), where \( \displaystyle G(x, t) := \int_0^t g(x, s) \, ds \);
		
		\item \label{g5} \( 0 < G(x, t) \leq g(x, t) t \leq \displaystyle \frac{V_0}{k} t \) for all \( t > 0 \), \( x \in \Lambda_\Gamma^c \);

		\item \label{g6} for each \( x \in \Lambda_\Gamma \), the function \( t \mapsto g(x, t) \) increases strictly in \( t \in (0, +\infty) \) and for each \( x \in \Lambda_\Gamma^c \), the function \( t \mapsto g(x, t) \) increases strictly in \( t \in (0, a) \);
		
		\item \label{g7} 
		for every fixed \(x\in\mathbb R^2\), the map \(t\mapsto g(x,t)\) is
		differentiable for a.e. \(t>0\), and
		\[
		0\leq\partial_tg(x,t)\leq e^{4\pi t}-1
		\]
		at every differentiability point.
	\end{enumerate}

	Then we consider the \emph{modified problem}
	\begin{equation}\label{p3.2}
		\left(\frac{1}{i}\nabla-A_{\varepsilon}(x)\right)^{2}u+V_{\varepsilon}(x)u=g(\varepsilon x, |u|^{2})u\,\ \text{in}\,\ \mathbb{R}^{2}.
	\end{equation}
	
	\begin{remark}\label{remark}
		Note that, if $u$ is a solution of problem \eqref{p3.2} with
		\[|u(x)|^{2}\leq a \text{ for a.e.} \quad x\in \Lambda_{\Gamma, \varepsilon}^{c},\,\ \Lambda_{\Gamma, \varepsilon}:=\{x\in\mathbb{R}^{2}:\varepsilon x\in \Lambda_\Gamma\},\]
		then $u$ is a solution of problem \eqref{origi}.
	\end{remark}

	The functional associated to problem \eqref{p3.2} is
	\[J_{\varepsilon}(u):=\frac{1}{2}\int_{\mathbb{R}^{2}}(|\nabla_{A_{\varepsilon}} u|^{2}+V_{\varepsilon}(x)|u|^{2})dx-\frac{1}{2}\int_{\mathbb{R}^{2}}G(\varepsilon x, |u|^{2})dx,\,\ u\in H_{\varepsilon}.\]
	By the assumptions on $f$ and using standard tools we can verify that
	$J_{\varepsilon} \in C^{1}(H_{\varepsilon}, \mathbb{R})$, and its nonzero critical points are nontrivial weak solutions of the modified problem \eqref{p3.2}.
	Moreover, the functional $J_{\varepsilon}$  satisfies the Mountain Pass Geometry (see \cite[Lemma 3.1]{d'Avenia2022}) and there exists  a Palais-Smale sequence of $J_{\varepsilon}$ at level $c_{\varepsilon}$ ((PS)$_{c_{\varepsilon}}$), where \[c_{\varepsilon}:=\mathop{\inf}\limits_{\gamma\in \Gamma_{\varepsilon}}\mathop{\sup}\limits_{t\in [0,1]}J_{\varepsilon}(\gamma(t))
	\quad
	\text{and}
	\quad
	\Gamma_{\varepsilon}=\{\gamma\in C([0,1], H_{\varepsilon}): \gamma(0)=0\,\ \text{and}\,\ J_{\varepsilon}(\gamma(1))<0\}.
	\]
	
	If we consider $\mathcal{N}_{\varepsilon}:=\{u\in H_{\varepsilon}\setminus\{0\}:J^{\prime}_{\varepsilon}(u)[u]=0\}$, for such a value we have the following characterization.
	\begin{lemma}\label{lem_same_levels}
		For every $\varepsilon>0$,
		\begin{equation*}
			c_{\varepsilon}=
			\mathop{\inf}\limits_{u\in H_{\varepsilon}\setminus\{0\}}
			\mathop{\sup}\limits_{t\geq0}J_{\varepsilon}(tu)
			=\mathop{\inf}\limits_{\substack{u\in H_\varepsilon\setminus\{0\}\\
					|\operatorname{supp}u\cap\Lambda_{\Gamma,\varepsilon}|\neq0}}
			\mathop{\max}\limits_{t\geq0}J_\varepsilon(tu)
			=\mathop{\inf}\limits_{u\in\mathcal{N}_{\varepsilon}}J_{\varepsilon}(u).
		\end{equation*}
	\end{lemma}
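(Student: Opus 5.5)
We will prove the chain of equalities by the classical argument of \cite[Theorem 4.2]{willem1997}, adapted to the penalized nonlinearity. Denote the four quantities by $c_\varepsilon$, $c^1_\varepsilon$, $c^2_\varepsilon$, $c^3_\varepsilon$ (from left to right).

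The first step is to understand the fibering map $h_u(t):=J_\varepsilon(tu)$ for $u\neq0$. We have
\[
h_u'(t)=t\left(\|u\|_\varepsilon^2-\int_{\mathbb{R}^2}g(\varepsilon x,t^2|u|^2)|u|^2dx\right).
\]
Set $\Omega_u:=\{x:\,u(x)\neq0\}$.

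If $|\Omega_u\cap\Lambda_{\Gamma,\varepsilon}|\neq0$, then by \eqref{g6} the map $t\mapsto g(\varepsilon x,t^2|u|^2)$ is nondecreasing everywhere. It is strictly increasing on the part of $\Omega_u$ lying in $\Lambda_{\Gamma,\varepsilon}$, and on $\Lambda_{\Gamma,\varepsilon}^c$ it stays bounded by $V_0/k$ by \eqref{g5}. Using \eqref{f4} on the set $\Omega_u\cap\Lambda_{\Gamma,\varepsilon}$ (positive measure), the integral tends to $+\infty$ as $t\to+\infty$. By \eqref{g2} and \eqref{p2.4} with Corollary \ref{2.1c}, it is $o(1)$ as $t\to0^+$. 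Hence $h_u$ has a unique critical point $t_u>0$, which is its global maximum, and $tu\in\mathcal{N}_\varepsilon$ if and only if $t=t_u$. Moreover, $h_u(t)\to-\infty$ as $t\to+\infty$, since $G(\varepsilon x,\cdot)$ grows superquadratically on $\Lambda_{\Gamma,\varepsilon}$ by \eqref{f4} and is only linear on the complement.

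If instead $|\Omega_u\cap\Lambda_{\Gamma,\varepsilon}|=0$, then \eqref{g5} and $k>1$ give
\[
\int_{\mathbb{R}^2} g(\varepsilon x,t^2|u|^2)|u|^2\,dx\le \frac{V_0}{k}|u|_2^2<\|u\|_\varepsilon^2 .
\]
Thus $h_u'>0$ for all $t>0$, so $\sup_{t\ge0}h_u=+\infty$ by the same bound on $G$, and $u\notin\mathcal{N}_\varepsilon$ up to scaling. This shows $c^1_\varepsilon=c^2_\varepsilon$: the functions excluded in $c^2_\varepsilon$ contribute $+\infty$ to the infimum in $c^1_\varepsilon$, and for the remaining ones the supremum is attained. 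It also gives $c^2_\varepsilon=c^3_\varepsilon$, because each $u$ in $c^2_\varepsilon$ satisfies $\max_t h_u=J_\varepsilon(t_uu)$ with $t_uu\in\mathcal{N}_\varepsilon$, and conversely each $v\in\mathcal{N}_\varepsilon$ has positive-measure support in $\Lambda_{\Gamma,\varepsilon}$ with $t_v=1$.

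Next we compare with $c_\varepsilon$. For $c_\varepsilon\le c^2_\varepsilon$, take $u$ admissible for $c^2_\varepsilon$ and choose $T$ large with $J_\varepsilon(Tu)<0$. The path $\gamma(t)=tTu$ belongs to $\Gamma_\varepsilon$ and has $\max J_\varepsilon\circ\gamma=\max_{t}h_u(t)$.

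For the reverse inequality $c_\varepsilon\ge c^3_\varepsilon$, we show that every $\gamma\in\Gamma_\varepsilon$ crosses $\mathcal{N}_\varepsilon$. Consider $\psi(v):=J'_\varepsilon(v)[v]$. Near $0$ we have $\psi(v)\ge\frac12\|v\|_\varepsilon^2>0$ for $0<\|v\|_\varepsilon\le\rho$, by \eqref{g2}, \eqref{p2.4} and Corollary \ref{2.1c} (via the diamagnetic inequality). At the endpoint, $J_\varepsilon(\gamma(1))<0$ forces $\psi(\gamma(1))<0$. Indeed, if $\psi(v)\ge0$ then
\[
J_\varepsilon(v)\ge J_\varepsilon(v)-\tfrac1\Theta\psi(v)\ge\Big(\tfrac12-\tfrac1\Theta-\tfrac1{2k}\Big)\|v\|^2_\varepsilon\ge0,
\]
using \eqref{g4} inside $\Lambda_{\Gamma,\varepsilon}$ and \eqref{g5} outside. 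By continuity there is $t_0\in(0,1)$ with $\gamma(t_0)\neq0$ and $\psi(\gamma(t_0))=0$, i.e.\ $\gamma(t_0)\in\mathcal{N}_\varepsilon$. Therefore $\max_t J_\varepsilon(\gamma(t))\ge c^3_\varepsilon$.

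The main obstacle is the penalized region. There $g$ is only nondecreasing (constant for $t>a$), so uniqueness of $t_u$ relies on strict monotonicity from the part of the support inside $\Lambda_{\Gamma,\varepsilon}$. The $\Lambda^c$ contribution must be controlled by the factor $V_0/k<V_0\le V_\varepsilon$. Verifying the crossing argument's endpoint inequality also requires care there: the precise constant $\frac12-\frac1\Theta-\frac1{2k}>0$ from \eqref{p3.1} is what is needed.
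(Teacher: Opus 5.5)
Your proposal is correct and follows essentially the same route as the paper: the two-case analysis of $t\mapsto J_\varepsilon(tu)$ according to whether $u$ vanishes a.e.\ on $\Lambda_{\Gamma,\varepsilon}$, followed by the straight-path comparison and the Nehari-crossing argument for $t\mapsto J_\varepsilon'(\gamma(t))[\gamma(t)]$. The only real difference is at the endpoint, where the paper uses the simpler pointwise bound $G(x,t)\le g(x,t)t$ (from \eqref{g4} with $\Theta>2$, and \eqref{g5}) to get $J_\varepsilon'(\gamma(1))[\gamma(1)]\le 2J_\varepsilon(\gamma(1))<0$, so the positivity of $\frac12-\frac1\Theta-\frac1{2k}$ is not actually needed there; in either version, you should take the crossing time after the last zero of $\gamma$ to guarantee $\gamma(t_0)\neq0$.
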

	\begin{proof}
		For every $u\in H_{\varepsilon}\setminus\{0\}$ with
		$|\operatorname{supp}u\cap\Lambda_{\Gamma,\varepsilon}|\neq0$, by \eqref{f4} and the definition of $g(x,t)$, we have
		\[
		J_{\varepsilon}(tu)
		\leq \frac{t^2}{2}\|u\|_\varepsilon^2
		-\frac{1}{2}\int_{\Lambda_{\Gamma, \varepsilon}}F(|tu|^{2})dx
		\leq
		\frac{t^2}{2}\|u\|_\varepsilon^2-\frac{C_p}{p}t^p\int_{\Lambda_{\Gamma, \varepsilon}}|u|^pdx,
		\]
		which implies that
		\[J_{\varepsilon}(tu)\rightarrow -\infty\,\ \text{as}\,\ t\rightarrow+\infty.\]
		Moreover, for every $u\in H_{\varepsilon}\setminus\{0\}$ with
		$|\operatorname{supp}u\cap\Lambda_{\Gamma,\varepsilon}|\neq0$ and $t>0$, we have that $J^{\prime}_{\varepsilon}(tu)[tu]=0$ if and only if
		\begin{equation}\label{Neder}
			\|u\|_{\varepsilon}^2
			-
			\int_{\Lambda_{\Gamma, \varepsilon}}g(\varepsilon x, |tu|^{2})|u|^2dx
			-\int_{\Lambda^c_{\Gamma, \varepsilon}}g(\varepsilon x, |tu|^{2})|u|^2dx=0.
		\end{equation}
		By \eqref{g6} and since
		$|\operatorname{supp}u\cap\Lambda_{\Gamma,\varepsilon}|\neq0$, the left hand side of \eqref{Neder} is a strictly decreasing function in $t$. Moreover it is positive for $t>0$ small enough and, since, by (\ref{f4}),
		\[
		\int_{\Lambda_{\Gamma, \varepsilon}}g(\varepsilon x, |tu|^{2})|u|^2dx
		+\int_{\Lambda^c_{\Gamma, \varepsilon}}g(\varepsilon x, |tu|^{2})|u|^2dx
		\geq
		\int_{\Lambda_{\Gamma, \varepsilon}}f(|tu|^{2})|u|^2dx
		\geq C_pt^{p-2} \int_{\Lambda_{\Gamma, \varepsilon}}|u|^p dx,
		\]
		it is negative for $t>0$ large. Then there exists a unique $t_{\varepsilon}(u)>0$ such that $\ t_{\varepsilon}(u)u\in\mathcal{N}_{\varepsilon}$ and the sign of the left hand side of \eqref{Neder}, 
		implies that
		\begin{equation*}
			\mathop{\max}\limits_{t\geq0}J_{\varepsilon}(tu)=J_{\varepsilon}(t_{\varepsilon}(u)u).
		\end{equation*}
		
		For each $u\in H_{\varepsilon}\setminus\{0\}$ with
		$|\operatorname{supp}u\cap\Lambda_{\Gamma,\varepsilon}|=0$, since
		\[g(\varepsilon x, |tu|^2)=\widehat{f}(|tu|^2)\leq \frac{V_0}{k}\leq \frac{V(\varepsilon x)}{k} \text{ for } t> 0 \text{ and }x\in\Lambda_{\Gamma, \varepsilon}^c,\]
		using \eqref{g5}
		we have 
		\[
		J_\varepsilon(tu)
		\geq\frac{t^2}{2}\|u\|_\varepsilon^2
		-\frac{V_0t^2}{2k}\int_{\Lambda_{\Gamma, \varepsilon}^c}|u|^{2}dx
		\geq
		\frac{t^2}{2}|\nabla_{A_\varepsilon} u|_2^2
		+\frac{t^2}{2}\left(1-\frac{1}{k}\right)\int_{\Lambda_{\Gamma, \varepsilon}^c}V(\varepsilon x)|u|^{2}dx
		\to +\infty
		\text{ as }t\to+\infty,
		\]
		so that
		\[
		\sup_{t\geq0}J_\varepsilon(tu)=+\infty.
		\]
		Thus these {\em directions} do not affect the infimum over
		$H_\varepsilon\setminus\{0\}$ and then
		\begin{equation}\label{geqce}
			\inf_{u\in H_\varepsilon\setminus\{0\}}
			\sup_{t\ge0}J_\varepsilon(tu)
			=\inf_{\substack{u\in H_\varepsilon\setminus\{0\}\\
					|\operatorname{supp}u\cap\Lambda_{\Gamma,\varepsilon}|\neq0}}
			\max_{t\ge0} J_\varepsilon(tu)
			\geq \inf_{u\in\mathcal{N}_{\varepsilon}}J_{\varepsilon}(u).
		\end{equation}
		Moreover, if $u\in \mathcal{N}_{\varepsilon}$, then $t=1$ is a critical point of the map
		$t>0\mapsto J_\varepsilon(tu)$. Thus, since if
		$|\operatorname{supp}u\cap\Lambda_{\Gamma,\varepsilon}|=0$, for $t>0$ we have
		\begin{align*}
			J^{\prime}_{\varepsilon}(tu)[u]
			\geq t|\nabla_{A_\varepsilon} u|_2^2
			+t\left(1-\frac{1}{k}\right)\int_{\Lambda_{\Gamma, \varepsilon}^c}V(\varepsilon x)|u|^{2}dx
			>0,
		\end{align*}
		it follows that if $u\in \mathcal{N}_{\varepsilon}$, then
		$|\operatorname{supp}u\cap\Lambda_{\Gamma,\varepsilon}|\neq0$, and
		\(
		J_{\varepsilon}(u)= \max_{t\ge0} J_\varepsilon(tu).
		\)
		Then
		\[
		\inf_{\mathcal{N}_{\varepsilon}}J_{\varepsilon}(u)
		\geq
		\inf_{\substack{u\in H_\varepsilon\setminus\{0\}\\ |\operatorname{supp}u\cap\Lambda_{\Gamma,\varepsilon}|\neq0}}
		\max_{t\ge0} J_\varepsilon(tu)
		\]
		which, combined with \eqref{geqce}, implies
		\[
		\inf_{u\in H_\varepsilon\setminus\{0\}}
		\sup_{t\ge0}J_\varepsilon(tu)
		=\inf_{\substack{u\in H_\varepsilon\setminus\{0\}\\
				|\operatorname{supp}u\cap\Lambda_{\Gamma,\varepsilon}|\neq0}}
		\max_{t\ge0}J_\varepsilon(tu)
		=
		\inf_{u\in\mathcal{N}_{\varepsilon}}J_{\varepsilon}(u).
		\]
		
		Next, since $J_\varepsilon(tu)<0$ for any $u\in H_{\varepsilon}\setminus\{0\}$ with
		$|\operatorname{supp}u\cap\Lambda_{\Gamma,\varepsilon}|\neq0$ and for $t$ sufficiently large, it follows that
		\[
		\inf_{\substack{u\in H_\varepsilon\setminus\{0\}\\ |\operatorname{supp}u\cap\Lambda_{\Gamma,\varepsilon}|\neq0}}
		\max_{t\ge0} J_\varepsilon(tu)
		\geq c_\varepsilon.
		\]
		Moreover, for any $\gamma\in\Gamma_\varepsilon$, we define $\beta(t):=J_\varepsilon'(\gamma(t))[\gamma(t)]$ for $t\in[0,1]$. 
		We first show that $\beta(t)>0$ whenever $t\in(0,1)$ and $0<\|\gamma(t)\|_\varepsilon$ is sufficiently small, which is possible since $\gamma(0)=0$, $\gamma(1)\neq 0$, and $\gamma$ is continuous.
		
		By \eqref{g3} and \eqref{p2.4}, fixed \(q>2\), for any \(\xi>0\) and \(\alpha>4\pi\), there exists \(C>0\) such that  
		\begin{equation*}
			g(\varepsilon x,s^2)s^2 \le \xi s^2 + C |s|^q (e^{\alpha s^2}-1) \quad \text{for all } s\in\mathbb{R}\text{ and a.e. }x\in\mathbb{R}^2, 
		\end{equation*}
		so that, using the Sobolev and H\"older inequalities and
		\begin{equation}\label{e}
			(e^\tau - 1)^\sigma \leq e^{\tau \sigma} - 1, \text{ for } \tau \geq 0 \text{ and } \sigma > 1,
		\end{equation}
		for every $t\in[0,1]$,
		\begin{align*}
			\int_{\mathbb{R}^2} g(\varepsilon x,|\gamma(t)|^2)|\gamma(t)|^2 dx
			&\leq
			\frac{\xi}{V_0} \|\gamma(t)\|_\varepsilon^2
			+ C 
			|\gamma(t)|_{2q}^q \Bigl( \int_{\mathbb{R}^2} (e^{\alpha |\gamma(t)|^2}-1)^2 dx \Bigr)^{1/2}\\
			&\le
			\frac{\xi}{V_0} \|\gamma(t)\|_\varepsilon^2
			+C \|\gamma(t)\|_\varepsilon^q \Bigl( \int_{\mathbb{R}^2} (e^{2\alpha |\gamma(t)|^2}-1) dx \Bigr)^{1/2}. 
		\end{align*}
		Now observe that, by the diamagnetic inequality \eqref{p2.1}, if $\gamma(t)\neq 0$,
		\[
		\frac{|\gamma(t)|}{\|\gamma(t)\|_\varepsilon}\in H^1(\mathbb{R}^2,\mathbb{R}),\quad
		\Bigl|\frac{|\gamma(t)|}{\|\gamma(t)\|_\varepsilon}\Bigr|_2^2\le\frac1{V_0},\quad
		\Bigl|\nabla\frac{|\gamma(t)|}{\|\gamma(t)\|_\varepsilon}\Bigr|_2^2\le1.
		\]  
		Hence, being \(\|\gamma(t)\|_\varepsilon\) small\footnote{Here we need that $2\alpha\|\gamma(t)\|_\varepsilon^2<4\pi$.}, by Lemma \ref{2.1},  
		\begin{equation*}
			\int_{\mathbb{R}^2} (e^{2\alpha |\gamma(t)|^2}-1)\,dx 
			= \int_{\mathbb{R}^2} \Bigl( e^{2\alpha \|\gamma(t)\|_\varepsilon^2 \bigl(\frac{|\gamma(t)|}{\|\gamma(t)\|_\varepsilon}\bigr)^2} -1 \Bigr) dx
			\le  C.
		\end{equation*}
		Then, taking $0<\xi<V_0$, whenever $0<\|\gamma(t)\|_\varepsilon$ is sufficiently small,
		\[
		\beta(t) = \|\gamma(t)\|_\varepsilon^2 - \int_{\mathbb{R}^2} g(\varepsilon x,|\gamma(t)|^2)|\gamma(t)|^2\,dx \ge \Bigl(1-\frac{\xi}{V_0}\Bigr)\|\gamma(t)\|_\varepsilon^2 - C\|\gamma(t)\|_\varepsilon^q >0.
		\] 
		Therefore there exists $t_0\in(0,1)$ such that $0<\|\gamma(t_0)\|_\varepsilon$ is sufficiently small and $\beta(t_0)>0$.
		
		On the other hand, 
		since for every $u\in H_\varepsilon$, using (\ref{g4}) and (\ref{g5}),
		\[
		\int_{\mathbb{R}^2} G(\varepsilon x,|u|^2)dx \leq \int_{\mathbb{R}^2} g(\varepsilon x,|u|^2)|u|^2dx,
		\]
		and $J_\varepsilon(\gamma(1))<0$, we get
		\[
		\beta(1)\leq 2 J_\varepsilon(\gamma(1))<0.
		\]
		Thus, the continuity of $\beta$ implies that there exists $t\in(t_0,1)$ such that $\beta(t)=0$, i.e. $\gamma(t)\in\mathcal{N}_\varepsilon$.
		Hence,
		\[
		\sup_{t\in [0,1]}J_{\varepsilon}(\gamma(t))
		\geq \inf_{\mathcal{N}_{\varepsilon}}J_{\varepsilon}(u)
		\]
		and so, taking the infimum on $\Gamma_\varepsilon$, we deduce that
		\[
		c_{\varepsilon}\geq \inf_{u\in\mathcal{N}_{\varepsilon}}J_{\varepsilon}(u),
		\]
		which allows us to conclude the proof.
	\end{proof}

	Note that the function $f$ satisfies all the properties given for $g$, except for condition \eqref{g5}.  
	This condition is essential in ensuring that $J_{\varepsilon}$ satisfies the $(PS)$ condition, as shown in the next lemma.
	\begin{lemma}\label{3.1}
		The functional $J_{\varepsilon}$ satisfies the $(PS)_d$ condition at any positive level 
		$\displaystyle d<\left(\frac{1}{2}-\frac{1}{\Theta}-\frac{1}{2k}\right).$
	\end{lemma}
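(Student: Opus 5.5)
Let $\{u_n\}\subset H_\varepsilon$ be a $(PS)_d$ sequence, i.e. $J_\varepsilon(u_n)\to d$ and $J_\varepsilon'(u_n)\to0$ in $H_\varepsilon^*$, with $0<d<\frac12-\frac1\Theta-\frac1{2k}$. The plan has three steps: prove boundedness, upgrade it to a uniform bound small enough for Trudinger--Moser, and then get strong convergence through tail control plus local compactness.

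\emph{Boundedness.} Compute $J_\varepsilon(u_n)-\frac1\Theta J_\varepsilon'(u_n)[u_n]$ and split the nonlinear part over $\Lambda_{\Gamma,\varepsilon}$ and its complement. On $\Lambda_{\Gamma,\varepsilon}$, condition (\ref{g4}) makes $\frac1\Theta g|u|^2-\frac12G\geq0$. On the complement, (\ref{g5}) gives $G\le \frac{V_0}{k}|u|^2\le \frac{V_\varepsilon}{k}|u|^2$. Together these yield
\[
d+o_n(1)+o_n(1)\|u_n\|_\varepsilon\geq\Bigl(\frac12-\frac1\Theta-\frac1{2k}\Bigr)\|u_n\|_\varepsilon^2 .
\]
Hence $\{u_n\}$ is bounded. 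Moreover
\[
\limsup_n\|u_n\|_\varepsilon^2\le d\Big/\Bigl(\frac12-\frac1\Theta-\frac1{2k}\Bigr)<1
\]
when $\frac12-\frac1\Theta-\frac1{2k}\le 1$, which is how I read the intended threshold. This strict bound $<1$ is what controls the exponential terms: by the diamagnetic inequality and Lemma \ref{2.1} (or Corollary \ref{2.1c}), we can choose $\alpha>4\pi$ and $r>1$ close to $1$ so that $\sup_n\int(e^{r\alpha|u_n|^2}-1)dx<\infty$. Passing to a subsequence, $u_n\rightharpoonup u$ in $H_\varepsilon$, $u_n\to u$ in $L^s_{\rm loc}$ and a.e.

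\emph{Tail estimate.} This is the main obstacle, and it is where (\ref{g5}) is used. For each $\delta>0$ I want $R>0$ with $\limsup_n\int_{B_R^c}(|\nabla_{A_\varepsilon}u_n|^2+V_\varepsilon|u_n|^2)<\delta$. Take a cutoff $\eta_R$ vanishing on $B_{R/2}$, equal to $1$ on $B_R^c$, with $|\nabla\eta_R|\le C/R$, and choose $R$ large enough that $\Lambda_{\Gamma,\varepsilon}\subset B_{R/2}$ (possible since $\Lambda_\Gamma$ is bounded). Test $J_\varepsilon'(u_n)$ against $\eta_Ru_n$, which is bounded in $H_\varepsilon$. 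Using $\nabla_{A_\varepsilon}(\eta_Ru_n)=\eta_R\nabla_{A_\varepsilon}u_n-iu_n\nabla\eta_R$, we obtain
\[
\int\eta_R\bigl(|\nabla_{A_\varepsilon}u_n|^2+V_\varepsilon|u_n|^2\bigr)\le\int\eta_R\,g(\varepsilon x,|u_n|^2)|u_n|^2+\frac CR\|u_n\|_\varepsilon|u_n|_2+o_n(1).
\]
On the support of $\eta_R$ the point $\varepsilon x$ lies outside $\Lambda_\Gamma$, so $g\le V_0/k\le V_\varepsilon/k$. Absorbing this term gives $(1-\frac1k)\int\eta_R(\ldots)\le C/R+o_n(1)$, which is the required tail bound.

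\emph{Strong convergence.} Since the limit problem is weakly closed, $J_\varepsilon'(u)=0$. I then write
\[
\|u_n\|_\varepsilon^2-\|u\|_\varepsilon^2=\int g(\varepsilon x,|u_n|^2)|u_n|^2-\int g(\varepsilon x,|u|^2)|u|^2+o_n(1),
\]
using $J_\varepsilon'(u_n)[u_n]=o_n(1)$ and $J_\varepsilon'(u)[u]=0$, and show that the right-hand side tends to $0$. On $B_R^c$ both integrals are $\le C\delta$ by the tail estimate and (\ref{g5}). On $B_R$ the integrand is bounded by $\xi|u_n|^2+C|u_n|^q(e^{\alpha|u_n|^2}-1)$ by (\ref{p2.4}). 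By Hölder, with exponent $r$ on the exponential factor and the uniform bound from the first step, the family is uniformly integrable on $B_R$. Together with $u_n\to u$ in $L^s(B_R)$ and a.e., Vitali's theorem gives convergence of the integrals; alternatively one can argue via Lemma \ref{GDCT}. Hence $\|u_n\|_\varepsilon\to\|u\|_\varepsilon$, and together with weak convergence in the Hilbert space $H_\varepsilon$ this gives $u_n\to u$ strongly.
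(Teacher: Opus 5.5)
Your proposal is correct and follows essentially the same route as the paper: the estimate for $J_\varepsilon(u_n)-\frac1\Theta J_\varepsilon'(u_n)[u_n]$ via \eqref{g4}--\eqref{g5}, the Trudinger--Moser control from $\limsup_n\|u_n\|_\varepsilon^2\le d\,(\frac12-\frac1\Theta-\frac1{2k})^{-1}<1$, the cutoff tail estimate, and the splitting of $\int g(\varepsilon x,|u_n|^2)|u_n|^2dx$ over $B_R$ and $B_R^c$; note that the bound $<1$ follows directly from $d<\frac12-\frac1\Theta-\frac1{2k}$, so your extra proviso is unnecessary. The remaining differences are minor: you use Vitali's theorem where the paper invokes \cite[Theorem A.I]{Berestycki}, and you only assert $J_\varepsilon'(u)=0$, which the paper proves by testing with $\phi\in C_c^\infty(\mathbb{R}^2,\mathbb{C})$ (via Lemma \ref{GDCT}) and then using the density Lemma \ref{dense}; your uniform-integrability argument on $\operatorname{supp}\phi$ gives the same result.
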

	\begin{proof}
		Let $\{u_{n}\}$ be a $(PS)_d$ sequence for $J_\varepsilon$. By
		conditions \eqref{g4} and \eqref{g5}, we obtain
		\begin{equation}\label{bddPS}
			\begin{split}
				d+o_{n}(1)+o_{n}(1)\|u_{n}\|_{\varepsilon}\geq{}&J_{\varepsilon}(u_{n})-\frac{1}{\Theta}J_{\varepsilon}^{\prime}(u_{n})[u_{n}]\\
				=&{}\left(\frac{1}{2}-\frac{1}{\Theta}\right)\|u_{n}\|_{\varepsilon}^{2}
				+\int_{\mathbb{R}^{2}}\left(\frac{1}{\Theta}g(\varepsilon x, |u_{n}|^{2})|u_{n}|^{2}-\frac{1}{2}G(\varepsilon x, |u_{n}|^{2})\right)dx\\
				\geq&{}\left(\frac{1}{2}-\frac{1}{\Theta}\right)\|u_{n}\|_{\varepsilon}^{2}
				+\int_{\Lambda_{\Gamma, \varepsilon}^{c}}\left(\frac{1}{\Theta}g(\varepsilon x, |u_{n}|^{2})|u_{n}|^{2}-\frac{1}{2}G(\varepsilon x, |u_{n}|^{2})\right)dx\\
				\geq&{}\left(\frac{1}{2}-\frac{1}{\Theta}\right)\|u_{n}\|_{\varepsilon}^{2}
				-\frac{1}{2}\int_{\Lambda_{\Gamma, \varepsilon}^{c}}G(\varepsilon x, |u_{n}|^{2})dx\\
				\geq&{}\left(\frac{1}{2}-\frac{1}{\Theta}\right)\|u_{n}\|_{\varepsilon}^{2}
				-\frac{1}{2k}\int_{\mathbb{R}^{2}}V_{\varepsilon}(x)|u_{n}|^{2}dx\geq\left(\frac{1}{2}-\frac{1}{\Theta}-\frac{1}{2k}\right)\|u_{n}\|_{\varepsilon}^{2},
			\end{split}
		\end{equation}
		which implies that $\|u_{n}\|_{\varepsilon}$ is bounded and, for $n$ large enough,
		\[\left(\frac{1}{2}-\frac{1}{\Theta}-\frac{1}{2k}\right)\|u_{n}\|_{\varepsilon}^{2}\leq d+o_n(1)<\left(\frac{1}{2}-\frac{1}{\Theta}-\frac{1}{2k}\right)\]
		so that
		\[\ell^2:=\limsup_{n\to +\infty} |\nabla |u_n||_2^2 \leq
		\limsup_{n\to +\infty} \Bigl( |\nabla |u_n||_2^2 + V_0 |u_n|_2^2 \Bigr)\leq\limsup_{n\to +\infty}\|u_n\|^2_\varepsilon
		\leq\left(\frac{1}{2}-\frac{1}{\Theta}-\frac{1}{2k}\right)^{-1}d<1.\]
		Thus, up to a subsequence, \(u_{n} \rightharpoonup u\) in \(H_{\varepsilon}\) and 
		\(u_{n} \to u\) in \(L_{\mathrm{loc}}^{q}(\mathbb{R}^{2}, \mathbb{C})\) for all \(1\leq q<+\infty\) (so that $u_n\to u$ a.e. in $\mathbb{R}^2$) as \(n \to +\infty\).
		
		Now observe that, for every \(\phi \in C_c^\infty(\mathbb{R}^2, \mathbb{C})\),
		\[
		g(\varepsilon x, |u_{n}|^{2}) u_{n} \overline{\phi}
		\to g(\varepsilon x, |u|^{2}) u \overline{\phi}
		\quad \text{a.e. in } \mathbb{R}^2 \text{ as } n \to +\infty,
		\]
		by \eqref{g3} and \eqref{p2.2}, fixed \(q > 2\), for any \(\xi > 0\) and \(\alpha > 4\pi\), there exists a constant $C=C (q, \alpha, \xi)>0$, such that
		\begin{align*}
			|g(\varepsilon x,|u_n|^2)u_n \overline{\phi}| &\leq f(|u_n|^2) |u_n||\phi| \leq \xi |u_n||\phi| + C|u_n|^{q-1}(e^{\alpha|u_n|^2} - 1)|\phi|\\
			&\to \xi |u||\phi| + C|u|^{q-1}(e^{\alpha|u|^2} - 1)|\phi| \quad \text{a.e. in } \mathbb{R}^2 \text{ as } n \to +\infty,
		\end{align*}
		and the function whose limit is being evaluated and its limit are   in $L^1(\mathbb{R}^2,\mathbb{R})$, since, by the H\"older and Sobolev inequalities, \eqref{e}, and Lemma \ref{2.1},
		\begin{equation*}
			\int_{\mathbb{R}^2} \big(\xi |u||\phi| + C|u|^{q-1}(e^{\alpha|u|^2} - 1)|\phi|\big) dx  \leq \xi |u|_2|\phi|_2 + C|u|^{q-1}_{3(q-1)}\left(\int_{\mathbb{R}^2}(e^{3\alpha|u|^2} - 1)dx\right)^{1/3}|\phi|_3<+\infty.  
		\end{equation*}

		Moreover, let \(r^*, \tau > 1\) such that \(r^* \tau \ell^2 < 1\). Using \eqref{e}, Lemma \ref{2.1}, the H\"older and Sobolev inequalities, and taking
		\[
		\kappa \in \left(0, \frac{1}{r^* \tau} - \ell^2\right) \quad \text{and} \quad \alpha \in \left(4\pi, \frac{4\pi}{r^* \tau (\ell^2 + \kappa)}\right),
		\]
		we have that, for \(n\) large enough, such that \(|\nabla|u_n||_2^2 \leq \ell^2 + \kappa\),
		\begin{align*}
			\int_{\operatorname{supp}(\phi)} \left(|u_n|^{q-1}(e^{\alpha|u_n|^2} - 1)\right)^{r^*} dx &\leq |u_n|_{L^{r^* \tau'(q-1)}(\operatorname{supp}(\phi),\mathbb{C})}^{r^*(q-1)} \left(\int_{\mathbb{R}^2} (e^{\alpha r^* \tau |u_n|^2} - 1) dx\right)^{1/\tau}  \\
			&\leq C \|u_n\|_\varepsilon^{r^*(q-1)} \left(\int_{\mathbb{R}^2} \left(e^{\alpha r^* \tau (\ell^2 + \kappa) \left(\frac{|u_n|}{\sqrt{\ell^2 + \kappa}}\right)^2} - 1\right) dx\right)^{1/\tau} \leq C,
		\end{align*}
		where \(\tau'\) is the conjugate exponent of \(\tau\).
		Then $\{|u_n|^{q-1}(e^{\alpha|u_n|^2} - 1)\}$ is bounded in $L^{r^*}(\operatorname{supp}(\phi),\mathbb{R})$ and so, up to a subsequence, it converges weakly. Moreover, applying, for instance, the {Severini--Egorov theorem}, since$|u_n|^{q-1}(e^{\alpha|u_n|^2} - 1) \to |u|^{q-1}(e^{\alpha|u|^2} - 1)$ a.e. on  $\mathbb{R}^2$, then, for each  $\eta>0$, there exists a measurable set $S_\eta \subset \operatorname{supp}(\phi)$ such that $|\operatorname{supp}(\phi) \setminus S_\eta| < \eta$ and $|u_n|^{q-1}(e^{\alpha|u_n|^2} - 1) \to |u|^{q-1}(e^{\alpha|u|^2} - 1)$ uniformly on $S_\eta$. Thus,
		\begin{align*}
			&\left| \int_{S_\eta} [|u_n|^{q-1}(e^{\alpha|u_n|^2} - 1) - |u|^{q-1}(e^{\alpha|u|^2} - 1)]\phi dx\right|\\
			&\qquad \le
			|\phi|_\infty |\operatorname{supp}(\phi)|   \left||u_n|^{q-1}(e^{\alpha|u_n|^2} - 1) - |u|^{q-1}(e^{\alpha|u|^2} - 1)\right|_{L^\infty(S_\eta)}\to 0.
		\end{align*}
		Moreover, by the H\"older inequality, 
		\begin{align*}
			&\left| \int_{\operatorname{supp}(\phi)\setminus   S_\eta } \left[|u_n|^{q-1}(e^{\alpha|u_n|^2} - 1) - |u|^{q-1}(e^{\alpha|u|^2} - 1)\right]\phi dx \right|\\
			&\qquad
			\le
			\left||u_n|^{q-1}(e^{\alpha|u_n|^2} - 1) - |u|^{q-1}(e^{\alpha|u|^2} - 1)\right|_{L^{r^*}(\operatorname{supp}(\phi)\setminus   S_\eta)}
			|\phi|_{L^{{r^*}'}(\operatorname{supp}(\phi)\setminus   S_\eta)},
		\end{align*}
		where ${r^*}'$ is the conjugate exponent of $r^*$. Then, since, by Fatou's Lemma,
		\[
		\int_{\operatorname{supp}(\phi)} |u|^{r^*(q-1)}(e^{\alpha|u|^2} - 1)^{r^*} dx\le \liminf_n \int_{\operatorname{supp}(\phi)}  |u_n|^{r^*(q-1)}(e^{\alpha|u_n|^2} - 1)^{r^*} dx \leq C,
		\]
		and $
		|\phi|_{L^{{r^*}'}(\operatorname{supp}(\phi)\setminus   S_\eta)} \leq |\phi|_\infty \eta^{1/{{r^*}'}}$ we can conclude that
		\begin{align*}
			&\int_{\mathbb{R}^2} |u_n|^{q-1}(e^{\alpha|u_n|^2} - 1)|\phi| dx
			=\int_{\operatorname{supp}(\phi)} |u_n|^{q-1}(e^{\alpha|u_n|^2} - 1)|\phi| dx\\
			&\qquad
			\to \int_{\operatorname{supp}(\phi)} |u|^{q-1}(e^{\alpha|u|^2} - 1)|\phi| dx
			=\int_{\mathbb{R}^2} |u|^{q-1}(e^{\alpha|u|^2} - 1)|\phi| dx.
		\end{align*}
		Finally, since, up to a subsequence, also \(|u_n| \rightharpoonup |u|\) in \(L^2(\mathbb{R}^2, \mathbb{R})\), we have, as \(n \to +\infty\),
		\[
		\xi \int_{\mathbb{R}^2} |u_n||\phi| dx + C \int_{\mathbb{R}^2} |u_n|^{q-1}(e^{\alpha|u_n|^2} - 1)|\phi| dx \to \xi \int_{\mathbb{R}^2} |u||\phi| dx + C \int_{\mathbb{R}^2} |u|^{q-1}(e^{\alpha|u|^2} - 1)|\phi| dx.
		\]
		Hence, Lemma \ref{GDCT} yields
		\[
		g(\varepsilon x,|u_n|^2)u_n\overline{\phi}
		\to g(\varepsilon x,|u|^2)u\overline{\phi}
		\quad\text{in }L^1(\mathbb{R}^2,\mathbb{C})
		\]
		and so, in particular, for every
		\(\phi \in C_c^\infty(\mathbb{R}^2, \mathbb{C})\),
		\begin{equation}\label{nonlinearity}
			\operatorname{Re}\int_{\mathbb{R}^{2}} g(\varepsilon x, |u_{n}|^{2}) u_{n} \overline{\phi} \,dx
			\to \operatorname{Re}\int_{\mathbb{R}^{2}} g(\varepsilon x, |u|^{2}) u \overline{\phi} \,dx.
		\end{equation}
		Thus, using the density of $C_c^\infty(\mathbb{R}^2,\mathbb{C})$ in $H_\varepsilon$ (Lemma \ref{dense}) and the weak convergence, \(u\) is a critical point of \(J_{\varepsilon}\).
		
		Let \(R > 0\) be such that \(\Lambda_{\Gamma, \varepsilon} \subset B_{R/2}(0)\). We show that for any given \(\zeta > 0\), for \(R\) large enough,
		\begin{equation}\label{compact1}
			\limsup_{n\to +\infty} \int_{B_{R}^{c}(0)} \bigl(|\nabla_{A_{\varepsilon}} u_{n}|^{2} + V_{\varepsilon}(x) |u_{n}|^{2}\bigr) \,dx \le \zeta. 
		\end{equation}
		Let \(\phi_{R} \in C^{\infty}(\mathbb{R}^{2},[0,1] )\) be a cut-off function such that $\phi_{R} = 0$ in $B_{R/2}(0)$, $\phi_{R} = 1$ in $B_{R}^{c}(0)$,  $|\nabla \phi_{R}| \le C/R$, where \(C > 0\) is a constant independent of \(R\). Since
		\begin{equation}\label{nephiR}
			\nabla_{A_{\varepsilon}}(u_{n} \phi_{R}) = \phi_{R} \nabla_{A_{\varepsilon}} u_{n}
			-i u_{n} \nabla \phi_{R}
			\quad
			\text{a.e. in }\mathbb{R}^2,
		\end{equation}
		the sequence \(\{\phi_{R} u_{n}\}\) is bounded in \(H_{\varepsilon}\), and so we have \(J_{\varepsilon}'(u_{n})(\phi_{R} u_{n}) = o_{n}(1)\), that is,
		\[
		\operatorname{Re}\int_{\mathbb{R}^{2}} \nabla_{A_{\varepsilon}} u_{n} \overline{\nabla_{A_{\varepsilon}}(\phi_{R} u_{n})} \,dx
		+ \int_{\mathbb{R}^{2}} V_{\varepsilon}(x) |u_{n}|^{2} \phi_{R} \,dx
		= \int_{\mathbb{R}^{2}} g(\varepsilon x, |u_{n}|^{2}) |u_{n}|^{2} \phi_{R} \,dx + o_{n}(1).
		\]
		Then, using \eqref{nephiR} and \eqref{g5}, we have
		\begin{align*}
			\int_{\mathbb{R}^{2}} \bigl(|\nabla_{A_{\varepsilon}} u_{n}|^{2} + V_{\varepsilon}(x) |u_{n}|^{2}\bigr) \phi_{R} \,dx
			&=
			\operatorname{Re}\int_{\mathbb{R}^{2}}\nabla_{A_{\varepsilon}} u_{n} \overline{\nabla_{A_{\varepsilon}}(u_{n} \phi_{R})}dx - \operatorname{Re}\int_{\mathbb{R}^{2}}i \overline{u}_{n} \nabla_{A_{\varepsilon}} u_{n}\nabla \phi_{R}\,dx\\
			&\qquad
			+\int_{\mathbb{R}^{2}}  V_{\varepsilon}(x) |u_{n}|^{2} \phi_{R} \,dx\\
			&=
			\int_{\mathbb{R}^{2}} g(\varepsilon x, |u_{n}|^{2}) |u_{n}|^{2} \phi_{R} \,dx
			- \operatorname{Re}\int_{\mathbb{R}^{2}} i \overline{u}_{n}\nabla_{A_{\varepsilon}} u_{n} \nabla \phi_{R} \,dx + o_{n}(1) \\
			&\le \frac{1}{k} \int_{\mathbb{R}^{2}} V_{\varepsilon}(x) |u_{n}|^{2} \phi_{R} \,dx
			- \operatorname{Re}\int_{\mathbb{R}^{2}} i \overline{u}_{n} \nabla_{A_{\varepsilon}} u_{n} \nabla \phi_{R} \,dx + o_{n}(1).
		\end{align*}
		Then, the H\"older inequality and the boundedness of \(\{u_{n}\}\) in \(H_{\varepsilon}\) imply
		\[
		\Bigl(1 - \frac{1}{k}\Bigr) \int_{\mathbb{R}^{2}} \bigl(|\nabla_{A_{\varepsilon}} u_{n}|^{2} + V_{\varepsilon}(x) |u_{n}|^{2}\bigr) \phi_{R} \,dx
		\le \frac{C}{R} |u_{n}|_{2} |\nabla_{A_{\varepsilon}} u_{n}|_{2} + o_{n}(1)
		\le \frac{C_1}{R} + o_{n}(1)
		\]
		and so we get \eqref{compact1}.
		
		Let, now,
		\[
		P(x,t) := g(\varepsilon x, t^{2}) t^2, \qquad Q(t) := t(e^{\alpha t^{2}} - 1),\quad t \in \mathbb{R},
		\]
		where
		\[
		\alpha \in \left(4\pi, \frac{4\pi}{\tau(\ell^2 + \kappa)}\right)
		\quad
		\text{with}
		\quad
		\kappa \in (0,1-\ell^2),
		\tau\in\left(1,\min\left\{2,\frac{1}{\ell^2+\kappa}\right\}\right).
		\]
		Using \eqref{g3} and \eqref{f2}, we have
		\begin{equation*}
			\lim_{t \to +\infty} \frac{P(x,t)}{Q(t)} = 0 \quad \text{uniformly for } x \in \mathbb{R}^{2}.
		\end{equation*}
		Moreover, by \eqref{e}, the H\"older and Sobolev inequalities and Lemma~\ref{2.1}, there exists $n_0\in\mathbb N$ such that
		\begin{equation*}
			\sup_{n\geq n_0} \int_{\mathbb{R}^{2}} Q(|u_{n}|)\,dx
			\leq 
			\sup_{n\geq n_0}|u_n|_{\tau'}\left[\int_{\mathbb{R}^2} ( e^{\alpha\tau  (\ell^2 + \kappa) \left( \frac{|u_n|}{\sqrt{\ell^2 + \kappa}} \right)^2} - 1 ) dx\right]^\frac{1}{\tau}
			\le C.  
		\end{equation*}
		where $|\nabla |u_n||_2^2 \leq \ell^2+\kappa$ for $n\geq n_0$ and
		$\tau'>2$ is the conjugate exponent of $\tau$.
		Each of the finitely many omitted integrals (for $n<n_0$) is finite by Lemma \ref{2.1} and the H\"older inequality. Then,
		the
		same bound holds with the supremum taken over all $n\in\mathbb N$.
		
		Since also
		\[
		g(\varepsilon x, |u_{n}|^{2}) |u_{n}|^{2} \to g(\varepsilon x, |u|^{2}) |u|^{2} \quad \text{a.e. in } \mathbb{R}^{2} \text{ as } n \to +\infty,
		\]
		the first part of \cite[Theorem A.I]{Berestycki}
		implies
		\[
		\lim_{n\to +\infty} \int_{B_{R}(0)} \bigl| g(\varepsilon x, |u_{n}|^{2}) |u_{n}|^{2} - g(\varepsilon x, |u|^{2}) |u|^{2} \bigr| \,dx = 0.
		\]
		Moreover, by \eqref{g5} and \eqref{compact1}, for $R$ sufficiently large, we have
		\begin{align*}
			&\limsup_{n\to+\infty}\int_{B_R^c(0)}
			\left|g(\varepsilon x,|u_n|^2)|u_n|^2
			-g(\varepsilon x,|u|^2)|u|^2\right|\,dx\\
			&\quad\leq \frac1k\limsup_{n\to+\infty}
			\int_{B_R^c(0)}
			\left(|\nabla_{A_\varepsilon}u_n|^2+
			V(\varepsilon x)|u_n|^2\right)\,dx
			+\frac1k\int_{B_R^c(0)}
			\left(|\nabla_{A_\varepsilon}u|^2+
			V(\varepsilon x)|u|^2\right)\,dx
			<\frac{2\zeta}{k}
		\end{align*}
		for every \(\zeta > 0\). Hence
		\[
		\int_{\mathbb{R}^{2}} g(\varepsilon x, |u_{n}|^{2}) |u_{n}|^{2} \,dx \to \int_{\mathbb{R}^{2}} g(\varepsilon x, |u|^{2}) |u|^{2} \,dx \quad \text{as } n \to +\infty.
		\]
		Finally, since \(J_{\varepsilon}'(u) = 0\), we have
		\[
		o_{n}(1) = J_{\varepsilon}'(u_{n})[u_{n}] = \|u_{n}\|_{\varepsilon}^{2} - \int_{\mathbb{R}^{2}} g(\varepsilon x, |u_{n}|^{2}) |u_{n}|^{2} \,dx
		= \|u_{n}\|_{\varepsilon}^{2} - \|u\|_{\varepsilon}^{2} + o_{n}(1).
		\]
		Thus, the sequence \(\{u_{n}\}\) converges strongly to \(u\) in \(H_{\varepsilon}\).
	\end{proof}
	
	We next investigate the behavior of sequences $\{u_{n}\}\subset H_{\varepsilon_{n}}$ with 
	\begin{equation*}
		J_{\varepsilon_{n}}(u_{n})\rightarrow c,\quad
		\|J^{\prime}_{\varepsilon_{n}}(u_{n})\|_{\varepsilon_n}:=
		\sup_{\phi_n\in H_{\varepsilon_n},\|\phi_n\|_{\varepsilon_n}\leq 1}|J^{\prime}_{\varepsilon_{n}}(u_{n})[\phi_n]|\to 0,
		\quad
		\varepsilon_{n}\rightarrow 0^+, \text{ as } n\rightarrow +\infty,
	\end{equation*}
	($(PS)_c^*$ for short) for some suitable values of $c$.

	\begin{proposition}\label{fengjie}
		Let $\{u_n\}$ be a $(PS)_c^*$ sequence with $\displaystyle c \in\left(0,\frac{1}{8}\left(\frac{1}{2} - \frac{1}{\Theta} - \frac{1}{2k}\right)\right)$. Then there exist a subsequence, still denoted by $\{u_n\}$, a positive integer $l$, sequences of points $\{y_{n,j}\} \subset \mathbb{R}^2$ with $j = 1, \ldots, l$, such that,  as $n \to +\infty$, $\varepsilon_n y_{n,j} \to y_j \in \overline{\Lambda}_\Gamma$, $|y_{n,i} - y_{n,j}| \to +\infty$ for $i\neq j$, 
		and
		\begin{equation}\label{decomposition}
			\left\| u_n - \sum_{j=1}^l e^{iA(y_j)\cdot(\cdot - y_{n,j})} U_{0,j}(\cdot - y_{n,j}) \chi_{\varepsilon_n}(\cdot - y_{n,j}) \right\|_{\varepsilon_n} \to 0, \quad \text{as } n \to +\infty,
		\end{equation}
		where $\chi_\varepsilon := \chi(\cdot/(-\log \varepsilon))$ for $0 < \varepsilon < 1$, $\chi$ is a cut-off function in $C_c^\infty(\mathbb{R}^2,[0,1])$ satisfying $\chi(x) = 1$ for $|x| \leq 1$, $\chi(x) = 0$ for $|x| \geq 2$, and $|\nabla \chi| \leq 2$, and each $U_{0,j}$ is a nontrivial solution of
		\begin{equation}\label{Ueqn}
			-\Delta u + V(y_j)u = g_{0,j}(x, |u|^2)u, \quad u\in H^1(\mathbb{R}^2, \mathbb{C}),
		\end{equation}
		with
		\[
		g_{0,j}(x,t):=\eta_j(x)f(t)+(1-\eta_j(x))\widehat f(t),
		\quad
		G_{0,j}(x,t):=\int_0^t g_{0,j}(x,s)\,ds,
		\]
		\[
		\chi_{\Lambda_\Gamma}(\varepsilon_n(\cdot+y_{n,j}))
		\rightharpoonup\eta_j
		\quad\hbox{in }L^p_{\mathrm{loc}}(\mathbb{R}^2)
		\quad\hbox{for every }1\leq p<+\infty,
		\qquad 0\leq\eta_j\leq1\quad\hbox{a.e. in }\mathbb{R}^2.
		\]
		Moreover, if
		\[
		J_{0,j}(u) := \frac{1}{2} \int_{\mathbb{R}^2} \left( |\nabla u|^2 + V(y_j)|u|^2 \right) dx - \frac{1}{2} \int_{\mathbb{R}^2} G_{0,j}(x, |u|^2) dx,
		\]
		we have
		\begin{equation}\label{djl616:profile-energy-lower}
			J_{0,j}(U_{0,j})\ge c^c_{V(y_j)}>0
		\end{equation}
		and the energy is completely recovered by the profiles, namely,
		\[
		c=\sum_{j=1}^lJ_{0,j}(U_{0,j}).
		\]
	\end{proposition}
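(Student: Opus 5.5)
We argue by iterated profile extraction (a splitting argument in the spirit of \cite{Gui} and \cite{jeanjean}), working throughout in the rescaled variables.

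\textbf{Step 1: bounds and smallness.} As in \eqref{bddPS}, the sequence satisfies $\|u_n\|_{\varepsilon_n}^2\le (\frac12-\frac1\Theta-\frac1{2k})^{-1}(c+o_n(1))$. Since $c<\frac18(\frac12-\frac1\Theta-\frac1{2k})$, this gives $\limsup_n |\nabla|u_n||_2^2<1/8$. This smallness is what allows the Trudinger--Moser estimates of Lemma \ref{2.1} to be applied uniformly to $|u_n|$, and to cut-offs or remainders of it, with some $\alpha>4\pi$ and H\"older exponents $>1$. The nonlinear terms are then handled exactly as in the proof of Lemma \ref{3.1}, using Lemma \ref{GDCT} and \cite[Theorem A.I]{Berestycki}.

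\textbf{Step 2: no vanishing, and localization near $\Lambda_\Gamma$.} We rule out vanishing. If $\sup_y\int_{B_1(y)}|u_n|^2\to0$, Lions' lemma applied to $|u_n|$ together with the uniform exponential bounds gives $\int_{\Lambda_{\Gamma,\varepsilon_n}}g|u_n|^2\to0$. By \eqref{g5} the outside part is at most $\frac1k\|u_n\|^2$. Hence $J'_{\varepsilon_n}(u_n)[u_n]=o_n(1)$ forces $\|u_n\|_{\varepsilon_n}\to0$, which contradicts $c>0$. So there are $y_{n,1}$ with $\int_{B_1(y_{n,1})}|u_n|^2\ge\delta>0$.

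Next we show that $\operatorname{dist}(\varepsilon_n y_{n,1},\Lambda_\Gamma)\to 0$. Suppose instead that $\varepsilon_n y_{n,1}$ stays away from $\overline\Lambda_\Gamma$. Testing with $\phi u_n$, with $\phi$ a cutoff around $y_{n,1}$ of radius $\to\infty$ but $\ll 1/\varepsilon_n$, the penalized nonlinearity there is $\widehat f\le V_0/k$. As in the proof of \eqref{compact1}, this yields $(1-\frac1k)\int\phi(|\nabla_A u_n|^2+V|u_n|^2)\to0$, a contradiction. Boundedness of $\varepsilon_ny_{n,1}$ follows the same way, so up to a subsequence $\varepsilon_ny_{n,1}\to y_1\in\overline\Lambda_\Gamma$.

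\textbf{Step 3: the limit profile.} We set $v_n:=e^{-iA(y_1)\cdot(\cdot+y_{n,1})}\,u_n(\cdot+y_{n,1})$, removing the constant gauge so that $A_{\varepsilon_n}(\cdot+y_{n,1})-A(y_1)\to0$ locally uniformly. By Lemma \ref{coro}, $v_n$ is bounded in $H^1_{\rm loc}$, and $v_n\rightharpoonup U_{0,1}\ne0$. Passing to the limit in $J'$ tested on $C_c^\infty$ functions shows that $U_{0,1}$ solves \eqref{Ueqn}, with $\eta_1$ the weak limit of the translated characteristic function. The inequality $J_{0,1}(U_{0,1})\ge c^c_{V(y_1)}$ follows from $g_{0,1}\le f$: we compare via the Nehari fibering \eqref{IcNeh}. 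Indeed, $U_{0,1}$ lies on the $J_{0,1}$-Nehari set, and $J_{0,1}(U_{0,1})\ge\max_t J_{0,1}(tU_{0,1})$ by monotonicity \eqref{g6} where $\eta_1>0$, which in turn dominates $\max_t I^c_{V(y_1)}(tU_{0,1})\ge c^c_{V(y_1)}$. One must also check that $|\operatorname{supp}U_{0,1}\cap\{\eta_1>0\}|\ne0$; this follows from the argument in Lemma \ref{lem_same_levels}.

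\textbf{Step 4: splitting and iteration; the main obstacle.} We define $w_n:=u_n-e^{iA(y_1)\cdot(\cdot-y_{n,1})}U_{0,1}(\cdot-y_{n,1})\chi_{\varepsilon_n}(\cdot-y_{n,1})$. The logarithmic cutoff ensures $|A_{\varepsilon_n}-A(y_1)|\lesssim\omega(\varepsilon_n|\log\varepsilon_n|)\to0$ on its support, and Lemma \ref{coro}(\ref{equivi}) controls norms there.

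The key claims are the following:
\begin{itemize}
\item $\|w_n\|^2_{\varepsilon_n}=\|u_n\|^2_{\varepsilon_n}-\|U_{0,1}\|^2+o(1)$;
\item $J_{\varepsilon_n}(w_n)=J_{\varepsilon_n}(u_n)-J_{0,1}(U_{0,1})+o(1)$;
\item $J'_{\varepsilon_n}(w_n)\to0$ uniformly.
\end{itemize}
These require a Brezis--Lieb type splitting for $G(\varepsilon x,|\cdot|^2)$ and $g|\cdot|^2$. I expect this to be the main obstacle, for two reasons. The first is the exponential growth: we will use the smallness from Step 1 (the remainder still has $|\nabla|w_n||_2^2<1$) plus local strong convergence on $B_{|\log\varepsilon_n|}$ and Egorov-type arguments. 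The second is the $x$-dependent, non-smooth penalization: the convergence of $\chi_{\Lambda_\Gamma}(\varepsilon_n(\cdot+y_{n,1}))$ is only weak, so the splitting of $g$ should be written via $\eta_1$, exploiting local strong convergence of $v_n$ so that the product converges.

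Then $w_n$ is a $(PS)^*$ sequence at level $c-J_{0,1}(U_{0,1})$. If $\|w_n\|\to0$ we stop. Otherwise we repeat Steps 2--3 to obtain $y_{n,2}$ with $|y_{n,2}-y_{n,1}|\to\infty$, the divergence following from $w_n(\cdot+y_{n,1})\rightharpoonup0$. Since each profile costs at least $\min_{\overline\Lambda_\Gamma}c^c_V\ge c_{V_0}>0$ by Corollary \ref{existencecor}, the iteration terminates after finitely many steps $l$. Summing the energy identities gives $c=\sum_{j=1}^l J_{0,j}(U_{0,j})$ together with \eqref{decomposition}.
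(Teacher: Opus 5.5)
Your proposal follows essentially the same route as the paper: non-vanishing via Lions' lemma, the $\widehat f\le V_0/k$ test on a ball of radius $\to\infty$ but $\ll 1/\varepsilon_n$ around $y_{n,1}$ to force $y_1\in\overline\Lambda_\Gamma$, profile extraction through the logarithmic cut-off with $\eta_1$ the weak limit of the translated characteristic function, splitting of $J_{\varepsilon_n}$ and $J'_{\varepsilon_n}$ using the smallness $\iota^2<1/8$, and termination via the lower bound $c^c_{V_0}>0$. Two bookkeeping fixes are needed. First, your gauge $e^{-iA(y_1)\cdot(\cdot+y_{n,1})}$ and the phase $e^{iA(y_1)\cdot(\cdot-y_{n,1})}$ in $w_n$ disagree by the $n$-dependent constant $e^{-iA(y_1)\cdot y_{n,1}}$, so either drop $y_{n,1}$ from the gauge or absorb the subsequential limit of this constant into $U_{0,1}$. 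Second, the uniform dual-norm smallness of $J'_{\varepsilon_n}$ at the subtracted cut-off profile (the term with $\eta_1-\chi_{\Lambda_\Gamma}(\varepsilon_n(\cdot+y_{n,1}))$) comes from $L^2_{\rm loc}$-compactness of the translated cut-off test functions $\chi_{\varepsilon_n}\phi_n(\cdot+y_{n,1})$, as in the paper, not from local strong convergence of $v_n$.
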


	\begin{proof}
		Since $\{u_{n}\}$ is a $(PS)_c^{*}$ sequence, arguing as in \eqref{bddPS}, we obtain that $\|u_{n}\|_{\varepsilon_n}$ is bounded and
		\[\left(\frac{1}{2}-\frac{1}{\Theta}-\frac{1}{2k}\right)\|u_{n}\|_{\varepsilon_{n}}^{2}\leq c+o_n(1).\]
		Hence, by the diamagnetic inequality \eqref{p2.1} and \eqref{V1}, we obtain that
		\[
		\left(\frac{1}{2}-\frac{1}{\Theta}-\frac{1}{2k}\right)
		\mathop{\limsup}\limits_{n\rightarrow +\infty}(|\nabla|u_{n}||_{2}^{2}+V_0|u_n|_2^2)
		\leq\left(\frac{1}{2}-\frac{1}{\Theta}-\frac{1}{2k}\right)\mathop{\limsup}\limits_{n\rightarrow +\infty}\|u_{n}\|_{\varepsilon_{n}}^{2}
		\leq c< \frac{1}{8}\left(\frac{1}{2}-\frac{1}{\Theta}-\frac{1}{2k}\right),
		\]
		from which,  
		\begin{equation}\label{p3.5}
			\iota^{2}:=\mathop{\limsup}\limits_{n\rightarrow +\infty}(|\nabla|u_{n}||_{2}^{2}+V_0|u_n|_2^2)< \frac{1}{8}.
		\end{equation}

		We claim that there exist a sequence $\{y_{n,1}\} \subset \mathbb{R}^2$ and constants $R_1, \beta_1 > 0$ such that
		\begin{equation}
			\label{p3.6}
			\liminf_{n\rightarrow +\infty}\int_{B_{R_1}(y_{n,1})} |u_n|^2 dx > \beta_1.
		\end{equation}
		Assume by contradiction that for every $R > 0$,
		\[
		\lim_{n \to +\infty} \sup_{y \in \mathbb{R}^2} \int_{B_R(y)} |u_n|^2 dx = 0.
		\]
		Being $\{|u_n|\}$ bounded in $H^1(\mathbb{R}^2, \mathbb{R})$, from \cite[Chapter 6, Lemma 8.4]{Kavian} it follows that $|u_n|_\tau \to 0$ as $n \to +\infty$ for any $\tau > 2$. Fixed $q > 2$, for any $\xi > 0$ and $\alpha > 4\pi$, from \eqref{g3}, \eqref{p2.2} and $J'_{\varepsilon_n}(u_n)[u_n] \to 0$ as $n \to +\infty$, there exists $C > 0$ such that
		\begin{equation}
			\label{p3.7}
			\begin{split}
				\|u_n\|_{\varepsilon_n}^2 &= \int_{\mathbb{R}^2} g(\varepsilon_n x, |u_n|^2)|u_n|^2 dx + o_n(1) 
				\leq \xi \int_{\mathbb{R}^2} |u_n|^2 dx + C \int_{\mathbb{R}^2} |u_n|^q (e^{\alpha|u_n|^2} - 1) dx + o_n(1) \\
				&\leq \frac{\xi}{V_0} \|u_n\|_{\varepsilon_n}^2 + C \int_{\mathbb{R}^2} |u_n|^q (e^{\alpha|u_n|^2} - 1) dx + o_n(1).
			\end{split}
		\end{equation}
		Fix $\kappa \in (0, 1 - \iota^{2})$ and take $r \in \left(1, \frac{1}{\iota^{2} + \kappa}\right)$, $\alpha \in \left(4\pi, \frac{4\pi}{r(\iota^{2} + \kappa)}\right)$. Then, the H\"older inequality, \eqref{e}, and Lemma \ref{2.1} imply that, for $n$ large enough (such that $|\nabla |u_n||_2^2 \leq \iota^{2} + \kappa$),
		\[
		\int_{\mathbb{R}^2} |u_n|^q (e^{\alpha|u_n|^2} - 1) dx \leq |u_n|_{qr'}^q \left( \int_{\mathbb{R}^2} (e^{\alpha r(\iota^{2} + \kappa)\left( \frac{|u_n|}{\sqrt{\iota^{2} + \kappa}} \right)^2}  - 1) dx \right)^{\frac{1}{r}} \leq C|u_n|_{qr'}^q \to 0 \text{ as } n \to +\infty.
		\]
		Therefore, it follows from \eqref{p3.7} that $\|u_n\|_{\varepsilon_n} \to 0$ as $n \to +\infty$.
		Then, by (\ref{g4}), (\ref{g5}), and \eqref{p3.7}, we obtain
		\[
		0\leq \int_{\mathbb{R}^2} G(\varepsilon_n x, |u_n|^2)\, dx
		\leq \int_{\mathbb{R}^2} g(\varepsilon_n x, |u_n|^2)|u_n|^2\, dx
		= o_n(1)
		\]
		and so $c
		=0$, contradicting the assumption $c > 0$.

		Now, let $v_{n}:=u_{n}(\cdot+y_{n,1})$.
		Due to the Sobolev inequality, $\{v_n\}$ is bounded in $L^r(\mathbb{R}^2, \mathbb{C})$ for $2\leq r<+\infty$.
		Moreover, due to the diamagnetic inequality \eqref{p2.1}, $\{|v_{n}|\}$ is bounded in $H^{1}(\mathbb{R}^{2},\mathbb{R})$ and so, there exists $\omega_{1}\in H^1(\mathbb{R}^2,\mathbb{R})$ such that, up to a subsequence, if necessary,  $|v_{n}|\to \omega_{1}$ weakly in $H^{1}(\mathbb{R}^{2},\mathbb{R})$ and strongly in $L^{r}_{\rm loc}(\mathbb{R}^2,\mathbb{R})$ ($1\leq r<+\infty$), with $\omega_{1}\neq0$ by \eqref{p3.6}.
		
		Next, we claim that, up to a subsequence, $\varepsilon_{n}y_{n,1} \to y_1 \in \overline{\Lambda}_\Gamma$ as $n \to +\infty$. 
		Arguing by contradiction, suppose that this is not the case.
		Then, after passing to a further subsequence, there exists $\delta>0$
		such that
		$\operatorname{dist}(\varepsilon_ny_{n,1},\overline\Lambda_\Gamma)\geq\delta$
		and so, for $n$ large enough (such that $2\varepsilon_n|\log\varepsilon_n|<\delta$),
		we obtain
		\begin{equation}\label{B2logen}
			\varepsilon_{n}x+\varepsilon_{n}y_{n,1}\in \Lambda_\Gamma^c \quad \text{for all } x\in B_{2|\log \varepsilon_{n}|}(0).
		\end{equation}
		
		Since
		\[
		\nabla_{A_{\varepsilon_n}} (u_{n}\chi_{\varepsilon_n}(\cdot-y_{n,1}))
		=\chi_{\varepsilon_n}(\cdot-y_{n,1})\nabla_{A_{\varepsilon_n}} u_{n}- iu_{n}\nabla \chi_{\varepsilon_n}(\cdot-y_{n,1}),
		\]
		for $n$ large enough, being $\{\|u_n\|_{\varepsilon_n}\}$ bounded, we have
		\begin{equation}\label{before}
			\begin{split}
				\|u_{n}\chi_{\varepsilon_n}(\cdot-y_{n,1})\|_{\varepsilon_n}^2
				&=|\chi_{\varepsilon_n}(\cdot-y_{n,1})\nabla_{A_{\varepsilon_n}} u_{n}|_2^2-2\operatorname{Im}\int_{\mathbb{R}^2}\chi_{\varepsilon_n}(\cdot-y_{n,1})\nabla\chi_{\varepsilon_n}(\cdot-y_{n,1})\overline{u}_{n}\nabla_{A_{\varepsilon_n}} u_{n}dx  \\
				&\quad   +|u_{n}\nabla \chi_{\varepsilon_n}(\cdot-y_{n,1})|_2^2               +\int_{\mathbb{R}^2}V_{\varepsilon_n}(x)|\chi_{\varepsilon_n}(\cdot-y_{n,1})u_{n}|^2dx\\
				&\leq\|u_n\|_{\varepsilon_n}^2
				+\frac{4}{\log^2 \varepsilon_n}|u_{n}|_2^2
				+\frac{4}{|\log\varepsilon_n|}|u_{n}|_2 |\nabla_{A_{\varepsilon_n}} u_{n}|_2
				\leq C.
			\end{split}
		\end{equation}
		
		Then, it follows that 
		\begin{align*}
			o_n(1)
			&=J_{\varepsilon_n}^{\prime}(u_{n})[u_{n}\chi_{\varepsilon_{n}}(x-y_{n,1})]\\
			&=\operatorname{Re}\int_{\mathbb{R}^2} \nabla_{A_{\varepsilon_{n}}} u_{n} \overline{\nabla_{A_{\varepsilon_n}} (u_{n}\chi_{\varepsilon_{n}}(x-y_{n,1}))}dx\\
			&\quad
			+ \int_{\mathbb{R}^2} V_{\varepsilon_n}(x)|u_{n}|^{2}\chi_{\varepsilon_{n}}(x-y_{n,1})dx
			-\int_{\mathbb{R}^2}g(\varepsilon_{n}x, |u_{n}|^{2})|u_{n}|^{2}\chi_{\varepsilon_{n}}(x-y_{n,1})dx\\
			&=
			\int_{\mathbb{R}^2} (|\nabla_{A_{\varepsilon_{n}}}u_{n}|^{2} +  V_{\varepsilon_n}(x)|u_{n}|^{2})\chi_{\varepsilon_{n}}(x-y_{n,1})dx\\
			&\quad
			-\operatorname{Im}\int_{\mathbb{R}^2} \nabla \chi_{\varepsilon_{n}}(x-y_{n,1})\nabla_{A_{\varepsilon_{n}}}u_{n} \overline{u}_n dx
			-\int_{\mathbb{R}^2}g(\varepsilon_{n}x, |u_{n}|^{2})|u_{n}|^{2}\chi_{\varepsilon_{n}}(x-y_{n,1})dx.     	    
		\end{align*}
		Changing variable and using 
		\eqref{V1}, we get
		\begin{equation*}
			\begin{split}	
				\int_{\mathbb{R}^2} (|\nabla_{A_{\varepsilon_{n}}}u_{n}|^{2} +  V_{\varepsilon_n}(x)|u_{n}|^{2})\chi_{\varepsilon_{n}}(x-y_{n,1})dx
				&=
				\int_{\mathbb{R}^2} \left(|\nabla_{A_{\varepsilon_n}(x+y_{n,1})}v_n|^{2} +  V_{\varepsilon_n}( x+y_{n,1})|v_n|^{2}\right)\chi_{\varepsilon_{n}}(x)dx\\
				&\geq V_0 \int_{\mathbb{R}^2}   |v_n|^{2}\chi_{\varepsilon_{n}}(x)dx.
			\end{split} 
		\end{equation*}
		
		Moreover, by the H\"older inequality,
		\begin{equation*}
			\left|\operatorname{Im}\int_{\mathbb{R}^2} \nabla \chi_{\varepsilon_{n}}(x-y_{n,1})\nabla_{A_{\varepsilon_{n}}}u_{n} \overline{u}_n dx\right|
			\leq 
			\frac{2}{|\log \varepsilon_n|}|u_n|_2|\nabla_{A_{\varepsilon_{n}}}u_{n}|_2=o_n(1).
		\end{equation*}
		Finally, changing variable, for $n$ sufficiently large, from \eqref{B2logen} and \eqref{g5}, it follows that
		\begin{align*}
			\int_{\mathbb{R}^2}g(\varepsilon_{n}x, |u_{n}|^{2})|u_{n}|^{2}\chi_{\varepsilon_{n}}(x-y_{n,1})dx
			&=\int_{\mathbb{R}^2}g(\varepsilon_{n}(x+ y_{n,1}), |v_{n}|^{2})|v_{n}|^{2}\chi_{\varepsilon_{n}}(x)dx\\
			&=\int_{B_{2|\log\varepsilon_n|}(0)}g(\varepsilon_{n}(x+ y_{n,1}), |v_{n}|^{2})|v_{n}|^{2}\chi_{\varepsilon_{n}}(x)dx\\
			&\leq
			\frac{V_0}{k}\int_{\mathbb{R}^2}|v_n|^2\chi_{\varepsilon_{n}}(x)dx.
		\end{align*}
		Then
		\[
		\int_{\mathbb{R}^2}|v_n|^2\chi_{\varepsilon_{n}}(x)dx=o_n(1).
		\]
		Therefore,
		if $R>0$ is sufficiently large and $n$ such that $|\log\varepsilon_n|>R$,
		\begin{align*}
			0< \int_{B_R(0)} |\omega_1|^2dx
			&= \int_{B_R(0)} |v_n|^2dx +o_n(1)
			\leq \int_{B_{|\log\varepsilon_n|}(0)} |v_n|^2dx +o_n(1)
			\\
			&\leq \int_{B_{2|\log\varepsilon_n|}(0)} |v_n|^2\chi_{\varepsilon_n}(x) dx +o_n(1)
			=
			\int_{\mathbb{R}^2} |v_n|^2\chi_{\varepsilon_n}(x) dx +o_n(1)= o_n(1),
		\end{align*}
		which is 
		a contradiction.  Hence $y_1\in\overline{\Lambda}_\Gamma$.

		Observing that, if $R>0$, for $n$ large enough, $\varepsilon_n B_{2|\log\varepsilon_n|}(0)\subset B_R(0)$, and combining this with  $\varepsilon_n y_{n,1}\to y_1$, we obtain that there exists $R'>0$ such that, for $n$ large enough,  $\varepsilon_n (x+ y_{n,1}) \in B_{R'}(0)$ for every $x\in B_{2|\log\varepsilon_n|}(0)$, so that there exists $C>0$ independent of $n$ such that
		\begin{equation}\label{unifboundVA}
			\sup_{x\in B_{2|\log\varepsilon_n|}(0)} V_{\varepsilon_n}(x+y_{n,1})\leq C
			\quad\text{and}\quad
			\sup_{x\in B_{2|\log\varepsilon_n|}(0)} |A_{\varepsilon_n}(x+y_{n,1})|\leq C.
		\end{equation}
		Then, as in Lemma \ref{coro} (\ref{equivi}), we get that
		for $n$ sufficiently large, the norm
		\begin{equation}\label{fanshu}
			\left(\int_{B_{2|\log\varepsilon_n|}(0)} (|\nabla_{A_{\varepsilon_n}(x+y_{n,1})} u|^{2}+V_{\varepsilon_n}( x+y_{n,1})|u|^{2})dx\right)^{1/2}
		\end{equation}
		is uniformly equivalent to the standard norm of $H^{1}(B_{2|\log\varepsilon_n|}(0),\mathbb{C})$.
		
		Analogously, if $K\subset\mathbb{R}^2$ is a bounded open set, we obtain that the norm
		\begin{equation}\label{fanshu1}
			\left(\int_{K} (|\nabla_{A_{\varepsilon_n}(x+y_{n,1})} u|^{2}+V_{\varepsilon_n}( x+y_{n,1})|u|^{2})dx\right)^{1/2}
		\end{equation}
		is uniformly equivalent to the standard norm of $H^{1}(K,\mathbb{C})$.
		
		Now, for any $n\in \mathbb{N}$, let $w_n:=v_n\chi_{\varepsilon_n}$. Due to the equivalence of the norm in \eqref{fanshu} and \eqref{before},
		\begin{equation}\label{wnbdd}
			\begin{split}
				\|w_n\|^2
				&=
				\int_{B_{2|\log\varepsilon_n|}(0)} (|\nabla w_n|^{2}+|w_n|^{2})dx\\
				&\leq
				C\int_{B_{2|\log\varepsilon_n|}(0)} (|\nabla_{A_{\varepsilon_n}(x+y_{n,1})} w_n|^{2}+V_{\varepsilon_n}(x+y_{n,1})|w_n|^{2})dx\\
				&=
				C\int_{B_{2|\log\varepsilon_n|}(y_{n,1})} (|\nabla_{A_{\varepsilon_n}} (u_{n}\chi_{\varepsilon_n}(\cdot-y_{n,1}))|^{2}+V_{\varepsilon_n}(x)|u_{n}\chi_{\varepsilon_n}(\cdot-y_{n,1})|^{2})dx\\
				&\leq C\|u_{n}\chi_{\varepsilon_n}(\cdot-y_{n,1})\|_{\varepsilon_n}^2\leq C.
			\end{split}
		\end{equation}
		Then there exists $u_{0,1}\in H^1(\mathbb{R}^2,\mathbb{C})$ such that, up to a subsequence, $w_n\rightharpoonup u_{0,1}$ in $H^1(\mathbb{R}^2,\mathbb{C})$ and, since $\chi_{\varepsilon_n}\rightarrow 1$ a.e. in $\mathbb{R}^2$ as $n\to +\infty$, $|u_{0,1}|=\omega_1\neq 0$.
		Indeed, on every compact set $K\subset\mathbb R^2$ we have
		$\chi_{\varepsilon_n}=1$ for all sufficiently large $n$, and hence
		$w_n=v_n$ on $K$. The local compactness therefore gives both
		$|w_n|\to|u_{0,1}|$ and $|v_n|\to\omega_1$ in $L^2(K)$, which identifies
		$|u_{0,1}|=\omega_1$ a.e. on $K$.
		
		Furthermore, up to a subsequence,
		\begin{equation}\label{localweakprofile}
			\chi_{\Lambda_\Gamma}(\varepsilon_n(\cdot+y_{n,1}))
			\rightharpoonup\eta_1\quad\hbox{in }L^p_{\mathrm{loc}}(\mathbb R^2, \mathbb{R}) \text{ for every } 1\leq p<+\infty,
			\quad
			0\leq \eta_1\leq 1 \text{ a.e. in } \mathbb{R}^2,
		\end{equation}
		and, as a consequence,
		if \(h_n\to h\) in
		\(L^1(\mathbb{R}^2,\mathbb{C})\), then
		\begin{equation}\label{localweakproduct}
			\int_{\mathbb{R}^2}\chi_{\Lambda_\Gamma}
			(\varepsilon_nx+\varepsilon_ny_{n,1})h_n\,dx
			\longrightarrow\int_{\mathbb{R}^2}\eta_1h\,dx.
		\end{equation}
		Indeed, first observe that
		\[
		\begin{aligned}
			\bigg|\int_{\mathbb{R}^2}\chi_{\Lambda_\Gamma}
			(\varepsilon_nx+\varepsilon_ny_{n,1})h_n\,dx
			-\int_{\mathbb{R}^2}\eta_1h\,dx\bigg|
			&\leq |h_n-h|_{1}
			+\bigg|\int_{\mathbb{R}^2}
			\bigl(\chi_{\Lambda_\Gamma}(\varepsilon_nx+\varepsilon_ny_{n,1})
			-\eta_1(x)\bigr)h(x)\,dx\bigg|.
		\end{aligned}
		\]
		For \(M>0\), set
		\[
		\psi_M(x):=
		\begin{cases}
			h(x),& |h(x)|\leq M,\\
			M\dfrac{h(x)}{|h(x)|},& |h(x)|>M.
		\end{cases}
		\]
		Observe that \(\psi_M\in L^\infty(\mathbb{R}^2,\mathbb{C})\) and,
		fixed \(R>0\) and \(\delta>0\), since \(h\in L^1(\mathbb{R}^2,\mathbb{C})\), we have
		\[
		\int_{B_R(0)}|h-\psi_M| \,dx
		{\leq}\int_{\{|h|>M\}}|h|\,dx\longrightarrow0
		\quad\hbox{as }M\to+\infty.
		\]
		Choose \(M\) so large that
		\(|h-\psi_M|_{L^1(B_R(0))}<\delta\).
		Since
		\(\psi_M\in L^2(B_R(0),\mathbb{C})\), by \eqref{localweakprofile} applied to the real and imaginary parts of
		\(\psi_M\), we obtain
		\[
		\int_{B_R(0)}
		\bigl(\chi_{\Lambda_\Gamma}(\varepsilon_nx+\varepsilon_ny_{n,1})
		-\eta_1(x)\bigr)\psi_M(x)\,dx\longrightarrow0.
		\]
		Moreover, 
		\begin{align*}
			&\bigg|\int_{\mathbb{R}^2}
			\bigl(\chi_{\Lambda_\Gamma}(\varepsilon_nx+\varepsilon_ny_{n,1})
			-\eta_1(x)\bigr)h(x)\,dx\bigg|\\
			&\quad\leq
			\bigg|\int_{B_R(0)}
			\bigl(\chi_{\Lambda_\Gamma}(\varepsilon_nx+\varepsilon_ny_{n,1})
			-\eta_1(x)\bigr)\psi_M(x)\,dx\bigg|
			+2\int_{B_R(0)}|h-\psi_M| \,dx
			+2\int_{B_R^c(0)}|h| \,dx
		\end{align*}
		and so
		\[
		\limsup_{n\to+\infty}\bigg|\int_{\mathbb{R}^2}
		\bigl(\chi_{\Lambda_\Gamma}(\varepsilon_nx+\varepsilon_ny_{n,1})
		-\eta_1(x)\bigr)h(x)\,dx\bigg|
		\leq2\delta+2|h|_{L^1(B_R(0)^c)}
		\]
		which gives the desired conclusion by first letting \(\delta\to0^+\) and then \(R\to+\infty\).
		
		Now, for any $\varphi\in C_{c}^{\infty}(\mathbb{R}^{2},\mathbb{C})$, due to the equivalence of the norm in \eqref{fanshu1},
		we obtain
		\begin{equation*}
			\|\varphi(\cdot-y_{n,1})\|_{\varepsilon_{n}}^{2}
			=\int_{\operatorname{supp} \varphi} (|\nabla_{A_{\varepsilon_n}(x+y_{n,1})} \varphi|^{2}+ V_{\varepsilon_n}( x+y_{n,1}) |\varphi|^{2})dx\leq C\|\varphi\|^{2}< +\infty.
		\end{equation*}
		Thus, for $n$ large enough (such that $\operatorname{supp}  \varphi\subset B_{|\log\varepsilon_n|}(0)$),	
		\begin{align*}
			o_n(1)&=	J^{\prime}_{\varepsilon_{n}}(u_{n})[\varphi(\cdot-y_{n,1})]\\
			&= \operatorname{Re}\int_{\operatorname{supp}  \varphi}\left(\nabla_{A_{\varepsilon_n}(x+y_{n,1})}v_n\overline{\nabla_{A_{\varepsilon_n}(x+y_{n,1})}\varphi}+V_{\varepsilon_n}( x+y_{n,1})v_n\overline{\varphi}
			- g(\varepsilon_n( x+y_{n,1}),|v_n|^{2})v_n\overline{\varphi}\right)dx\\
			&= \operatorname{Re}\int_{\operatorname{supp}  \varphi}\left(\nabla_{A_{\varepsilon_n}(x+y_{n,1})}w_n\overline{\nabla_{A_{\varepsilon_n}(x+y_{n,1})}\varphi}+V_{\varepsilon_n}( x+y_{n,1})w_n\overline{\varphi}
			-g(\varepsilon_n( x+y_{n,1}),|w_n|^{2})w_n\overline{\varphi}\right)dx.
		\end{align*}
		Since \(w_n\to u_{0,1}\) in
		\(L^2(\operatorname{supp}\varphi,\mathbb C)\), by
		\cite[Lemma A.1]{willem1997}, after passing to a further subsequence,
		still denoted by \(\{w_n\}\), there exists
		\(v\in L^2(\operatorname{supp}\varphi,\mathbb R)\) such that
		\(|w_n|\leq v\) a.e. in \(\operatorname{supp}\varphi\).
		Using the H\"older inequality and  Lebesgue's Dominated Convergence Theorem, we obtain
		\begin{align*}
			& \left| \operatorname{Re}\int_{\operatorname{supp}  \varphi}\left(\nabla_{A_{\varepsilon_n}(x+y_{n,1})}w_n\overline{\nabla_{A_{\varepsilon_n}(x+y_{n,1})}\varphi}-\nabla_{A(y_1)}w_n\overline{\nabla_{A(y_1)}\varphi}\right)dx\right|\\ 
			&\leq \int_{\operatorname{supp}\varphi}
			|(\nabla_{A_{\varepsilon_n}(x+y_{n,1})}-\nabla_{A(y_1)})w_n|
			|\nabla_{A_{\varepsilon_n}(x+y_{n,1})}\varphi|\,dx\\
			&\quad+\int_{\operatorname{supp}\varphi}
			|\nabla_{A(y_1)}w_n|
			|(\nabla_{A_{\varepsilon_n}(x+y_{n,1})}-\nabla_{A(y_1)})\varphi|\,dx\\
			&\leq
			|(A_{\varepsilon_n}(x+y_{n,1})-A(y_1))w_n|_{L^2(\operatorname{supp}\varphi)}
			|\nabla_{A_{\varepsilon_n}(x+y_{n,1})}\varphi|_2+|\nabla_{A(y_1)}w_n|_2
			|(A_{\varepsilon_n}(x+y_{n,1})-A(y_1))\varphi|_2\\
			&\leq C|(A_{\varepsilon_n}(x+y_{n,1})-A(y_1))w_n|_{L^2(\operatorname{supp}\varphi)}+C|(A_{\varepsilon_n}(x+y_{n,1})-A(y_1))\varphi|_2=o_n(1),
		\end{align*}
		where we have also used that \[|A_{\varepsilon_n}(x+y_{n,1})-A(y_1)|^2|w_n|^2\leq Cv^2\in L^{1}(\operatorname{supp}  \varphi,\mathbb{R})\]
		and $|A_{\varepsilon_n}(x+y_{n,1})-A(y_1)|\to 0$ a.e. in $\operatorname{supp}  \varphi$.
		
		Similarly, we can obtain that
		\[
		\left| \operatorname{Re}\int_{\operatorname{supp}  \varphi}\left(V_{\varepsilon_n}(x+y_{n,1})-V(y_1)\right)w_n \overline{\varphi}dx\right|=o_n(1).
		\]

		Finally, since by the local compact
		embedding, after passing to a subsequence,
		\(w_n\to u_{0,1}\) in \(L^r(\operatorname{supp}\varphi,\mathbb{C})\) for every
		\(1\leq r<+\infty\), and a.e. in \(\operatorname{supp}\varphi\), using the  estimate \eqref{p3.5}, the inequality
		\(0\leq\widehat f\leq f\), and arguing as in the proof of \eqref{nonlinearity}, 
		\[
		\widehat f(|w_n|^2)w_n\overline\varphi 
		\to  \widehat f(|u_{0,1}|^2)u_{0,1}\overline\varphi,\quad
		\bigl(f(|w_n|^2)-\widehat f(|w_n|^2)\bigr)w_n\overline\varphi
		\to\bigl(f(|u_{0,1}|^2)-\widehat f(|u_{0,1}|^2)\bigr)u_{0,1}\overline\varphi\,\ \text{in}\,\ L^1(\mathbb{R}^2,\mathbb{C}) ,
		\]
		and so, applying \eqref{localweakproduct}, we obtain
		\begin{equation*}
			\operatorname{Re}\int_{\operatorname{supp}\varphi}
			g(\varepsilon_n(x+y_{n,1}),|v_n|^{2})v_n\overline{\varphi}\,dx
			\to\operatorname{Re}\int_{\mathbb{R}^2}
			g_{0,1}(x,|u_{0,1}|^{2})u_{0,1}\overline{\varphi}\,dx.
		\end{equation*}
		
		Hence,  using that $w_n\rightharpoonup u_{0,1}$ in $H^1(\mathbb{R}^{2},\mathbb{C})$, we conclude that
		\begin{align*}
			o_n(1)=
			\operatorname{Re}\int_{\mathbb{R}^2}\left(\nabla_{A(y_1)}u_{0,1}\overline{\nabla_{A(y_1)}\varphi}+V(y_1)u_{0,1}\overline{\varphi}
			-g_{0,1}(x,|u_{0,1}|^{2})u_{0,1}\overline{\varphi}\right)dx,
		\end{align*}
		which implies that 
		\begin{equation}\label{p3.9}
			\left(\frac{1}{i}\nabla-A(y_1)\right)^2 u_{0,1}+ V(y_1)u_{0,1}
			=g_{0,1}(x,|u_{0,1}|^{2})u_{0,1}
			\quad \text{in }\mathbb{R}^2.
		\end{equation}
		Then $U_{0,1}(x):=e^{-iA(y_1)\cdot x}u_{0,1}\in H^{1}(\mathbb{R}^2,\mathbb{C})\setminus\{0\}$ and, since
		\[
		-\Delta U_{0,1}=e^{-iA(y_1)\cdot x}\left(\frac{1}{i}\nabla-A(y_1)\right)^2 u_{0,1},
		\]
		it solves \eqref{Ueqn} for $j=1$.
		
		Let $\Phi(t):=J_{0,1}(tU_{0,1})$ for $t\geq0$. 
		Since \(U_{0,1}\) solves \eqref{Ueqn} for \(j=1\), 
		\begin{align*}
			\Phi'(t)
			&=t\left[
			\int_{\mathbb{R}^{2}}\left(|\nabla U_{0,1}|^{2}
			+V(y_1)|U_{0,1}|^{2}\right)\,dx
			-\int_{\mathbb{R}^{2}}g_{0,1}(x,t^{2}|U_{0,1}|^{2})
			|U_{0,1}|^{2}\,dx\right]\\
			&= t\int_{\mathbb{R}^{2}}\left[g_{0,1}(x,|U_{0,1}|^{2})-g_{0,1}(x,t^{2}|U_{0,1}|^{2})\right]
			|U_{0,1}|^{2}\,dx.
		\end{align*}
		Moreover \(t\mapsto g_{0,1}(x,t)\) is non-decreasing for every
		\(x\in\mathbb{R}^{2}\), so that \(t=1\) is a
		global maximizer of \(J_{0,1}(tU_{0,1})\). Hence, since $0\leq\hat{F}\leq F$,
		\begin{equation}\label{J01U01}
			J_{0,1}(U_{0,1})=\max_{t\geq0}J_{0,1}(tU_{0,1})
			\geq\max_{t\geq0}I^c_{V(y_1)}(tU_{0,1})\geq c^c_{V(y_1)}>0.
		\end{equation}

		Now, for every $n\in \mathbb{N}$, let us define
		\begin{equation*}
			\Psi_{n,1} := e^{iA(y_1) \cdot (\cdot - y_{n,1})}(U_{0,1} \chi_{\varepsilon_n})(\cdot - y_{n,1})=(u_{0,1} \chi_{\varepsilon_n})(\cdot - y_{n,1})
		\end{equation*}
		that, by \eqref{unifboundVA} and the shifted norm equivalence
		\eqref{fanshu}, 
		belongs to $H_{\varepsilon_n}$.
		Since
		$
		V(\varepsilon_n \cdot +\varepsilon_n y_{n,1})\,|U_{0,1}|^2\chi_{\varepsilon_n}^2\to V(y_1)|U_{0,1}|^2$ a.e. in $\mathbb{R}^2$
		and, by \eqref{unifboundVA},
		\[
		|V(\varepsilon_n \cdot+\varepsilon_n y_{n,1})|\,|U_{0,1}|^2\,\chi_{\varepsilon_n}^2\le C|U_{0,1}|^2\in L^1(\mathbb{R}^2,\mathbb{R}),
		\]
		applying the Lebesgue Dominated Convergence Theorem, we get
		\begin{equation}\label{mo0}
			\int_{\mathbb{R}^2} V_{\varepsilon_n}( x) |\Psi_{n,1}|^2 dx=\int_{\mathbb{R}^2} V_{\varepsilon_n}( x+y_{n,1}) |U_{0,1} \chi_{\varepsilon_n}|^2 dx \to V(y_1)  \int_{\mathbb{R}^2}  |U_{0,1} |^2 dx. 
		\end{equation}   
		In addition, 
		\begin{align*}
			\int_{\mathbb{R}^2}|\nabla_{A_{\varepsilon_n}} \Psi_{n,1}|^2dx
			&= \int_{\mathbb{R}^2} |\chi_{\varepsilon_n} \nabla U_{0,1}|^2 dx +  \int_{\mathbb{R}^2}\left( |U_{0,1} \nabla \chi_{\varepsilon_n}|^2  + 2 \operatorname{Re}  \chi_{\varepsilon_n} \nabla U_{0,1} \overline{U_{0,1}}\cdot \nabla \chi_{\varepsilon_n}\right) dx \\
			&\quad +\int_{\mathbb{R}^2}  |A(y_1) - A_{\varepsilon_n}(x+y_{n,1})|^2 |U_{0,1} \chi_{\varepsilon_n}|^2  dx\\
			&\quad - 2 \operatorname{Im} \int_{\mathbb{R}^2}  (A(y_1) - A_{\varepsilon_n}(x+y_{n,1})) U_{0,1} \chi_{\varepsilon_n} \overline{\nabla (U_{0,1} \chi_{\varepsilon_n})} dx.
		\end{align*}
		Arguing as for \eqref{mo0}, we have
		\[
		\lim_{n\to +\infty}\int_{\mathbb{R}^2}|A(y_1) - A_{\varepsilon_n}(x+y_{n,1})|^2 |U_{0,1} \chi_{\varepsilon_n}|^2dx=0.
		\]
		Moreover,
		\begin{equation*}
			\left|\int_{\mathbb{R}^2} \left(|U_{0,1} \nabla \chi_{\varepsilon_n}|^2  + 2 \operatorname{Re}  \chi_{\varepsilon_n} \nabla U_{0,1} \overline{U_{0,1}}\cdot \nabla \chi_{\varepsilon_n}\right) dx\right|\leq \frac{4}{\log^2 \varepsilon_n}|U_{0,1} |^2_2 + \frac{4}{|\log \varepsilon_n|}  |\nabla U_{0,1}|_2 |U_{0,1}|_2=o_n(1)  
		\end{equation*}
		and
		\begin{align*}
			&\left|  \operatorname{Im} \int_{\mathbb{R}^2}  (A(y_1) - A_{\varepsilon_n}(x+y_{n,1})) U_{0,1} \chi_{\varepsilon_n} \overline{\nabla (U_{0,1} \chi_{\varepsilon_n})} dx \right|\\
			&\quad\leq  \left( \int_{\mathbb{R}^2} |A(y_1) - A_{\varepsilon_n}(x+y_{n,1})|^2 |U_{0,1} \chi_{\varepsilon_n}|^2 dx \right)^{\frac{1}{2}} \left(2|\nabla U_{0,1}|_2^2+\frac{8}{\log^2\varepsilon_n}|U_{0,1}|_2^2\right)^\frac{1}{2}= o_n(1).
		\end{align*}
		Then
		\begin{equation}\label{mo}
			\begin{split}
				\|\Psi_{n,1}\|_{\varepsilon_n}^2 = \int_{\mathbb{R}^2} |\nabla U_{0,1}|^2 dx + V(y_1) \int_{\mathbb{R}^2} |U_{0,1}|^2 dx + o_n(1).
			\end{split}
		\end{equation}
		
		Furthermore,   by (\ref{g3}) and \eqref{p2.5}, fixed $q > 2$, for any $\xi > 0$  and $\alpha > 4\pi$,  there exists $C > 0$ such that
		\[
		|G(\varepsilon_n x+\varepsilon_{n}y_{n,1}, |U_{0,1}\chi_{\varepsilon_n}|^2)| \leq F(|U_{0,1}\chi_{\varepsilon_n}|^2) \leq \xi|U_{0,1}|^2 + C|U_{0,1}|^q(e^{\alpha|U_{0,1}|^2} - 1) 
		\]
		that belongs\footnote{Using again the H\"older inequality and Lemma \ref{2.1}.} to $L^1(\mathbb{R}^2, \mathbb{R})$.
		Then, since \(0\leq\chi_{\varepsilon_n}\leq1\),
		\(\chi_{\varepsilon_n}\to1\) a.e., and \(0\leq\widehat F\leq F\), the
		previous inequality and the Dominated Convergence Theorem give
		\[
		F(|U_{0,1}\chi_{\varepsilon_n}|^2)\to F(|U_{0,1}|^2),\qquad
		\widehat F(|U_{0,1}\chi_{\varepsilon_n}|^2)\to\widehat F(|U_{0,1}|^2)
		\quad\hbox{in }L^1(\mathbb{R}^2).
		\]
		Thus, since \(G(\varepsilon_nx+\varepsilon_ny_{n,1},\cdot)=\widehat F(\cdot)
		+\chi_{\Lambda_\Gamma}(\varepsilon_n(x+y_{n,1}))
		\bigl(F(\cdot)-\widehat F(\cdot)\bigr)\),
		\eqref{localweakproduct} gives
		\[
		\int_{\mathbb{R}^2}G(\varepsilon_n x, |\Psi_{n,1}|^2)\,dx
		=\int_{\mathbb{R}^2}G(\varepsilon_n x+\varepsilon_n y_{n,1},
		|U_{0,1}\chi_{\varepsilon_n}|^2)\,dx
		\to \int_{\mathbb{R}^2}G_{0,1}(x,|U_{0,1}|^2)\,dx.
		\]
		Therefore, 
		\begin{equation}
			\label{strong}
			J_{\varepsilon_n}(\Psi_{n,1}) \to J_{0,1}(U_{0,1}) \text{ as } n \to +\infty.
		\end{equation}

		Now, for every $n\in \mathbb{N}$, let us define \( u_n^1 := u_n - \Psi_{n,1} \).
		We claim that, after extracting a subsequence if necessary,  $\{u_n^1\}$ is a $(PS)_{c-J_{0,1}(U_{0,1})}^*$ sequence.

		First, let us prove that, up to  a subsequence,
		\begin{equation}\label{p3.13}
			J_{\varepsilon_{n}}(u_{n}^{1})=J_{\varepsilon_{n}}(u_{n})-J_{\varepsilon_{n}}(\Psi_{n,1})+o_{n}(1)
		\end{equation}
		which, combined with \eqref{strong}, implies that
		\begin{equation*}
			J_{\varepsilon_{n}}(u_{n}^{1})\rightarrow c-J_{0,1}(U_{0,1})\,\ \text{as}\,\ n\rightarrow +\infty.
		\end{equation*}
		To this end, let
		\begin{align*}
			A_n&:= \|u_n^1\|_{\varepsilon_n}^2-\|u_n\|_{\varepsilon_n}^2+\|\Psi_{n,1}\|_{\varepsilon_n}^2= 2\langle \Psi_{n,1},\Psi_{n,1}-u_n\rangle_{\varepsilon_n}\\
			&= 2 \operatorname{Re} \int_{B_{2|\log \varepsilon_n|}(0)}  \nabla_{A_{\varepsilon_n}(x+y_{n,1})} ( u_{0,1} \chi_{\varepsilon_n}-v_n
			) \overline{\nabla_{A_{\varepsilon_n}(x+y_{n,1})} (u_{0,1} \chi_{\varepsilon_n})}dx\\
			&\qquad
			+2 \operatorname{Re} \int_{B_{2|\log \varepsilon_n|}(0)}V_{\varepsilon_n}( x+y_{n,1}) (u_{0,1} \chi_{\varepsilon_n}-v_n) \overline{u}_{0,1} \chi_{\varepsilon_n} dx
		\end{align*}
		where we recall that $v_{n}=u_{n}(\cdot+y_{n,1})$.
		
		For $R>0$ and $n$ sufficiently large (such that $\overline{B_{R}(0)}\subset B_{|\log \varepsilon_n|}(0)$ so that $w_n=v_n\chi_{\varepsilon_n}=v_n$ on $\overline{B_{R}(0)}$),
		\begin{align*}  
			&\left|\operatorname{Re} \int_{B_{R}(0)} \biggl( \nabla_{A_{\varepsilon_n}(x+y_{n,1})} (u_{0,1} \chi_{\varepsilon_n}-v_n) \overline{\nabla_{A_{\varepsilon_n}(x+y_{n,1})} (u_{0,1}\chi_{\varepsilon_n})}
			-\nabla_{A(y_1)} (u_{0,1} - w_n ) \overline{\nabla_{A(y_1)} u_{0,1}}\biggr) dx\right|\\
			&\quad\leq
			\int_{B_{R}(0)}|A_{\varepsilon_n}(x+y_{n,1})-A(y_1)| |\nabla(u_{0,1} -w_n)| |u_{0,1}|dx
			+\int_{B_{R}(0)} |u_{0,1}-w_n||A(y_1)| |\nabla_{A(y_1)}u_{0,1} |dx\\
			&\qquad
			+ \int_{B_{R}(0)}
			|\nabla_{A_{\varepsilon_n}(x+y_{n,1})} u_{0,1}| |u_{0,1} -w_n| |A_{\varepsilon_n}(x+y_{n,1})|dx.
		\end{align*}
		Moreover, since $\varepsilon_n (x + y_{n,1})\to y_1$ for $x\in B_R(0)$, by the H\"older inequality, \eqref{unifboundVA}, and \eqref{wnbdd}, using the Lebesgue Dominated Convergence Theorem,
		\begin{align*}
			&\int_{B_{R}(0)}|A_{\varepsilon_n}(x+y_{n,1})-A(y_1)| |\nabla(u_{0,1} -w_n)| |u_{0,1}|dx\\
			&\quad\leq\left(\int_{B_{R}(0)}|A_{\varepsilon_n}(x+y_{n,1})-A(y_1)|^2|u_{0,1}|^2dx\right)^{1/2} |\nabla(u_{0,1} -w_n)|_2=o_n(1).
		\end{align*}
		In addition, $w_n\to u_{0,1}$ in $L_{\rm loc}^2(\mathbb{R}^2,\mathbb{C})$ implies 
		\begin{align*}
			\int_{B_{R}(0)} |u_{0,1}-w_n||A(y_1)| |\nabla_{A(y_1)}u_{0,1} |dx
			=o_n(1),
		\end{align*}
		and, together with the equivalence of the norm in \eqref{fanshu1} and \eqref{unifboundVA},
		\[
		\int_{B_{R}(0)}
		|\nabla_{A_{\varepsilon_n}(x+y_{n,1})} u_{0,1}| |u_{0,1} -w_n| |A_{\varepsilon_n}(x+y_{n,1})|dx
		\leq C\left(\int_{B_{R}(0)}
		|u_{0,1} -w_n|^2 dx\right)^\frac{1}{2}
		\|u_{0,1}\|
		=o_n(1)
		\]
		and
		\[\operatorname{Re}\int_{B_{R}(0)} V_{\varepsilon_n}(x+y_{n,1}) (u_{0,1} \chi_{\varepsilon_n}-v_n) \overline{u}_{0,1}\chi_{\varepsilon_n}  dx-\operatorname{Re}\int_{B_{R}(0)} V(y_1) (u_{0,1} -w_n) \overline{u}_{0,1}  dx=o_n(1).\]
		
		Consequently, fixed $R>0$, since $w_n\rightharpoonup u_{0,1}$ in $H^{1}(B_{R}(0),\mathbb{C})$ as $n\rightarrow+\infty$, for $n$ sufficiently large, we have
		\begin{align*}
			& 2 \operatorname{Re} \int_{B_{R}(0)}  \nabla_{A_{\varepsilon_n}(x+y_{n,1})} (u_{0,1} \chi_{\varepsilon_n}-v_n) \overline{\nabla_{A_{\varepsilon_n}(x+y_{n,1})} (u_{0,1}\chi_{\varepsilon_n})}dx\\
			&\qquad
			+2\operatorname{Re}\int_{B_{R}(0)} V_{\varepsilon_n}(x+y_{n,1}) (u_{0,1} \chi_{\varepsilon_n}-v_n) \overline{u}_{0,1}\chi_{\varepsilon_n}  dx\\
			&\quad=2\operatorname{Re} \int_{B_{R}(0)} \biggl( \nabla_{A(y_1)} ( u_{0,1}-w_n ) \overline{\nabla_{A(y_1)} u_{0,1}} + V(y_1) ( u_{0,1}-w_n ) \overline{u}_{0,1} \biggr) dx+o_n(1)=o_{n}(1).
		\end{align*}

		Moreover, since, due 
		to the equivalence of the norm in \eqref{fanshu},
		\[\|v_n\|_{H^1(B_{2|\log \varepsilon_n|}(0),\mathbb{C})}\leq C\left(\int_{B_{2|\log \varepsilon_n|}(0)} (|\nabla_{A_{\varepsilon_n}(x+y_{n,1})} v_n|^{2}+V_{\varepsilon_n}( x+y_{n,1})|v_n|^{2})dx\right)^{1/2} \leq C\|u_n\|_{\varepsilon_n}\leq C,\]
		and, for $n$ large enough, by \eqref{unifboundVA},
		\[
		|\nabla_{A_{\varepsilon_n}(x+y_{n,1})}(u_{0,1}\chi_{\varepsilon_n})|_2^2
		\leq2|\nabla(u_{0,1}\chi_{\varepsilon_n})|_2^2
		+2\int_{\mathbb{R}^2}|A_{\varepsilon_n}(x+y_{n,1})|^2|u_{0,1}\chi_{\varepsilon_n}|^2dx\leq
		C+\frac{16}{\log^2\varepsilon_n}|u_{0,1}|_2^2\leq C.
		\]
		and, analogously,
		\[
		\int_{B_R(0)^c}	|\nabla_{ A_{\varepsilon_n}(x+y_{n,1})} (u_{0,1}\chi_{\varepsilon_n})|^2dx
		\leq C \int_{B_R(0)^c}	(|\nabla u_{0,1}|^2+ |u_{0,1}|^2)dx,
		\]
		for any $\delta>0$,  there exists $R>0$ such that, for $n$ sufficiently large, 
		\begin{align*}
			&\biggl|\operatorname{Re} \int_{B_{2|\log \varepsilon_n|}(0)\setminus B_{R}(0)} \nabla_{A_{\varepsilon_n}(x+y_{n,1})} (u_{0,1} \chi_{\varepsilon_n}-v_n) \overline{\nabla_{A_{\varepsilon_n}(x+y_{n,1})} (u_{0,1}\chi_{\varepsilon_n})}dx\\
			&\qquad
			+\operatorname{Re} \int_{B_{2|\log \varepsilon_n|}(0)\setminus B_{R}(0)} V_{\varepsilon_n}(x+y_{n,1}) (u_{0,1} \chi_{\varepsilon_n}-v_n) \overline{u}_{0,1}\chi_{\varepsilon_n}  dx\biggr|\\
			&\quad\leq
			|\nabla_{ A_{\varepsilon_n}(x+y_{n,1})} ( u_{0,1} \chi_{\varepsilon_n}-v_n)|_{L^2(B_{2|\log \varepsilon_n|}(0))}|\nabla_{ A_{\varepsilon_n}(x+y_{n,1})} (u_{0,1}\chi_{\varepsilon_n})|_{L^2(B_{2|\log \varepsilon_n|}(0)\setminus B_{R}(0))}\\
			&\qquad+C|( u_{0,1} \chi_{\varepsilon_n}-v_n)|_{L^2(B_{2|\log \varepsilon_n|}(0))}|u_{0,1}|_{L^2(B_{2|\log \varepsilon_n|}(0)\setminus B_{R}(0))}\leq\delta.
		\end{align*}
		Consequently, $A_n=o_n(1)$.  
		
		Moreover, for the remaining part, observe that
		\begin{align*}
			&\int_{\mathbb{R}^2} \left(G(\varepsilon_{n}x, |u_{n}^{1}|^{2})-G(\varepsilon_{n}x, |u_{n}|^{2})+G(\varepsilon_{n}x, |\Psi_{n,1}|^{2})\right)dx\\
			&\quad=\int_{\mathbb{R}^2}\big(G(\varepsilon_n x+\varepsilon_ny_{n,1}, |v_n-u_{0,1}\chi_{\varepsilon_{n}}|^{2})-G(\varepsilon_n x+\varepsilon_ny_{n,1}, |v_n|^{2})+G(\varepsilon_n x+\varepsilon_ny_{n,1}, |u_{0,1}\chi_{\varepsilon_{n}}|^{2})\big)dx.
		\end{align*}

		Fixed $q>2$, for any $\xi>0$ and $\alpha>4\pi$, using \eqref{g3}, \eqref{p2.2}, and the Young inequality,  there exists a constant $C=C(\alpha,q, \xi)>0$ such that
		\begin{align*}
			&|G(\varepsilon_n x+\varepsilon_ny_{n,1},|v_n-u_{0,1}\chi_{\varepsilon_{n}}|^{2})-G(\varepsilon_n x+\varepsilon_ny_{n,1}, |v_n|^{2})|\\
			&\quad \leq 2\int_{0}^{1}|g(\varepsilon_n x+\varepsilon_ny_{n,1},|v_n-tu_{0,1}\chi_{\varepsilon_{n}}|^{2})||v_n-tu_{0,1}\chi_{\varepsilon_{n}}||u_{0,1}\chi_{\varepsilon_{n}}|dt\\
			&\quad \leq\int_{0}^{1}\left(\xi|v_n-tu_{0,1}\chi_{\varepsilon_{n}}||u_{0,1}\chi_{\varepsilon_{n}}|
			+C|v_n-tu_{0,1}\chi_{\varepsilon_{n}}|^{q-1}(e^{\alpha|v_n-tu_{0,1}\chi_{\varepsilon_{n}}|^{2}}-1)|u_{0,1}\chi_{\varepsilon_{n}}|\right)dt\\
			& \quad \leq \xi(|v_n|+|u_{0,1}|)|u_{0,1}|+C(|v_n|+|u_{0,1}|)^{q-1}(e^{\alpha(|v_n|+|u_{0,1}|)^{2}}-1)|u_{0,1}|\\
			& \quad \leq \xi\left(|v_n|^{2}+|v_n|^q(e^{\alpha(|v_n|+|u_{0,1}|)^{2}}-1)\right)
			+C\left(|u_{0,1}|^{2}+|u_{0,1}|^q(e^{\alpha(|v_n|+|u_{0,1}|)^{2}}-1)\right).
		\end{align*}
		Then, by \eqref{g3} and \eqref{p2.5}, we get
		\begin{equation}\label{p3.16}
			\begin{split}
				&|G(\varepsilon_n x+\varepsilon_ny_{n,1}, |v_n-u_{0,1}\chi_{\varepsilon_{n}}|^{2})-G(\varepsilon_n x+\varepsilon_ny_{n,1}, |v_n|^{2})+G(\varepsilon_n x+\varepsilon_ny_{n,1}, |u_{0,1}\chi_{\varepsilon_{n}}|^{2})|\\
				& \quad \leq \xi\left(|v_n|^{2}+|v_n|^q(e^{\alpha(|v_n|+|u_{0,1}|)^{2}}-1)\right)
				+C\left(|u_{0,1}|^{2}+|u_{0,1}|^q(e^{\alpha(|v_n|+|u_{0,1}|)^{2}}-1)\right).
			\end{split}
		\end{equation}
		
		Now, let 
		\begin{align*}
			G_{n,1}^{\xi}:=\max\bigg\{&\left|G(\varepsilon_n x+\varepsilon_ny_{n,1}, |v_n-u_{0,1}\chi_{\varepsilon_{n}}|^{2})-G(\varepsilon_n x+\varepsilon_ny_{n,1}, |v_n|^{2})+G(\varepsilon_n x+\varepsilon_ny_{n,1}, |u_{0,1}\chi_{\varepsilon_{n}}|^{2})\right|\\
			&-{}\xi(|v_n|^{2}+|v_n|^q(e^{\alpha(|v_n|+|u_{0,1}|)^{2}}-1)), 0\bigg\}.
		\end{align*}
		Note that, by \eqref{p3.16},
		\begin{equation*}
			G_{n,1}^{\xi}\leq C\left(|u_{0,1}|^{2}+|u_{0,1}|^q(e^{\alpha(|v_n|+|u_{0,1}|)^{2}}-1)\right)
		\end{equation*}
		and, since $v_n\rightarrow u_{0,1}$ a.e. in $\mathbb{R}^2$,
		\begin{equation*}
			G_{n,1}^{\xi}\rightarrow 0\,\ \text{and}\,\ e^{\alpha(|v_n|+|u_{0,1}|)^{2}}\rightarrow e^{4\alpha|u_{0,1}|^{2}}\,\ \text{a.e.}\,\ \text{in}\,\ \mathbb{R}^{2}\,\ \text{as}\,\ n\rightarrow +\infty.
		\end{equation*}
		
		Moreover, since \(|u_{0,1}|^{q}\in L^{r}(\mathbb{R}^{2},\mathbb{R})\) for every $r> 1$,
		by the H\"older inequality, \eqref{e}, and Lemma \ref{2.1}, $|u_{0,1}|^{q}(e^{4\alpha |u_{0,1}|^{2}}-1)\in  L^{1}(\mathbb{R}^{2},\mathbb{R})$, being
		\[
		\int_{\mathbb{R}^2} |u_{0,1}|^{q}(e^{4\alpha |u_{0,1}|^{2}}-1)dx
		\leq |u_{0,1}|_{rq}^{q}\left(\int_{\mathbb{R}^2} (e^{4\alpha r'|u_{0,1}|^{2}}-1)dx\right)^\frac{1}{r'},
		\]
		where $r'$ is the conjugate exponent of $r$.
		
		Now let us show that
		\begin{equation}\label{a2.15}
			\int_{\mathbb{R}^{2}}|u_{0,1}|^{q}(e^{\alpha(|v_{n}|+|u_{0,1}|)^{2}}-1)dx
			\longrightarrow
			\int_{\mathbb{R}^{2}}|u_{0,1}|^{q}(e^{4\alpha|u_{0,1}|^{2}}-1)dx.
		\end{equation}
		Since $|v_n|\rightharpoonup |u_{0,1}|$ in $H^{1}(\mathbb{R}^{2},\mathbb{R})$ as $n\rightarrow +\infty$, by the weak lower-semi continuity and \eqref{p3.5}, we have
		\[|\nabla|u_{0,1}||_{2}^{2}\leq  |\nabla|u_{0,1}||_{2}^{2}+V_0|u_{0,1}|_2^2\leq\mathop{\limsup}\limits_{n\rightarrow +\infty} (|\nabla|v_{n}||_{2}^{2}+V_0|v_n|_2^2)=\mathop{\limsup}\limits_{n\rightarrow +\infty} (|\nabla|u_{n}||_{2}^{2}+V_0|u_n|_2^2)=\iota^{2}<\frac{1}{8}.\]
		Then for any fixed $\kappa\in (0,1-4\iota^{2})$ and $n$ large enough,
		\begin{equation}\label{a2.16}
			|\nabla(|v_n|+|u_{0,1}|)|_{2}^{2}\leq 2|\nabla|v_n||_{2}^{2}+2|\nabla|u_{0,1}||_{2}^{2}\leq 4\iota^{2}+\kappa<1.
		\end{equation}
		Let
		$
		p^{*}\in \left(1,\frac{1}{4\iota^{2} + \kappa}\right)$, $\alpha\in \left(4\pi ,\frac{4\pi}{p^{*}(4\iota^{2} + \kappa)}\right)$.
		By \eqref{e}, \eqref{a2.16}, and Lemma \ref{2.1}, we get
		\begin{equation}\label{a2.18}
			\int_{\mathbb{R}^{2}}(e^{\alpha (|v_n|+|u_{0,1}|)^{2}} - 1)^{p^{*}}d x\leq \int_{\mathbb{R}^{2}}(e^{(4\iota^{2} + \kappa)p^{*}\alpha \left(\frac{|v_n|+|u_{0,1}|}{\sqrt{4\iota^{2} + \kappa}}\right)^{2}} - 1)d x\leq C, 
		\end{equation}
		and so, by Fatou's lemma,
		\(e^{4\alpha|u_{0,1}|^2}-1\in L^{p^*}(\mathbb R^2, \mathbb{R})\).\\
		Fix now
		\(\phi\in C_c^\infty(\mathbb R^2, \mathbb{R})\) and \(\sigma>0\).  By the
		Severini--Egorov theorem \cite[Theorem 4.29]{Brezis}, there is a
		measurable set \(A\subset\operatorname{supp}\phi\) such that
		\(|\operatorname{supp}\phi\setminus A|<\sigma\) and
		\(e^{\alpha(|v_n|+|u_{0,1}|)^2}-1\to e^{4\alpha|u_{0,1}|^2}-1\)
		uniformly on \(A\). If \((p^*)'=p^*/(p^*-1)\), then
		\begin{align*}
			\left|\int_{\mathbb R^2}
			\left(e^{\alpha(|v_n|+|u_{0,1}|)^2}-e^{4\alpha|u_{0,1}|^2}\right)
			\phi\,dx\right|
			&\leq |\phi|_\infty\left[ |A|
			\left|e^{\alpha(|v_n|+|u_{0,1}|)^2}
			-e^{4\alpha|u_{0,1}|^2}\right|_{L^\infty(A)}\right.\\
			&\quad+\left. |\operatorname{supp}\phi\setminus A|^\frac{1}{(p^*)'}
			\left(\left|e^{\alpha(|v_n|+|u_{0,1}|)^2}-1\right|_{p^*}
			+\left|e^{4\alpha|u_{0,1}|^2}-1\right|_{p^*}\right)\right]
			\\
			&\leq |\phi|_\infty \left[|A|
			\left|e^{\alpha(|v_n|+|u_{0,1}|)^2}
			-e^{4\alpha|u_{0,1}|^2}\right|_{L^\infty(A)}
			+C\sigma^{1/(p^*)'}\right].
		\end{align*}
		Letting first \(n\to+\infty\) and then \(\sigma\to0^+\), we obtain
		\[
		\int_{\mathbb R^2}\left(e^{\alpha(|v_n|+|u_{0,1}|)^2}-1\right)\phi\,dx
		\to\int_{\mathbb R^2}\left(e^{4\alpha|u_{0,1}|^2}-1\right)\phi\,dx.
		\]
		Since
		\(C_c^\infty(\mathbb R^2)\) is dense in
		\(L^{(p^*)'}(\mathbb R^2)\) and
		\(\{e^{\alpha(|v_n|+|u_{0,1}|)^2}-1\}\) is bounded in \(L^{p^*}\),
		it follows that
		\[
		e^{\alpha(|v_n|+|u_{0,1}|)^2}-1
		\rightharpoonup e^{4\alpha|u_{0,1}|^2}-1
		\quad\text{in }L^{p^*}(\mathbb R^2,\mathbb R).
		\]
		Therefore, since \(|u_{0,1}|^{q}\in L^{(p^{*})'}(\mathbb{R}^{2},\mathbb{R})\), the previous weak convergence tested in \(|u_{0,1}|^q\) gives \eqref{a2.15}, and, by \eqref{a2.18}, $|u_{0,1}|^{q}(e^{\alpha(|v_n|+ |u_{0,1}|)^{2}}-1)\in L^1(\mathbb{R}^2,\mathbb{R})$.
		Hence, Lemma \ref{GDCT}
		yields
		\[
		G_{n,1}^{\xi}\to0
		\quad\text{in }L^1(\mathbb{R}^2,\mathbb{R})
		\quad\text{as }n\to+\infty.
		\]
		On the other hand, by the definition of \(G_{n,1}^{\xi}\),
		\begin{align*}
			&\left|G(\varepsilon_n x+\varepsilon_ny_{n,1}, |v_n-u_{0,1}\chi_{\varepsilon_{n}}|^{2})-G(\varepsilon_n x+\varepsilon_ny_{n,1}, |v_n|^{2})+G(\varepsilon_n x+\varepsilon_ny_{n,1}, |u_{0,1}\chi_{\varepsilon_{n}}|^{2})\right|\\
			&\quad\leq \xi(|v_n|^{2} + |v_n|^{q}(e^{\alpha (|v_n|+|u_{0,1}|)^{2}} - 1)) + G_{n,1}^{\xi}.
		\end{align*}
		Using the H\"older inequality, the boundedness of the sequence \(\{|v_n|\}\) in $H^1(\mathbb{R}^2,\mathbb{R})$, the Sobolev inequality, and \eqref{a2.18}, we have
		\[
		\int_{\mathbb{R}^{2}}\left(|v_n|^{q}(e^{\alpha (|v_n|+|u_{0,1}|)^{2}} - 1)\right)\,dx
		\leq |v_n|_{q(p^*)'}^{q}\left(\int_{\mathbb{R}^{2}}(e^{\alpha (|v_n|+|u_{0,1}|)^{2}} - 1)^{p^{*}}dx\right)^{\frac{1}{p^{*}}} \leq C.
		\]
		Thus, the arbitrariness of \(\xi\) allows us to conclude that
		\[
		\int_{\mathbb{R}^2} \left(G(\varepsilon_{n}x, |u_{n}^{1}|^{2})-G(\varepsilon_{n}x, |u_{n}|^{2})+G(\varepsilon_{n}x, |\Psi_{n,1}|^{2})\right)dx=o_{n}(1).		
		\]
		Hence, the proof of \eqref{p3.13} is complete.
		
		Now let us prove that, up to a subsequence,
		\begin{equation}\label{p3.12}
			\|J^{\prime}_{\varepsilon_{n}}(u^1_{n})\|_{\varepsilon_n}
			\to 0		\,\ \text{as}\,\ n\rightarrow +\infty.
		\end{equation}
		For any $\phi_{n}\in H_{\varepsilon_{n}}$ with $\|\phi_{n}\|_{\varepsilon_{n}}\leq 1$,
		\begin{align*}
			J^{\prime}_{\varepsilon_{n}}(u_{n}^{1})[\phi_{n}]
			&=J^{\prime}_{\varepsilon_{n}}(u_{n})[\phi_{n}]-J^{\prime}_{\varepsilon_{n}}(\Psi_{n,1})[\phi_{n}]\\
			&\quad
			-\operatorname{Re}\int_{\mathbb{R}^{2}} \big[g(\varepsilon_{n}x, |u_{n}^{1}|^{2})u_{n}^{1}-g(\varepsilon_{n}x, |u_{n}|^{2})u_{n}
			+g(\varepsilon_{n}x,|\Psi_{n,1}|^{2})\Psi_{n,1}\big]\overline{\phi}_{n}dx.
		\end{align*}
		
		By assumption, $
		\|J^{\prime}_{\varepsilon_{n}}(u_{n})\|_{\varepsilon_n}
		=o_n(1)$. Moreover, 
		\begin{equation}\label{I1}
			\begin{split}
				J^{\prime}_{\varepsilon_{n}}(\Psi_{n,1})[\phi_{n}]
				&=
				\operatorname{Re}\int_{\mathbb{R}^2} \nabla_{A_{\varepsilon_n}(x+y_{n,1})} (u_{0,1}\chi_{\varepsilon_{n}}) \overline{\nabla_{A_{\varepsilon_n}(x+y_{n,1})} \phi_{n}(x+y_{n,1})}dx\\
				&\quad +\operatorname{Re}\int_{\mathbb{R}^2}  V_{\varepsilon_n}( x+y_{n,1})u_{0,1}\chi_{\varepsilon_{n}}\overline{\phi_{n}(x+y_{n,1})}dx\\
				&\quad 
				-\operatorname{Re}\int_{\mathbb{R}^2}g(\varepsilon_n x+\varepsilon_n y_{n,1}, |u_{0,1}\chi_{\varepsilon_{n}}|^{2})u_{0,1}\chi_{\varepsilon_{n}}\overline{\phi_{n}(x+y_{n,1})}dx.       
			\end{split}
		\end{equation}
		Since, by \eqref{unifboundVA},
		\[
		\begin{aligned}
			\int_{B_{2|\log\varepsilon_n|}(0)} \left|  \nabla_{A_{\varepsilon_n}(x+y_{n,1})}u_{0,1} \right|^2 dx & \leq\int_{B_{2|\log\varepsilon_n|}(0)} 2\left( |\nabla u_{0,1}|^2 + |A_{\varepsilon_n}(x+y_{n,1})u_{0,1}|^2 \right)  dx \\
			& \leq 2 \int_{B_{2|\log\varepsilon_n|}(0)} |\nabla u_{0,1}|^2 dx + C\int_{B_{2|\log\varepsilon_n|}(0)} |u_{0,1}|^2 dx\leq C\|u_{0,1}\|^2,
		\end{aligned}
		\] 
		and so
		\begin{align*}
			&\left|\operatorname{Re}\int_{\mathbb{R}^2}
			\big(i\overline{\phi_n(x+y_{n,1})}\nabla\chi_{\varepsilon_n}
			\nabla_{A_{\varepsilon_n}(x+y_{n,1})}u_{0,1}+iu_{0,1}\nabla\chi_{\varepsilon_n}
			\overline{\nabla_{A_{\varepsilon_n}(x+y_{n,1})}\phi_n(x+y_{n,1})}\big)
			dx\right|\\
			&\quad\leq\frac{2}{|\log \varepsilon_n|}
			\left(|\phi_n|_2|\nabla_{A_{\varepsilon_n}(x+y_{n,1})}u_{0,1}|_{L^2(B_{2|\log\varepsilon_n|}(0))}+|u_{0,1}|_2 |\nabla_{A_{\varepsilon_n}} \phi_{n}|_2\right)
			\leq \frac{C}{|\log \varepsilon_n|}=o_{n}(1)
		\end{align*}
		uniformly with respect to $\phi_{n}\in H_{\varepsilon_{n}}$ with $\|\phi_{n}\|_{\varepsilon_{n}}\leq 1$, for the first term of \eqref{I1} we have
		\begin{align*}
			&\operatorname{Re}\int_{\mathbb{R}^2}\nabla_{A_{\varepsilon_n}(x+y_{n,1})}(u_{0,1}\chi_{\varepsilon_n})
			\overline{\nabla_{A_{\varepsilon_n}(x+y_{n,1})}\phi_n(x+y_{n,1})}dx\\
			&\quad=\operatorname{Re}\int_{\mathbb{R}^2}\chi_{\varepsilon_n}\nabla_{A_{\varepsilon_n}(x+y_{n,1})}u_{0,1}
			\overline{\nabla_{A_{\varepsilon_n}(x+y_{n,1})}\phi_n(x+y_{n,1})}dx\\
			&\qquad-\operatorname{Re}\int_{\mathbb{R}^2}iu_{0,1}\nabla\chi_{\varepsilon_n}
			\overline{\nabla_{A_{\varepsilon_n}(x+y_{n,1})}\phi_n(x+y_{n,1})}dx\\
			&\quad=\operatorname{Re}\int_{\mathbb{R}^2}\chi_{\varepsilon_n}\nabla_{A_{\varepsilon_n}(x+y_{n,1})}u_{0,1}
			\overline{\nabla_{A_{\varepsilon_n}(x+y_{n,1})}\phi_n(x+y_{n,1})}dx\\
			&\qquad+\operatorname{Re}\int_{\mathbb{R}^2}i\overline{\phi_n(x+y_{n,1})}\nabla\chi_{\varepsilon_n}
			\nabla_{A_{\varepsilon_n}(x+y_{n,1})}u_{0,1}dx+o_n(1)\\
			&\quad=\operatorname{Re}\int_{\mathbb{R}^2}\nabla_{A_{\varepsilon_n}(x+y_{n,1})}u_{0,1}
			\overline{\nabla_{A_{\varepsilon_n}(x+y_{n,1})}(\chi_{\varepsilon_n}\phi_n(x+y_{n,1}))}dx+o_n(1).
		\end{align*}
		Moreover, by the H\"older inequality,
		\begin{align*}  
			&\left|\operatorname{Re} \int_{\mathbb{R}^2} \biggl( \nabla_{A_{\varepsilon_n}(x+y_{n,1})} u_{0,1} \overline{\nabla_{A_{\varepsilon_n}(x+y_{n,1})} (\chi_{\varepsilon_{n}}\phi_{n}(x+y_{n,1})})-\nabla_{A(y_1)} u_{0,1} \overline{\nabla_{A(y_1)} (\phi_{n}(x+y_{n,1})\chi_{\varepsilon_{n}})}\biggr) dx\right|\\
			&\quad\leq
			\int_{B_{2|\log \varepsilon_n|}(0)}|A_{\varepsilon_n}(x+y_{n,1})-A(y_1)| |u_{0,1}| |\nabla_{A_{\varepsilon_n}(x+y_{n,1})} (\chi_{\varepsilon_{n}}\phi_{n}(x+y_{n,1}))|dx\\
			&\qquad
			+\int_{B_{2|\log \varepsilon_n|}(0)} |\nabla_{A(y_1)} u_{0,1}||A_{\varepsilon_n}(x+y_{n,1})-A(y_1)| |(\chi_{\varepsilon_{n}}\phi_{n}(x+y_{n,1})) |dx\\
			&\quad\leq\left(\int_{B_{2|\log \varepsilon_n|}(0)}\left(|A_{\varepsilon_n}(x+y_{n,1})-A(y_1)| |u_{0,1} |\right)^2dx\right)^{1/2}|\nabla_{A_{\varepsilon_n}(x+y_{n,1})} (\chi_{\varepsilon_{n}}\phi_{n}(x+y_{n,1}))|_2\\
			&\qquad+\left(\int_{B_{2|\log \varepsilon_n|}(0)} \left(|A_{\varepsilon_n}(x+y_{n,1})-A(y_1)| |\nabla_{A(y_1)} u_{0,1} |\right)^2 dx\right)^{1/2}|\chi_{\varepsilon_{n}}\phi_{n}(x+y_{n,1})|_2.
		\end{align*}
		Since $\left(|A_{\varepsilon_n}(\cdot+y_{n,1})-A(y_1)| |u_{0,1}|\right)^2\chi_{B_{2|\log\varepsilon_n|}(0)}\to 0$ a.e. in $\mathbb{R}^2$ and, by \eqref{unifboundVA},
		\[
		\left(|A_{\varepsilon_n}(\cdot+y_{n,1})-A(y_1)| |u_{0,1}|\right)^2\chi_{B_{2|\log\varepsilon_n|}(0)}\le C|u_{0,1}|^2\in L^1(\mathbb{R}^2,\mathbb{R}),
		\]
		applying the Lebesgue Dominated Convergence Theorem, we get
		\[
		\int_{B_{2|\log \varepsilon_n|}(0)}\left(|A_{\varepsilon_n}(x+y_{n,1})-A(y_1)| |u_{0,1}|\right)^2dx=o_n(1).
		\]
		Similarly,  using \(C|\nabla_{A(y_1)}u_{0,1}|^2\in L^1(\mathbb R^2,\mathbb R)\) as dominating function, we obtain that
		\[
		\int_{B_{2|\log \varepsilon_n|}(0)} \left(|A_{\varepsilon_n}(x+y_{n,1})-A(y_1)| |\nabla_{A(y_1)} u_{0,1} |\right)^2dx=o_n(1).
		\]
		Furthermore, arguing as in \eqref{before}, there exists $C>0$ such that for every $\phi_{n}\in H_{\varepsilon_{n}}$ with $\|\phi_{n}\|_{\varepsilon_{n}}\leq 1$,
		\begin{equation}\label{before1}
			\begin{split}
				&\left(\int_{\mathbb{R}^2} (|\nabla_{A_{\varepsilon_n}(x+y_{n,1})} \big(\phi_{n}(x+y_{n,1})\chi_{\varepsilon_{n}}\big)|^{2}+V_{\varepsilon_n}( x+y_{n,1})|\phi_{n}(x+y_{n,1})\chi_{\varepsilon_{n}}|^{2})dx\right)^\frac{1}{2}\\
				&\qquad
				=\|\phi_{n}\chi_{\varepsilon_{n}}(x-y_{n,1})\|_{\varepsilon_n}\leq C.   
			\end{split}
		\end{equation}
		Thus
		\[
		\operatorname{Re}\int_{\mathbb{R}^2} \left(\nabla_{A_{\varepsilon_n}(x+y_{n,1})}(u_{0,1}\chi_{\varepsilon_{n}}) \overline{\nabla_{A_{\varepsilon_n}(x+y_{n,1})} \phi_{n}(x+y_{n,1})}
		- \nabla_{A(y_1)} u_{0,1} \overline{\nabla_{A(y_1)} (\phi_{n}(x+y_{n,1})\chi_{\varepsilon_{n}})}\right)dx=o_n(1)
		\]
		uniformly with respect to $\phi_{n}\in H_{\varepsilon_{n}}$ with $\|\phi_{n}\|_{\varepsilon_{n}}\leq 1$.
		
		Hence, using that $u_{0,1}$ is a non-trivial solution of \eqref{p3.9},  
		we obtain that
		\begin{equation*}
			\begin{split}
				&\operatorname{Re}\int_{\mathbb{R}^2}\nabla_{A_{\varepsilon_n}(x+y_{n,1})}(u_{0,1}\chi_{\varepsilon_n})
				\overline{\nabla_{A_{\varepsilon_n}(x+y_{n,1})}\phi_n(x+y_{n,1})}dx\\
				&\quad=-\operatorname{Re}\int_{\mathbb{R}^2}V(y_1)u_{0,1}\chi_{\varepsilon_n}
				\overline{\phi_n(x+y_{n,1})}dx
				+\operatorname{Re}\int_{\mathbb{R}^2} g_{0,1}(x,|u_{0,1}|^2)
				u_{0,1}\chi_{\varepsilon_n}\overline{\phi_n(x+y_{n,1})}dx+o_n(1).
			\end{split}
		\end{equation*}
		and so,  
		\begin{align*}
			J^{\prime}_{\varepsilon_n}(\Psi_{n,1})[\phi_n]
			&=\operatorname{Re}\int_{\mathbb{R}^2}\left(V_{\varepsilon_n}(x+y_{n,1})-V(y_1)\right)
			u_{0,1}\chi_{\varepsilon_n}\overline{\phi_n(x+y_{n,1})}dx\\
			&\quad+\operatorname{Re}\int_{\mathbb{R}^2}\left[g_{0,1}(x,|u_{0,1}|^2)
			-g(\varepsilon_nx+\varepsilon_ny_{n,1},|u_{0,1}\chi_{\varepsilon_n}|^2)\right] u_{0,1}\chi_{\varepsilon_n}\overline{\phi_n(x+y_{n,1})}dx+o_n(1),
		\end{align*}
		where $o_n(1)$ is uniform with respect to $\phi_{n}\in H_{\varepsilon_{n}}$ with $\|\phi_{n}\|_{\varepsilon_{n}}\leq 1$.
		
		The H\"older inequality, \eqref{unifboundVA}, and the Lebesgue Dominated Convergence Theorem imply
		\begin{align*}
			&\left|\operatorname{Re}\int_{\mathbb{R}^2}  \left(V_{\varepsilon_n}( x+y_{n,1})-V(y_1)\right)u_{0,1}\chi_{\varepsilon_{n}}\overline{\phi_{n}(x+y_{n,1})}dx\right|\\
			&\quad
			\leq\left(\int_{B_{2|\log\varepsilon_n|}(0)}  \left|V_{\varepsilon_n}( x+y_{n,1})-V(y_1)\right|^2|u_{0,1}|^2dx\right)^{1/2}|\chi_{\varepsilon_{n}}\phi_{n}(x+y_{n,1})|_2\\
			&\quad\leq C\left(\int_{B_{2|\log\varepsilon_n|}(0)}  \left|V_{\varepsilon_n}( x+y_{n,1})-V(y_1)\right|^2|u_{0,1}|^2dx\right)^{1/2}=o_{n}(1),
		\end{align*}
		where $C>0$ does not depend on $\phi_{n}\in H_{\varepsilon_{n}}$ with $\|\phi_{n}\|_{\varepsilon_{n}}\leq 1$.\\

		Furthermore, by \eqref{p2.2} and \eqref{e}, fixed $q>2$, for any $\xi>0$ and $\alpha>4\pi$, there exists $C=C(\alpha,\xi,q)$ such that
		\[
		f^2(|u_{0,1}|^2)|u_{0,1}|^2
		\leq \xi|u_{0,1}|^2+
		C|u_{0,1}|^{2(q-1)}(e^{2\alpha|u_{0,1}|^2}-1)
		\]
		and by the Sobolev embedding, the H\"older inequality, and Lemma \ref{2.1}, the function in the right hand side is in $L^1(\mathbb{R}^2,\mathbb{R})$, so that
		\begin{equation}\label{fhatfL2}
			(f(|u_{0,1}|^2)-\widehat f(|u_{0,1}|^2))u_{0,1}
			\in L^2(\mathbb{R}^2,\mathbb{C}).
		\end{equation}
		Set, now,
		\[
		\begin{aligned}
			R_n&
			:=\left\{\chi_{\Lambda_\Gamma}(\varepsilon_n(\cdot+y_{n,1}))
			\bigl[f(|u_{0,1}|^2)-f(|u_{0,1}\chi_{\varepsilon_n}|^2)\bigr]\right.\\
			&\qquad
			\left.+\bigl(1-\chi_{\Lambda_\Gamma}(\varepsilon_n(\cdot+y_{n,1}))\bigr)
			\bigl[\widehat f(|u_{0,1}|^2)
			-\widehat f(|u_{0,1}\chi_{\varepsilon_n}|^2)\bigr]\right\}
			u_{0,1}\chi_{\varepsilon_n}.
		\end{aligned}
		\]
		Since \(\chi_{\varepsilon_n}\to1\) a.e. in $\mathbb{R}^2$, the continuity of \(f\) and
		\(\widehat f\) gives \(R_n\to0\) a.e. in $\mathbb{R}^2$. Moreover,
		\[
		|R_n|^2\leq4f^2(|u_{0,1}|^2)|u_{0,1}|^2\in L^1(\mathbb{R}^2,\mathbb{R}).
		\]
		Thus
		\(|R_n|_{2}\to0\).
		Therefore, 
		\begin{align*}
			&\bigl[g_{0,1}(\cdot,|u_{0,1}|^2)
			-g(\varepsilon_n(\cdot+y_{n,1}),
			|u_{0,1}\chi_{\varepsilon_n}|^2)\bigr]
			u_{0,1}\chi_{\varepsilon_n}\\
			&\quad
			=\Bigl[\eta_1f(|u_{0,1}|^2)+(1-\eta_1)\widehat f(|u_{0,1}|^2)
			-\chi_{\Lambda_\Gamma}(\varepsilon_n(\cdot+y_{n,1}))
			f(|u_{0,1}|^2)\\
			&\qquad-\bigl(1-\chi_{\Lambda_\Gamma}(\varepsilon_n(\cdot+y_{n,1}))\bigr)\widehat f(|u_{0,1}|^2)\Bigr]
			u_{0,1}\chi_{\varepsilon_n}+R_n\\
			&\quad
			=\bigl(\eta_1-\chi_{\Lambda_\Gamma}
			(\varepsilon_n(\cdot+y_{n,1}))\bigr)
			\bigl(f(|u_{0,1}|^2)-\widehat f(|u_{0,1}|^2)\bigr)
			u_{0,1}\chi_{\varepsilon_n}+R_n.
		\end{align*}
		We claim that
		\begin{equation*}
			\sup_{\|\phi_n\|_{\varepsilon_n}\leq1}
			\left|\int_{\mathbb{R}^2}
			\bigl(\eta_1-\chi_{\Lambda_\Gamma}
			(\varepsilon_n(x+y_{n,1}))\bigr)
			\bigl(f(|u_{0,1}|^2)-\widehat f(|u_{0,1}|^2)\bigr)
			u_{0,1}\chi_{\varepsilon_n}
			\overline{\phi_n(x+y_{n,1})}\,dx\right|=o_n(1).
		\end{equation*}
		Indeed, if this were false, after passing to a subsequence there would exist
		\(\delta>0\) and \(\{\phi_n\}\subset  H_{\varepsilon_n}\) with $\|\phi_n\|_{\varepsilon_n}\leq1$ such that
		\begin{equation}\label{localweakuniformapplication_contr}
			\left|\int_{\mathbb{R}^2}
			\bigl(\eta_1-\chi_{\Lambda_\Gamma}
			(\varepsilon_n(x+y_{n,1}))\bigr)
			\bigl(f(|u_{0,1}|^2)-\widehat f(|u_{0,1}|^2)\bigr)
			u_{0,1}\chi_{\varepsilon_n}
			\overline{\phi_n(x+y_{n,1})}\,dx\right|\geq \delta.
		\end{equation}
		Set \(\zeta_n:=\chi_{\varepsilon_n}\phi_n(\cdot+y_{n,1})\). Since each $\zeta_n$ is supported in $B_{2|\log\varepsilon_n|}(0)$, \eqref{before1} and the uniform
		equivalence of \eqref{fanshu} with the standard
		\(H^1\)-norm on \(B_{2|\log\varepsilon_n|}(0)\)
		yields that
		\(\{\zeta_n\}\) is bounded in \(H^1(\mathbb{R}^2,\mathbb C)\), uniformly for
		\(\|\phi_n\|_{\varepsilon_n}\leq1\).
		Hence, up to a subsequence, there exists
		\(\zeta\in H^1(\mathbb{R}^2,\mathbb C)\) such that
		\(\zeta_n\rightharpoonup\zeta\) in \(H^1(\mathbb{R}^2,\mathbb C)\) and
		\(\zeta_n\to\zeta\) in \(L^2_{\rm loc}(\mathbb{R}^2,\mathbb C)\).
		For every \(R>0\), H\"older's inequality gives
		\begin{align*}
			|(f(|u_{0,1}|^2)-\widehat f(|u_{0,1}|^2))u_{0,1}(\overline{\zeta_n}-\overline\zeta)|_{1}
			&\leq |(f(|u_{0,1}|^2)-\widehat f(|u_{0,1}|^2))u_{0,1}|_{L^2(B_R(0))}
			|\zeta_n-\zeta|_{L^2(B_R(0))}\\
			&\quad
			+|(f(|u_{0,1}|^2)-\widehat f(|u_{0,1}|^2))u_{0,1}|_{L^2(B_R(0)^c)}
			\bigl(|\zeta_n|_{2}
			+|\zeta|_{2}\bigr).
		\end{align*}
		For fixed \(R>0\), the first term tends to zero as \(n\to+\infty\), while, due to the boundedness of $\{\zeta_n\}$ and \eqref{fhatfL2}, the second term tends to $0$ as $R\to +\infty$. Thus
		\[
		(f(|u_{0,1}|^2)-\widehat f(|u_{0,1}|^2))u_{0,1}\overline{\zeta_n}
		\to
		(f(|u_{0,1}|^2)-\widehat f(|u_{0,1}|^2))u_{0,1}\overline\zeta
		\quad
		\text{in }L^1(\mathbb{R}^2,\mathbb{C}).
		\] 
		Therefore, by \eqref{localweakproduct},
		\[
		\int_{\mathbb{R}^2}\chi_{\Lambda_\Gamma}(\varepsilon_nx+\varepsilon_ny_{n,1})
		(f(|u_{0,1}|^2)-\widehat f(|u_{0,1}|^2))u_{0,1} \overline{\zeta_n}\,dx\to\int_{\mathbb{R}^2}\eta_1(f(|u_{0,1}|^2)-\widehat f(|u_{0,1}|^2))u_{0,1} \overline\zeta\,dx
		\]
		and, since $\eta_1$ is a measurable function with \(0\leq\eta_1\leq1\), by \eqref{fhatfL2}
		\[
		\int_{\mathbb{R}^2}\eta_1(f(|u_{0,1}|^2)-\widehat f(|u_{0,1}|^2))u_{0,1}\overline{\zeta_n}\,dx
		\to\int_{\mathbb{R}^2}\eta_1(f(|u_{0,1}|^2)-\widehat f(|u_{0,1}|^2))u_{0,1}\overline\zeta\,dx.
		\]
		Then, subtracting the last two limits, we get a contradiction with \eqref{localweakuniformapplication_contr}.
		
		Finally, the Cauchy--Schwarz
		inequality and \eqref{V1} give
		\[
		\left|\int_{\mathbb{R}^2}R_n
		\overline{\phi_n(x+y_{n,1})}\,dx\right|
		\leq |R_n|_{L^2(\mathbb{R}^2)}
		|\phi_n|_{L^2(\mathbb{R}^2)}
		\leq V_0^{-1/2}|R_n|_{L^2(\mathbb{R}^2)}=o_n(1),
		\]
		uniformly for \(\|\phi_n\|_{\varepsilon_n}\leq1\). 
		
		Hence,
		\[
		\begin{aligned}
			&\left|\operatorname{Re}\int_{\mathbb{R}^2}
			\bigl[g_{0,1}(x,|u_{0,1}|^2)
			-g(\varepsilon_nx+\varepsilon_ny_{n,1},
			|u_{0,1}\chi_{\varepsilon_n}|^2)\bigr]
			u_{0,1}\chi_{\varepsilon_n}
			\overline{\phi_n(x+y_{n,1})}\,dx\right|=o_n(1),
		\end{aligned}
		\]
		uniformly with respect to
		\(\phi_n\in H_{\varepsilon_n}\) with
		\(\|\phi_n\|_{\varepsilon_n}\leq1\), and so
		\[
		\|J^{\prime}_{\varepsilon_{n}}(\Psi_{n,1})\|_{\varepsilon_n}
		=o_n(1).
		\]

		Finally, observe that, using the H\"older inequality,
		\[
		\left|\operatorname{Re}\int_{\mathbb{R}^{2}} \big[g(\varepsilon_{n}x, |u_{n}^{1}|^{2})u_{n}^{1}-g(\varepsilon_{n}x, |u_{n}|^{2})u_{n}
		+g(\varepsilon_{n}x,|\Psi_{n,1}|^{2})\Psi_{n,1}\big]\overline{\phi}_{n}dx\right|
		\leq
		|\mathfrak{G}_n|_2
		| \phi_n|_2
		\leq C  |\mathfrak{G}_n|_2,
		\]
		where
		\begin{align*}
			\mathfrak{G}_n&:=|g(\varepsilon_n (\cdot+ y_{n,1}), |v_{n}-u_{0,1}\chi_{\varepsilon_{n}}|^{2})(v_{n}-u_{0,1}\chi_{\varepsilon_{n}})-g(\varepsilon_n (\cdot+ y_{n,1}), |v_{n}|^{2})v_{n}\\
			&\quad
			+g(\varepsilon_n (\cdot + y_{n,1}), |u_{0,1}\chi_{\varepsilon_{n}}|^{2})u_{0,1}\chi_{\varepsilon_{n}}|.
		\end{align*}
		The function $\mathfrak{G}_n$ tends to $0$ as $n\to +\infty$ a.e. in $\mathbb{R}^2$.

		Since, for a.e. fixed \(x\in\mathbb R^2\), the map \(t\mapsto g(x,t)\) is continuous and piecewise \(C^1\), it is locally absolutely continuous and so, by the chain rule and \eqref{g7}, for all \(c,b\in\mathbb C\) and a.e. \(x\in\mathbb R^2\),
		\[
		\left|g(\varepsilon_n(x+y_{n,1}),|c-b|^2)
		-g(\varepsilon_n(x+y_{n,1}),|c|^2)\right|\leq2\int_0^1
		\bigl(e^{4\pi|c-sb|^2}-1\bigr)|c-sb|\,|b|\,ds,
		\]
		using also (\ref{g3}), and \eqref{p2.2}, fixed $q>2$, for any $\xi>0$ and $\alpha>4\pi$, there exists $C=C(\alpha,\xi,q)>0$ such that
		\begin{align*}
			\mathfrak{G}_n(x)
			&\leq
			\left|g(\varepsilon_n (x+ y_{n,1}), |v_n-u_{0,1}\chi_{\varepsilon_{n}}|^{2})-g(\varepsilon_n (x+ y_{n,1}), |v_n|^{2})\right||v_n|\\
			&\quad
			+|g(\varepsilon_n (x+ y_{n,1}), |u_{0,1}\chi_{\varepsilon_{n}}|^{2})-g(\varepsilon_n (x+ y_{n,1}), |v_n-u_{0,1}\chi_{\varepsilon_{n}}|^{2})||u_{0,1}|\\
			&\leq
			2|v_n| \int_{0}^{1}
			(e^{4\pi|v_n-s u_{0,1}\chi_{\varepsilon_{n}}|^{2}}-1)
			|v_n-s u_{0,1}\chi_{\varepsilon_{n}}||u_{0,1}\chi_{\varepsilon_{n}}|ds\\
			&\qquad
			+\left(\xi+C(|v_n|+|u_{0,1}|)^{q-2}(e^{\alpha(|v_n|+|u_{0,1}|)^{2}}-1)\right)|u_{0,1}|\\
			&\leq 
			\left[2(e^{4\pi(|v_n|+|u_{0,1}|)^{2}}-1)(|v_n|+|u_{0,1}|)|v_n|
			+\xi+C(|v_n|+|u_{0,1}|)^{q-2}(e^{\alpha(|v_n|+|u_{0,1}|)^{2}}-1)\right]|u_{0,1}|.
		\end{align*}

		Then, using also  \eqref{e},
		there exist $C=C(\alpha,\xi,q)>0$ such that
		\begin{equation}\label{p3.16b}
			\begin{split}
				\mathfrak{G}_n^2 &\leq \left[\xi^2+C(e^{2\alpha(|v_n|+|u_{0,1}|)^{2}}-1)\left((|v_n|^2+|u_{0,1}|^2)|v_n|^2+(|v_n|^{2q-4}+|u_{0,1}|^{2q-4})\right)\right]|u_{0,1}|^2.      
			\end{split}
		\end{equation}
		
		Now, let 
		\[g_{n,1}^{\xi}:=\max\{\mathfrak{G}_n^2-\xi^2|u_{0,1}|^2, 0\}.
		\]
		Note that $g_{n,1}^{\xi}\to 0$ a.e. in $\mathbb{R}^2$ and, by \eqref{p3.16b},
		\begin{equation}\label{a2.12b}
			g_{n,1}^{\xi}
			\leq
			C(e^{2\alpha(|v_n|+|u_{0,1}|)^{2}}-1)\left((|v_n|^2+|u_{0,1}|^2)|v_n|^2+(|v_n|^{2q-4}+|u_{0,1}|^{2q-4})\right)|u_{0,1}|^2.
		\end{equation}
		
		Moreover, since $v_n\rightarrow u_{0,1}$ a.e. in $\mathbb{R}^2$, the right hand side (up to the multiplicative constant) in \eqref{a2.12b} tends to $2(e^{8\alpha|u_{0,1}|^{2}}-1)(|u_{0,1}|^6+|u_{0,1}|^{2q-2})$ a.e. in  $\mathbb{R}^{2}$ as  $n\rightarrow +\infty$ that, since \(|u_{0,1}|\in L^{r}(\mathbb{R}^{2},\mathbb{R})\) for every $r\geq 2$,
		by the H\"older inequality, \eqref{e}, and Lemma \ref{2.1}, belongs to $L^{1}(\mathbb{R}^{2},\mathbb{R})$.
		
		Now, let us show that the right hand side (up to the multiplicative constant) in \eqref{a2.12b} belongs to $L^{1}(\mathbb{R}^{2},\mathbb{R})$ and tends to $2(e^{8\alpha|u_{0,1}|^{2}}-1)(|u_{0,1}|^6+|u_{0,1}|^{2q-2})$ in $L^{1}(\mathbb{R}^{2},\mathbb{R})$ as $n\rightarrow +\infty$.
		
		Let
		\(
		\kappa\in \left(0,\frac{1}{2}-4\iota^{2}\right)\), \(\alpha\in \left(4\pi, \frac{2\pi}{4\iota^{2}+\kappa}\right)\), \( p^{*}\in \left(1,\frac{2\pi}{\alpha(4\iota^{2} + \kappa)}\right)\),
		and
		\[
		\tau\in (1,\tau^*)
		\quad
		\tau^*:=
		\begin{cases}
			\displaystyle\min\left\{\frac{2\pi}{\alpha p^{*}(4\iota^{2} + \kappa)},\frac{1}{3-q}\right\}
			&\text{if }2<q<3,\vspace{5pt}\\
			\displaystyle\frac{2\pi}{\alpha p^{*}(4\iota^{2} + \kappa)}
			&\text{if }q\geq 3.
		\end{cases}
		\]
		Observe that, if \(\tau'=\tau/(\tau-1)\) is the conjugate exponent of \(\tau\), then $(2q-4)p^*\tau'>2$. Indeed, if \(2<q<3\), then \(\tau<1/(3-q)\), so that \(\tau'>1/(q-2)\) and $(2q-4)p^*\tau'>2p^*>2$, while, if \(q\geq3\), then \(p^*>1\) and \(\tau'>1\) give the desired inequality.
		Then, by \eqref{e}, the boundedness of $\{|v_n|\}$ in $H^1(\mathbb{R}^2,\mathbb{R})$, \eqref{a2.16}, and Lemma \ref{2.1}, we get
		\begin{align*}
			&\int_{\mathbb{R}^2} \left[(|v_n|^2+|u_{0,1}|^2)|v_n|^2+|v_n|^{2q-4}+|u_{0,1}|^{2q-4}\right]^{p^*}(e^{2\alpha(|v_n|+|u_{0,1}|)^{2}}-1)^{p^*} dx \\
			&\quad
			\leq \left(\int_{\mathbb{R}^2}\left[(|v_n|^2+|u_{0,1}|^2)|v_n|^2+|v_n|^{2q-4}+|u_{0,1}|^{2q-4}\right]^{p^*\tau'} dx\right)^\frac{1}{\tau'}\left(\int_{\mathbb{R}^2} (e^{2\alpha(|v_n|+|u_{0,1}|)^{2}}-1)^{p^*\tau}dx\right)^\frac{1}{\tau}\\
			&\quad
			\leq C \left( \int_{\mathbb{R}^2} ( e^{2\alpha p^* \tau(4\iota^2 + \kappa)\left( \frac{|v_n| + |u_{0,1}|}{\sqrt{4\iota^2 + \kappa}} \right)^2} - 1 ) dx \right)^{\frac{1}{\tau}} \leq C.       
		\end{align*}
		Thus, since \(|u_{0,1}|^{2}\in L^{(p^{*})'}(\mathbb{R}^{2},\mathbb{R})\), where  \((p^{*})'\) is the conjugate exponent of  \(p^{*}\), by H\"older's inequality, the right hand side of \eqref{a2.12b} belongs to $L^1(\mathbb{R}^{2},\mathbb{R})$. Moreover, up to a subsequence,  \(\{(e^{2\alpha(|v_n|+|u_{0,1}|)^{2}}-1)\left((|v_n|^2+|u_{0,1}|^2)|v_n|^2+|v_n|^{2q-4}+|u_{0,1}|^{2q-4}\right)\}\) converges weakly in \(L^{p^{*}}(\mathbb{R}^{2},\mathbb{R})\) and so, arguing as above, by the Severini--Egorov theorem we obtain that
		\begin{align*}
			&(e^{2\alpha(|v_n|+|u_{0,1}|)^{2}}-1)\left((|v_n|^2+|u_{0,1}|^2)|v_n|^2+|v_n|^{2q-4}+|u_{0,1}|^{2q-4}\right)\\
			&\quad
			\rightharpoonup  2(e^{8\alpha|u_{0,1}|^{2}}-1)(|u_{0,1}|^4+|u_{0,1}|^{2q-4})   \text{ in } L^{p^{*}}(\mathbb{R}^{2}, \mathbb{R}).
		\end{align*}
		Then, testing the previous weak convergence against \(|u_{0,1}|^2\), we obtain
		\begin{align*}
			&\int_{\mathbb R^2}(e^{2\alpha(|v_n|+|u_{0,1}|)^{2}}-1)
			\left((|v_n|^2+|u_{0,1}|^2)|v_n|^2
			+|v_n|^{2q-4}+|u_{0,1}|^{2q-4}\right)|u_{0,1}|^2\,dx\\
			&\quad\longrightarrow
			2\int_{\mathbb R^2}(e^{8\alpha|u_{0,1}|^{2}}-1)
			(|u_{0,1}|^6+|u_{0,1}|^{2q-2})\,dx.
		\end{align*}
		Hence, Lemma \ref{GDCT} yields $g_{n,1}^{\xi}\to 0$ in $L^1(\mathbb{R}^2,\mathbb{R})$ as $n\to+\infty$.
		Since $\mathfrak{G}_n^2\leq \xi^2|u_{0,1}|^2+g_{n,1}^{\xi}$ and $\xi>0$ is arbitrary, we obtain $|\mathfrak{G}_n|_2=o_n(1)$.
		
		Hence, the proof of \eqref{p3.12} is complete.
		
		Therefore, since $\{\|u_n\|_{\varepsilon_n}\}$ is bounded and, by \eqref{mo}, we also have that $\{\|u_n^1\|_{\varepsilon_n}\}$ is bounded, by a similar argument  to that used in \eqref{bddPS}, we obtain
		\begin{equation}\label{c01}
			c - J_{0,1}(U_{0,1}) + o_n(1) + o_n(1)\|u_n^1\|_{\varepsilon_n} \geq \left( \frac{1}{2} - \frac{1}{\Theta} - \frac{1}{2k} \right) \|u_n^1\|_{\varepsilon_n}^2,
		\end{equation}
		and so $c - J_{0,1}(U_{0,1}) \geq 0$.

		If $c - J_{0,1}(U_{0,1}) = 0$, by \eqref{c01} we obtain that $\|u_n^1\|_{\varepsilon_n}\to 0$ as $n\to +\infty$, and so the proof is complete.
		Otherwise, if $c - J_{0,1}(U_{0,1}) > 0$, starting from $u_n^1$, which is a $(PS)_{c-J_{0,1}(U_{0,1})}^*$ sequence with, by \eqref{J01U01}, $\displaystyle c-J_{0,1}(U_{0,1})\in \left(0,\frac{1}{8}\left(\frac{1}{2}-\frac{1}{\Theta}-\frac{1}{2k}\right)\right)$, we can repeat the above procedure
		obtaining a sequence $\{y_{n,2}\} \subset \mathbb{R}^2$ and constants $R_2, \beta_2 > 0$ such that
		\begin{equation}\label{yn2}
			\liminf_{n\rightarrow +\infty}\int_{B_{R_2}(y_{n,2})} |u_n^1|^2 dx > \beta_2.
		\end{equation}

		The sequence $\{y_{n,2}\}$ can be chosen such that $|y_{n,1}-y_{n,2}|\to +\infty$.
		Indeed, arguing by contradiction, we assume that $|y_{n,1}-y_{n,2}|$ is bounded for some subsequence. Then, by \eqref{yn2}, there exists $R'>0$ such that
		\[
		\beta_2<\int_{B_{R_2}(y_{n,2})} |u_n^1|^2 dx\leq \int_{B_{R'}(0)} |v_n-u_{0,1}\chi_{\varepsilon_n}|^2 dx
		=\int_{B_{R'}(0)} |w_n-u_{0,1}|^2 dx
		\]
		for $n\in\mathbb{N}$ large, which contradicts the convergence, up to a subsequence, of $\{w_n\}$ to $u_{0,1}$ in $L_{\rm loc}^2(\mathbb{R}^2,\mathbb{C})$. 
		
		Then, as before, considering $v_n^1:=u^1_{n}(\cdot+y_{n,2})$, we obtain that $\varepsilon_ny_{n,2}\to y_2\in \overline{\Lambda}_\Gamma$ and there is $U_{0,2}\in H^{1}(\mathbb{R}^2,\mathbb{C})\setminus\{0\}$ that solves \eqref{Ueqn} for $j=2$.
		Hence, defining \( u_n^2 := u_n^1 - \Psi_{n,2} \), where $\Psi_{n,2} := e^{iA(y_2) \cdot (\cdot - y_{n,2})}(U_{0,2} \chi_{\varepsilon_n})(\cdot - y_{n,2})=(u_{0,2} \chi_{\varepsilon_n})(\cdot - y_{n,2})$, and showing that, up to a subsequence,  $\{u_n^2\}$ is a $(PS)_{c-J_{0,1}(U_{0,1})-J_{0,2}(U_{0,2})}^*$ sequence with $c-J_{0,1}(U_{0,1})-J_{0,2}(U_{0,2})\geq 0$, we can proceed as before, considering the cases $c-J_{0,1}(U_{0,1})-J_{0,2}(U_{0,2})= 0$ (and obtaining that \[
		\|u_n^2\|_{\varepsilon_n}
		=\left\| u_n - \sum_{j=1}^{2} e^{iA(y_j)\cdot(\cdot - y_{n,j})} U_{0,j}(\cdot - y_{n,j}) \chi_{\varepsilon_n}(\cdot - y_{n,j}) \right\|_{\varepsilon_n} \to 0, \quad \text{as } n \to +\infty,
		\]
		concluding the proof) or $c-J_{0,1}(U_{0,1})-J_{0,2}(U_{0,2})>0$.
		
		This process stops in a finite number of steps since, due to the fact that each $y_j\in\overline{\Lambda}_\Gamma$, 
		\begin{equation}\label{finalsplitting}
			J_{0,j}(U_{0,j})
			=\max_{t\geq 0}J_{0,j}(tU_{0,j})\geq
			\max_{t\geq 0}I_{V(y_j)}^c(tU_{0,j})
			\geq \max_{t\geq 0}I_{V_0}^c(tU_{0,j})
			\geq c_{V_0}^c>0
		\end{equation}
		and this allows us to conclude.
	\end{proof}				
	\begin{remark}\label{eta-one}
		Under the assumptions of the previous Proposition, if we suppose, in addition, that $\varepsilon_n y_{n,j}\to y_j\in\Lambda_\Gamma$, then $\eta_j=1$ a.e. in $\mathbb R^2$.
		Indeed, 
		for every
		compact set \(Q\subset\mathbb R^2\), we have $\varepsilon_n(x+y_{n,j})\in\Lambda_\Gamma$ for every $x\in Q$ and 
		for \(n\) sufficiently large, so that $\chi_{\Lambda_\Gamma}\bigl(\varepsilon_n(x+y_{n,j})\bigr)=1$ on $Q$. Then  the weak convergence $\chi_{\Lambda_\Gamma}\bigl(\varepsilon_n(\cdot+y_{n,j})\bigr)
		\rightharpoonup\eta_j$ in $L^p_{\mathrm{loc}}(\mathbb R^2)$ for $1\leq p<+\infty$ and the arbitrariness of \(Q\) imply that $\eta_j=1$ a.e. in $\mathbb R^2$.
	\end{remark}

	\section{Multiple solutions for the modified problem}\label{Sec3}
	
	For $j=1,2,\dots,m$, set
	$\Lambda_{j,\varepsilon}:=\{x\in\mathbb{R}^2:\varepsilon x\in\Lambda_j\}$ and
	\[
	g_j(x, t) := \chi_{\Lambda_j}(x) f(t) + \left(1 - \chi_{\Lambda_j}(x)\right) \widehat{f}(t), \quad G_j(x, t) := \int_0^t g_j(x, s)ds.
	\]
	We now consider the modified problem corresponding to $\Lambda_j$, where $j = 1, 2, \dots, m$, that is,
	\begin{equation}\label{danfeng}
		\left(\frac{1}{i} \nabla - A_\varepsilon(x)\right)^2 u + V_\varepsilon(x) u = g_j(\varepsilon x, |u|^2)u \quad \text{in } \mathbb{R}^2,
	\end{equation}
	with the associated $C^1$ energy functional
	\[
	J_{\varepsilon,j}(u) := \frac{1}{2} \|u\|_{\varepsilon}^2 - \frac{1}{2} \int_{\mathbb{R}^2} G_j(\varepsilon x, |u|^2)dx, \quad u \in H_{\varepsilon}.
	\]
	Note that problem \eqref{danfeng} is a special case of problem \eqref{p3.2}, so all the properties satisfied by problem \eqref{p3.2} and its functional $J_\varepsilon$ are also satisfied by \eqref{danfeng} and $J_{\varepsilon,j}$. Then the functional $J_{\varepsilon,j}$ satisfies the Mountain Pass Geometry, and there exists a $(PS)_{c_{\varepsilon,j}}$ sequence for $J_{\varepsilon,j}$, where
	\begin{equation*}
		0 < c_{\varepsilon,j}
		:= \inf_{\gamma \in \Gamma_{\varepsilon,j}} \sup_{t \in [0,1]}
		J_{\varepsilon,j}(\gamma(t)) 
		= \inf_{u \in H_\varepsilon \setminus \{0\}}
		\sup_{t \geq 0} J_{\varepsilon,j}(tu) 
		=\inf_{\substack{u\in H_\varepsilon\setminus\{0\}\\
				|\operatorname{supp}u\cap\Lambda_{j,\varepsilon}|\neq 0}}
		\max_{t\geq0}J_{\varepsilon,j}(tu)
		= \inf_{u \in \mathcal{N}_{\varepsilon,j}} J_{\varepsilon,j}(u),
	\end{equation*}
	\[
	\mathcal{N}_{\varepsilon,j} := \left\{ u \in H_\varepsilon \setminus \{0\} : J_{\varepsilon,j}'(u)[u] = 0 \right\},\quad \Gamma_{\varepsilon,j} := \left\{ \gamma \in C([0,1], H_\varepsilon) : \gamma(0) = 0 \text{ and } J_{\varepsilon,j}(\gamma(1)) < 0 \right\}.
	\]

	The following lemma provides uniform lower bounds for suitable norms on the corresponding {\em Nehari manifolds}.
	
	\begin{lemma}\label{uniformNeharilowerbound}
		There exist constants \(\varepsilon_*>0\) and \(\kappa_*>0\) such that,
		for every \(\varepsilon\in(0,\varepsilon_*)\) and \(j\in\{1,\ldots,m\}\),
		\begin{equation*}
			\|v\|_\varepsilon\geq\kappa_*
			\qquad\text{for every }v\in\mathcal N_{\varepsilon,j}.
		\end{equation*}
		Moreover, for every \(j\in\{1,\ldots,m\}\) and $v\in H^1(\mathbb R^2,\mathbb C)\setminus\{0\}$ satisfying
		\[
		\int_{\mathbb R^2}
		\bigl(|\nabla v|^2+V_j|v|^2\bigr)\,dx
		=
		\int_{\mathbb R^2}f(|v|^2)|v|^2\,dx,
		\]
		we have
		\[
		\left(|\nabla v|_2^2+V_j|v|_2^2\right)^{1/2}\geq\kappa_*.
		\]
	\end{lemma}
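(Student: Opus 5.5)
The plan is to exploit the Nehari identity together with the Trudinger--Moser bound of Lemma~\ref{2.1}, exactly as in the small-norm part of the proof of Lemma~\ref{lem_same_levels}, but with constants kept independent of $\varepsilon$ and $j$. Let $v\in\mathcal N_{\varepsilon,j}$. By \eqref{g3},
\[
\|v\|_\varepsilon^2=\int_{\mathbb R^2}g_j(\varepsilon x,|v|^2)|v|^2\,dx\leq\int_{\mathbb R^2}f(|v|^2)|v|^2\,dx .
\]
We fix $q>2$, $\alpha>4\pi$ and $\xi=V_0/2$, and apply \eqref{p2.4}. This gives
\[
\|v\|_\varepsilon^2\leq\tfrac12\|v\|_\varepsilon^2+C\int_{\mathbb R^2}|v|^q(e^{\alpha|v|^2}-1)\,dx .
\]
We then argue by contradiction, or more directly by working with small norms. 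Suppose $\|v\|_\varepsilon\leq\rho$, where $\rho$ is chosen so that $2\alpha\rho^2<4\pi$. By the diamagnetic inequality \eqref{p2.1}, the function $w:=|v|/\|v\|_\varepsilon$ satisfies $|\nabla w|_2\le1$ and $|w|_2^2\le1/V_0$. The H\"older inequality, \eqref{e} and Lemma~\ref{2.1} then yield
\[
\int_{\mathbb R^2}|v|^q(e^{\alpha|v|^2}-1)\,dx\leq |v|_{2q}^q\Big(\int_{\mathbb R^2}(e^{2\alpha\rho^2w^2}-1)\,dx\Big)^{1/2}\leq C\|v\|_\varepsilon^q .
\]
The Sobolev constant for $|v|_{2q}$ is controlled by $\||v|\|\le C\|v\|_\varepsilon$, which again uses only $V\ge V_0$. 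Combining the two inequalities gives $\tfrac12\|v\|_\varepsilon^2\le C\|v\|_\varepsilon^q$, hence $\|v\|_\varepsilon\ge\min\{\rho,(2C)^{-1/(q-2)}\}=:\kappa_*$.

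The key point to check is that every constant depends only on $V_0$, $q$, $\alpha$ and $f$. None of them depends on $\varepsilon$, $j$, $A$ or the sets $\Lambda_j$. As a consequence the restriction $\varepsilon<\varepsilon_*$ is harmless; any $\varepsilon_*$ works, for instance $\varepsilon_*=1$.

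For the second statement we run the identical computation in $H^1(\mathbb R^2,\mathbb C)$ with the norm $(|\nabla v|_2^2+V_j|v|_2^2)^{1/2}$. Since $V_j\geq V_0$ by \eqref{V1}, the bounds $|\nabla |v||_2\le|\nabla v|_2$ (from \cite[Theorem 6.17]{Lieb}) and $V_0|v|_2^2\le$ the squared norm produce the same normalization. We then invoke Corollary~\ref{2.1c} in place of the diamagnetic inequality and obtain the same constant $\kappa_*$.

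The only delicate step is ensuring the exponential integral is uniformly bounded. This is where the small-norm hypothesis $2\alpha\rho^2<4\pi$ is needed, together with the uniform $L^2$ bound $|w|_2^2\le 1/V_0$ that is required by Lemma~\ref{2.1}.
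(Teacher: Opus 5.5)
Your proposal is correct and follows essentially the same route as the paper. For $\|v\|_\varepsilon$ small you bound $\int_{\mathbb R^2} g_j(\varepsilon x,|v|^2)|v|^2\,dx$ by $\tfrac12\|v\|_\varepsilon^2+C\|v\|_\varepsilon^q$, using \eqref{g3}, \eqref{p2.4}, H\"older's inequality, \eqref{e}, the diamagnetic inequality and Lemma~\ref{2.1}; the Nehari identity then gives $\|v\|_\varepsilon\geq\min\{\rho,(2C)^{-1/(q-2)}\}$, and the same computation handles the limit problem. Your extra remark is also correct: the constants depend only on $V_0,q,\alpha,f$, so the smallness restriction on $\varepsilon$ is not actually needed.
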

	\begin{proof}
		Fix \(q>2\) and \(\alpha>4\pi\). By
		\(0\leq g_j(x,t)\leq f(t)\), \eqref{p2.4}, H\"older's
		inequality, \eqref{e}, the Sobolev embedding, the diamagnetic
		inequality, and Lemma \ref{2.1}, there exist constants
		\(\delta>0\) and \(C>0\), independent of sufficiently small
		\(\varepsilon>0\) and of \(j\in\{1,\ldots,m\}\), such that, whenever \(\|v\|_\varepsilon\leq\delta\),
		\[
		\int_{\mathbb R^2}
		g_j(\varepsilon x,|v|^2)|v|^2\,dx
		\leq
		\frac12\|v\|_\varepsilon^2+C\|v\|_\varepsilon^q.
		\]
		Then, if \(v\in\mathcal N_{\varepsilon,j}\) and \(\|v\|_\varepsilon\leq\delta\),
		\[
		\|v\|_\varepsilon^2
		=
		\int_{\mathbb R^2}
		g_j(\varepsilon x,|v|^2)|v|^2\,dx
		\leq \frac12\|v\|_\varepsilon^2+C\|v\|_\varepsilon^q
		\]
		and so
		\[
		\|v\|_\varepsilon
		\geq
		\min\left\{\delta,(2C)^{-1/(q-2)}\right\}.
		\]
		The same argument, with \(g_j(\varepsilon x,\cdot)\) replaced by \(f\) and \(\|\cdot\|_\varepsilon\) replaced by
		\(\left(|\nabla(\cdot)|_2^2+V_j|\,\cdot\,|_2^2\right)^{1/2}\),
		applies to the second part. Observing that, since \(\{1,\ldots,m\}\) is
		finite, the constants may be chosen uniformly with respect to
		\(j\in\{1,\ldots,m\}\), we can conclude. 
	\end{proof}

	Let us continue with the following preliminary result.
	\begin{lemma}\label{lempsiej}
		For every $j = 1,2,\dots,m$, let
		\begin{equation}\label{psi}
			\psi_{\varepsilon,j} := e^{iA(x_j)\cdot(\cdot - x_j/\varepsilon)}(w_j \chi_\varepsilon)(\cdot - x_j/\varepsilon),
		\end{equation}
		where
		$x_j \in K_j$, $\chi_\varepsilon$ is as in Proposition \ref{fengjie}, and $w_j:=w_{V_j} \in H^1(\mathbb{R}^2,\mathbb{R})$ is a radially symmetric ground state solution of problem  \eqref{fude} with $\mu=V_j$. 
		Then, for sufficiently small $\varepsilon > 0$, 
		\begin{equation}\label{supp}
			\operatorname{supp}\psi_{\varepsilon,j} \subset \Lambda_{j,\varepsilon},
		\end{equation}	
		so that
		\begin{equation}\label{empintj1j2e}
			\Lambda_{j_1,\varepsilon}\cap\Lambda_{j_2,\varepsilon}=\emptyset \text{ for every } j_1,j_2\in \Gamma \text{ with } j_1\neq j_2,
		\end{equation}
		and
		\begin{equation}\label{V_j}
			\lim_{\varepsilon\to 0^+}\|\psi_{\varepsilon,j}\|^2_{\varepsilon}
			= |\nabla w_j|_{2}^{2}+V_j|w_j|_2^2.
		\end{equation}
	\end{lemma}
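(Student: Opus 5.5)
The plan has three parts: a support computation, an immediate disjointness consequence, and a norm limit obtained by repeating the computation leading to \eqref{mo} in the proof of Proposition \ref{fengjie}.

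\emph{Support inclusion \eqref{supp}.} Since $\operatorname{supp}\chi_\varepsilon\subset \overline{B_{2|\log\varepsilon|}(0)}$, we have $\operatorname{supp}\psi_{\varepsilon,j}\subset \overline{B_{2|\log\varepsilon|}(x_j/\varepsilon)}$. Hence $\varepsilon\operatorname{supp}\psi_{\varepsilon,j}\subset \overline{B_{2\varepsilon|\log\varepsilon|}(x_j)}$. By \eqref{Kj}, $x_j\in K_j\Subset\Lambda_j$, and $\Lambda_j$ is open, so there is $r_j>0$ with $B_{r_j}(x_j)\subset\Lambda_j$. Since $\varepsilon|\log\varepsilon|\to0$, the inclusion $2\varepsilon|\log\varepsilon|<r_j$ holds for small $\varepsilon$, and this gives \eqref{supp}. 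The disjointness \eqref{empintj1j2e} is immediate, because the $\Lambda_j$ are disjoint by \eqref{V2} and $x\mapsto \varepsilon x$ is a bijection; it holds for every $\varepsilon$. Taking the minimum over the finitely many $j$ makes the threshold on $\varepsilon$ uniform in $j$.

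\emph{Norm limit \eqref{V_j}.} Change variables $x\mapsto x+x_j/\varepsilon$ and write $\psi_{\varepsilon,j}(\cdot+x_j/\varepsilon)=e^{iA(x_j)\cdot x}w_j\chi_\varepsilon$. Then
\[
\|\psi_{\varepsilon,j}\|_\varepsilon^2=\int_{\mathbb{R}^2}\Big(\big|\nabla(w_j\chi_\varepsilon)+i\big(A(x_j)-A(\varepsilon x+x_j)\big)w_j\chi_\varepsilon\big|^2+V(\varepsilon x+x_j)w_j^2\chi_\varepsilon^2\Big)dx,
\]
using $\nabla_{A_\varepsilon}(e^{iA(x_j)\cdot x}\phi)=e^{iA(x_j)\cdot x}(-i\nabla\phi+(A(x_j)-A_\varepsilon)\phi)$ after translation. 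On $\operatorname{supp}\chi_\varepsilon$ we have $|\varepsilon x|\le 2\varepsilon|\log\varepsilon|\to0$. So by continuity, $A(\varepsilon x+x_j)\to A(x_j)$ and $V(\varepsilon x+x_j)\to V(x_j)=V_j$ pointwise, and both are uniformly bounded there, exactly as in \eqref{unifboundVA}. Dominated convergence, with dominants $C w_j^2$ and $C|\nabla w_j|^2$, then gives
\[
\int V(\varepsilon x+x_j)w_j^2\chi_\varepsilon^2\,dx\to V_j|w_j|_2^2,\qquad \int|A(x_j)-A(\varepsilon x+x_j)|^2w_j^2\chi_\varepsilon^2\,dx\to0.
\]
Next, $|\nabla\chi_\varepsilon|\le 2/|\log\varepsilon|$, so $\nabla(w_j\chi_\varepsilon)=\chi_\varepsilon\nabla w_j+w_j\nabla\chi_\varepsilon$ with $|w_j\nabla\chi_\varepsilon|_2\le 2|w_j|_2/|\log\varepsilon|\to0$. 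Together with $|\chi_\varepsilon\nabla w_j|_2\to|\nabla w_j|_2$ by dominated convergence, this shows $|\nabla(w_j\chi_\varepsilon)|_2^2\to|\nabla w_j|_2^2$. The cross term is then $o(1)$ by Cauchy--Schwarz, using the vanishing $A$-difference term and the bounded gradient term. This yields \eqref{V_j}; it also shows $\psi_{\varepsilon,j}\in H_\varepsilon$, since the function is compactly supported and in $H^1$.

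\emph{Main obstacle.} There is no serious difficulty. The one point requiring care is that the uniform bounds on $A$ and $V$ along the shrinking support must come from continuity alone, with no differentiability of $A$ used. This is handled, as in Lemma \ref{coro} (\ref{equivi}), by the fact that $\varepsilon\operatorname{supp}\chi_\varepsilon$ stays in a fixed compact neighborhood of $x_j$.
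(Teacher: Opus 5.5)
Your proposal is correct and takes essentially the same route as the paper. For \eqref{supp} you use a ball $B_{r_j}(x_j)\subset\Lambda_j$ together with $\varepsilon|\log\varepsilon|\to0$; for \eqref{empintj1j2e} you use \eqref{V2}; and for \eqref{V_j} you repeat the computation leading to \eqref{mo}, namely translation, dominated convergence via the local boundedness of $A$ and $V$, and $|\nabla\chi_\varepsilon|\leq 2/|\log\varepsilon|$. One minor point: since $w_j\chi_\varepsilon$ is real-valued, the cross term between $\nabla(w_j\chi_\varepsilon)$ and $i(A(x_j)-A(\varepsilon x+x_j))w_j\chi_\varepsilon$ vanishes identically, which is why the paper's expansion has no such term, so your Cauchy--Schwarz estimate for it is valid but unnecessary.
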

	\begin{proof}
		Assumption \eqref{V2} ensures that, for all $j=1,2,\dots,m$, $\operatorname{dist}(x_j, \partial \Lambda_j)  > 0$
		and this implies that there exists $r_j>0$ such that $B_{r_j}(x_j)\subset\Lambda_j$. Hence, for sufficiently small $\varepsilon > 0$, we get \eqref{supp}.
		
		Then, \eqref{empintj1j2e} follows from (\ref{V2}).
		
		Let now $\varepsilon_n\to 0^+$ as $n\to +\infty$ be arbitrary.
		Since, for each $j=1,2,...,m$,
		\begin{equation*}
			\begin{split}					 \|\psi_{\varepsilon_n,j}\|^2_{\varepsilon_n}
				&= \int_{\mathbb{R}^2} |\chi_{\varepsilon_n} \nabla w_j|^2 dx +  \int_{\mathbb{R}^2}\left( |w_j \nabla \chi_{\varepsilon_n}|^2  + 2   \chi_{\varepsilon_n} w_j \nabla w_j \cdot \nabla \chi_{\varepsilon_n}\right) dx \\
				&\quad +\int_{\mathbb{R}^2}  |A(x_j) - A(\varepsilon_nx+x_j)|^2 |w_j \chi_{\varepsilon_n}|^2  dx +\int_{\mathbb{R}^2} V(\varepsilon_nx+x_j)|w_j\chi_{\varepsilon_n}|^2  dx.
			\end{split}
		\end{equation*}
		Then, by a similar argument used above  to obtain \eqref{mo}, we have that $\psi_{\varepsilon_n,j}\in H_{\varepsilon_n}$ and
		\begin{equation*}
			\|\psi_{\varepsilon_n,j}\|^2_{\varepsilon_n}
			= |\nabla w_j|_{2}^{2}+V_j|w_j|_2^2 + o_n(1),
		\end{equation*}
		completing the proof.
	\end{proof}
	
	Now we show the asymptotic behavior of the energy levels $c_{\varepsilon,j}$.

	\begin{lemma}\label{4.1}
		For each $j=1,2,...,m$, we have
		$c_{\varepsilon,j}\rightarrow c_{V_j}$ as $ \varepsilon\rightarrow 0^+$.
	\end{lemma}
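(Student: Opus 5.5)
We prove the two inequalities $\limsup_{\varepsilon\to0^+}c_{\varepsilon,j}\le c_{V_j}$ and $\liminf_{\varepsilon\to0^+}c_{\varepsilon,j}\ge c_{V_j}$ separately. Throughout, $c_{V_j}=c^c_{V_j}$ by Lemma \ref{existenceC}.

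\emph{Upper bound.} Take $\psi_{\varepsilon,j}$ as in \eqref{psi}. By \eqref{supp}, $G_j(\varepsilon x,|t\psi_{\varepsilon,j}|^2)=F(|t\psi_{\varepsilon,j}|^2)$, so
\[
J_{\varepsilon,j}(t\psi_{\varepsilon,j})=\frac{t^2}{2}\|\psi_{\varepsilon,j}\|_\varepsilon^2-\frac12\int_{\mathbb{R}^2}F(t^2|w_j\chi_\varepsilon|^2)\,dx .
\]
\begin{enumerate}
\item The quadratic term converges to $\frac{t^2}{2}(|\nabla w_j|_2^2+V_j|w_j|_2^2)$ by \eqref{V_j}.
\item The nonlinear term converges to $\frac12\int F(t^2w_j^2)\,dx$, locally uniformly in $t$. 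Here $w_j\in L^\infty$ and $\chi_\varepsilon\to1$, and we dominate using \eqref{p2.5} together with Corollary \ref{2.1c}.
\item The maximizers $t_\varepsilon$ of $t\mapsto J_{\varepsilon,j}(t\psi_{\varepsilon,j})$ stay in a compact subset of $(0,+\infty)$. The upper bound comes from \eqref{f4}: $F(s)\ge \frac{2C_p}{p}s^{p/2}$, so $J_{\varepsilon,j}(t\psi_{\varepsilon,j})\to-\infty$ uniformly in small $\varepsilon$. The lower bound comes from the Nehari identity combined with Lemma \ref{uniformNeharilowerbound}.
\end{enumerate}
Passing to the limit along a subsequence $t_\varepsilon\to t_0$ gives
\[
\limsup_{\varepsilon\to0^+} c_{\varepsilon,j}\le\limsup_{\varepsilon\to0^+}\max_{t\ge0}J_{\varepsilon,j}(t\psi_{\varepsilon,j})\le\max_{t\ge0}I^c_{V_j}(tw_j)=c_{V_j},
\]
where the first inequality is the minimax characterization of $c_{\varepsilon,j}$.

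\emph{Lower bound.} Suppose, by contradiction, that there is a sequence $\varepsilon_n\to0$ with $c_{\varepsilon_n,j}\to \bar c<c_{V_j}$. The lower bound $c_{\varepsilon,j}\ge\kappa>0$ holds by the mountain pass geometry, since the estimate in Lemma \ref{uniformNeharilowerbound} is uniform. By Lemma \ref{3.1}, and since \eqref{p3.1} makes $c_{V_j}$ smaller than the (PS) threshold, $J_{\varepsilon_n,j}$ has a critical point $u_n$ at level $c_{\varepsilon_n,j}$. Hence $\{u_n\}$ is a $(PS)^*_{\bar c}$ sequence for the functionals with $\Gamma=\{j\}$, and $\bar c$ lies in the range required by Proposition \ref{fengjie}, again by \eqref{p3.1}. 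Proposition \ref{fengjie} then yields
\[
\bar c=\sum_{i=1}^{l}J_{0,i}(U_{0,i})\ge J_{0,1}(U_{0,1})\ge c^c_{V(y_1)},\qquad y_1\in\overline\Lambda_j .
\]
Since $V\ge V_j$ on $\overline\Lambda_j$ and $c_\mu$ is increasing in $\mu$ (Corollary \ref{existencecor}), this gives $\bar c\ge c_{V_j}$, a contradiction.

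The main obstacle is the lower bound. The limit functional $J_{0,1}$ contains the penalized, weakly limiting coefficient $\eta_1$, so one might fear it lies below $I^c_{V(y_1)}$. Inequality \eqref{djl616:profile-energy-lower} already handles this: because $\widehat F\le F$, we have $J_{0,1}\ge I^c_{V(y_1)}$ along rays, so no sharper localization is needed. The remaining care is in justifying that the sequence of critical points qualifies as a $(PS)^*$ sequence and in the bookkeeping of levels against the thresholds in \eqref{p3.1}.
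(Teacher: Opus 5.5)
Your proposal is correct and follows essentially the paper's proof. The upper bound uses the test functions $\psi_{\varepsilon,j}$ (the paper works with the Nehari projections $t_{n,j}\psi_{\varepsilon_n,j}$ and shows $t_{n,j}\to1$, which amounts to your maximizer argument), and the lower bound feeds the critical points from Lemma~\ref{3.1} into Proposition~\ref{fengjie} with $\Gamma=\{j\}$. One small correction: since $y_1\in\overline\Lambda_j$ may satisfy $V(y_1)>V^*$, justify $c^c_{V(y_1)}\geq c^c_{V_j}$ by the minimax characterization \eqref{jitaic}, which gives monotonicity of $c^c_\mu$ for every $\mu>0$ as in the paper, rather than by Corollary~\ref{existencecor}, which is stated only for $\mu\in(0,V^*]$.
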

	\begin{proof}
		Fix \(j\in\{1,\ldots,m\}\) and let
		\(\varepsilon_n\to0^+\) be arbitrary.  We first prove the upper bound.  Let
		\(\psi_{\varepsilon_n,j}\) be given by \eqref{psi}.
		Since, by definition, $|\operatorname{supp}\psi_{\varepsilon_n,j}|\neq 0$, then, for $n$ large enough, by \eqref{supp}, $|\operatorname{supp}\psi_{\varepsilon_n,j}\cap \Lambda_{j,\varepsilon_n}|\neq 0$, and so,
		arguing as in the first part of the proof of Lemma \ref{lem_same_levels}, there exists \(t_{n,j}>0\) such that
		\(t_{n,j}\psi_{\varepsilon_n,j}\in\mathcal N_{\varepsilon_n,j}\), namely
		\begin{equation}\label{N_ej}
			\|\psi_{\varepsilon_n,j}\|_{\varepsilon_n}^2
			=\int_{\mathbb R^2}
			f(t_{n,j}^2|w_j\chi_{\varepsilon_n}|^2)
			|w_j\chi_{\varepsilon_n}|^2dx.
		\end{equation}
		The sequence \(\{t_{n,j}\}\) is bounded and bounded away from zero.  Indeed,
		if \(t_{n,j}\to+\infty\), then, since
		\(\min_{\overline{B_1(0)}}w_j>0\), by (\ref{f4}), for $n$ large enough
		\[
		\int_{\mathbb R^2}
		f(t_{n,j}^2|w_j\chi_{\varepsilon_n}|^2)
		|w_j\chi_{\varepsilon_n}|^2dx
		\geq C t_{n,j}^{p-2}\int_{B_1(0)} |w_j|^p dx
		\geq C t_{n,j}^{p-2}
		\]
		so that, passing to the limit as $n\to +\infty$ and using \eqref{N_ej}, we get that $\|\psi_{\varepsilon_n,j}\|_{\varepsilon_n}^2\to +\infty$, which contradicts
		\eqref{V_j}.  If
		\(t_{n,j}\to0^+\), then \eqref{p2.4}, H\"older's inequality, and Lemma
		\ref{2.1} show that the right-hand side tends to zero, again contradicting
		\eqref{V_j}.  Hence, after passing to a subsequence,
		\(t_{n,j}\to t_j>0\).
		
		Thus, by \eqref{V_j}, the Dominated Convergence Theorem, and \eqref{N_ej},
		\[
		\int_{\mathbb R^2}(|\nabla w_j|^2+V_jw_j^2)dx
		=\int_{\mathbb R^2}f(t_j^2w_j^2)w_j^2dx.
		\]
		Since \(w_j\in\mathcal N_{V_j}\) and \(f\) is strictly increasing on
		\((0,+\infty)\) by \eqref{f4}, it follows that \(t_j=1\). 
		
		Moreover,
		\eqref{p2.5}, H\"older's inequality, and Lemma \ref{2.1} provide an
		integrable dominating function on \(\mathbb R^2\) for
		\(F(t_{n,j}^2|w_j\chi_{\varepsilon_n}|^2)\) independent of \(n\).  Therefore, using again the Dominated Convergence Theorem,
		\[
		J_{\varepsilon_n,j}(t_{n,j}\psi_{\varepsilon_n,j})
		\longrightarrow I_{V_j}(w_j)=c_{V_j},
		\]
		and hence
		\begin{equation}\label{locallevelupper}
			\limsup_{n\to+\infty}c_{\varepsilon_n,j}
			= \limsup_{n\to+\infty} \inf_{u \in \mathcal{N}_{\varepsilon_n,j}} J_{\varepsilon_n,j}(u)
			\leq \limsup_{n\to+\infty} J_{\varepsilon_n,j}(t_{n,j}\psi_{\varepsilon_n,j})
			=c_{V_j}.
		\end{equation}
		
		For \(n\) sufficiently large, by \eqref{locallevelupper} and \eqref{p3.1}, using Lemma \ref{3.1} for \(\Gamma=\{j\}\), we find
		\(u_{n,j}\in H_{\varepsilon_n}\setminus\{0\}\) such that
		\[
		J_{\varepsilon_n,j}(u_{n,j})=c_{\varepsilon_n,j},
		\qquad J_{\varepsilon_n,j}'(u_{n,j})=0.
		\]
		Then, Lemma \ref{uniformNeharilowerbound} gives that $\|u_{n,j}\|_{\varepsilon_n}\geq\kappa_*$ for $n$ sufficiently large and so, arguing as in \eqref{bddPS} and since
		\(J_{\varepsilon_n,j}'(u_{n,j})=0\),
		\[
		c_{\varepsilon_n,j}=J_{\varepsilon_n,j}(u_{n,j})
		-\frac1{\Theta} J_{\varepsilon_n,j}'(u_{n,j})[u_{n,j}]
		\geq\left(\frac12-\frac1{\Theta}-\frac1{2k}\right)
		\|u_{n,j}\|_{\varepsilon_n}^2
		\geq\left(\frac12-\frac1{\Theta}-\frac1{2k}\right)\kappa_*^2>0.
		\]
		Thus, by \eqref{locallevelupper}, we obtain that
		\(\{u_{n,j}\}\) is bounded in \(H_{\varepsilon_n}\).  
		
		Let
		\begin{equation}\label{dsingle}
			d:=\liminf\limits_{n\to+\infty}
			c_{\varepsilon_n,j}>0
		\end{equation}
		and consider a subsequence of \(\{u_{n,j}\}\), still denoted by \(\{u_{n,j}\}\), such that $c_{\varepsilon_n,j}= J_{\varepsilon_n,j}(u_{n,j})\to d$.
		By \eqref{locallevelupper} and
		\eqref{p3.1},
		\begin{equation}\label{uppbcVj}
			0<d\leq c_{V_j}<\frac{1}{8m
			}\left(\frac12-\frac1{\Theta}-\frac1{2k}\right)
		\end{equation}
		and, since \(J_{\varepsilon_n,j}'(u_{n,j})=0\), it follows that
		\(\{u_{n,j}\}\) is a \((PS)^*_d\) sequence for $J_{\varepsilon_n,j}$.  
		Applying Proposition \ref{fengjie} to \(\Gamma=\{j\}\), so that the penalized nonlinearity \(g\) and the functional
		\(J_\varepsilon\)  are  \(g_j\) and
		\(J_{\varepsilon,j}\), respectively, there exists a subsequence still denoted by \(\{u_{n,j}\}\),
		a positive integer \(l\),
		mutually diverging sequences \(\{y_{n,h}\}_{h=1}^l\subset\mathbb R^2\), $l$
		points \(y_h\in\overline\Lambda_j\) that are limits of $\{\varepsilon_n y_{n,h}\}$, and $l$ nontrivial functions
		\(U_{0,h}\in H^1(\mathbb R^2,\mathbb C)\) such that
		\[
		-\Delta U_{0,h}+V(y_h)U_{0,h}
		=g_{0,h}(x,|U_{0,h}|^2)U_{0,h}
		\quad
		\text{in }
		\mathbb R^2,
		\text{ for every }
		h\in\{1,\ldots,l\},
		\]
		\[
		\left\|u_{n,j}-\sum_{h=1}^l
		e^{iA(y_h)\cdot(\cdot-y_{n,h})}
		(U_{0,h}\chi_{\varepsilon_n})(\cdot-y_{n,h})
		\right\|_{\varepsilon_n}\longrightarrow0,
		\qquad
		d=\sum_{h=1}^lJ_{0,h}(U_{0,h}),
		\]
		\[
		J_{0,h}(U_{0,h})\geq c^c_{V(y_h)}>0,
		\qquad h=1,\ldots,l.
		\]
		where 
		\[
		J_{0,h}(v):=\frac12\int_{\mathbb R^2}
		\bigl(|\nabla v|^2+V(y_h)|v|^2\bigr)dx
		-\frac12\int_{\mathbb R^2}G_{0,h}(x,|v|^2)dx, 
		\]
		\[
		g_{0,h}(x,t):=\eta_h(x)f(t)+(1-\eta_h(x))\widehat f(t),
		\quad
		G_{0,h}(x,t):=\int_0^t g_{0,h}(x,s)\,ds,
		\]
		\[
		\chi_{\Lambda_j}(\varepsilon_n(\cdot+y_{n,h}))
		\rightharpoonup\eta_h
		\quad\hbox{in }L^p_{\mathrm{loc}}(\mathbb{R}^2)
		\quad\hbox{for every }1\leq p<+\infty,
		\qquad 0\leq\eta_h\leq1\quad\hbox{a.e. in }\mathbb{R}^2.
		\]
		Since \(y_h\in\overline\Lambda_j\), we have \(V(y_h)\geq V_j\).
		Hence, by the minimax characterization \eqref{jitaic} and Lemma
		\ref{existenceC},
		\[
		J_{0,h}(U_{0,h})
		\geq c^c_{V(y_h)}
		\geq c^c_{V_j}
		=c_{V_j}
		\]
		and hence \(d\geq c_{V_j}\). Combining this inequality with \eqref{uppbcVj}
		we obtain \(l=1\) and \(d=c_{V_j}\).
		Then, by \eqref{dsingle}, 
		\(\liminf_{n\to+\infty}c_{\varepsilon_n,j}=c_{V_j}\) and so, by \eqref{locallevelupper}, we obtain that
		\(c_{\varepsilon_n,j}\to c_{V_j}\). Hence, the arbitrariness of the sequence
		\(\varepsilon_n\to0^+\) allows us to conclude.
		
	\end{proof}

	We now introduce a smooth localization of the different potential wells.
	
	\begin{lemma}\label{smoothlocalization}
		There exist real-valued functions
		$\zeta_0,\zeta_1,\ldots,\zeta_s\in C^\infty(\mathbb R^2,[0,1])$ such that for every $r=0,1,\ldots,s$, $|\nabla\zeta_r|\in L^\infty(\mathbb{R}^2,\mathbb{R})$, for every $j\in\Gamma$, $\zeta_j=1$ in a neighborhood of $\overline\Lambda_j$, $\zeta_0=0$ in a neighborhood of $\overline\Lambda_\Gamma$,
		\[
		\sum_{r=0}^s\zeta_r^2=1,\qquad
		\operatorname{supp}\zeta_i\cap\operatorname{supp}\zeta_j=\emptyset
		\quad(i\ne j),\qquad i,j\in\Gamma.
		\]
		Moreover, there exists $C_\zeta>0$, such that, for every $\varepsilon>0$ small enough and $u\in H_\varepsilon$, if $\zeta_{r,\varepsilon}:=\zeta_r(\varepsilon \cdot)$,
		\begin{equation}\label{multiplierbound}
			\|\zeta_{r,\varepsilon}u\|_\varepsilon\leq C_\zeta\|u\|_\varepsilon,
			\qquad r=0,1,\ldots,s
		\end{equation}
		and
		\begin{equation}\label{magneticIMS}
			\sum_{r=0}^s\|\zeta_{r,\varepsilon}u\|_\varepsilon^2
			=\|u\|_\varepsilon^2+
			\int_{\mathbb R^2}\sum_{r=0}^s|\nabla\zeta_{r,\varepsilon}|^2|u|^2dx.
		\end{equation}
	\end{lemma}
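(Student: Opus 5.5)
The construction will be a partition of unity built from squared cut-offs, so that $\sum\zeta_r^2=1$ holds exactly. Since the closures $\overline\Lambda_1,\ldots,\overline\Lambda_s$ are compact and mutually disjoint, there is $\delta>0$ such that the neighborhoods $N_j:=\{x:\operatorname{dist}(x,\overline\Lambda_j)<3\delta\}$, $j\in\Gamma$, have mutually disjoint closures. For each $j\in\Gamma$ I take a standard mollified cut-off $\theta_j\in C_c^\infty(\mathbb R^2,[0,1])$ with $\theta_j=1$ on $\{\operatorname{dist}(\cdot,\overline\Lambda_j)<\delta\}$ and $\operatorname{supp}\theta_j\subset\{\operatorname{dist}(\cdot,\overline\Lambda_j)\le 2\delta\}$. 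Then I set
\[
\zeta_j:=\sin\Bigl(\tfrac{\pi}{2}\theta_j\Bigr),\qquad \zeta_0:=\cos\Bigl(\tfrac{\pi}{2}\textstyle\sum_{j\in\Gamma}\theta_j\Bigr).
\]

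The properties follow directly from this choice. The supports of the $\theta_j$ are disjoint, so at every point at most one $\theta_j$ is nonzero, and hence $\zeta_0^2+\sum_j\zeta_j^2=\cos^2(\frac{\pi}{2}\theta_{j})+\sin^2(\frac{\pi}{2}\theta_{j})=1$ (with $\theta_j=0$ if no index is active). Each $\zeta_r$ is smooth with values in $[0,1]$, and its gradient is bounded because $\theta_j$ is compactly supported. We have $\zeta_j=1$ and $\zeta_0=0$ on the $\delta$-neighborhoods of the $\overline\Lambda_j$, and the supports of $\zeta_i$ and $\zeta_j$ are disjoint for $i\neq j$.

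Next I prove the magnetic IMS identity \eqref{magneticIMS}. For a real $\zeta$ and $u\in H_\varepsilon$ one has $\nabla_{A_\varepsilon}(\zeta u)=\zeta\nabla_{A_\varepsilon}u-iu\nabla\zeta$ a.e., as in \eqref{nephiR}. Therefore
\[
|\nabla_{A_\varepsilon}(\zeta u)|^2=\zeta^2|\nabla_{A_\varepsilon}u|^2+|\nabla\zeta|^2|u|^2+2\zeta\nabla\zeta\cdot\operatorname{Re}\bigl(\overline{(-iu)}\,\nabla_{A_\varepsilon}u\bigr).
\]
I sum this over $r$ with $\zeta=\zeta_{r,\varepsilon}$. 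Since $\sum_r\zeta_{r,\varepsilon}^2=1$, we get $\sum_r\zeta_{r,\varepsilon}\nabla\zeta_{r,\varepsilon}=\frac12\nabla\sum_r\zeta_{r,\varepsilon}^2=0$, so the cross terms cancel pointwise. The potential terms add up to $V_\varepsilon|u|^2$. Integrating gives \eqref{magneticIMS}. I would first check the computation for $u\in C_c^\infty$ and extend by Lemma \ref{dense}, although the a.e. identity already holds in $H_\varepsilon$.

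For \eqref{multiplierbound} I use $|\nabla\zeta_{r,\varepsilon}|=\varepsilon|\nabla\zeta_r(\varepsilon\cdot)|\le\varepsilon L$ with $L:=\max_r|\nabla\zeta_r|_\infty$. Then
\[
\|\zeta_{r,\varepsilon}u\|_\varepsilon^2\le2\|u\|_\varepsilon^2+2\varepsilon^2L^2|u|_2^2\le\Bigl(2+\tfrac{2\varepsilon^2L^2}{V_0}\Bigr)\|u\|_\varepsilon^2
\]
by \eqref{V1}, which is bounded uniformly for $\varepsilon\le1$. The membership $\zeta_{r,\varepsilon}u\in H_\varepsilon$ follows from the same inequality.

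There is no real obstacle in this proof. The only point needing care is choosing the construction so that $\sum\zeta_r^2=1$ holds exactly with smooth functions; taking $\zeta_0$ as the square root of $1-\sum_j\zeta_j^2$ could fail to be smooth where that quantity vanishes, and the sine/cosine construction avoids this.
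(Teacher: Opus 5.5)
Your proposal is correct and follows essentially the same route as the paper. The paper takes $\varrho_j\in C_c^\infty(\mathbb R^2,[0,\pi/2])$ with disjoint supports and sets $\zeta_j=\sin\varrho_j$, $\zeta_0=\cos(\sum_j\varrho_j)$, which is your construction with $\varrho_j=\frac{\pi}{2}\theta_j$. It then proves \eqref{multiplierbound} and \eqref{magneticIMS} exactly as you do, via the magnetic product rule, the bound $|\nabla\zeta_{r,\varepsilon}|\le\varepsilon|\nabla\zeta_r|_\infty$ combined with $|u|_2\le V_0^{-1/2}\|u\|_\varepsilon$, and the pointwise cancellation $\sum_r\zeta_{r,\varepsilon}\nabla\zeta_{r,\varepsilon}=0$.
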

	\begin{proof}
		Since the compact sets $\overline\Lambda_j$, $j\in\Gamma$, are mutually
		disjoint, choose $\varrho_j\in C_c^\infty(\mathbb R^2,[0,\pi/2])$ such that
		$\varrho_j=\pi/2$ in a neighborhood of $\overline\Lambda_j$ and the supports
		of $\varrho_1,\ldots,\varrho_s$ are mutually disjoint. 
		Define
		\(\zeta_j:=\sin\varrho_j\) for \(j\in\Gamma\), and
		\(\zeta_0:=\cos(\sum_{j=1}^s\varrho_j)\).
		If $\varrho_j(x)\ne0$, then $\varrho_i(x)=0$ for $i\ne j$, and hence
		$\zeta_0(x)^2+\zeta_j(x)^2=1$.  Outside all these supports,
		$\zeta_0=1$.  This proves the asserted properties of the partition.
		By construction, each \(\zeta_j\), \(j\in\Gamma\), has compact support,
		while \(\zeta_0=1\) outside a compact set. Hence all their gradients are
		bounded. 
		
		For a real-valued smooth function $\zeta$, the product rule gives
		\(\nabla_{A_\varepsilon}(\zeta u)=\zeta\nabla_{A_\varepsilon}u
		+\frac1i u\nabla\zeta\).
		Since $|\nabla\zeta_{r,\varepsilon}|\leq |\nabla\zeta_r|_\infty\varepsilon$ and, by
		\eqref{V1}, $|u|_2\leq V_0^{-1/2}\|u\|_\varepsilon$, we obtain
		\[
		\|\zeta_{r,\varepsilon}u\|_\varepsilon^2
		\leq
		2\left(1+
		\frac{\varepsilon^2|\nabla\zeta_r|_\infty^2}{V_0}\right)\|u\|_\varepsilon^2,
		\]
		which proves \eqref{multiplierbound}.

		Finally, since, for each $r=0,1,\ldots,s$, the
		product rule gives, a.e. in $\mathbb R^2$,
		\[
		|\nabla_{A_\varepsilon}(\zeta_{r,\varepsilon}u)|^2
		=\zeta_{r,\varepsilon}^2|\nabla_{A_\varepsilon}u|^2
		+|\nabla\zeta_{r,\varepsilon}|^2|u|^2+2\operatorname{Re}\left(
		\zeta_{r,\varepsilon}\nabla_{A_\varepsilon}u\cdot
		\overline{\frac1i u\nabla\zeta_{r,\varepsilon}}\right)
		\]
		and 
		\[
		\sum_{r=0}^s\zeta_{r,\varepsilon}^2=1 
		\implies
		\sum_{r=0}^s\zeta_{r,\varepsilon}
		\nabla\zeta_{r,\varepsilon}
		=\frac12\nabla\left(\sum_{r=0}^s
		\zeta_{r,\varepsilon}^2\right)=0
		\]
		then
		\begin{equation*}
			2\operatorname{Re}\sum_{r=0}^s
			\left(\zeta_{r,\varepsilon}\nabla_{A_\varepsilon}u\cdot
			\overline{\frac1i u\nabla\zeta_{r,\varepsilon}}\right)
			=2\operatorname{Re}\left(\nabla_{A_\varepsilon}u\cdot
			\overline{\frac1i u\sum_{r=0}^s\zeta_{r,\varepsilon}\nabla\zeta_{r,\varepsilon}}\right)=0,    
		\end{equation*}
		and hence
		\[
		\sum_{r=0}^s|\nabla_{A_\varepsilon}
		(\zeta_{r,\varepsilon}u)|^2
		=|\nabla_{A_\varepsilon}u|^2
		+\sum_{r=0}^s|\nabla\zeta_{r,\varepsilon}|^2|u|^2.
		\]
		Moreover,
		\[
		\sum_{r=0}^sV_\varepsilon(x)|\zeta_{r,\varepsilon}u|^2
		=V_\varepsilon(x)|u|^2\sum_{r=0}^s\zeta_{r,\varepsilon}^2
		=V_\varepsilon(x)|u|^2
		\]
		and so, integrating the last two identities over $\mathbb R^2$, we obtain
		\eqref{magneticIMS}.
	\end{proof}
	
	We now establish a global energy decomposition associated with
	the smooth localization above.
	\begin{lemma}\label{globalenergysplitting}
		There exists $C>0$,  such that for every $\varepsilon>0$ small enough and $u\in H_\varepsilon$
		\begin{equation}\label{energydecomp}
			J_\varepsilon(u)\geq
			\sum_{j=1}^sJ_{\varepsilon,j}(\zeta_{j,\varepsilon}u)
			+\frac12\int_{\mathbb R^2}|\nabla_{A_\varepsilon}(\zeta_{0,\varepsilon}u)|^2dx+\frac{k-1}{2k}\int_{\mathbb R^2}V_\varepsilon(x)|\zeta_{0,\varepsilon}u|^2dx
			-C\varepsilon^2\|u\|_\varepsilon^2.
		\end{equation}
		If, in addition,
		$\zeta_{j,\varepsilon}u\in\mathcal N_{\varepsilon,j}$ for all
		$j\in\Gamma$, then, after increasing $C$, for all sufficiently small $\varepsilon>0$,
		\begin{equation}\label{energydecompN}
			J_\varepsilon(u)\geq
			(1-C\varepsilon^2)\sum_{j=1}^s
			J_{\varepsilon,j}(\zeta_{j,\varepsilon}u).
		\end{equation}
	\end{lemma}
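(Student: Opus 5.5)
The plan is to compare $J_\varepsilon(u)$ with the localized pieces $\zeta_{r,\varepsilon}u$ term by term, treating the quadratic part through the magnetic IMS formula \eqref{magneticIMS} and the nonlinear part through a pointwise inequality.

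\emph{Quadratic part.} Since $|\nabla\zeta_{r,\varepsilon}|\leq\varepsilon|\nabla\zeta_r|_\infty$ and, by \eqref{V1}, $|u|_2^2\leq V_0^{-1}\|u\|_\varepsilon^2$, identity \eqref{magneticIMS} gives
\[
\|u\|_\varepsilon^2\geq\sum_{r=0}^s\|\zeta_{r,\varepsilon}u\|_\varepsilon^2-C\varepsilon^2\|u\|_\varepsilon^2 .
\]

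\emph{Nonlinear part.} The key step is the pointwise estimate
\[
G(\varepsilon x,|u|^2)\leq\sum_{j=1}^sG_j(\varepsilon x,|\zeta_{j,\varepsilon}u|^2)+\frac1k V_\varepsilon(x)|\zeta_{0,\varepsilon}u|^2
\quad\text{for a.e. }x\in\mathbb R^2 .
\]
I will prove it by splitting into three cases, using that the supports of the $\zeta_j$, $j\in\Gamma$, are disjoint. At any $x$, at most one $\zeta_{j,\varepsilon}(x)$ with $j\in\Gamma$ is nonzero, so at most one term of the sum survives.
\begin{itemize}
\item If $\varepsilon x\in\Lambda_j$ with $j\in\Gamma$, then $\zeta_j=1$ and $\zeta_0=0$ there. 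Hence both sides equal $F(|u|^2)$.
\item If $\varepsilon x\notin\Lambda_\Gamma$ and $\zeta_{j,\varepsilon}(x)\neq0$ for some $j\in\Gamma$, then $G=\widehat F$ and $G_j=\widehat F$ at that point. Since $0\leq\widehat f\leq V_0/k$, we have $\widehat F(a+b)\leq\widehat F(a)+\frac{V_0}{k}b$. Apply this with $a=\zeta_{j,\varepsilon}^2|u|^2$ and $b=\zeta_{0,\varepsilon}^2|u|^2$, using $a+b=|u|^2$, and then $V_0\leq V_\varepsilon$.
\item Otherwise $\zeta_{0,\varepsilon}(x)=1$, and \eqref{g5} gives $G\leq\frac{V_0}{k}|u|^2\leq\frac1kV_\varepsilon|\zeta_{0,\varepsilon}u|^2$.
\end{itemize}
Wells $\Lambda_i$ with $i\notin\Gamma$ lie in $\Lambda_\Gamma^c$, so they cause no trouble.

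Inserting both estimates into $J_\varepsilon(u)=\frac12\|u\|_\varepsilon^2-\frac12\int G$ and regrouping, the $j\in\Gamma$ terms assemble into $J_{\varepsilon,j}(\zeta_{j,\varepsilon}u)$. The $\zeta_0$ terms give $\frac12|\nabla_{A_\varepsilon}(\zeta_{0,\varepsilon}u)|_2^2+\frac{k-1}{2k}\int V_\varepsilon|\zeta_{0,\varepsilon}u|^2$. This proves \eqref{energydecomp}.

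\emph{The Nehari case.} For \eqref{energydecompN}, the point is to absorb the error $C\varepsilon^2\|u\|_\varepsilon^2$, which needs a little care. Again by \eqref{magneticIMS}, $\|u\|_\varepsilon^2\leq\sum_{r=0}^s\|\zeta_{r,\varepsilon}u\|_\varepsilon^2$. The $\zeta_0$ contribution in \eqref{energydecomp} is at least $\frac{k-1}{2k}\|\zeta_{0,\varepsilon}u\|_\varepsilon^2$, so for $\varepsilon$ small it absorbs $C\varepsilon^2\|\zeta_{0,\varepsilon}u\|_\varepsilon^2$ and can then be dropped. For $j\in\Gamma$ with $\zeta_{j,\varepsilon}u\in\mathcal N_{\varepsilon,j}$, the computation in \eqref{bddPS} (with $J'_{\varepsilon,j}(\zeta_{j,\varepsilon}u)[\zeta_{j,\varepsilon}u]=0$) gives
\[
J_{\varepsilon,j}(\zeta_{j,\varepsilon}u)\geq\Bigl(\frac12-\frac1\Theta-\frac1{2k}\Bigr)\|\zeta_{j,\varepsilon}u\|_\varepsilon^2 .
\]
Hence $C\varepsilon^2\sum_{j}\|\zeta_{j,\varepsilon}u\|_\varepsilon^2\leq C'\varepsilon^2\sum_jJ_{\varepsilon,j}(\zeta_{j,\varepsilon}u)$, which yields \eqref{energydecompN} after enlarging $C$.
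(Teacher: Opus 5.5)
Your proposal is correct and follows essentially the same route as the paper's proof. Both arguments use the same three-case pointwise inequality for $G$, based on $0\le\widehat f\le V_0/k$ and $\zeta_{0,\varepsilon}^2+\zeta_{j,\varepsilon}^2=1$ where only one well cutoff is active, and both use the magnetic IMS identity \eqref{magneticIMS} with $|\nabla\zeta_{r,\varepsilon}|\le\varepsilon|\nabla\zeta_r|_\infty$ to produce the $C\varepsilon^2\|u\|_\varepsilon^2$ error. In the Nehari case you absorb that error exactly as the paper does, via $J_{\varepsilon,j}(\zeta_{j,\varepsilon}u)\ge(\frac12-\frac1\Theta-\frac1{2k})\|\zeta_{j,\varepsilon}u\|_\varepsilon^2$ and the term $\frac{k-1}{2k}\|\zeta_{0,\varepsilon}u\|_\varepsilon^2$.
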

	\begin{proof}
		We first compare the nonlinear terms pointwise.  If
		$\varepsilon x\in\Lambda_j$ for some $j\in\Gamma$, then
		$\zeta_{j,\varepsilon}(x)=1$, whereas
		$\zeta_{i,\varepsilon}(x)=0$ for $i\ne j$ and
		$\zeta_{0,\varepsilon}(x)=0$.  By the definitions of $G$ and $G_j$,
		\[
		G(\varepsilon x,|u|^2)=F(|u|^2)
		=G_j(\varepsilon x,|\zeta_{j,\varepsilon}u|^2)
		=\sum_{i=1}^sG_i(\varepsilon x,|\zeta_{i,\varepsilon}u|^2).
		\]
		Suppose now that $\varepsilon x\notin\Lambda_\Gamma$.  Then
		$G(\varepsilon x,t)=G_i(\varepsilon x,t)=\widehat F(t)$ for every
		$i=1,\ldots,s$ and $t\geq0$.  Moreover, at most one of
		$\zeta_{h,\varepsilon}$'s ($h\in\Gamma$) is nonzero.
		If the nonzero member is $\zeta_{j,\varepsilon}$, then
		$\zeta_{0,\varepsilon}^2(x)=1-\zeta_{j,\varepsilon}^2(x)$, and
		\begin{align*}
			G(\varepsilon x,|u(x)
			|^2)-\sum_{i=1}^sG_i(\varepsilon x,|\zeta_{i,\varepsilon}(x)
			u(x)
			|^2)
			&=\int_{\zeta_{j,\varepsilon}^2(x)
				|u(x)
				|^2}^{|u(x)
				|^2}\widehat f(t)dt\leq\frac{V_0}{k}(1-\zeta_{j,\varepsilon}^2(x)
			)|u(x)
			|^2\\
			&
			\leq\frac{V_\varepsilon(x)}{k}|\zeta_{0,\varepsilon}(x)
			u(x)
			|^2.
		\end{align*}
		If, instead, all the functions $\zeta_{h,\varepsilon}$
		vanish, then $\zeta_{0,\varepsilon}(x)=1$ and
		\[
		G(\varepsilon x,|u(x)
		|^2)
		=\widehat F(|u(x)
		|^2)=\int_0^{|u(x)
			|^2}\widehat f(t)dt
		\leq\frac{V_0}{k}|u(x)
		|^2
		\leq\frac{V_\varepsilon(x)}{k}|\zeta_{0,\varepsilon}(x)
		u(x)
		|^2.
		\]
		Combining these cases, for a.e. $x\in\mathbb R^2$ we obtain
		\begin{equation}\label{nonlinearpartition}
			G(\varepsilon x,|u|^2)\leq
			\sum_{j=1}^sG_j(\varepsilon x,|\zeta_{j,\varepsilon}u|^2)
			+\frac{V_\varepsilon(x)}{k}|\zeta_{0,\varepsilon}u|^2.
		\end{equation}
		Then, by \eqref{magneticIMS} and  \eqref{nonlinearpartition}, we obtain
		\[
		\begin{aligned}
			J_\varepsilon(u)
			\geq{}&\frac12\sum_{r=0}^s
			\|\zeta_{r,\varepsilon}u\|_\varepsilon^2
			-\frac12\sum_{j=1}^s\int_{\mathbb R^2}
			G_j(\varepsilon x,|\zeta_{j,\varepsilon}u|^2)dx-\frac1{2k}\int_{\mathbb R^2}V_\varepsilon(x)
			|\zeta_{0,\varepsilon}u|^2dx
			-\frac12\int_{\mathbb R^2}\sum_{r=0}^s
			|\nabla\zeta_{r,\varepsilon}|^2|u|^2dx\\
			={}&\sum_{j=1}^sJ_{\varepsilon,j}(\zeta_{j,\varepsilon}u)
			+\frac12\int_{\mathbb R^2}
			|\nabla_{A_\varepsilon}(\zeta_{0,\varepsilon}u)|^2dx+\frac{k-1}{2k}\int_{\mathbb R^2}V_\varepsilon(x)
			|\zeta_{0,\varepsilon}u|^2dx
			-\frac12\int_{\mathbb R^2}\sum_{r=0}^s
			|\nabla\zeta_{r,\varepsilon}|^2|u|^2dx.
		\end{aligned}
		\]
		Since
		$|\nabla\zeta_{r,\varepsilon}(x)|\leq
		\varepsilon|\nabla\zeta_r|_\infty$ and $V_\varepsilon\geq V_0$, it follows
		that
		\[
		\int_{\mathbb R^2}\sum_{r=0}^s
		|\nabla\zeta_{r,\varepsilon}|^2|u|^2dx
		\leq\varepsilon^2\sum_{r=0}^s|\nabla\zeta_r|_\infty^2|u|_2^2
		\leq\frac{\varepsilon^2}{V_0}\left(\sum_{r=0}^s|\nabla\zeta_r|_\infty^2\right)\|u\|_\varepsilon^2.
		\]
		Thus, combining the last two estimates, we get
		\eqref{energydecomp}.
		
		Let now $\zeta_{j,\varepsilon}u\in\mathcal N_{\varepsilon,j}$ for all $j\in\Gamma$.   Arguing as in \eqref{bddPS},
		\[
		J_{\varepsilon,j}(\zeta_{j,\varepsilon}u)
		=J_{\varepsilon,j}(\zeta_{j,\varepsilon}u)-\frac1{\Theta} J_{\varepsilon,j}'(\zeta_{j,\varepsilon}u)[\zeta_{j,\varepsilon}u]
		\geq\left(\frac12-\frac1{\Theta}-\frac1{2k}\right)\|\zeta_{j,\varepsilon}u\|_\varepsilon^2
		\]
		and so, by \eqref{p3.1},
		\[
		\|\zeta_{j,\varepsilon}u\|_\varepsilon^2
		\leq \left(\frac12-\frac1{\Theta}-\frac1{2k}\right)^{-1}J_{\varepsilon,j}(\zeta_{j,\varepsilon}u)
		\] for $j=1,\ldots,s$.
		Moreover, \eqref{magneticIMS} yields
		\[
		\|u\|_\varepsilon^2\leq
		\sum_{j=1}^s\|\zeta_{j,\varepsilon}u\|_\varepsilon^2
		+\|\zeta_{0,\varepsilon}u\|_\varepsilon^2.\]
		In addition, since $(k-1)/(2k)<1/2$,
		\[
		\frac12\int_{\mathbb R^2}|\nabla_{A_\varepsilon}(\zeta_{0,\varepsilon}u)|^2dx
		+\frac{k-1}{2k}\int_{\mathbb R^2}V_\varepsilon(x)|\zeta_{0,\varepsilon}u|^2dx
		\geq\frac{k-1}{2k}\|\zeta_{0,\varepsilon}u\|_\varepsilon^2.
		\]
		Thus, using the last three estimates in \eqref{energydecomp}, we obtain
		\begin{align*}
			J_\varepsilon(u)&\geq
			\sum_{j=1}^sJ_{\varepsilon,j}(\zeta_{j,\varepsilon}u)
			+\frac{k-1}{2k}\|\zeta_{0,\varepsilon}u\|_\varepsilon^2-C\varepsilon^2\left(
			\sum_{j=1}^s\|\zeta_{j,\varepsilon}u\|_\varepsilon^2
			+\|\zeta_{0,\varepsilon}u\|_\varepsilon^2\right)\\
			&\geq
			\left[1-C\varepsilon^2\left(
			\frac12-\frac1{\Theta}-\frac1{2k}\right)^{-1}\right]
			\sum_{j=1}^sJ_{\varepsilon,j}(\zeta_{j,\varepsilon}u)+\left(\frac{k-1}{2k}-C\varepsilon^2\right)
			\|\zeta_{0,\varepsilon}u\|_\varepsilon^2
		\end{align*}
		and so we can conclude.
	\end{proof}
	
	Due to the geometry of $t\mapsto I_{V_j}(tw_j)$, we may choose $T>1$ so
	large that, for every $j=1,\ldots,m$,
	\begin{equation*}
		0<I_{V_j}(T^{-1}w_j)<\frac{c_{V_j}}2,\qquad
		I_{V_j}'(T^{-1}w_j)[w_j]>0,\qquad
		I_{V_j}'(Tw_j)[w_j]<0,\qquad 
		I_{V_j}(Tw_j)<0.
	\end{equation*}
	Observe that
	\begin{equation}\label{carcVj}
		c_{V_j}=\max_{\theta_j\in[T^{-2},1]}I_{V_j}(\theta_jTw_j),
	\end{equation}
	and the maximum is attained only at $\theta_j=1/T$.
	
	For $\theta=(\theta_1,\ldots,\theta_s)\in[T^{-2},1]^s$, set
	\[
	\widetilde E_\varepsilon(\theta):=
	\sum_{j=1}^s\theta_jT\psi_{\varepsilon,j}
	\]
	and consider the class
	\begin{equation}\label{pathclass}
		\Sigma_\varepsilon
		:=\left\{E\in C([T^{-2},1]^s,H_\varepsilon):
		\begin{array}{l}
			E=\widetilde E_\varepsilon
			\text{ on }\partial([T^{-2},1]^s),\\
			\|\zeta_{j,\varepsilon}E(\theta)\|_\varepsilon>0
			\text{ for every }j\in\Gamma
			\text{ and }\theta\in[T^{-2},1]^s
		\end{array}
		\right\}.
	\end{equation}
	By \eqref{supp},
	$\zeta_{j,\varepsilon}\psi_{\varepsilon,j}=\psi_{\varepsilon,j}$ for small
	$\varepsilon$, and hence $\widetilde E_\varepsilon\in\Sigma_\varepsilon$.
	
	The next lemma shows that every admissible map
	$E\in\Sigma_\varepsilon$ admits a parameter for which all its localized
	components belong simultaneously to the corresponding Nehari manifolds.
	
	\begin{lemma}\label{4.2}
		For $\varepsilon>0$ sufficiently small and every $E\in\Sigma_\varepsilon$,
		there exists $\theta^*\in[T^{-2},1]^s$ such that, for every
		\(j\in\Gamma\),
		\(\zeta_{j,\varepsilon}E(\theta^*)\in\mathcal N_{\varepsilon,j}\).
	\end{lemma}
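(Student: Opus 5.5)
This is a degree argument in the spirit of the Miranda (Poincaré--Miranda) theorem, applied to the continuous vector field on the cube $Q:=[T^{-2},1]^s$.
\[
\Phi_\varepsilon(\theta):=\bigl(J'_{\varepsilon,1}(\zeta_{1,\varepsilon}E(\theta))[\zeta_{1,\varepsilon}E(\theta)],\ldots,J'_{\varepsilon,s}(\zeta_{s,\varepsilon}E(\theta))[\zeta_{s,\varepsilon}E(\theta)]\bigr).
\]
Continuity follows from $E\in C(Q,H_\varepsilon)$, \eqref{multiplierbound}, and $J_{\varepsilon,j}\in C^1$. A zero $\theta^*$ of $\Phi_\varepsilon$ gives the claim, because $\|\zeta_{j,\varepsilon}E(\theta)\|_\varepsilon>0$ for all $\theta$ by the definition \eqref{pathclass}. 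Hence $\zeta_{j,\varepsilon}E(\theta^*)\neq0$, and so it lies in $\mathcal N_{\varepsilon,j}$.

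To apply Miranda's theorem, I will check the sign of each component on the opposite faces of $Q$. On $\partial Q$ we have $E=\widetilde E_\varepsilon$. For $\varepsilon$ small, \eqref{supp} and the disjointness of the supports of the $\zeta$'s give $\zeta_{j,\varepsilon}\widetilde E_\varepsilon(\theta)=\theta_jT\psi_{\varepsilon,j}$, since $\zeta_{j,\varepsilon}\equiv1$ on $\Lambda_{j,\varepsilon}$ and $\zeta_{j,\varepsilon}\equiv 0$ on $\Lambda_{i,\varepsilon}$ for $i\ne j$. So the $j$-th component on the boundary depends only on $\theta_j$:
\[
\Phi_{\varepsilon,j}(\theta)=h_{\varepsilon,j}(\theta_j):=J'_{\varepsilon,j}(\theta_jT\psi_{\varepsilon,j})[\theta_jT\psi_{\varepsilon,j}].
\]
On the face $\theta_j=T^{-2}$ this equals $T^{-1}J'_{\varepsilon,j}(T^{-1}\psi_{\varepsilon,j})[\psi_{\varepsilon,j}]$. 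On the face $\theta_j=1$ it equals $T\,J'_{\varepsilon,j}(T\psi_{\varepsilon,j})[\psi_{\varepsilon,j}]$. Because $\operatorname{supp}\psi_{\varepsilon,j}\subset\Lambda_{j,\varepsilon}$, only $f$ appears in these expressions. I then argue as in Lemma \ref{lempsiej} and in the proof of Lemma \ref{4.1}: by \eqref{V_j}, and by dominated convergence using \eqref{p2.4}, H\"older and Lemma \ref{2.1} for the integrals $\int f(t^2|w_j\chi_\varepsilon|^2)|w_j\chi_\varepsilon|^2dx$ with $t=T^{\pm1}$, we get
\[
J'_{\varepsilon,j}(t\psi_{\varepsilon,j})[\psi_{\varepsilon,j}]\to I'_{V_j}(tw_j)[w_j]\quad(\varepsilon\to0^+),\qquad t\in\{T^{-1},T\}.
\]
By the choice of $T$, the limits are $>0$ for $t=T^{-1}$ and $<0$ for $t=T$. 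Therefore, for $\varepsilon$ small and uniformly in $E\in\Sigma_\varepsilon$ (the boundary data do not depend on $E$), we have $\Phi_{\varepsilon,j}>0$ on $\{\theta_j=T^{-2}\}$ and $\Phi_{\varepsilon,j}<0$ on $\{\theta_j=1\}$, for every $j\in\Gamma$. Only finitely many $j$ are involved, so a single $\varepsilon$ works.

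Miranda's theorem then yields $\theta^*\in Q$ with $\Phi_\varepsilon(\theta^*)=0$. Equivalently, I can compute $\deg(\Phi_\varepsilon,\operatorname{int}Q,0)=\deg(\Phi_\varepsilon|_{\partial Q}\text{ data})$ via the homotopy $(1-\lambda)\Phi_\varepsilon+\lambda(h_{\varepsilon,1}(\theta_1),\ldots,h_{\varepsilon,s}(\theta_s))$. This homotopy has no zeros on $\partial Q$ because the two maps coincide there, and the product map has degree $(-1)^s\neq0$. A zero in the interior, or on the boundary (impossible by the sign conditions), gives $\theta^*$.

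The main obstacle is verifying the limit of $J'_{\varepsilon,j}(t\psi_{\varepsilon,j})[\psi_{\varepsilon,j}]$, in particular the convergence of the nonlinear term under exponential growth. This is handled exactly as in Lemma \ref{4.1}, using that $w_j\in L^\infty$ so that $F$ and $f$ terms are dominated by integrable functions independent of $\varepsilon$. A second point needing care is that Miranda's theorem requires strict signs on entire faces; here this holds because $\Phi_{\varepsilon,j}$ depends only on $\theta_j$ on $\partial Q$.
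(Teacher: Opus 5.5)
Your proposal is correct and follows essentially the same route as the paper. Both define the same map $\Phi_\varepsilon$, reduce it on the faces to $\zeta_{j,\varepsilon}E(\theta)=\theta_jT\psi_{\varepsilon,j}$, pass to the limit $I'_{V_j}(T^{\pm1}w_j)[w_j]$ via \eqref{V_j} and dominated convergence, and conclude with the Poincar\'e--Miranda theorem; the paper dominates the nonlinear term by $f(T^2w_j^2)T^2w_j^2$ using the monotonicity of $f$, and your degree-theoretic reformulation is an optional add-on.
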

	\begin{proof}
		For $\varepsilon>0$, $E\in\Sigma_\varepsilon$, $j\in\Gamma$, and $\theta\in[T^{-2},1]^s$, define
		\[
		\Phi_{\varepsilon,j}(\theta):=J_{\varepsilon,j}'(\zeta_{j,\varepsilon}E(\theta))
		[\zeta_{j,\varepsilon}E(\theta)],\qquad
		\text{and}
		\qquad
		\Phi_\varepsilon:=(\Phi_{\varepsilon,1},\ldots,\Phi_{\varepsilon,s}).
		\]
		The map $\Phi_\varepsilon:\theta\in [T^{-2},1]^s\to\mathbb{R}^s$ is continuous. If $\theta_j=T^{-2}$, then
		$\theta\in\partial([T^{-2},1]^s)$ and hence
		$E(\theta)=\widetilde E_\varepsilon(\theta)$. Moreover, for $\varepsilon$ small enough, due to \eqref{supp} and to the properties of $\zeta_{j,\varepsilon}$ in
		Lemma \ref{smoothlocalization},
		\[
		\zeta_{j,\varepsilon}\psi_{\varepsilon,j}=\psi_{\varepsilon,j},
		\qquad
		\zeta_{j,\varepsilon}\psi_{\varepsilon,i}=0\quad (i\ne j).
		\]
		Therefore
		\[
		\zeta_{j,\varepsilon}E(\theta)
		=\sum_{i=1}^s\theta_iT\zeta_{j,\varepsilon}\psi_{\varepsilon,i}
		=\theta_jT\psi_{\varepsilon,j}=T^{-1}\psi_{\varepsilon,j}.
		\]
		Analogously, if $\theta_j=1$, \(
		\zeta_{j,\varepsilon}E(\theta)=T\psi_{\varepsilon,j}
		\).
		
		Moreover, for \(a\in\{T^{-1},T\}\), observed that
		\[
		\int_{\mathbb{R}^2} f(a^2 |\psi_{\varepsilon,j}|^2)a^2 |\psi_{\varepsilon,j}|^2 dx
		= \int_{\mathbb{R}^2} f(a^2 |w_j\chi_\varepsilon|^2)a^2 |w_j\chi_\varepsilon|^2 dx,
		\]
		since
		\(0\leq\chi_\varepsilon\leq1\), $\chi_\varepsilon\to 1$ a.e. in $\mathbb{R}^2$, and \(f\) is nondecreasing, 
		\[
		f\bigl(a^2|w_j\chi_\varepsilon|^2\bigr)
		a^2|w_j\chi_\varepsilon|^2
		\leq f(T^2w_j^2)T^2w_j^2,
		\]
		with the left hand side that tends to $f\bigl(a^2w_j^2\bigr)a^2w_j^2$ a.e. in $\mathbb{R}^2$ and  the right hand side that belongs to \(L^1(\mathbb R^2)\) by
		\eqref{p2.4}, H\"older's inequality, and Lemma \ref{2.1}.
		Then, the Dominated Convergence Theorem and \eqref{V_j} give
		\[
		\Phi_{\varepsilon,j}(\theta)=
		\begin{cases}
			T^{-1}I_{V_j}'(T^{-1}w_j)[w_j]+o_\varepsilon(1)>0 & \text{for }\theta_j=T^{-2},\\
			TI_{V_j}'(Tw_j)[w_j]+o_\varepsilon(1)<0 & \text{for }\theta_j=1.
		\end{cases}
		\]
		Hence, by the Poincar\'e--Miranda theorem,
		there exists $\theta^*\in[T^{-2},1]^s$ such that
		$\Phi_\varepsilon(\theta^*)=0$. Therefore, since the second
		condition in \eqref{pathclass} gives
		$\zeta_{j,\varepsilon}E(\theta^*)\ne0$, we conclude.
	\end{proof}
	
	Define
	\[
	S_\varepsilon:=\inf_{E\in\Sigma_\varepsilon}
	\max_{\theta\in[T^{-2},1]^s}J_\varepsilon(E(\theta)).
	\]
	We have
	\begin{proposition}\label{4.3}
		\[
		\lim_{\varepsilon\to0^+}S_\varepsilon=\sum_{j=1}^sc_{V_j}.
		\]
	\end{proposition}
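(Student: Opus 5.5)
I will prove the two inequalities $\limsup_{\varepsilon\to0^+}S_\varepsilon\le\sum_{j=1}^s c_{V_j}$ and $\liminf_{\varepsilon\to0^+}S_\varepsilon\ge\sum_{j=1}^s c_{V_j}$ separately.

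\emph{Upper bound.} I use the admissible map $\widetilde E_\varepsilon\in\Sigma_\varepsilon$, so $S_\varepsilon\le\max_\theta J_\varepsilon(\widetilde E_\varepsilon(\theta))$. For small $\varepsilon$, by \eqref{supp} the functions $\psi_{\varepsilon,j}$ have disjoint supports contained in $\Lambda_{j,\varepsilon}\subset\Lambda_{\Gamma,\varepsilon}$, where $G(\varepsilon x,t)=F(t)$. Hence
\[
J_\varepsilon(\widetilde E_\varepsilon(\theta))=\sum_{j=1}^s\Big(\tfrac12\theta_j^2T^2\|\psi_{\varepsilon,j}\|_\varepsilon^2-\tfrac12\int_{\mathbb R^2}F(\theta_j^2T^2|w_j\chi_\varepsilon|^2)\,dx\Big).
\]
By \eqref{V_j} and dominated convergence (with dominating function $F(T^2w_j^2)$, integrable by \eqref{p2.5}, H\"older's inequality and Lemma \ref{2.1}), the $j$-th summand converges to $I_{V_j}(\theta_jTw_j)$ uniformly for $\theta_j\in[T^{-2},1]$. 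Uniformity holds because the summand is quadratic in $\theta_j$ plus a term monotone in $\theta_j$; alternatively, use equicontinuity in $\theta_j$. Then \eqref{carcVj} gives $\limsup_{\varepsilon\to0^+} S_\varepsilon\le\sum_j c_{V_j}$.

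\emph{Lower bound.} Take any $E\in\Sigma_\varepsilon$. By Lemma \ref{4.2} there is $\theta^*$ with $\zeta_{j,\varepsilon}E(\theta^*)\in\mathcal N_{\varepsilon,j}$ for all $j\in\Gamma$. Using \eqref{energydecompN} and Lemma \ref{lem_same_levels} applied to $J_{\varepsilon,j}$,
\[
\max_\theta J_\varepsilon(E(\theta))\ge J_\varepsilon(E(\theta^*))\ge(1-C\varepsilon^2)\sum_{j=1}^s J_{\varepsilon,j}(\zeta_{j,\varepsilon}E(\theta^*))\ge(1-C\varepsilon^2)\sum_{j=1}^s c_{\varepsilon,j}.
\]
Taking the infimum over $E$ and applying Lemma \ref{4.1} yields $\liminf_{\varepsilon\to0^+} S_\varepsilon\ge\sum_j c_{V_j}$.

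\emph{Main obstacle.} The only delicate point is the uniform-in-$\theta$ convergence in the upper bound. I would handle it by monotonicity and dominated convergence: for $\theta_j\in[T^{-2},1]$ the integrand satisfies
\[
\big|F(\theta_j^2T^2|w_j\chi_\varepsilon|^2)-F(\theta_j^2T^2w_j^2)\big|\le F(T^2w_j^2)-F(T^2|w_j\chi_\varepsilon|^2),
\]
using that $f\ge0$ is nondecreasing and $\chi_\varepsilon\le1$. The right-hand side is independent of $\theta$ and its integral tends to $0$. The remaining work, namely checking that the ingredients of Lemmas \ref{4.1}, \ref{globalenergysplitting} and \ref{4.2} hold simultaneously for all small $\varepsilon$, is routine.
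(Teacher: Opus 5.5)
Your proposal is correct and follows the paper's proof essentially step by step. The lower bound goes through Lemma \ref{4.2}, \eqref{energydecompN} and Lemma \ref{4.1}, and the upper bound uses the reference map $\widetilde E_\varepsilon$ with convergence to $I_{V_j}(\theta_jTw_j)$ uniformly in $\theta$. The only difference is cosmetic: your $\theta$-independent dominating bound $F(T^2w_j^2)-F(T^2|w_j\chi_\varepsilon|^2)$ comes from the monotonicity of $f$, whereas the paper uses the mean-value bound $T^2w_j^2f(T^2w_j^2)(1-\chi_\varepsilon^2)$; both work equally well.
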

	\begin{proof}
		Let $E\in\Sigma_\varepsilon$.  By Lemma \ref{4.2}, for $\varepsilon>0$ small enough, there exists
		$\theta^*\in[T^{-2},1]^s$ such that, for every $j\in\Gamma$, $\zeta_{j,\varepsilon}E(\theta^*)\in\mathcal N_{\varepsilon,j}$.  Hence \eqref{energydecompN} and the definition of $c_{\varepsilon,j}$ give
		\[
		\max_{\theta\in[T^{-2},1]^s}J_\varepsilon(E(\theta))
		\geq J_\varepsilon(E(\theta^*))
		\geq(1-C\varepsilon^2)\sum_{j=1}^sJ_{\varepsilon,j}(\zeta_{j,\varepsilon}E(\theta^*))
		\geq(1-C\varepsilon^2)\sum_{j=1}^sc_{\varepsilon,j}.
		\]
		Then, taking the infimum over
		$E\in\Sigma_\varepsilon$  we obtain
		\[
		S_\varepsilon\geq(1-C\varepsilon^2)
		\sum_{j=1}^sc_{\varepsilon,j}.
		\]
		Finally, by Lemma \ref{4.1}, it follows that
		\[
		\liminf_{\varepsilon\to0^+}S_\varepsilon
		\geq\sum_{j=1}^sc_{V_j}.
		\]
		
		On the other hand, \eqref{supp} and \eqref{empintj1j2e} give
		\begin{equation}\label{Eesum}
			J_\varepsilon(\widetilde E_\varepsilon(\theta))
			=\sum_{j=1}^sJ_{\varepsilon,j}(\theta_jT\psi_{\varepsilon,j})
		\end{equation}
		and
		\[
		\int_{\mathbb R^2}G_j(\varepsilon x,|T\theta_j\psi_{\varepsilon,j}|^2)dx
		= \int_{\Lambda_{j,\varepsilon}}G_j(\varepsilon x,|T\theta_j\psi_{\varepsilon,j}|^2)dx
		= \int_{\Lambda_{j,\varepsilon}}F(|T\theta_j\psi_{\varepsilon,j}|^2)dx
		= \int_{\mathbb R^2}F(|T\theta_j\psi_{\varepsilon,j}|^2)dx.
		\]
		Thus, by \eqref{V_j}, 
		\[
		\left|J_{\varepsilon,j}(\theta_jT\psi_{\varepsilon,j})-I_{V_j}(\theta_jTw_j)\right|
		\leq\frac{T^2}{2}o_\varepsilon(1)
		+\frac12\left|\int_{\mathbb R^2}\left[F(|T\theta_jw_j|^2)
		-F(|T\theta_jw_j\chi_\varepsilon|^2)\right]dx\right|,
		\]
		where $o_\varepsilon(1)$ is independent of $\theta$.
		Moreover, by the Mean Value Theorem,
		for a.e. $x\in\mathbb R^2$ there exists
		\(
		\xi(x)\in[|T\theta_jw_j\chi_\varepsilon|,|T\theta_jw_j|]
		\)
		such that
		\[
		\left|F(|T\theta_jw_j|^2)-F(|T\theta_jw_j\chi_\varepsilon|^2)\right|
		=T^2\theta_j^2w_j^2(1-\chi_\varepsilon^2)f((\xi(x))^2)
		\leq T^2w_j^2f(T^2w_j^2)(1-\chi_\varepsilon^2)
		\leq T^2w_j^2f(T^2w_j^2).
		\]
		The last function belongs to $L^1(\mathbb R^2)$ by \eqref{p2.4},
		H\"older's inequality, and Lemma \ref{2.1}
		and so, since $\chi_\varepsilon\to 1$ a.e. in $\mathbb{R}^2$, the
		Lebesgue Dominated Convergence Theorem gives
		\begin{equation}\label{uniftheta}
			\int_{\mathbb R^2} T^2w_j^2f(T^2w_j^2)(1-\chi_\varepsilon^2)dx \to 0
		\end{equation}
		and so
		\[
		\int_{\mathbb R^2}\left|F(|T\theta_jw_j|^2)
		-F(|T\theta_jw_j\chi_\varepsilon|^2)\right|dx\longrightarrow0
		\]
		uniformly with respect to $\theta\in[T^{-2},1]^s$, since \eqref{uniftheta} is independent of $\theta$.  Hence
		\begin{equation}\label{JejtoIVj}
			J_{\varepsilon,j}(\theta_jT\psi_{\varepsilon,j})\longrightarrow I_{V_j}(\theta_jTw_j)
		\end{equation}
		uniformly with respect to $\theta\in[T^{-2},1]^s$.

		Thus, since $\widetilde E_\varepsilon\in\Sigma_\varepsilon$ for
		$\varepsilon>0$ small enough, by \eqref{Eesum}, \eqref{JejtoIVj}, and \eqref{carcVj}, we obtain
		\[
		\begin{aligned}
			S_\varepsilon
			&\leq\max_{\theta\in[T^{-2},1]^s}J_\varepsilon(\widetilde E_\varepsilon(\theta))
			=\max_{\theta\in[T^{-2},1]^s}\sum_{j=1}^sJ_{\varepsilon,j}(\theta_jT\psi_{\varepsilon,j})
			=\max_{\theta\in[T^{-2},1]^s}
			\sum_{j=1}^sI_{V_j}(\theta_jTw_j)+o_\varepsilon(1)\\
			&\leq\sum_{j=1}^s\max_{\theta_j\in[T^{-2},1]}
			I_{V_j}(\theta_jTw_j)+o_\varepsilon(1)
			=\sum_{j=1}^sc_{V_j}+o_\varepsilon(1)
		\end{aligned}
		\]
		completing the proof.
	\end{proof}
	
	\begin{remark}\label{4.4}
		Set
		\(
		C_{\varepsilon,\Gamma}:=
		\max\limits_{\theta\in[T^{-2},1]^s}
		J_\varepsilon(\widetilde E_\varepsilon(\theta)).
		\)
		Then{, using the last series of inequalities in the proof of the previous Proposition and its final result,}
		\begin{equation}\label{Cepslimit}
			\lim_{\varepsilon\to0^+}C_{\varepsilon,\Gamma}
			=\sum_{j=1}^sc_{V_j}.
		\end{equation}
	\end{remark}

	Now, let
	\[
	\|u\|_{\varepsilon,j}:=\left(
	\int_{\Lambda_{j,\varepsilon}}
	\bigl(|\nabla_{A_\varepsilon}u|^2+V_\varepsilon(x)|u|^2\bigr)dx
	\right)^{1/2},
	\qquad
	j\in\Gamma,
	\]
	and
	\begin{equation}\label{rhochoice}
		0<\rho<\frac{1}{4}\min\left\{\kappa_*,
		T^{-1}\min_{j\in\Gamma}
		\bigl(|\nabla w_j|_2^2+V_j|w_j|_2^2\bigr)^{1/2}\right\}.
	\end{equation}
	
	By Lemma \ref{lempsiej}, uniformly for $\theta\in[T^{-2},1]^s$ and for $\varepsilon>0$ sufficiently small,
	\begin{equation}\label{pathlocalmass}
		\|\widetilde E_\varepsilon(\theta)\|_{\varepsilon,j}
		=\theta_jT\|\psi_{\varepsilon,j}\|_\varepsilon
		\geq2\rho,\qquad j\in\Gamma.
	\end{equation}
	In addition, Proposition
	\ref{4.3} and \eqref{JejtoIVj} give
	\begin{equation}\label{pathcenter}
		J_\varepsilon\bigl(\widetilde E_\varepsilon(1/T,\ldots,1/T)\bigr)
		=S_\varepsilon+o_\varepsilon(1).
	\end{equation}
	
	Define now
	\begin{equation*}
		m_\varepsilon(u):=\min_{j\in\Gamma}\|u\|_{\varepsilon,j},
		\qquad
		M_\varepsilon(a):=\{u\in H_\varepsilon:m_\varepsilon(u)\geq a\},
		\quad a>0,
	\end{equation*}
	For $u,v\in H_\varepsilon$,
	\begin{equation}\label{mlipschitz}
		|m_\varepsilon(u)-m_\varepsilon(v)|
		\leq\max_{j\in\Gamma}
		\bigl|\|u\|_{\varepsilon,j}-\|v\|_{\varepsilon,j}\bigr|
		\leq\|u-v\|_\varepsilon.
	\end{equation}
	Thus $m_\varepsilon$ is $1$-Lipschitz and so every $M_\varepsilon(a)$ is
	closed. 
	
	Notice that \eqref{pathlocalmass} places the whole reference path
	strictly inside $M_\varepsilon(\rho/2)$.
	
	Let
	\[
	J_\varepsilon^{C_{\varepsilon,\Gamma}}
	:=\{u\in H_\varepsilon:J_\varepsilon(u)\leq
	C_{\varepsilon,\Gamma}\},
	\qquad
	B_{\varepsilon,\nu}:=
	\{u\in H_\varepsilon:|J_\varepsilon(u)-S_\varepsilon|\leq\nu\}.
	\]
	By Remark \ref{4.4}, using \eqref{Eesum}, \eqref{JejtoIVj} and \eqref{carcVj}, for sufficiently small \(\varepsilon>0\),
	both \(\widetilde E_\varepsilon(T^{-2},\ldots,T^{-2})\) and
	\(\widetilde E_\varepsilon(1,\ldots,1)\) belong to
	\(J_\varepsilon^{C_{\varepsilon,\Gamma}}\). Hence
	\(J_\varepsilon^{C_{\varepsilon,\Gamma}}\neq\emptyset\).
	
	Since the maximum in \eqref{carcVj} is unique, each summand in the continuous
	function
	\(
	\theta\longmapsto
	\sum_{j=1}^s\bigl(c_{V_j}-I_{V_j}(\theta_jTw_j)\bigr)
	\)
	is nonnegative and vanishes only when $\theta_j=1/T$.  Hence the sum vanishes only at
	$(1/T,\ldots,1/T)$.  Since this point lies in the interior of
	$[T^{-2},1]^s$, the function is positive on the compact set
	$\partial([T^{-2},1]^s)$.  We can therefore choose $\nu^*>0$ such that
	\begin{equation}\label{p4.7}
		\sum_{j=1}^s\bigl(c_{V_j}-I_{V_j}(\theta_jTw_j)\bigr)
		\geq4\nu^*,
		\qquad \theta\in\partial([T^{-2},1]^s).
	\end{equation}
	From \eqref{pathlocalmass} and \eqref{pathcenter} it follows that, for every $\nu>0$ and small
	$\varepsilon>0$,
	\begin{equation}\label{pathinB}
		\widetilde E_\varepsilon(1/T,\ldots,1/T)
		\in B_{\varepsilon,\nu}\cap M_\varepsilon(\rho/2).
	\end{equation}
	On the other hand, by \eqref{Eesum}, \eqref{JejtoIVj}, Proposition
	\ref{4.3}, and \eqref{p4.7}, uniformly for
	$\theta\in\partial([T^{-2},1]^s)$,
	\[
	S_\varepsilon-J_\varepsilon(\widetilde E_\varepsilon(\theta))
	=\sum_{j=1}^s\bigl(c_{V_j}-I_{V_j}(\theta_jTw_j)\bigr)
	+o_\varepsilon(1)
	\geq4\nu^*+o_\varepsilon(1).
	\]
	Consequently, for every $\nu\in(0,\nu^*)$ and all sufficiently small
	$\varepsilon>0$,
	\begin{equation}\label{boundaryoutsideB}
		\widetilde E_\varepsilon(\theta)\notin B_{\varepsilon,2\nu},
		\qquad \theta\in\partial([T^{-2},1]^s).
	\end{equation}

	The following proposition rules out almost critical points in the transition
	regions between the energy neighborhoods
	$B_{\varepsilon,\nu}\subset B_{\varepsilon,2\nu}$ and the localization sets
	$M_\varepsilon(\rho/2)\subset M_\varepsilon(\rho/4)$. This uniform 
	estimate is the key ingredient in the localized deformation argument used
	below.
	\begin{proposition}\label{4.5}
		For every $\nu\in(0,\nu^*)$, there exist $\sigma_0=\sigma_0(\nu)>0$ and
		$\varepsilon^*=\varepsilon^*(\nu)>0$ such that, for
		$0<\varepsilon<\varepsilon^*$,
		\(\|J_\varepsilon'(u)\|_\varepsilon\geq\sigma_0\) for every
		$u\in J_\varepsilon^{C_{\varepsilon,\Gamma}}$ belonging to
		\begin{equation}\label{gradientregion}
			\bigl(\left(B_{\varepsilon,2\nu}\setminus
			B_{\varepsilon,\nu}\right)\cap M_\varepsilon(\rho/4)\bigr)\cup
			\bigl(B_{\varepsilon,2\nu}\cap
			\left(M_\varepsilon(\rho/4)\setminus
			\operatorname{int}M_\varepsilon(\rho/2)\right)\bigr).
		\end{equation}
	\end{proposition}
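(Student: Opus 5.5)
We argue by contradiction. If the statement fails for some $\nu\in(0,\nu^*)$, there are $\varepsilon_n\to0^+$ and $u_n\in J_{\varepsilon_n}^{C_{\varepsilon_n,\Gamma}}$ lying in the region \eqref{gradientregion} with $\|J'_{\varepsilon_n}(u_n)\|_{\varepsilon_n}\to0$. The region condition gives $|J_{\varepsilon_n}(u_n)-S_{\varepsilon_n}|\le 2\nu$, and Proposition \ref{4.3} gives $S_{\varepsilon_n}\to\sum_{j=1}^s c_{V_j}$. Passing to a subsequence, $J_{\varepsilon_n}(u_n)\to c$ with $|c-\sum_j c_{V_j}|\le2\nu$. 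Since $\nu<\nu^*$ can be assumed small and, by \eqref{p3.1}, $\sum_j c_{V_j}<\frac{1}{8}\bigl(\frac12-\frac1\Theta-\frac1{2k}\bigr)$ with room to spare, $c$ lies in the admissible range. Moreover $c>0$, because $m_{\varepsilon_n}(u_n)\ge\rho/4$ prevents $\|u_n\|_{\varepsilon_n}\to0$. Hence $\{u_n\}$ is a $(PS)^*_c$ sequence and Proposition \ref{fengjie} applies: up to a subsequence,
\[
u_n=\sum_{h=1}^{l}\Psi_{n,h}+o_n(1)\quad\text{in } H_{\varepsilon_n},
\qquad c=\sum_{h=1}^{l}J_{0,h}(U_{0,h}),
\qquad \varepsilon_ny_{n,h}\to y_h\in\overline\Lambda_\Gamma .
\]

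The core step is to identify the profiles using the localization $m_{\varepsilon_n}(u_n)\ge\rho/4$. Each $\Psi_{n,h}$ is supported in a ball of radius $2|\log\varepsilon_n|$ around $y_{n,h}$, and $\varepsilon_n|\log\varepsilon_n|\to0$. So if no $y_h$ lies in $\overline\Lambda_j$, then $\|u_n\|_{\varepsilon_n,j}\to0$, which contradicts $m_{\varepsilon_n}(u_n)\ge\rho/4$. Hence every $j\in\Gamma$ has at least one $y_h\in\overline\Lambda_j$. For such $h$, the bound \eqref{djl616:profile-energy-lower} together with $V(y_h)\ge V_j$ gives $J_{0,h}(U_{0,h})\ge c^c_{V(y_h)}\ge c_{V_j}$. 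Summing over $j$ yields $c\ge\sum_j c_{V_j}$. The reverse inequality is the delicate point: the hypothesis only gives $c\le\sum_jc_{V_j}+2\nu$. To close it, I would use $J_{\varepsilon_n}(u_n)\le C_{\varepsilon_n,\Gamma}$ together with \eqref{Cepslimit}, which give $c\le\sum_j c_{V_j}$. Therefore $c=\sum_jc_{V_j}$, there is exactly one profile per well ($l=s$), each $J_{0,h}(U_{0,h})=c_{V_j}$, and $V(y_h)=V_j$. By \eqref{V2} this forces $y_h\in K_j\subset\Lambda_j$, so Remark \ref{eta-one} gives $\eta_h\equiv1$ and $U_{0,h}$ is a ground state of \eqref{fude} with $\mu=V_j$.

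Now $J_{\varepsilon_n}(u_n)\to\sum_jc_{V_j}$ while $S_{\varepsilon_n}\to\sum_jc_{V_j}$, so $|J_{\varepsilon_n}(u_n)-S_{\varepsilon_n}|\to0<\nu$. This rules out the first set in \eqref{gradientregion}. For the second set, note that $y_h\in K_j$ lies in the interior of $\Lambda_j$, so the support of $\Psi_{n,h}$ sits inside $\Lambda_{j,\varepsilon_n}$. Then
\[
\|u_n\|_{\varepsilon_n,j}^2\to|\nabla U_{0,h}|_2^2+V_j|U_{0,h}|_2^2\ge\kappa_*^2
\]
by \eqref{mo} and the second part of Lemma \ref{uniformNeharilowerbound}, since $U_{0,h}$ lies on the limiting Nehari set. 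By \eqref{rhochoice}, $\kappa_*>4\rho>\rho/2$, so $m_{\varepsilon_n}(u_n)>\rho/2$ for $n$ large. That is, $u_n\in\operatorname{int}M_{\varepsilon_n}(\rho/2)$, which contradicts membership in the second set.

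I expect the main obstacle to be the upper bound $c\le\sum_jc_{V_j}$, which is what excludes extra profiles and wells with $V(y_h)>V_j$. This is why the hypothesis $u\in J^{C_{\varepsilon,\Gamma}}_\varepsilon$ is included: by Remark \ref{4.4}, $C_{\varepsilon,\Gamma}-S_\varepsilon\to0$, which is exactly what is needed. A second technical point is showing that the norm $\|\cdot\|_{\varepsilon_n,j}$ of the remainder and of the cross terms vanishes. This follows from the strong convergence in \eqref{decomposition} and the mutual divergence of the $y_{n,h}$.
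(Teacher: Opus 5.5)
Your proposal is correct and follows the paper's proof essentially step by step: the contradiction sequence, Proposition \ref{fengjie}, at least one profile per prescribed well forced by $m_{\varepsilon_n}(u_n)\geq\rho/4$, the sandwich $\sum_j c_{V_j}\geq c\geq\sum_j c_{V_j}$ (upper bound from $u_n\in J_{\varepsilon_n}^{C_{\varepsilon_n,\Gamma}}$ and \eqref{Cepslimit}), and then Proposition \ref{4.3} and Lemma \ref{uniformNeharilowerbound} with \eqref{rhochoice} to exclude both parts of \eqref{gradientregion}. Two small repairs are needed: (i) the admissibility $c<\frac18\bigl(\frac12-\frac1\Theta-\frac1{2k}\bigr)$ should come from the sublevel bound $c\leq\sum_j c_{V_j}$, moved before you invoke Proposition \ref{fengjie}, not from taking $\nu$ small, since $\nu\in(0,\nu^*)$ is arbitrary and $2\nu$ need not fit in the margin left by \eqref{p3.1}; (ii) concluding $V(y_h)=V_j$ needs a strict inequality, which the paper gets from $J_{0,h}(U_{0,h})\geq\max_{t\geq0}I^c_{V(y_h)}(tU_{0,h})$ (see \eqref{finalsplitting}) by evaluating at the maximizer $t_j$ of $t\mapsto I^c_{V_j}(tU_{0,h})$.
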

	\begin{proof}
		Observe preliminarily that, if $\nu\in(0,\nu^*)$ and $\varepsilon$ is sufficiently small, by \eqref{pathinB} and \eqref{boundaryoutsideB}, the continuous map
		\(\theta\mapsto |J_\varepsilon(\widetilde E_\varepsilon(\theta))
		-S_\varepsilon|\) is at most \(\nu\) at
		\((1/T,\ldots,1/T)\) and greater than \(2\nu\) at \((1,\ldots,1)\).
		Thus, by \eqref{pathlocalmass}, the first set in \eqref{gradientregion} is nonempty.

		Suppose, by contradiction, that there exist $\varepsilon_n\to0^+$ and
		$u_n$ in the indicated set such that
		$\|J_{\varepsilon_n}'(u_n)\|_{\varepsilon_n}\to0$.
		Since $u_n\in J_{\varepsilon_n}^{C_{\varepsilon_n,\Gamma}}$,
		Remark \ref{4.4} gives a uniform upper bound for
		$J_{\varepsilon_n}(u_n)$. Thus, arguing as in \eqref{bddPS}, we first
		obtain the boundedness of $\{u_n\}$ in $H_{\varepsilon_n}$ and then
		\begin{equation}\label{boundJen}
			J_{\varepsilon_n}(u_n)
			\geq\left(\frac12-\frac1{\Theta}-\frac1{2k}\right)
			\|u_n\|_{\varepsilon_n}^2+o_n(1)\geq o_n(1).
		\end{equation}
		Hence, after passing to a subsequence,
		$J_{\varepsilon_n}(u_n)\to c$ for some $c\geq0$, and Remark \ref{4.4}
		also gives
		\begin{equation}\label{goodcbound}
			c\leq\sum_{j=1}^sc_{V_j}.
		\end{equation}
		We have that $c>0$. Indeed, if $c=0$,
		by \eqref{boundJen},
		$\|u_n\|_{\varepsilon_n}\to0$, whereas
		\(\rho/4\leq m_{\varepsilon_n}(u_n)\leq\|u_n\|_{\varepsilon_n}\).
		Moreover, by \eqref{p3.1}, \eqref{goodcbound}, and $s\leq m$,
		\[
		0<c\leq\sum_{j=1}^s c_{V_j}
		\leq s c_{V^*}\leq m c_{V^*}
		<\frac18\left(\frac12-\frac1{\Theta}-\frac1{2k}\right).
		\]
		Therefore, up to a subsequence, Proposition \ref{fengjie} provides a positive integer $l$,
		mutually diverging sequences of points $y_{n,r}$, and nontrivial limit
		profiles $U_{0,r}$ such that
		\[
		\left\|u_n-
		\sum_{r=1}^l e^{iA(y_r)\cdot(\cdot-y_{n,r})}
		(U_{0,r}\chi_{\varepsilon_n})(\cdot-y_{n,r})
		\right\|_{\varepsilon_n}\longrightarrow0,
		\qquad
		c=\sum_{r=1}^lJ_{0,r}(U_{0,r}).
		\]
		
		We have that \(l\geq s\). Indeed, if \(l<s\), then, since the closures of
		the wells are mutually disjoint, there exists \(i\in\Gamma\) such that
		\(y_r\notin\overline\Lambda_i\) for every \(r=1,\ldots,l\). By the
		separation of the wells and the convergences
		\(\varepsilon_ny_{n,r}\to y_r\) and
		\(\varepsilon_n|\log\varepsilon_n|\to0\), for $n$ large enough,
		\[
		\frac\rho4\leq\|u_n\|_{\varepsilon_n,i}
		\leq\left\|u_n-\sum_{r=1}^l e^{iA(y_r)\cdot(\cdot-y_{n,r})}
		(U_{0,r}\chi_{\varepsilon_n})(\cdot-y_{n,r})\right\|_{\varepsilon_n}
		\]
		and so, the strong convergence above gives a contradiction.
		
		The same argument shows that each \(\overline\Lambda_i\),
		\(i\in\Gamma\), contains at least one of the points \(y_1,\ldots,y_l\).
		After relabelling, we may assume that \(y_j\in\overline\Lambda_j\) for
		\(j\in\Gamma\). Then, by \eqref{djl616:profile-energy-lower}, the
		monotonicity of \(c_\mu^c\) with respect to \(\mu\), which follows from
		\eqref{jitaic}, and Lemma \ref{existenceC},
		\[
		\sum_{j=1}^sc_{V_j}\geq c
		=\sum_{r=1}^lJ_{0,r}(U_{0,r})
		\geq\sum_{j=1}^sJ_{0,j}(U_{0,j})
		\geq\sum_{j=1}^sc^c_{V(y_j)}
		\geq\sum_{j=1}^sc^c_{V_j}
		=\sum_{j=1}^sc_{V_j}.
		\]
		Hence equality holds throughout the preceding chain and, since every nontrivial
		profile has strictly positive energy by \eqref{djl616:profile-energy-lower},
		\(l=s\). 
		
		We next show that the corresponding concentration
		points belong to the minimum sets. Equality in the preceding chain gives
		\(J_{0,j}(U_{0,j})=c_{V_j}\) for every \(j\in\Gamma\). Moreover, \eqref{finalsplitting} gives
		\begin{equation}\label{fromfinalsplitting}
			c_{V_j}=J_{0,j}(U_{0,j})
			\geq\max_{t\geq0}I^c_{V(y_j)}(tU_{0,j})
			\geq\max_{t\geq0}I^c_{V_j}(tU_{0,j})
			\geq c^c_{V_j}=c_{V_j}.
		\end{equation}
		Fix \(j\in\Gamma\), and let \(t_j>0\)
		be such that
		\(
		I^c_{V_j}(t_jU_{0,j})
		=\max_{t\geq0}I^c_{V_j}(tU_{0,j}),
		\)
		whose existence follows from \eqref{IcNeh}. If \(V(y_j)>V_j\), then
		\[
		\begin{aligned}
			\max_{t\geq0}I^c_{V(y_j)}(tU_{0,j})
			&\geq I^c_{V(y_j)}(t_jU_{0,j})=I^c_{V_j}(t_jU_{0,j})
			+\frac{V(y_j)-V_j}{2}t_j^2|U_{0,j}|_2^2>\max_{t\geq0}I^c_{V_j}(tU_{0,j}),
		\end{aligned}
		\]
		which contradicts \eqref{fromfinalsplitting}. Therefore
		\(V(y_j)=V_j\) and, by (\ref{V2}) and \eqref{Kj},
		\(y_j\in K_j\).
		
		Moreover, since
		\(y_j\in K_j\), Remark
		\ref{eta-one} gives \(\eta_j=1\) a.e. in \(\mathbb R^2\) and so
		\[
		g_{0,j}(x,t)=f(t),\qquad G_{0,j}(x,t)=F(t)
		\quad\hbox{for a.e. }x\in\mathbb R^2\hbox{ and every }t\geq0,
		\quad
		J_{0,j}=I^c_{V_j},
		\]
		by \eqref{fromfinalsplitting}, each \(U_{0,j}\) is a ground state
		solution of \eqref{fude} with \(\mu=V_j\), and
		\begin{equation*}
			J_{\varepsilon_n}(u_n)\to\sum_{j=1}^sc_{V_j}.
		\end{equation*}
		Proposition \ref{4.3} rules out the annular part of
		\eqref{gradientregion}. Consequently,
		\[
		u_n\in B_{\varepsilon_n,2\nu}
		\cap\left(M_{\varepsilon_n}(\rho/4)
		\setminus\operatorname{int}M_{\varepsilon_n}(\rho/2)\right).
		\]
		Of course, if, for $n$ large enough, $B_{\varepsilon_n,2\nu}
		\cap\left(M_{\varepsilon_n}(\rho/4)
		\setminus\operatorname{int}M_{\varepsilon_n}(\rho/2)\right)=\emptyset$, we get a contradiction.
		
		For \(j\in\Gamma\), the support of \(\Psi_{n,j}:=e^{iA(y_j)\cdot(\cdot-y_{n,j})}
		(U_{0,j}\chi_{\varepsilon_n})(\cdot-y_{n,j})\) is contained in \(\overline{B}_{2|\log\varepsilon_n|}(y_{n,j})\).
		Since \(\varepsilon_n y_{n,j}\to y_j\in K_j\) and
		\(\varepsilon_n|\log\varepsilon_n|\to0\), \(\overline{B}_{2|\log\varepsilon_n|}(y_{n,j})\subset\Lambda_{j,\varepsilon_n}\) for large \(n\).
		The disjointness of the wells then shows that all terms with \(i\ne j\) vanish on \(\Lambda_{j,\varepsilon_n}\).
		Then, restricting the profile decomposition \eqref{decomposition} from Proposition \ref{fengjie} to
		\(\Lambda_{j,\varepsilon_n}\), we get
		\[\left|\|u_n\|_{\varepsilon_n,j}
		- \|\Psi_{n,j}\|_{\varepsilon_n,j}
		\right|\to0.
		\]
		By the support inclusion above and by repeating the argument leading to \eqref{mo}
		we obtain
		\[
		\|\Psi_{n,j}\|_{\varepsilon_n}^2=\|\Psi_{n,j}\|_{\varepsilon_n,j}^2
		=|\nabla U_{0,j}|_2^2+V_j|U_{0,j}|_2^2+o_n(1)
		\]
		and so
		\begin{equation}\label{localprofilenorm}
			\|u_n\|_{\varepsilon_n,j}\to
			\left(|\nabla U_{0,j}|_2^2+V_j|U_{0,j}|_2^2\right)^{1/2},
			\qquad j\in\Gamma.
		\end{equation}
		Moreover, since \(u_n\in M_{\varepsilon_n}(\rho/4)\) and 
		$m_{\varepsilon_n}$ is continuous,
		\[
		\{u\in H_{\varepsilon_n}:m_{\varepsilon_n}(u)>\rho/2\}
		\subset\operatorname{int}M_{\varepsilon_n}(\rho/2).
		\]
		Thus $u_n\notin\operatorname{int}M_{\varepsilon_n}(\rho/2)$ implies that
		$m_{\varepsilon_n}(u_n)\leq\rho/2$. Since \(\Gamma\) is finite, after taking
		a subsequence, there is a \(j\in\Gamma\) such that
		\begin{equation}\label{r4r2}
			\frac{\rho}{4}\leq\|u_n\|_{\varepsilon_n,j}\leq\frac{\rho}{2}.
		\end{equation}
		Since \(U_{0,j}\) is a
		ground state solution of \eqref{fude} with \(\mu=V_j\), by Lemma \ref{uniformNeharilowerbound} and \eqref{rhochoice}
		\[
		\left(|\nabla U_{0,j}|_2^2+V_j|U_{0,j}|_2^2\right)^{1/2}\geq\kappa_*>4\rho,
		\]
		which is in contradiction with \eqref{localprofilenorm} and \eqref{r4r2}.
		This completes the proof.
	\end{proof}
	
	Now we show a pseudo-gradient lemma
	in the \(C^1\) setting. For a related construction based on a partition
	of unity, see \cite[Lemma 2.7]{SquassinaSzulkinErratum}.

	\begin{lemma}
		\label{pseudogradient}
		Let \(E\) be a separable Hilbert space, with scalar product $\langle\cdot,\cdot\rangle$, of functions on
		\(\mathbb R^2\) such that
		\(C_c^\infty(\mathbb R^2,\mathbb C)\) is dense in \(E\), and let
		\(J\in C^1(E,\mathbb R)\). If \(E\) is complex, it is regarded as a
		real Hilbert space. Denote the set of critical points of \(J\) by \(K\). Then there exists a locally Lipschitz vector field $H:E\setminus K\to E$ such that, for every \(u\in E\setminus K\),
		\begin{equation*}
			\|H(u)\|\leq1,
			\qquad
			J'(u)[H(u)]
			>
			\frac12\|J'(u)\|.
		\end{equation*}
	\end{lemma}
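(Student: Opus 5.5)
The plan is to follow the classical partition-of-unity construction of a pseudo-gradient vector field (as in Willem's book or \cite[Lemma 2.7]{SquassinaSzulkinErratum}). Since \(J\in C^1\), the set \(K\) is closed and \(E\setminus K\) is an open subset of a metric space, hence paracompact. The separability and density hypotheses are not really needed; they only guarantee a countable dense set, and any metric space is paracompact anyway.

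The first step is to build local vectors. For each \(v\in E\setminus K\) we have \(\|J'(v)\|>0\), where \(\|J'(v)\|\) is the dual norm of \(J'(v)\) as a real-linear functional on \(E\), viewed as a real Hilbert space with scalar product \(\operatorname{Re}\langle\cdot,\cdot\rangle\). By the Riesz theorem (or simply by the definition of the sup), we choose \(w_v\in E\) with \(\|w_v\|\leq 1\) and
\[
J'(v)[w_v]>\tfrac23\|J'(v)\|.
\]
For example, take \(w_v=\nabla J(v)/\|\nabla J(v)\|\), which gives equality with \(\|J'(v)\|\). The map \(u\mapsto J'(u)[w_v]-\tfrac12\|J'(u)\|\) is continuous, because \(J'\) is continuous from \(E\) to \(E^*\). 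It is positive at \(u=v\), so there is an open neighborhood \(N_v\subset E\setminus K\) of \(v\) on which \(J'(u)[w_v]>\tfrac12\|J'(u)\|\).

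The second step is gluing. The family \(\{N_v\}\) is an open cover of the metric space \(E\setminus K\). By A.\,H. Stone's theorem it has a locally finite open refinement \(\{O_\lambda\}_{\lambda\in\Lambda}\), and we pick \(v_\lambda\) with \(O_\lambda\subset N_{v_\lambda}\). Set \(\rho_\lambda(u):=\operatorname{dist}(u,E\setminus O_\lambda)\) for \(u\in E\setminus K\); each \(\rho_\lambda\) is Lipschitz. Local finiteness makes \(\rho:=\sum_\lambda\rho_\lambda\) locally a finite sum of Lipschitz functions, and \(\rho>0\) everywhere. Then \(\beta_\lambda:=\rho_\lambda/\rho\) is a locally Lipschitz partition of unity subordinate to \(\{O_\lambda\}\). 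We define
\[
H(u):=\sum_\lambda\beta_\lambda(u)\,w_{v_\lambda}.
\]
Near any point this is a finite sum of locally Lipschitz scalar functions times fixed vectors, so \(H\) is locally Lipschitz. Convexity gives \(\|H(u)\|\leq\sum_\lambda\beta_\lambda(u)=1\). If \(\beta_\lambda(u)>0\), then \(u\in O_\lambda\subset N_{v_\lambda}\), so \(J'(u)[w_{v_\lambda}]>\tfrac12\|J'(u)\|\). Since \(J'(u)\) is real-linear and the \(\beta_\lambda\) are nonnegative with sum \(1\) and at least one positive, averaging gives \(J'(u)[H(u)]>\tfrac12\|J'(u)\|\).

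The only genuinely delicate point is the existence of a locally finite refinement, i.e. paracompactness of \(E\setminus K\). I would cite Stone's theorem that metric spaces are paracompact. The alternative is to use separability: the space is then second countable, hence Lindelöf, so a countable subcover \(\{N_{v_k}\}\) can be made locally finite by the standard shrinking \(O_k:=N_{v_k}\setminus\bigcup_{i<k}\overline{\{u:\operatorname{dist}(u,E\setminus N_{v_i})\geq 1/k\}}\) (the usual Lindelöf-regular argument). This is presumably why separability appears in the hypotheses. Everything else is routine. One further remark: the complex structure is irrelevant, because \(J'(u)\) is only \(\mathbb R\)-linear and all the inequalities above use real scalars.
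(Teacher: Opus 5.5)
Your proof is correct and follows the paper's route: a local vector with a strict margin at each noncritical point, a neighborhood where the margin persists, a locally finite open refinement from paracompactness of the metric space $E\setminus K$, and the distance-function partition of unity $\rho_\lambda/\sum\rho_\lambda$. The differences are cosmetic. The paper uses the density of $C_c^\infty(\mathbb R^2,\mathbb C)$ to make the local vectors smooth and compactly supported, and it first extracts a countable subcover via separability; as you correctly observe, neither step is needed for the stated conclusion.
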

	
	\begin{proof}
		First observe that, for each \(u\in E\setminus K\), there exists $v_u\in C_c^\infty(\mathbb R^2,\mathbb C)$ such that $\|v_u\|=1$ and $J'(u)[v_u] > \|J'(u)\|/2$.
		Indeed, by the Riesz representation theorem, there exists a unique
		$\mathfrak{j}_u\in E$ such that \(J'(u)=\langle \mathfrak{j}_u,\cdot\rangle\)
		and $\|J'(u)\|=\|\mathfrak{j}_u\|$. Since $J'(u)\neq 0$, then $\mathfrak{j}_u\neq 0$ and
		so we can consider $w_u:=\mathfrak{j}_u/\|\mathfrak{j}_u\|$.
		Thus $\|w_u\|=1$ and $J'(u)[w_u]=\|\mathfrak{j}_u\|=\|J'(u)\|$.
		By the density of $C_c^\infty(\mathbb{R}^2,\mathbb{C})$ in $E$, we can take $\varphi_u\in C_c^\infty(\mathbb{R}^2,\mathbb{C})$ such that $\|\varphi_u-w_u\|<1/3$.
		Then $\varphi_u\neq 0$ and
		\[
		J'(u)[\varphi_u]
		=J'(u)[w_u]+J'(u)[\varphi_u-w_u]
		\geq \|J'(u)\| \bigl(1-
		\|\varphi_u-w_u\|\bigr)
		>\frac{2}{3}\|J'(u)\|,
		\]
		and $\|\varphi_u\| \leq \|w_u\|+\|\varphi_u-w_u\| <4/3$.
		Set $v_u:=\varphi_u/\|\varphi_u\|$. We have that $v_u\in C_c^\infty(\mathbb{R}^2,\mathbb{C})$, $\|v_u\|=1$, and
		\[
		J'(u)[v_u]
		=\frac{J'(u)[\varphi_u]}
		{\|\varphi_u\|}
		>\frac12\|J'(u)\|.
		\]
		
		Set $\delta_u:= J'(u)[v_u] -\|J'(u)\|/2>0$.
		
		Since \(J'\) is continuous and \(E\setminus K\) is open, there exists
		a neighborhood \(W(u)\subset E\setminus K\) of \(u\) such that, for every $w\in W(u)$, \(\|J'(w)-J'(u)\|<\delta_u/2\).
		Then, for every \(w\in W(u)\), since \(\|v_u\|=1\),
		\[
		J'(w)[v_u]
		=J'(u)[v_u]+(J'(w)-J'(u))[v_u]
		>J'(u)[v_u]-\frac{\delta_u}{2}
		=\frac{1}{2}(\|J'(u)\|+\delta_u)
		\]
		and so
		\begin{equation}\label{localpseudogradientestimate}
			\|J'(w)\|
			\leq \|J'(u)\|
			+\|J'(w)-J'(u)\|
			<\|J'(u)\|+\frac{\delta_u}{2}
			<\|J'(u)\|+\delta_u
			<2J'(w)[v_u].
		\end{equation}
		
		Since \(E\setminus K\) is a separable metric space, the covering
		\(\{W(u):u\in E\setminus K\}\)
		has a countable subcover \(\{W(u_k)\}_{k\geq1}\). 
		Since
		\(E\setminus K\) is metric and hence paracompact, there exists a
		locally finite open refinement \(\{W_j\}_{j\geq1}\) of this
		subcover. For every \(j\), choose \(k_j\) such that \(W_j\subset W(u_{k_j})\) and set \(v_j:=v_{u_{k_j}}\).
		Thus, by \eqref{localpseudogradientestimate},
		for every $w\in W_j$, $2J'(w)[v_j]> \|J'(w)\|$.
		
		For $u\in E\setminus K$, let $\widetilde\rho_j(u):=
		\operatorname{dist}(u,E\setminus W_j)$,
		\[
		\rho_j(u):=
		\frac{\widetilde\rho_j(u)}
		{\displaystyle\sum_{k\geq1}\widetilde\rho_k(u)},
		\quad
		\text{and}
		\quad
		H(u):=
		\sum_{j\geq1}\rho_j(u)v_j.
		\]
		Observe that, for every \(u\in E\setminus K\), by the local
		finiteness of the covering, there exist a neighborhood \(U\) of \(u\) and
		a finite set \(I\subset\mathbb N\) such that
		\(U\cap W_j=\emptyset\) for every \(j\notin I\). Hence
		\(\widetilde\rho_j=0\) on \(U\) for \(j\notin I\), so both sums in the
		definitions of \(\rho_j\) and \(H\) reduce to finite sums on \(U\). Moreover,
		since \(\{W_j\}_{j\geq1}\) covers \(E\setminus K\), there exists
		\(j_u\in I\) such that \(u\in W_{j_u}\). Since \(W_{j_u}\) is open, $\widetilde\rho_{j_u}(u)>0$
		and therefore the denominator
		\[
		S(u):=\sum_{k\geq1}\widetilde\rho_k(u)
		\]
		is positive.
		
		Moreover, each function \(\widetilde\rho_j\) is
		\(1\)-Lipschitz, because it is the distance from a fixed closed set. Thus
		\(S\) is Lipschitz on \(U\). After shrinking \(U\), we may assume that
		\(S\geq c_0>0\) there. It follows that every quotient
		\(\rho_j\), \(j\in I\), is Lipschitz on \(U\), while
		\(\rho_j=0\) on \(U\) for \(j\notin I\). Hence, \(H\) is a finite
		sum of locally Lipschitz maps on \(U\), and hence is locally Lipschitz on
		\(E\setminus K\).
		
		Finally,
		\[
		\|H(u)\|
		\leq\sum_{j\geq1}\rho_j(u)\|v_j\|
		=\sum_{j\geq1}\rho_j(u)=1
		\]
		and
		\[
		J'(u)[H(u)]
		=\sum_{j\geq1}\rho_j(u)J'(u)[v_j]
		>\frac12\|J'(u)\|\sum_{j\geq1}\rho_j(u)
		=\frac12\|J'(u)\|.
		\]

	\end{proof}

	We next combine the previous results with a deformation argument to obtain a critical point.
	\begin{proposition}\label{4.6}
		For every $\nu\in(0,\nu^*/2)$ there exists
		$\varepsilon^*=\varepsilon^*(\nu)>0$ such that, for every
		$0<\varepsilon<\varepsilon^*$, the functional $J_\varepsilon$ has a
		critical point in
		\(
		B_{\varepsilon,\nu}\cap
		J_\varepsilon^{C_{\varepsilon,\Gamma}}\cap M_\varepsilon(\rho/2).
		\)
	\end{proposition}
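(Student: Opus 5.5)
We argue by contradiction via a localized deformation. Fix $\nu\in(0,\nu^*/2)$ and let $\sigma_0=\sigma_0(\nu)$, $\varepsilon^*(\nu)$ be given by Proposition \ref{4.5}. Decreasing $\varepsilon^*$ if necessary, we may also assume that \eqref{pathinB} and \eqref{boundaryoutsideB} hold for $2\nu<\nu^*$, and that $|C_{\varepsilon,\Gamma}-S_\varepsilon|<\nu/2$ by \eqref{Cepslimit} and Proposition \ref{4.3}. Suppose that, for some $\varepsilon<\varepsilon^*$, $J_\varepsilon$ has no critical point in $B_{\varepsilon,\nu}\cap J_\varepsilon^{C_{\varepsilon,\Gamma}}\cap M_\varepsilon(\rho/2)$. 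We first upgrade this to a uniform bound: we claim that $\|J_\varepsilon'(u)\|_\varepsilon\ge\sigma_1>0$ on $B_{\varepsilon,\nu}\cap J_\varepsilon^{C_{\varepsilon,\Gamma}}\cap M_\varepsilon(\rho/2)$. Otherwise there would be a sequence in this set with $J_\varepsilon'(u_n)\to0$ at levels in $[S_\varepsilon-\nu,S_\varepsilon+\nu]$. These levels lie below the threshold of Lemma \ref{3.1}, by \eqref{p3.1} and Proposition \ref{4.3}. Lemma \ref{3.1} then gives a strongly convergent subsequence, and since all three sets are closed, its limit would be a critical point in the set. Setting $\sigma:=\min\{\sigma_0,\sigma_1\}$, we obtain $\|J_\varepsilon'\|_\varepsilon\ge\sigma$ on the whole region $\mathcal R:=B_{\varepsilon,2\nu}\cap M_\varepsilon(\rho/4)\cap J_\varepsilon^{C_{\varepsilon,\Gamma}}$, because the complement of the previous set inside $\mathcal R$ is exactly the set \eqref{gradientregion}.

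Next we build the flow. Apply Lemma \ref{pseudogradient} with $E=H_\varepsilon$; this is legitimate by Lemma \ref{dense} and the separability of $H_\varepsilon$. Take a locally Lipschitz cutoff $\chi:H_\varepsilon\to[0,1]$ with $\chi=1$ on $B_{\varepsilon,\nu}\cap M_\varepsilon(\rho/2)$ and $\chi=0$ outside $\operatorname{int}$-neighborhoods contained in $B_{\varepsilon,2\nu}\cap M_\varepsilon(\rho/4)$, for instance a product of distance-type functions of $|J_\varepsilon(u)-S_\varepsilon|$ and $m_\varepsilon(u)$. Let $\eta$ solve $\dot\eta=-\chi(\eta)H(\eta)$, $\eta(0,u)=u$. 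Global existence holds since the field has norm at most $1$ and vanishes near $K$ on the relevant region. Along the flow $J_\varepsilon$ is nonincreasing, so $J_\varepsilon^{C_{\varepsilon,\Gamma}}$ is invariant, and wherever $\chi=1$ we have $\frac{d}{dt}J_\varepsilon(\eta)\le-\sigma/2$. Moreover, by \eqref{mlipschitz}, $m_\varepsilon$ changes at speed at most $1$.

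Now deform the reference path. Let $E(\theta):=\eta(t_0,\widetilde E_\varepsilon(\theta))$ with $t_0:=4\nu/\sigma$, and check that $E\in\Sigma_\varepsilon$. On $\partial([T^{-2},1]^s)$, \eqref{boundaryoutsideB} gives $\chi=0$, so $E=\widetilde E_\varepsilon$ there. The positivity condition $\|\zeta_{j,\varepsilon}E(\theta)\|_\varepsilon>0$ follows if the flow stays in $M_\varepsilon(\rho/4)$, because $\|u\|_{\varepsilon,j}\le\|\zeta_{j,\varepsilon}u\|_\varepsilon$ when $\zeta_j=1$ on $\overline\Lambda_j$. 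This holds because the flow stops ($\chi=0$) before $m_\varepsilon$ drops below $\rho/4$. Finally, fix $\theta$ and suppose $J_\varepsilon(E(\theta))>S_\varepsilon-\nu$. The trajectory starts at level at most $C_{\varepsilon,\Gamma}<S_\varepsilon+\nu$ and with $m_\varepsilon\ge2\rho$ by \eqref{pathlocalmass}. As long as it stays in $B_{\varepsilon,\nu}\cap M_\varepsilon(\rho/2)$, the energy decreases at rate at least $\sigma/2$, so it would lose $2\nu$ within time $t_0$, which is a contradiction. The alternative is that the trajectory exits $M_\varepsilon(\rho/2)$, which requires travelling distance $\ge3\rho/2$ and hence time $\ge3\rho/2$. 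During that time the energy decreases by at least $(\sigma/2)(3\rho/2)$ while the trajectory is still in $B_{\varepsilon,\nu}$. To make this bound exceed $2\nu$ we need $\sigma$ to be compared with $\nu/\rho$, and this is where I expect the main obstacle. I would first try to handle it by choosing $t_0=\min\{4\nu/\sigma,\rho\}$ and arguing that in time $\rho$ the trajectory cannot leave $M_\varepsilon(\rho)\subset M_\varepsilon(\rho/2)$. If $4\nu/\sigma>\rho$, I would shrink $\nu$, which is allowed because the conclusion for a smaller $\nu$ implies it for $\nu$; this requires checking that $\sigma_0(\nu)$ does not degenerate faster than $\nu$. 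Granting this, $\max_\theta J_\varepsilon(E(\theta))\le S_\varepsilon-\nu<S_\varepsilon$, contradicting the definition of $S_\varepsilon$. Hence a critical point exists in $B_{\varepsilon,\nu}\cap J_\varepsilon^{C_{\varepsilon,\Gamma}}\cap M_\varepsilon(\rho/2)$.
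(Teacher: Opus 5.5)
The overall architecture is the right one: a fixed-$\varepsilon$ Palais--Smale bound, a localized pseudo-gradient flow, and deformation of $\widetilde E_\varepsilon$ within $\Sigma_\varepsilon$. However, the step you flag as the main obstacle is a genuine gap, and your proposed repair does not close it.

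\textbf{Why the obstacle is real.} Your constant $\sigma_1$ comes from a compactness argument at fixed $\varepsilon$ (it is the paper's $b_n$). So neither $\sigma_1$ nor $\sigma=\min\{\sigma_0,\sigma_1\}$ has any lower bound uniform in $\varepsilon$.

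While $m_\varepsilon$ decreases from $2\rho$ to $\rho/2$, the trajectory lies in $M_\varepsilon(\rho/2)$. There Proposition \ref{4.5} says nothing and only $\sigma_1$ is available. Hence the drop $\frac{\sigma}{2}\cdot\frac{3\rho}{2}$ (or $\sigma\rho/2$ if you cap $t_0$ at $\rho$) may be negligible compared with $2\nu$.

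Shrinking $\nu$ does not help:
\begin{itemize}
\item Proposition \ref{4.5} is proved by contradiction and gives no quantitative dependence of $\sigma_0$ on $\nu$.
\item $\sigma_1$ degenerates with $\varepsilon$ regardless of $\nu$.
\item Choosing $\nu$ in terms of $\varepsilon$ inverts the quantifiers, since $\varepsilon^*$ may depend only on $\nu$.
\end{itemize}
Moreover, your target $J_\varepsilon(E(\theta))\le S_\varepsilon-\nu$ demands a drop of order $\nu$, which is more than needed.

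A smaller point: your $\chi$ is positive at levels in $(C_{\varepsilon,\Gamma},S_\varepsilon+2\nu)$. There critical points are not excluded, and the field of Lemma \ref{pseudogradient} is undefined. The paper multiplies by an extra cutoff $\omega_n$ supported where $\|J_{\varepsilon_n}'\|_{\varepsilon_n}>\widetilde b_n/2$.

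\textbf{The missing idea.} Tie the flow time to the small constant, and use $\sigma_0$ only while the trajectory actually crosses part of the transition annulus.

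First, it is enough to lose the $\varepsilon$-independent amount $d_0=\min\{\nu/2,3\sigma_0\rho/64\}$. Since $C_{\varepsilon,\Gamma}-S_\varepsilon\to0$, ending below $C_{\varepsilon,\Gamma}-d_0$ already beats $S_\varepsilon$. The remaining starting points are already handled: initial points outside $B_{\varepsilon,\nu}$ are below $S_\varepsilon-\nu$, and trajectories leaving $B_{\varepsilon,3\nu/2}$ end below $S_\varepsilon-3\nu/2$.

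Second, set up the flow as the paper does:
\begin{itemize}
\item use an energy cutoff equal to $1$ on $B_{\varepsilon,3\nu/2}$;
\item keep the flow active for $\rho/4<m_\varepsilon<\rho/2$ with weight $\varphi=\min\{1,(4m_\varepsilon/\rho-1)^+\}$;
\item run it up to time $T=8d_0/\widetilde b$, with $\widetilde b=\min\{\sigma_0,\sigma_1\}$.
\end{itemize}

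Third, split into cases.
\begin{itemize}
\item If $m_\varepsilon\ge\rho/2$ on $[0,T/2]$, the loss is at least $\widetilde bT/4=2d_0$.
\item If $m_\varepsilon$ hits $\rho/2$ before $T/2$ and later reaches $3\rho/8$, then across $\{3\rho/8<m_\varepsilon<\rho/2\}$ one has $\|J_\varepsilon'\|_\varepsilon\ge\sigma_0$. Because $|\frac{d}{dt}m_\varepsilon(\eta)|\le1$, one gets $\int\varphi\,dt\ge\int_{3\rho/8}^{\rho/2}(4s/\rho-1)\,ds=3\rho/32$. This gives a loss of at least $3\sigma_0\rho/64$.
\item Otherwise $m_\varepsilon>3\rho/8$ on a remaining interval of length at least $T/2$, where $\varphi\ge1/2$. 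The loss is at least $\widetilde bT/8=d_0$.
\end{itemize}
This case analysis makes the energy loss independent of the uncontrolled constant $\sigma_1$.
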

	\begin{proof}
		Suppose, by contradiction, that there are $\nu\in(0,\nu^*/2)$ and
		$\varepsilon_n\to0^+$ such that $J_{\varepsilon_n}$ has no critical point
		in
		\(
		B_{\varepsilon_n,\nu}\cap
		J_{\varepsilon_n}^{C_{\varepsilon_n,\Gamma}}
		\cap M_{\varepsilon_n}(\rho/2).
		\)
		By Proposition \ref{4.5}, for $n$ large there is no critical point in
		\(
		B_{\varepsilon_n,2\nu}\cap
		J_{\varepsilon_n}^{C_{\varepsilon_n,\Gamma}}\cap
		\bigl(M_{\varepsilon_n}(\rho/4)\setminus
		\operatorname{int}M_{\varepsilon_n}(\rho/2)\bigr)
		\)
		and so
		there is no critical point in
		$B_{\varepsilon_n,\nu}\cap
		J_{\varepsilon_n}^{C_{\varepsilon_n,\Gamma}}\cap
		M_{\varepsilon_n}(\rho/4)$.
		Then, for each fixed sufficiently large $n$, there exists $b_n>0$ such that for every $u\in B_{\varepsilon_n,\nu}\cap
		J_{\varepsilon_n}^{C_{\varepsilon_n,\Gamma}}\cap
		M_{\varepsilon_n}(\rho/4)$,
		\begin{equation}\label{fixedgradientbound}
			\|J_{\varepsilon_n}'(u)\|_{\varepsilon_n}\geq b_n.
		\end{equation}
		Indeed, otherwise there would be a sequence $u_m$ in the set
		such that $\|J_{\varepsilon_n}'(u_m)\|_{\varepsilon_n}\to0$. Since
		$J_{\varepsilon_n}(u_m)\leq C_{\varepsilon_n,\Gamma}$, arguing as in
		\eqref{bddPS} we first obtain that $\{u_m\}$ is bounded in
		$H_{\varepsilon_n}$ and then,
		after passing to a subsequence,
		$J_{\varepsilon_n}(u_m)\to d$ for some
		$0\leq d\leq C_{\varepsilon_n,\Gamma}$. Since
		$m_{\varepsilon_n}(u_m)\geq\rho/4$, we further obtain
		\[
		d\geq\left(\frac12-\frac1{\Theta}-\frac1{2k}\right)
		\liminf_{m\to+\infty}\|u_m\|_{\varepsilon_n}^2
		\geq\left(\frac12-\frac1{\Theta}-\frac1{2k}\right)\frac{\rho^2}{16}>0.
		\]
		Furthermore,
		\eqref{Cepslimit} together with \eqref{p3.1} and $s\leq m$ gives
		\[C_{\varepsilon_n,\Gamma}<\frac18\left(\frac12-\frac1{\Theta}-\frac1{2k}\right)
		<\frac12-\frac1{\Theta}-\frac1{2k}\] for $n$ large. Thus by Lemma
		\ref{3.1}, up to a subsequence,
		$u_m\to u$ in $H_{\varepsilon_n}$.  Hence, since the three sets in the intersection are
		closed, $u$ belongs to the same intersection and
		$J_{\varepsilon_n}'(u)=0$, a contradiction.
		
		Let \(\sigma_0>0\) be given by Proposition \ref{4.5}, and set
		\(\widetilde b_n:=\min\{b_n,\sigma_0\}\).  We now construct the deformation.
		Define the two cut-offs
		\[
		\vartheta_n(u):=
		\begin{cases}
			1,&|J_{\varepsilon_n}(u)-S_{\varepsilon_n}|\leq3\nu/2,\\
			\displaystyle\frac{2\nu-|J_{\varepsilon_n}(u)-S_{\varepsilon_n}|}{\nu/2},
			&3\nu/2<|J_{\varepsilon_n}(u)-S_{\varepsilon_n}|<2\nu,\\
			0,&|J_{\varepsilon_n}(u)-S_{\varepsilon_n}|\geq2\nu,
		\end{cases}
		\]
		and
		\[
		\varphi_n(u):=
		\begin{cases}
			0,&m_{\varepsilon_n}(u)\leq\rho/4,\\
			\displaystyle\frac{4m_{\varepsilon_n}(u)}\rho-1,
			&\rho/4<m_{\varepsilon_n}(u)<\rho/2,\\
			1,&m_{\varepsilon_n}(u)\geq\rho/2.
		\end{cases}
		\]
		By \eqref{mlipschitz}, \(\varphi_n\) is Lipschitz. Since
		\(J_{\varepsilon_n}\in C^1(H_{\varepsilon_n},\mathbb R)\), the map
		\(u\mapsto |J_{\varepsilon_n}(u)-S_{\varepsilon_n}|\) is locally Lipschitz,
		and hence so is \(\vartheta_n\). Put
		\begin{equation*}
			\Psi_n(u):=\vartheta_n(u)\varphi_n(u).
		\end{equation*}

		By Lemma \ref{pseudogradient} applied to $E=H_{\varepsilon_n}$, $J=J_{\varepsilon_n}$, and $K_n=\{u\in H_{\varepsilon_n}:J_{\varepsilon_n}'(u)=0\}$, there exists a locally Lipschitz vector field $Y_n:H_{\varepsilon_n}\setminus K_n\to H_{\varepsilon_n}$ such that, for every \(u\in H_{\varepsilon_n}\setminus K_n\),
		\[
		\|Y_n(u)\|_{\varepsilon_n}\leq1,
		\qquad
		J_{\varepsilon_n}'(u)[Y_n(u)]
		>
		\frac12
		\|J_{\varepsilon_n}'(u)\|_{\varepsilon_n}.
		\]

		Furthermore, since the closed set
		\(J_{\varepsilon_n}^{C_{\varepsilon_n,\Gamma}}\cap
		B_{\varepsilon_n,2\nu}\cap M_{\varepsilon_n}(\rho/4)\) is contained in the
		open set \(\{u\in H_{\varepsilon_n}:
		\|J_{\varepsilon_n}'(u)\|_{\varepsilon_n}>\widetilde b_n/2\}\),
		using the distance functions of these two sets, choose a locally Lipschitz
		function \(\omega_n:H_{\varepsilon_n}\to[0,1]\) which is equal to \(1\) on
		the former set and vanishes outside the latter one. 
		
		Then, define
		\[
		H_n(u):=
		\begin{cases}
			-\omega_n(u)\Psi_n(u)Y_n(u),&J_{\varepsilon_n}'(u)\ne0,\\
			0,&J_{\varepsilon_n}'(u)=0.
		\end{cases}
		\]
		To verify the regularity across the critical set, let
		\(u_*\in H_{\varepsilon_n}\) satisfy \(J_{\varepsilon_n}'(u_*)=0\).
		By the continuity of \(J_{\varepsilon_n}'\), there exists \(\delta_*>0\)
		such that \(\|J_{\varepsilon_n}'(v)\|_{\varepsilon_n}<\widetilde b_n/2\)
		whenever \(\|v-u_*\|_{\varepsilon_n}<\delta_*\).
		The definition of \(\omega_n\) then gives \(\omega_n(v)=0\), and consequently
		\(H_n(v)=0\), throughout this neighborhood.  Hence \(H_n\) is locally
		Lipschitz near every critical point.
		At every noncritical point this follows
		from the local Lipschitz continuity of \(\omega_n\), \(\Psi_n\), and \(Y_n\).
		Therefore \(H_n:H_{\varepsilon_n}\to H_{\varepsilon_n}\) is locally
		Lipschitz.  Moreover,
		\begin{equation}\label{localizedpseudogradient}
			\|H_n(u)\|_{\varepsilon_n}\leq1,
			\qquad H_n(u)=0\quad\hbox{if }\Psi_n(u)=0.
		\end{equation}
		Observe that, for every $u\in H_{\varepsilon_n}$,
		\begin{equation}\label{globaldescent}
			J_{\varepsilon_n}'(u)[H_n(u)]\leq 0.
		\end{equation}
		Indeed, if \(J_{\varepsilon_n}'(u)=0\),  the estimate is immediate.  If
		\(J_{\varepsilon_n}'(u)\ne0\), then, by the definition of \(H_n\) and the
		defining property of \(Y_n\),
		\begin{equation}\label{previousineq}
			J_{\varepsilon_n}'(u)[H_n(u)]
			=-\omega_n(u)\Psi_n(u)J_{\varepsilon_n}'(u)[Y_n(u)]
			\leq-\frac12\omega_n(u)\Psi_n(u)\|J_{\varepsilon_n}'(u)\|_{\varepsilon_n}\leq0.
		\end{equation}
		In particular, for every  $u\in J_{\varepsilon_n}^{C_{\varepsilon_n,\Gamma}}$, 
		\begin{equation}\label{localizeddescent}
			J_{\varepsilon_n}'(u)[H_n(u)]
			\leq-\frac12\Psi_n(u)
			\|J_{\varepsilon_n}'(u)\|_{\varepsilon_n}.
		\end{equation}
		To verify this, if \(\Psi_n(u)>0\) and
		\(u\in J_{\varepsilon_n}^{C_{\varepsilon_n,\Gamma}}\), then
		\(u\in B_{\varepsilon_n,2\nu}\cap M_{\varepsilon_n}(\rho/4)\), and hence
		\(\omega_n(u)=1\). Thus, \eqref{previousineq} yields \eqref{localizeddescent}.  If \(\Psi_n(u)=0\), then \(H_n(u)=0\), and both sides
		of \eqref{localizeddescent} are zero.

		For every $u\in H_{\varepsilon_n}$, consider the flow
		\begin{equation*}
			\dot\eta_n(t,u)=H_n(\eta_n(t,u)),
			\qquad \eta_n(0,u)=u.
		\end{equation*}
		By \cite[\S1.1]{DeimlingODE}\footnote{If \(X\) is a Banach
			space, \(D\subset X\) is open, and \(F:D\to X\) is locally Lipschitz, then for
			every \(u_0\in D\) the problem
			\[
			\dot u(t)=F(u(t)),\qquad u(0)=u_0,
			\]
			has a unique solution on a maximal interval.}
		and since \(H_n\) is locally
		Lipschitz, the preceding fact gives a unique solution on a maximal interval.
		
		Let us show that it is globally defined.
		
		Fix \(n\) and \(u\in H_{\varepsilon_n}\). By \eqref{localizedpseudogradient}, for \(0\leq s\leq t\) in its interval
		of existence,
		\begin{equation}\label{ineqflow}
			\|\eta_n(t,u)-\eta_n(s,u)\|_{\varepsilon_n}
			\leq\int_s^t\|H_n(\eta_n(\tau,u))\|_{\varepsilon_n}d\tau
			\leq t-s.
		\end{equation}
		Suppose, by contradiction, that
		the solution is defined on \([0,T)\) with \(T<+\infty\).
		Then, there exists $\lim\limits_{t\to T^{-}}\eta_n(t,u)$. Indeed,
		if
		\(t_k\to T^{-}\), then, by \eqref{ineqflow}, \(\{\eta_n(t_k,u)\}_{k}\) is a Cauchy sequence in the
		Hilbert space \(H_{\varepsilon_n}\).  Since \(H_{\varepsilon_n}\) is complete,
		this sequence converges in \(H_{\varepsilon_n}\), and 
		\eqref{ineqflow} also
		shows that the limit is independent of the sequence \(t_k\).  Denote this
		limit by \(u_T\). Since \(H_n\) is locally
		Lipschitz in a neighborhood of \(u_T\), the local existence theorem applied at
		the initial point \(u_T\) gives a solution \(\zeta\) of
		\[
		\dot\zeta(t)=H_n(\zeta(t)),\qquad \zeta(0)=u_T,
		\]
		on some interval \([0,\delta)\). Concatenating \(\eta_n(\cdot,u)\)
		with \(t\mapsto\zeta(t-T)\) extends the solution beyond \(T\), contradicting
		the maximality of \(T\).

		We also have the continuity with respect to the initial point. Indeed,
		fixed \(t>0\), since the curve \(\{\eta_n(s,u):0\leq s\leq t\}\) is compact and
		\(H_n\) is locally Lipschitz, we may divide \([0,t]\) into finitely many
		subintervals \(0=t_0<t_1<\cdots<t_m=t\) such that, on a neighborhood of
		\(\{\eta_n(s,u):t_{r-1}\leq s\leq t_r\}\), the vector field \(H_n\) is
		Lipschitz with constant \(L_r\).  If \(v\) is sufficiently close to \(u\),
		then, by induction on \(r\), both trajectories stay in this neighborhood on
		\([t_{r-1},t_r]\).  Hence, for \(t_{r-1}\leq s\leq t_r\),
		\[
		\begin{aligned}
			\|\eta_n(s,v)-\eta_n(s,u)\|_{\varepsilon_n}
			&\leq \|\eta_n(t_{r-1},v)-\eta_n(t_{r-1},u)\|_{\varepsilon_n}
			{}+\int_{t_{r-1}}^s\|H_n(\eta_n(\tau,v))-H_n(\eta_n(\tau,u))\|_{\varepsilon_n}\,d\tau\\
			&\leq \|\eta_n(t_{r-1},v)-\eta_n(t_{r-1},u)\|_{\varepsilon_n}
			{}+L_r\int_{t_{r-1}}^s\|\eta_n(\tau,v)-\eta_n(\tau,u)\|_{\varepsilon_n}\,d\tau .
		\end{aligned}
		\]
		By Gronwall's inequality, it follows that, for \(t_{r-1}\leq s\leq t_r\),
		\[
		\|\eta_n(s,v)-\eta_n(s,u)\|_{\varepsilon_n}
		\leq e^{L_r(s-t_{r-1})}
		\|\eta_n(t_{r-1},v)-\eta_n(t_{r-1},u)\|_{\varepsilon_n}.
		\]
		Taking \(s=t_r\) and applying the last estimate successively for
		\(r=1,\ldots,m\), we get
		\[
		\|\eta_n(t,v)-\eta_n(t,u)\|_{\varepsilon_n}
		\leq
		e^{\sum_{r=1}^mL_r(t_r-t_{r-1})}
		\|v-u\|_{\varepsilon_n}.
		\]
		The constant on the right is independent of \(v\).  Therefore
		\(\eta_n(t,v)\to\eta_n(t,u)\) in \(H_{\varepsilon_n}\) as \(v\to u\).

		By the chain rule and \eqref{globaldescent},
		\[
		\frac d{dt}J_{\varepsilon_n}(\eta_n(t,u))
		=J_{\varepsilon_n}'(\eta_n(t,u))
		[H_n(\eta_n(t,u))]\leq0.
		\]
		Consequently, every trajectory starting in
		\(J_{\varepsilon_n}^{C_{\varepsilon_n,\Gamma}}\) remains in the same
		sublevel.  Along such a trajectory, \eqref{localizedpseudogradient} and
		\eqref{localizeddescent} yield
		\begin{equation}\label{flowproperties}
			\|\dot\eta_n(t,u)\|_{\varepsilon_n}\leq1,\qquad
			\frac d{dt}J_{\varepsilon_n}(\eta_n(t,u))
			\leq-\frac12\Psi_n(\eta_n(t,u))
			\|J_{\varepsilon_n}'(\eta_n(t,u))\|_{\varepsilon_n}\leq0.
		\end{equation}
		Moreover,
		\begin{equation}\label{etanb}
			u\in M_{\varepsilon_n}(\rho/4)\quad\Longrightarrow\quad
			\eta_n(t,u)\in M_{\varepsilon_n}(\rho/4)\quad\text{for every }t\geq0.
		\end{equation}
		Indeed, if \(u\in M_{\varepsilon_n}(\rho/4)\), suppose, by
		contradiction, that
		\(m_{\varepsilon_n}(\eta_n(t_1,u))<\rho/4\) for some \(t_1>0\).  Since
		\(m_{\varepsilon_n}\) and \(t\mapsto\eta_n(t,u)\) are continuous, there is
		\(t_*\in[0,t_1)\) such that
		\(m_{\varepsilon_n}(\eta_n(t_*,u))=\rho/4\).
		By the definition of \(\varphi_n\), we have
		\(\varphi_n(\eta_n(t_*,u))=0\), and therefore
		\(\Psi_n(\eta_n(t_*,u))=0\) and
		\(H_n(\eta_n(t_*,u))=0\).  Thus the constant curve
		\(t\in [t_*,\infty)\mapsto\eta_n(t_*,u)\) is a solution on \([t_*,\infty)\) of 
		\[
		\dot\zeta(t)=H_n(\zeta(t)),
		\qquad \zeta(t_*)=\eta_n(t_*,u).
		\]
		Uniqueness gives \(\eta_n(t,u)=\eta_n(t_*,u)\) for \(t\geq t_*\), which contradicts
		\(m_{\varepsilon_n}(\eta_n(t_1,u))<\rho/4\).  
		
		Let
		\[
		d_0:=\min\left\{\frac\nu2,\frac{3\sigma_0\rho}{64}\right\}>0,
		\qquad
		T_n:=\frac{8d_0}{\widetilde b_n}.
		\]
		
		We claim that there exists $\xi>0$, independent of $n$, such that
		\begin{equation}\label{claim1}
			\limsup_{n\to+\infty}
			\max_{\theta\in[T^{-2},1]^s}
			J_{\varepsilon_n}
			(\eta_n(T_n,\widetilde E_{\varepsilon_n}(\theta)))
			\leq\sum_{j=1}^sc_{V_j}-\xi.
		\end{equation}
		Set
		\[
		k_0^n:=\sup\left\{J_{\varepsilon_n}(u):
		u\in\widetilde E_{\varepsilon_n}([T^{-2},1]^s)
		\setminus B_{\varepsilon_n,\nu}\right\}.
		\]
		Observe that, by \eqref{boundaryoutsideB}, the set
		$\widetilde E_{\varepsilon_n}([T^{-2},1]^s)\setminus B_{\varepsilon_n,\nu}$
		is nonempty for all sufficiently large $n$.
		
		Remark \ref{4.4} implies that
		\(
		\limsup_{n\to+\infty}k_0^n\leq\sum_{j=1}^sc_{V_j}.
		\)
		We claim that
		\begin{equation}\label{strict}
			\limsup_{n\to+\infty}k_0^n<\sum_{j=1}^sc_{V_j}.
		\end{equation}
		Otherwise, after passing to a
		subsequence, still denoted by $\{k_0^n\}$, we would have
		\(
		k_0^n\to\sum_{j=1}^sc_{V_j}.
		\)
		For every $n$, choose
		\(
		v_n\in\widetilde E_{\varepsilon_n}([T^{-2},1]^s)
		\setminus B_{\varepsilon_n,\nu}
		\)
		such that
		\(
		J_{\varepsilon_n}(v_n)\geq k_0^n-1/n
		\).
		Then, by Proposition \ref{4.3},
		\(
		J_{\varepsilon_n}(v_n)-S_{\varepsilon_n}\to0
		\)
		and so $v_n\in B_{\varepsilon_n,\nu}$ for all sufficiently large
		$n$, which is a contradiction.
		
		Fix $n$ sufficiently large and $\theta\in[T^{-2},1]^s$.  If
		$\widetilde E_{\varepsilon_n}(\theta)\notin B_{\varepsilon_n,\nu}$,
		then the monotonicity of the energy along the flow and the definition of
		\(k_0^n\) give
		\begin{equation}\label{strict1}
			J_{\varepsilon_n}
			(\eta_n(T_n,\widetilde E_{\varepsilon_n}(\theta)))
			\leq
			J_{\varepsilon_n}(\widetilde E_{\varepsilon_n}(\theta))
			\leq k_0^n.
		\end{equation}
		If, instead, $\widetilde E_{\varepsilon_n}(\theta)\in B_{\varepsilon_n,\nu}$, by the definition of $C_{\varepsilon_n,\Gamma}$, and
		\eqref{pathlocalmass}, for all sufficiently large $n$,
		\[
		\eta_n(0,\widetilde E_{\varepsilon_n}(\theta))
		=\widetilde E_{\varepsilon_n}(\theta)
		\in J_{\varepsilon_n}^{C_{\varepsilon_n,\Gamma}}
		\cap B_{\varepsilon_n,\nu}\cap M_{\varepsilon_n}(\rho/2).
		\]
		In particular, the monotonicity of the energy along the flow implies
		$\eta_n(t,\widetilde E_{\varepsilon_n}(\theta))
		\in J_{\varepsilon_n}^{C_{\varepsilon_n,\Gamma}}$ for every $t\geq0$.
		
		If the trajectory $t\mapsto
		\eta_n(t,\widetilde E_{\varepsilon_n}(\theta))$ leaves
		$B_{\varepsilon_n,3\nu/2}$ before $T_n$, then, since its energy is
		non-increasing and its initial point belongs to $B_{\varepsilon_n,\nu}$,
		it can leave only through the lower boundary
		so that
		\begin{equation}\label{strict2}
			J_{\varepsilon_n}
			(\eta_n(T_n,\widetilde E_{\varepsilon_n}(\theta)))
			\leq S_{\varepsilon_n}-\frac{3\nu}{2}.
		\end{equation}
		If
		$\eta_n(t,\widetilde E_{\varepsilon_n}(\theta))
		\in B_{\varepsilon_n,3\nu/2}$ for $0\leq t\leq T_n$, let $\tau_n\in(0,T_n]$ be the first time at which
		$m_{\varepsilon_n}(\eta_n(t,\widetilde E_{\varepsilon_n}(\theta)))
		=\rho/2$ (if this level is reached), or $\tau_n=T_n$ if this level is not reached before time
		$T_n$.  By \eqref{pathlocalmass} and the definition of $\tau_n$,
		\(m_{\varepsilon_n}(\eta_n(t,\widetilde E_{\varepsilon_n}(\theta)))
		\geq\rho/2\) for \(0\leq t\leq\tau_n\).
		
		Then, on $[0,\tau_n]$, we have
		\[\vartheta_n(\eta_n(t,\widetilde E_{\varepsilon_n}(\theta)))
		=\varphi_n(\eta_n(t,\widetilde E_{\varepsilon_n}(\theta)))
		=\Psi_n(\eta_n(t,\widetilde E_{\varepsilon_n}(\theta)))=1
		\]
		and
		\[
		\eta_n(t,\widetilde E_{\varepsilon_n}(\theta))\in
		J_{\varepsilon_n}^{C_{\varepsilon_n,\Gamma}}
		\cap B_{\varepsilon_n,2\nu}\cap M_{\varepsilon_n}(\rho/4)
		\]
		so that
		$\|J_{\varepsilon_n}'
		(\eta_n(t,\widetilde E_{\varepsilon_n}(\theta)))\|_{\varepsilon_n}
		\geq\widetilde b_n$. 
		
		If $\tau_n\geq T_n/2$, then,
		by \eqref{flowproperties},
		\begin{equation}\label{strict3}
			\begin{split}
				J_{\varepsilon_n}(\eta_n(T_n,\widetilde E_{\varepsilon_n}(\theta)))
				&\leq J_{\varepsilon_n}(\eta_n(T_n/2,\widetilde E_{\varepsilon_n}(\theta)))
				\leq J_{\varepsilon_n}(\widetilde E_{\varepsilon_n}(\theta))
				-\frac12\int_0^{T_n/2}\|J_{\varepsilon_n}'(\eta_n(t,\widetilde E_{\varepsilon_n}(\theta)))\|_{\varepsilon_n}dt\\
				&\leq C_{\varepsilon_n,\Gamma}-\frac{\widetilde b_nT_n}{4}
				=C_{\varepsilon_n,\Gamma}-2d_0.
			\end{split}
		\end{equation}
		
		It remains to consider $\tau_n<T_n/2$.  By \eqref{mlipschitz} and
		\eqref{flowproperties}, if $0\leq s<t\leq T_n$,
		\begin{align*}
			\left|m_{\varepsilon_n}(\eta_n(t,\widetilde E_{\varepsilon_n}(\theta)))
			-m_{\varepsilon_n}(\eta_n(s,\widetilde E_{\varepsilon_n}(\theta)))\right|
			&\leq
			\|\eta_n(t,\widetilde E_{\varepsilon_n}(\theta))
			-\eta_n(s,\widetilde E_{\varepsilon_n}(\theta))\|_{\varepsilon_n}
			\leq\int_s^t\|\dot\eta_n(r,\widetilde E_{\varepsilon_n}(\theta))\|_{\varepsilon_n}dr\\
			&\leq t-s.
		\end{align*}
		Then the function
		$t\mapsto m_{\varepsilon_n}
		(\eta_n(t,\widetilde E_{\varepsilon_n}(\theta)))$ is Lipschitz and so, for a.e. $t\in(0,T_n)$,
		\begin{equation}\label{bdddermen}
			\left|\frac d{dt}m_{\varepsilon_n}
			(\eta_n(t,\widetilde E_{\varepsilon_n}(\theta)))\right|\leq1.
		\end{equation}
		If $m_{\varepsilon_n}(\eta_n(t,\widetilde E_{\varepsilon_n}(\theta)))$ 
		reaches $3\rho/8$ at some time in $(\tau_n,T_n]$, denote its first
		hitting time after $\tau_n$ by $t_n^*$.  Then
		\(
		\widehat\tau_n:=\sup\{t\in[\tau_n,t_n^*]:
		m_{\varepsilon_n}(\eta_n(t,\widetilde E_{\varepsilon_n}(\theta)))
		=\rho/2\}
		\)
		is well defined and, for every $t\in(\widehat\tau_n,t_n^*)$,
		\[
		\frac{3\rho}{8}<m_{\varepsilon_n}
		(\eta_n(t,\widetilde E_{\varepsilon_n}(\theta)))<\frac\rho2.
		\]
		Thus, the definition of $\varphi_n$ and \eqref{bdddermen} give
		\begin{equation}\label{phi-integral-lower}
			\begin{aligned}
				\int_{\widehat\tau_n}^{t_n^*}\varphi_n(\eta_n(t,\widetilde E_{\varepsilon_n}(\theta)))dt
				&\geq-\int_{\widehat\tau_n}^{t_n^*}
				\left(\frac{4m_{\varepsilon_n}(\eta_n(t,\widetilde E_{\varepsilon_n}(\theta)))}{\rho}-1\right)
				\frac d{dt}m_{\varepsilon_n}(\eta_n(t,\widetilde E_{\varepsilon_n}(\theta)))dt\\
				&=\int_{3\rho/8}^{\rho/2}\left(\frac{4s}{\rho}-1\right)ds=\frac{3\rho}{32}.
			\end{aligned}
		\end{equation}
		Moreover, for a.e. \(t\in(\widehat\tau_n,t_n^*)\), Proposition
		\ref{4.5} gives
		$\|J_{\varepsilon_n}'
		(\eta_n(t,\widetilde E_{\varepsilon_n}(\theta)))\|_{\varepsilon_n}
		\geq\sigma_0$ and, since we are in the case in which the trajectory remains in
		$B_{\varepsilon_n,3\nu/2}$, 
		$\vartheta_n(\eta_n(t,\widetilde E_{\varepsilon_n}(\theta)))=1$ and
		$\Psi_n(\eta_n(t,\widetilde E_{\varepsilon_n}(\theta)))
		=\varphi_n(\eta_n(t,\widetilde E_{\varepsilon_n}(\theta)))$.
		Therefore, using the monotonicity of the energy along the flow, integrating
		\eqref{flowproperties} over \([\widehat\tau_n,t_n^*]\), and using
		\eqref{phi-integral-lower}, we obtain
		\begin{equation}\label{strict4}
			\begin{split}
				J_{\varepsilon_n}(\eta_n(T_n,\widetilde E_{\varepsilon_n}(\theta)))
				&\leq J_{\varepsilon_n}(\eta_n(t_n^*,\widetilde E_{\varepsilon_n}(\theta)))\\
				&\leq J_{\varepsilon_n}(\eta_n(\widehat\tau_n,\widetilde E_{\varepsilon_n}(\theta)))
				-\frac12\int_{\widehat\tau_n}^{t_n^*}
				\Psi_n(\eta_n(t,\widetilde E_{\varepsilon_n}(\theta)))
				\|J_{\varepsilon_n}'(\eta_n(t,\widetilde E_{\varepsilon_n}(\theta)))\|_{\varepsilon_n}dt\\
				&\leq J_{\varepsilon_n}(\eta_n(\widehat\tau_n,\widetilde E_{\varepsilon_n}(\theta)))
				-\frac{\sigma_0}{2}\int_{\widehat\tau_n}^{t_n^*}
				\varphi_n(\eta_n(t,\widetilde E_{\varepsilon_n}(\theta)))dt\\
				&\leq J_{\varepsilon_n}(\widetilde E_{\varepsilon_n}(\theta))
				-\frac{3\sigma_0\rho}{64}
				\leq C_{\varepsilon_n,\Gamma}-d_0.
			\end{split}
		\end{equation}
		If $m_{\varepsilon_n}(\eta_n(t,\widetilde E_{\varepsilon_n}(\theta)))>
		3\rho/8$ for every $t\in[\tau_n,T_n]$, then
		$\varphi_n(\eta_n(t,\widetilde E_{\varepsilon_n}(\theta)))\geq1/2$  and, since the trajectory remains in
		$B_{\varepsilon_n,3\nu/2}$, $\vartheta_n(\eta_n(t,\widetilde E_{\varepsilon_n}(\theta)))=1$, so that \(\Psi_n(\eta_n(t,\widetilde E_{\varepsilon_n}(\theta)))
		=\varphi_n(\eta_n(t,\widetilde E_{\varepsilon_n}(\theta)))\geq1/2\) for
		$\tau_n\leq t\leq T_n$.   Moreover \eqref{fixedgradientbound}
		and Proposition \ref{4.5} give
		$\|J_{\varepsilon_n}'
		(\eta_n(t,\widetilde E_{\varepsilon_n}(\theta)))\|_{\varepsilon_n}
		\geq\widetilde b_n$.
		Therefore \eqref{flowproperties} gives, for a.e.
		$t\in[\tau_n,T_n]$,
		\[
		\frac d{dt}J_{\varepsilon_n}(\eta_n(t,\widetilde E_{\varepsilon_n}(\theta)))
		\leq-\frac12\Psi_n(\eta_n(t,\widetilde E_{\varepsilon_n}(\theta)))
		\|J_{\varepsilon_n}'(\eta_n(t,\widetilde E_{\varepsilon_n}(\theta)))\|_{\varepsilon_n}
		\leq-\frac{\widetilde b_n}{4}
		\]
		and so, integrating this inequality over $[\tau_n,T_n]$ and since $\tau_n<T_n/2$, we obtain
		\begin{equation}\label{strict5}
			J_{\varepsilon_n}(\eta_n(T_n,\widetilde E_{\varepsilon_n}(\theta)))
			\leq J_{\varepsilon_n}(\eta_n(\tau_n,\widetilde E_{\varepsilon_n}(\theta)))
			-\frac{\widetilde b_n}{4}(T_n-\tau_n)
			\leq J_{\varepsilon_n}(\widetilde E_{\varepsilon_n}(\theta))
			-
			\frac{\widetilde b_n}{8}T_n
			\leq C_{\varepsilon_n,\Gamma}-d_0
			.
		\end{equation}
		
		Then, combining \eqref{strict}, \eqref{strict1}, \eqref{strict2}, \eqref{strict3}, \eqref{strict4}, \eqref{strict5}, we obtain \eqref{claim1}.
		
		Finally, for \(\theta\in\partial([T^{-2},1]^s)\),
		\eqref{boundaryoutsideB} gives
		\(\Psi_n(\widetilde E_{\varepsilon_n}(\theta))=0\). Hence
		\(H_n(\widetilde E_{\varepsilon_n}(\theta))=0\) and so the constant curve
		\(t\mapsto\widetilde E_{\varepsilon_n}(\theta)\) solves the flow equation
		with this initial datum.  By uniqueness,
		\[
		\eta_n(T_n,\widetilde E_{\varepsilon_n}(\theta))
		=\widetilde E_{\varepsilon_n}(\theta),
		\qquad \theta\in\partial([T^{-2},1]^s).
		\]
		Moreover, \eqref{pathlocalmass} and \eqref{etanb} imply that, for every \(\theta\in[T^{-2},1]^s\),
		\(m_{\varepsilon_n}(\eta_n(T_n,\widetilde E_{\varepsilon_n}(\theta)))
		\geq\rho/4\) and, since $\zeta_{j,\varepsilon_n}=1$ on
		$\Lambda_{j,\varepsilon_n}$, for every \(j\in\Gamma\),
		\(\|\zeta_{j,\varepsilon_n}\eta_n(T_n,\widetilde E_{\varepsilon_n}(\theta))
		\|_{\varepsilon_n}>0\).
		Thus the map
		\(\theta\in[T^{-2},1]^s\mapsto\eta_n(T_n,\widetilde E_{\varepsilon_n}(\theta))\) belongs to
		\(\Sigma_{\varepsilon_n}\). Hence Proposition \ref{4.3}, the definition of
		$S_{\varepsilon_n}$, and \eqref{claim1} now give
		\[
		\sum_{j=1}^sc_{V_j}
		=\lim_{n\to+\infty}S_{\varepsilon_n}
		\leq\limsup_{n\to+\infty}
		\max_{\theta\in[T^{-2},1]^s}
		J_{\varepsilon_n}(\eta_n(T_n,\widetilde E_{\varepsilon_n}(\theta)))
		\leq\sum_{j=1}^sc_{V_j}-\xi
		\]
		and this contradiction allows us to complete the proof.
	\end{proof}

	\section{Proof of Theorem \ref{1.1}}\label{Sec4}
	In this section, we prove Theorem \ref{1.1} by showing that the solutions
	of the modified problem obtained in Proposition \ref{4.6} solve the
	original problem and possess the asserted concentration and decay properties.
	
	Fix \(\nu\in(0,\nu^*/2)\) and, for every sufficiently small
	\(\varepsilon>0\), 
	let \(u_\varepsilon\) be a critical point provided by
	Proposition \ref{4.6}. Let us work along an arbitrary sequence
	\(\varepsilon_n\to0^+\) and write
	\(u_n=u_{\varepsilon_n}\). Then
	\[
	u_n\in B_{\varepsilon_n,\nu}\cap
	J_{\varepsilon_n}^{C_{\varepsilon_n,\Gamma}}\cap M_{\varepsilon_n}(\rho/2),
	\qquad J_{\varepsilon_n}'(u_n)=0.
	\]
	Passing to a subsequence and arguing as in the proof of Proposition \ref{4.5}, we may assume that
	\[
	J_{\varepsilon_n}(u_n)\to c
	\in
	\left(0,\frac18\left(\frac12-\frac1{\Theta}-\frac1{2k}\right)
	\right).
	\]
	Indeed, the arguments in \eqref{bddPS} show
	that \(\{\|u_n\|_{\varepsilon_n}\}\) is bounded and that \(c\geq0\).
	If \(c=0\), then \eqref{bddPS} and
	\(J_{\varepsilon_n}'(u_n)=0\) imply
	\(\|u_n\|_{\varepsilon_n}\to0\), whereas
	\(
	\rho/2\leq m_{\varepsilon_n}(u_n)
	\leq\|u_n\|_{\varepsilon_n}
	\).
	Then, Remark \ref{4.4} and \eqref{p3.1} allow us to conclude.
	Therefore we can apply Proposition
	\ref{fengjie} and so, up to a subsequence, there exist a positive integer
	\(l\), sequences \(\{y_{n,h}\}_{h=1}^l\subset\mathbb R^2\), and nontrivial
	solutions \(U_{0,h}\) of the corresponding limit problems such that
	\(
	\varepsilon_ny_{n,h}\to y_h\in\overline\Lambda_\Gamma,\) 
	\(|y_{n,h}-y_{n,k}|\to+\infty\,\ (h\ne k),
	\)
	\begin{equation}\label{profileconv}
		\left\|u_n-\sum_{h=1}^l e^{iA(y_h)\cdot(\cdot-y_{n,h})}
		(U_{0,h}\chi_{\varepsilon_n})(\cdot-y_{n,h})
		\right\|_{\varepsilon_n}\to0,
	\end{equation}
	and
	\begin{equation*}
		c=\sum_{h=1}^lJ_{0,h}(U_{0,h}).
	\end{equation*}
	
	Repeating the argument in the proof of Proposition \ref{4.5} and using
	\(m_{\varepsilon_n}(u_n)\geq\rho/2\), every prescribed well contains at least
	one profile,  \(l= s\), each \(y_j\in\overline\Lambda_j\) for \(j=1,\ldots,s\),
	\begin{equation}\label{exactprofilelevel}
		c=\sum_{j=1}^sc_{V_j},
	\end{equation}
	and, for every \(j\in\Gamma\), \(y_j\in K_j\) and \(U_{0,j}\) is a ground
	state solution of \eqref{fude} with \(\mu=V_j\).
	
	Let us prove now \(L^\infty\) and decay estimates. 
	
	Let $\eta\in C^\infty(\mathbb{R}^2,\mathbb{R})$ be such that
	$0\leq\eta\leq1$ and \(|\nabla\eta|_\infty<+\infty\). Moreover, for any
	\(n\in\mathbb{N}\) and \(L>0\), let
	\[
	u_{L,n}(x):=
	\begin{cases}
		|u_n(x)|, & \text{if } |u_n(x)|\le L,\\
		L, & \text{if } |u_n(x)|>L,
	\end{cases}
	\qquad
	z_{L,n}:=\eta^{2}u_{L,n}^{2(\beta-1)}u_n,
	\quad \text{and}\quad
	u^*_{L,n}:=\eta u_{L,n}^{\beta-1}|u_n|,
	\]
	where $\beta\geq 1$ will be determined later.\footnote{For
		\(\beta=1\), we consider \(u_{L,n}^{2(\beta-1)}\equiv1\).}
	Observe that \(u^*_{L,n}\in H^1(\mathbb{R}^2,\mathbb{R})\). Indeed, since
	\(|u^*_{L,n}|\leq L^{\beta-1}|u_n|\in
	L^2(\mathbb{R}^2,\mathbb{R})\), we have
	\(u^*_{L,n}\in L^2(\mathbb{R}^2,\mathbb{R})\). Moreover,
	\[
	|\nabla(u_{L,n}^{\beta-1}|u_n|)|(x)=
	\begin{cases}
		\beta |u_n|^{\beta-1}|\nabla |u_n||(x),
		&\text{if } |u_n(x)|\le L,\\
		L^{\beta-1}|\nabla |u_n||(x),
		&\text{if } |u_n(x)|>L,
	\end{cases}
	\leq \beta L^{\beta-1}|\nabla |u_n||(x)\in L^2(\mathbb{R}^2, \mathbb{R}),
	\]
	we also get that
	\(
	|\nabla u^*_{L,n}|
	=|\eta\nabla(u_{L,n}^{\beta-1}|u_n|)
	+u_{L,n}^{\beta-1}|u_n|\nabla\eta|
	\in L^2(\mathbb{R}^2,\mathbb{R}).
	\)
	Thus \(u^*_{L,n}\in H^1(\mathbb{R}^2,\mathbb{R})\).
	Furthermore \(z_{L,n}\in H_{\varepsilon_n}\). Indeed, since \(|z_{L,n}|\leq L^{2(\beta-1)}|u_n|\), we have
	\[
	\int_{\mathbb R^2}V_{\varepsilon_n}(x)|z_{L,n}|^2dx
	\leq
	L^{4(\beta-1)}
	\int_{\mathbb R^2}V_{\varepsilon_n}(x)|u_n|^2dx
	<+\infty.
	\]
	In particular, since \(V_{\varepsilon_n}\geq V_0>0\),
	\(z_{L,n}\in L^2(\mathbb R^2,\mathbb C)\).
	Moreover, the chain rule and the diamagnetic inequality give
	\[
	|\nabla_{A_{\varepsilon_n}}z_{L,n}|
	\leq
	L^{2(\beta-1)}|\nabla_{A_{\varepsilon_n}}u_n|
	+2L^{2(\beta-1)}|\nabla\eta|\,|u_n|
	+2(\beta-1)L^{2(\beta-1)}|\nabla|u_n||
	\in L^2(\mathbb R^2).
	\]
	In addition, by the diamagnetic inequality \eqref{p2.1} and
	\(\nabla |u_n|\cdot\nabla u_{L,n}\geq0\), we have
	\begin{equation}\label{testzLn}
		\begin{split}
			\operatorname{Re}\left(
			\nabla_{A_{\varepsilon_n}}u_n\cdot
			\overline{\nabla_{A_{\varepsilon_n}}z_{L,n}}
			\right)
			&=
			\eta^2u_{L,n}^{2(\beta-1)}
			|\nabla_{A_{\varepsilon_n}}u_n|^2
			+\operatorname{Re}(\overline{u_n}\nabla u_n)
			\nabla(\eta^2u_{L,n}^{2(\beta-1)})\\
			&=
			\eta^2u_{L,n}^{2(\beta-1)}
			|\nabla_{A_{\varepsilon_n}}u_n|^2
			+|u_n|\nabla |u_n|
			\nabla(\eta^2u_{L,n}^{2(\beta-1)})\\
			&\geq
			\eta^2u_{L,n}^{2(\beta-1)}|\nabla |u_n||^2
			+2\eta\nabla\eta\,u_{L,n}^{2(\beta-1)}
			|u_n|\nabla |u_n|
		\end{split}
	\end{equation}
	and, by Young's inequality,
	for every \(\delta>0\), there exists $C_\delta>0$ such that
	\begin{equation*}
		2\eta|\nabla\eta|u_{L,n}^{2(\beta-1)}|u_n||\nabla |u_n||
		\leq\delta\eta^2u_{L,n}^{2(\beta-1)}|\nabla |u_n||^2
		+C_\delta|\nabla\eta|^2u_{L,n}^{2(\beta-1)}|u_n|^2.
	\end{equation*}
	
	Since \(u_n\) is a solution of \eqref{p3.2}, taking
	\(z_{L,n}\) as a test function, 
	by \eqref{testzLn}, \eqref{g3}, and \eqref{p2.4}, we get that for a fixed $q>2$, for any \(0<\xi<V_0\) and $\alpha>4\pi$, there exists a constant \(C>0\),
	independent of \(L,\beta\), and \(n\), such that
	\begin{align*}
		\int_{\mathbb{R}^2}\eta^2u_{L,n}^{2(\beta-1)}
		|\nabla |u_n||^2dx
		&\leq
		\int_{\mathbb{R}^2}\eta^2u_{L,n}^{2(\beta-1)}
		|\nabla |u_n||^2dx
		+2\int_{\mathbb{R}^2}
		\eta\nabla\eta\,u_{L,n}^{2(\beta-1)}
		|u_n|\nabla |u_n|dx\\
		&\quad
		+2\int_{\mathbb{R}^2}
		\eta|\nabla\eta|\,u_{L,n}^{2(\beta-1)}
		|u_n||\nabla |u_n||dx\\
		&\quad
		+\int_{\mathbb{R}^2}
		V_{\varepsilon_n}(x)\eta^2u_{L,n}^{2(\beta-1)}|u_n|^2dx
		-\xi\int_{\mathbb{R}^2}
		\eta^2u_{L,n}^{2(\beta-1)}|u_n|^2dx\\
		&\leq
		\operatorname{Re}\int_{\mathbb{R}^2}
		\nabla_{A_{\varepsilon_n}}u_n\cdot
		\overline{\nabla_{A_{\varepsilon_n}}z_{L,n}}\,dx
		+\delta\int_{\mathbb{R}^2}
		\eta^2u_{L,n}^{2(\beta-1)}|\nabla |u_n||^2dx\\
		&\quad
		+C_\delta\int_{\mathbb{R}^2}
		|\nabla\eta|^2u_{L,n}^{2(\beta-1)}|u_n|^2dx
		+\operatorname{Re}\int_{\mathbb{R}^2}
		V_{\varepsilon_n}(x)u_n\overline{z_{L,n}}dx\\
		&\quad
		-\xi\int_{\mathbb{R}^2}
		\eta^2u_{L,n}^{2(\beta-1)}|u_n|^2dx\\
		&=
		\int_{\mathbb{R}^2}
		g(\varepsilon_nx,|u_n|^2)\eta^2u_{L,n}^{2(\beta-1)}
		|u_n|^2dx
		+\delta\int_{\mathbb{R}^2}
		\eta^2u_{L,n}^{2(\beta-1)}|\nabla |u_n||^2dx\\
		&\quad
		+C_\delta\int_{\mathbb{R}^2}
		|\nabla\eta|^2u_{L,n}^{2(\beta-1)}|u_n|^2dx
		-\xi\int_{\mathbb{R}^2}
		\eta^2u_{L,n}^{2(\beta-1)}|u_n|^2dx\\
		&\leq
		C\int_{\mathbb{R}^2}
		|u_n|^q\left(e^{\alpha|u_n|^2}-1\right)
		\eta^2u_{L,n}^{2(\beta-1)}dx
		+\delta\int_{\mathbb{R}^2}
		\eta^2u_{L,n}^{2(\beta-1)}|\nabla |u_n||^2dx\\
		&\quad
		+C_\delta\int_{\mathbb{R}^2}|\nabla\eta|^2|u_n|^{2\beta}dx .
	\end{align*}
	Hence, for \(\delta>0\) small enough,
	\begin{equation}\label{moser-grad}
		\int_{\mathbb{R}^2}\eta^2u_{L,n}^{2(\beta-1)}|\nabla |u_n||^2dx
		\leq C\left[\int_{\operatorname{supp}\eta}|u_n|^{q+2(\beta-1)}
		\left(e^{\alpha |u_n|^2}-1\right)dx+\int_{\mathbb{R}^2}|\nabla\eta|^2|u_n|^{2\beta}dx\right]
	\end{equation}
	and, since \(0<\xi<V_0\), we also get
	\begin{equation}\label{moser-12}
		\begin{split}
			\int_{\mathbb{R}^2}\eta^2u_{L,n}^{2(\beta-1)}|u_n|^2dx
			&\leq
			\frac{1}{V_0-\xi}
			\left[\int_{\mathbb{R}^2}V_{\varepsilon_n}(x)\eta^2u_{L,n}^{2(\beta-1)}|u_n|^2dx
			-\xi\int_{\mathbb{R}^2}\eta^2u_{L,n}^{2(\beta-1)}|u_n|^2dx\right]\\
			&\leq C\left[\int_{\operatorname{supp}\eta}|u_n|^{q+2(\beta-1)}
			\left(e^{\alpha |u_n|^2}-1\right)dx+\int_{\mathbb{R}^2}|\nabla\eta|^2|u_n|^{2\beta}dx\right].
		\end{split}
	\end{equation}
	
	Moreover, arguing as in \eqref{bddPS} and using the
	diamagnetic inequality, \eqref{exactprofilelevel}, and
	\eqref{p3.1},
	\[
	\limsup_{n\to+\infty}
	\bigl(|\nabla|u_n||_2^2+V_0|u_n|_2^2\bigr)
	\leq
	\limsup_{n\to+\infty}\|u_n\|_{\varepsilon_n}^2
	\leq
	\left(\frac12-\frac1{\Theta}-\frac1{2k}\right)^{-1}
	\sum_{j=1}^s c_{V_j}
	<\frac18.
	\]
	Then, if $\sigma\in (1/(2\sqrt{2}),1)$, without loss of generality we may assume that for every $n\in\mathbb{N}$,
	\begin{equation}\label{sigma}
		|\nabla|u_n||_2^2+V_0|u_n|_2^2\leq \sigma^2  <1.
	\end{equation}
	
	Let $q>2$ and $\alpha\in(4\pi,4\pi/\sigma^2)$. Take $\tau_0\in(1,4\pi/(\sigma^2\alpha))$
	and $\tau_1,\tau_2>1$ such that $1/\tau_0+1/\tau_1+1/\tau_2=1$ and $\tau_1(q-2)> 2$.
	Using the Sobolev embedding, \eqref{moser-grad}, \eqref{moser-12}, the
	H\"older inequality and \eqref{e} for such values, we have
	\begin{align*}
		\left|u^*_{L,n}\right|_q^2
		&\leq C\int_{\mathbb{R}^2}\left(|\nabla u^*_{L,n}|^2+|u^*_{L,n}|^2\right)dx\\
		&\leq C\left[\int_{\mathbb{R}^2}|\nabla\eta|^2|u_n|^{2\beta}dx
		+\beta^2\int_{\mathbb{R}^2}\eta^2u_{L,n}^{2(\beta-1)}|\nabla |u_n||^2dx
		+\int_{\mathbb{R}^2}\eta^2u_{L,n}^{2(\beta-1)}|u_n|^2dx\right]\\
		&\leq C\beta^2\left[\int_{\mathbb{R}^2}|\nabla\eta|^2|u_n|^{2\beta}dx
		+\int_{\operatorname{supp}\eta}|u_n|^{q+2(\beta-1)}
		\left(e^{\alpha |u_n|^2}-1\right)dx\right]\\
		&\leq C\beta^2\biggl[\int_{\mathbb{R}^2}|\nabla\eta|^2|u_n|^{2\beta}dx\\
		&\qquad\qquad
		+\left(\int_{\operatorname{supp}\eta}|u_n|^{\tau_1(q-2)}dx\right)^{\frac{1}{\tau_1}}
		\left(\int_{\mathbb{R}^2}\left(e^{\tau_0\sigma^2\alpha (|u_n|/\sigma)^2}-1\right)dx\right)^{\frac{1}{\tau_0}}
		\left(\int_{\operatorname{supp}\eta}|u_n|^{2\beta \tau_2}dx\right)^{\frac{1}{\tau_2}}\biggr].
	\end{align*}
	Then, \eqref{sigma} and Lemma \ref{2.1} give the uniform estimate
	\begin{equation}\label{exp-unif}
		\sup_n\int_{\mathbb R^2}
		\left(e^{\tau_0\sigma^2\alpha(|u_n|/\sigma)^2}-1\right)dx
		\leq C.
	\end{equation}
	Hence, by the Sobolev embedding,
	\begin{equation}\label{moser-star}
		\left|u^*_{L,n}\right|_q^2
		\leq C\beta^2\left[\int_{\mathbb{R}^2}|\nabla\eta|^2|u_n|^{2\beta}dx
		+\left(\int_{\operatorname{supp}\eta}|u_n|^{2\beta \tau_2} dx\right)^{\frac{1}{\tau_2}}\right].
	\end{equation}
	
	Let us show the uniform \(L^\infty\) bound.\\
	Let $\zeta:=q/(2\tau_2)$ and $\beta:=\zeta^h$ with $h\in\mathbb{N}$.
	To have $\zeta> 1$, let us take $q>8\pi/(4\pi-\sigma^2\alpha)$. Indeed, with this choice, take
	\[
	1<\frac{q}{q-2}<\tau_0<\frac{4\pi}{\sigma^2\alpha}
	\text{ and }
	\tau_1>\frac{\tau_0 q}{\tau_0(q-2)-q}
	\]
	and so
	\[
	\frac{1}{\tau_2}
	=
	1-\frac{1}{\tau_0}-\frac{1}{\tau_1}
	>1-\frac{1}{\tau_0}-\frac{q-2}{q}+\frac{1}{\tau_0}
	=\frac{2}{q}.
	\]
	Fix \(x_0\in\mathbb R^2\), \(N\in\mathbb N\), and, for each
	\(h=0,\ldots,N\), choose a cut-off function
	\(\eta_h\in C_c^\infty(B_{1+2^{-h}}(x_0))\) satisfying
	\(0\leq\eta_h\leq1\), \(\eta_h=1\) in
	\(B_{1+2^{-(h+1)}}(x_0)\), and \(|\nabla\eta_h|\leq C2^h\) where $C>0$ is independent of $h$. Since
	\(\operatorname{supp}\nabla\eta_h
	\subset
	\overline{B_{1+2^{-h}}(x_0)\setminus
		B_{1+2^{-(h+1)}}(x_0)}\), by H\"older's
	inequality,
	\begin{equation}\label{Ch}
		\begin{split}
			\int_{\mathbb{R}^2}|\nabla\eta_h|^2|u_n|^{2\zeta^h}dx
			&\leq C\bigg(\frac{3+4\cdot 2^h}{2^{2h}}\bigg)^{1-\frac{1}{\tau_2}}2^{2h}\left(\int_{B_{1+2^{-h}}(x_0)}|u_n|^{2\tau_2\zeta^h}dx\right)^{1/\tau_2}\\
			&\leq C2^{2h}\left(\int_{B_{1+2^{-h}}(x_0)}|u_n|^{2\tau_2\zeta^h}dx\right)^{1/\tau_2},
		\end{split}
	\end{equation}
	being the sequence $\{(3+4\cdot 2^h)2^{-2h}\}$ decreasing.
	Moreover, since \(\eta_h=1\) in \(B_{1+2^{-(h+1)}}(x_0)\) and
	\(u_{L,n}^{\beta-1}|u_n|\geq u_{L,n}^{\beta}\), by \eqref{moser-star} and \eqref{Ch},
	\[
	\left(\int_{B_{1+2^{-(h+1)}}(x_0)}u_{L,n}^{q\zeta^h}dx\right)^{2/q}
	\leq \left|u^*_{L,n}\right|_q^2
	\leq C\zeta^{2h}\left(1+2^{2h}\right)
	\left(\int_{B_{1+2^{-h}}(x_0)}|u_n|^{2\tau_2\zeta^h}dx\right)^{1/\tau_2}.
	\]
	Then, letting \(L\to+\infty\) and applying Fatou's lemma, we obtain
	\[
	\left(\int_{B_{1+2^{-(h+1)}}(x_0)}|u_n|^{q\zeta^h}dx\right)^{2/q}
	\leq C\zeta^{2h}\left(1+2^{2h}\right)
	\left(\int_{B_{1+2^{-h}}(x_0)}|u_n|^{2\tau_2\zeta^h}dx\right)^{1/\tau_2}.
	\]
	Since \(q\zeta^h=2\tau_2\zeta^{h+1}\), we get
	\[
	\begin{aligned}
		\left(\int_{B_{1+2^{-(h+1)}}(x_0)}
		|u_n|^{2\tau_2\zeta^{h+1}}dx\right)^{\frac{1}{2\tau_2\zeta^{h+1}}}
		&\leq C^{\zeta^{-h}}\zeta^{h\zeta^{-h}}
		\left(1+2^{2h}\right)^{\frac{1}{2\zeta^h}}
		\biggl(\int_{B_{1+2^{-h}}(x_0)}|u_n|^{2\tau_2\zeta^h}dx\biggr)^{\frac{1}{2\tau_2\zeta^h}} .
	\end{aligned}
	\]
	Iterating the previous inequality, we obtain
	\[
	\left(\int_{B_{1+2^{-(N+1)}}(x_0)}
	|u_n|^{2\tau_2\zeta^{N+1}}dx\right)^{\frac{1}{2\tau_2\zeta^{N+1}}}
	\leq C^{\sum_{h=0}^N\zeta^{-h}}
	\zeta^{\sum_{h=0}^N h\zeta^{-h}}
	e^{\sum_{h=0}^N\frac{\log(1+2^{2h})}{2\zeta^h}}\left(\int_{B_2(x_0)}|u_n|^{2\tau_2} dx\right)^{\frac{1}{2\tau_2}}
	\]
	and, passing to the limit as \(N\to+\infty\) and using that all the series in the
	last line are convergent, we conclude that
	\begin{equation*}
		|u_n|_{L^\infty(B_1(x_0))}
		\leq C\left(\int_{B_2(x_0)}|u_n|^{2\tau_2} dx\right)^{\frac{1}{2\tau_2}}.
	\end{equation*}
	Thus, by \eqref{p2.1}, \(V_0>0\), the boundedness of
	\(\|u_n\|_{\varepsilon_n}\), and
	the Sobolev embedding we get
	\[
	|u_n|_{L^\infty(B_1(x_0))}
	\leq C|u_n|_{L^{2\tau_2}(B_2(x_0))}
	\leq C|u_n|_{2\tau_2}\leq C,
	\]
	uniformly in \(x_0\) and \(n\) and so, covering \(\mathbb R^2\) by countably many
	unit balls, we obtain a constant \(C_\infty>0\) such that
	\begin{equation}\label{uniformLinfty}
		|u_n|_\infty\leq C_\infty .
	\end{equation}
	
	Let us prove the decay away from the concentration points.

	For every \(R>0\), set
	\(
	\Omega_{n,R}:=\mathbb{R}^2\setminus
	\bigcup_{j\in\Gamma}B_R(y_{n,j})
	\)
	and
	\(\Psi_{n,j}:=e^{iA(y_j)\cdot(\cdot-y_{n,j})}
	(U_{0,j}\chi_{\varepsilon_n})(\cdot-y_{n,j})\).
	We have
	\[
	\int_{\Omega_{n,R}}\left(|\nabla_{A_{\varepsilon_n}}u_n|^2
	+V_{\varepsilon_n}(x)|u_n|^2\right)dx
	\leq2\left\|u_n-\sum_{j\in\Gamma}\Psi_{n,j}\right\|_{\varepsilon_n}^2
	+C\sum_{j\in\Gamma}\int_{\Omega_{n,R}}
	\left(|\nabla_{A_{\varepsilon_n}}\Psi_{n,j}|^2+V_{\varepsilon_n}(x)|\Psi_{n,j}|^2\right)dx 
	\]
	where \(C\) is independent of \(n\) and \(R\).
	For \(j\in\Gamma\), set
	\(z_j=x-y_{n,j}\), so that \(|z_j|\geq R\) when \(x\in\Omega_{n,R}\).
	Moreover, if \(z_j\in\operatorname{supp}\chi_{\varepsilon_n}\), then
	\(|z_j|\leq2|\log\varepsilon_n|\) and hence
	\(\varepsilon_n(z_j+y_{n,j})\to y_j\) uniformly on this support.
	Therefore \(A(\varepsilon_n(z_j+y_{n,j}))\) and
	\(V(\varepsilon_n(z_j+y_{n,j}))\) are uniformly bounded there.
	Using also
	\(|\nabla\chi_{\varepsilon_n}|\leq C/|\log\varepsilon_n|\), we obtain
	\[
	\begin{aligned}
		&\int_{\Omega_{n,R}}\left(|\nabla_{A_{\varepsilon_n}}\Psi_{n,j}|^2+V_{\varepsilon_n}(x)|\Psi_{n,j}|^2\right)dx\\
		&=\int_{\Omega_{n,R}-y_{n,j}}\left[
		\left|\left(\frac1i\nabla-A(\varepsilon_n(z_j+y_{n,j}))+A(y_j)\right)
		(U_{0,j}\chi_{\varepsilon_n})(z_j)\right|^2 
		+V(\varepsilon_n(z_j+y_{n,j}))
		|(U_{0,j}\chi_{\varepsilon_n})(z_j)|^2\right]dz_j\\
		&\leq C\|U_{0,j}\chi_{\varepsilon_n}\|_{H^1(B_R^c(0),\mathbb C)}^2
		\leq C\|U_{0,j}\|_{H^1(B_R^c(0),\mathbb C)}^2+o_n(1).
	\end{aligned}
	\]
	Thus, since \(U_{0,j}\in H^1(\mathbb{R}^2,\mathbb{C})\) and \(\Gamma\) is finite, using \eqref{profileconv}, we
	deduce that
	\begin{equation}\label{tail-H}
		\lim_{R\to+\infty}\limsup_{n\to+\infty}
		\int_{\Omega_{n,R}}\left(|\nabla_{A_{\varepsilon_n}}u_n|^2
		+V_{\varepsilon_n}(x)|u_n|^2\right)dx=0.
	\end{equation}

	For any \(R>0\) and \(0<r\leq R/2\), take a cut-off function
	\(\eta\in C^\infty(\mathbb{R}^2,\mathbb{R})\) such that
	\(0\leq\eta\leq1\), \(\eta=1\) in \(\Omega_{n,R}\),
	\(\eta=0\) in \(\bigcup_{j\in\Gamma}B_{R-r}(y_{n,j})\), and
	\(|\nabla\eta|\leq2/r\). Then
	\[
	\operatorname{supp}\nabla\eta\subset
	\bigcup_{j\in\Gamma}\left(B_R(y_{n,j})\setminus B_{R-r}(y_{n,j})\right),
	\]
	and so, since the number of points \(y_{n,j}\) is fixed, $|\operatorname{supp}\nabla\eta|
	\leq |\Gamma|\pi\bigl(R^2-(R-r)^2\bigr)\leq CRr\leq CR^2$.
	Hence, by H\"older's
	inequality,
	\[
	\int_{\mathbb{R}^2}|\nabla\eta|^2|u_n|^{2\beta}dx
	\leq \frac{C}{r^2}\left|\operatorname{supp}\nabla\eta\right|^{1-\frac{1}{\tau_2}}
	\left(\int_{\Omega_{n,R-r}}|u_n|^{2\beta \tau_2}dx\right)^{\frac{1}{\tau_2}}
	\leq C\frac{R^\frac{2(\tau_2-1)}{\tau_2}}{r^2}
	\left(\int_{\Omega_{n,R-r}}|u_n|^{2\beta \tau_2}dx\right)^{\frac{1}{\tau_2}}.
	\]
	Moreover, since $\tau_1(q-2)>2$ and \(\{\|u_n\|_{\varepsilon_n}\}\) is bounded, using the diamagnetic inequality and the Sobolev embeddings, there exists \(C\) independent of \(n,R,r\) such that
	\begin{equation*}
		\left(\int_{\operatorname{supp}\eta}|u_n|^{\tau_1(q-2)}\,dx\right)^{1/\tau_1}
		\leq |u_n|_{\tau_1(q-2)}^{q-2}
		\leq C.
	\end{equation*}
	Hence, since \(\eta=1\) in \(\Omega_{n,R}\), by \eqref{moser-star},
	\begin{align*}
		\left(\int_{\Omega_{n,R}}u_{L,n}^{q\beta}dx\right)^{2/q}
		&\leq \left|u^*_{L,n}\right|_q^2
		\leq C\beta^2\left[
		\int_{\mathbb R^2}|\nabla\eta|^2|u_n|^{2\beta}\,dx
		+\left(\int_{\Omega_{n,R-r}}|u_n|^{2\beta\tau_2}\,dx\right)^{1/\tau_2}
		\right]\\
		&\leq
		C\beta^2\left(1+\frac{R^\frac{2(\tau_2-1)}{\tau_2}}{r^2}\right)
		\left(\int_{\Omega_{n,R-r}}|u_n|^{2\beta \tau_2}dx\right)^{\frac{1}{\tau_2}}.
	\end{align*}
	Then, applying Fatou's Lemma as \(L\to+\infty\), we obtain
	\[
	\left(\int_{\Omega_{n,R}}|u_n|^{q\beta}dx\right)^{2/q}
	\leq
	C\beta^2\left(1+\frac{R^\frac{2(\tau_2-1)}{\tau_2}}{r^2}\right)
	\left(\int_{\Omega_{n,R-r}}|u_n|^{2\beta \tau_2}dx\right)^{\frac{1}{\tau_2}}
	\]
	and, taking \(\beta=\zeta^h\) as before (with $\zeta=q/(2\tau_2)$), we obtain
	\[
	\left(\int_{\Omega_{n,R}}|u_n|^{2\tau_2\zeta^{h+1}}dx\right)^{\frac{1}{2\tau_2\zeta^{h+1}}}
	\leq C^{\zeta^{-h}}\zeta^{h\zeta^{-h}}
	\left(1+\frac{R^\frac{2(\tau_2-1)}{\tau_2}}{r^2}\right)^{\frac{1}{2\zeta^h}}
	\left(\int_{\Omega_{n,R-r}}|u_n|^{2\tau_2\zeta^h}dx\right)^{\frac{1}{2\tau_2\zeta^h}}.
	\]
	Fix \(R\geq1\) and \(N\in\mathbb N^*\), and apply this estimate for
	\(h=1,\ldots,N\), with left-hand radius \(R-R/2^{h+1}\) and
	\(r=R/2^{h+1}\), so that, at the \(h\)-th step the right-hand
	radius is $R-R/2^h$. Moreover, since \(R\geq1\),
	\(1+(R-R/2^{h+1})^{2(\tau_2-1)/\tau_2}/(R/2^{h+1})^2
	\leq1+2^{2(h+1)}R^{-2/\tau_2}\leq1+2^{2(h+1)}\).
	Hence
	\[
	\left(\int_{\Omega_{n,R-R/2^{N+1}}}
	|u_n|^{2\tau_2\zeta^{N+1}}dx\right)^{\frac{1}{2\tau_2\zeta^{N+1}}}
	\leq C^{\sum_{h=1}^N\zeta^{-h}}
	\zeta^{\sum_{h=1}^N h\zeta^{-h}}
	e^{\sum_{h=1}^N\frac{\log(1+2^{2(h+1)})}{2\zeta^h}}
	\left(\int_{\Omega_{n,R/2}}|u_n|^{2\tau_2\zeta}dx\right)^{\frac{1}{2\tau_2\zeta}}.
	\]
	Passing to the limit as \(N\to+\infty\), we conclude that
	\[
	|u_n|_{L^\infty(\Omega_{n,R})}
	\leq C\left(\int_{\Omega_{n,R/2}}|u_n|^{q}dx\right)^{\frac{1}{q}}
	\]
	and, since by \eqref{uniformLinfty}, \(q>2\), and
	\(V_{\varepsilon_n}\geq V_0\),
	\[
	\int_{\Omega_{n,R/2}}|u_n|^{q}dx
	\leq |u_n|_\infty^{q-2}\int_{\Omega_{n,R/2}}|u_n|^2dx
	\leq \frac{C}{V_0}\int_{\Omega_{n,R/2}}V_{\varepsilon_n}(x)|u_n|^2dx,
	\]
	we obtain that there exists \(C>0\) independent of \(R\) and \(n\) such that
	\[
	|u_n|_{L^\infty(\Omega_{n,R})}
	\leq C\left(\int_{\Omega_{n,R/2}}V_{\varepsilon_n}(x)|u_n|^2dx\right)^{\frac{1}{2\tau_2\zeta}}.
	\]
	Then, by \eqref{tail-H}, we conclude that
	\begin{equation}\label{uniformDecayOutside}
		\lim_{R\to+\infty}\limsup_{n\to+\infty}
		|u_n|_{L^\infty(\mathbb{R}^2\setminus\bigcup_{j\in\Gamma}B_R(y_{n,j}))}=0.
	\end{equation}
	
	Since \(y_j\in K_j\), for every fixed \(R>0\) and \(n\)
	sufficiently large we have \(B_R(y_{n,j})\subset\Lambda_{j,\varepsilon_n}\).
	Taking \(R\) large in \eqref{uniformDecayOutside}, we get \(|u_n|^2\leq a\) a.e. in \(\Lambda_{\Gamma,\varepsilon_n}^c\) for \(n\) large. Thus, by Remark \ref{remark}, \(u_n\) is also a
	solution of the original problem \eqref{origi} with \(\varepsilon=\varepsilon_n\). Then, due to the arbitrariness of the sequence \(\varepsilon_n\to0^+\), we get that, for every \(\varepsilon>0\) sufficiently small, \(u_\varepsilon\) solves \eqref{origi}.

	Let \(k>1\) be the number chosen in \eqref{p3.1} and fix \(\gamma\in(0,a)\). The properties of $f$ and the definition of $a$ in Section \ref{Secthree} imply that
	\begin{equation}\label{gg}
		f(t^2)\leq \frac{V_{0}}{k}, \quad \text{for }|t|\leq\sqrt{\gamma}.
	\end{equation}
	
	We claim that there exist \(R>0\) and \(\varepsilon_*>0\) such that,
	for every \(0<\varepsilon<\varepsilon_*\), one can choose centers
	\(q_{\varepsilon,j}\in\mathbb{R}^2\) with \(\varepsilon q_{\varepsilon,j}\in\Lambda_j\)
	for which
	\begin{equation}\label{p5.7}
		\left|u_\varepsilon\right|_{L^\infty\left(\mathbb R^2\setminus
			\bigcup_{j\in\Gamma}B_R(q_{\varepsilon,j})\right)}^2
		\leq\frac\gamma2<\gamma.
	\end{equation}
	Indeed, if we suppose that this were false, for every positive integer \(n\), there would exist \(\varepsilon_n\in(0,1/n)\) such that, for every choice of
	centers \(q_j\) satisfying \(\varepsilon_nq_j\in\Lambda_j\),
	\[
	\left|u_{\varepsilon_n}\right|_{L^\infty\left(\mathbb R^2\setminus
		\bigcup_{j\in\Gamma}B_n(q_j)\right)}^2>\frac\gamma2.
	\]
	After passing to a subsequence, the argument leading to
	\eqref{uniformDecayOutside} for $R=n$ provides profile centers \(y_{n,j}\) with
	\(\varepsilon_ny_{n,j}\to y_j\in K_j\), and so \(\varepsilon_ny_{n,j}\in\Lambda_j\) for large \(n\), such that
	\[
	\limsup_{n\to+\infty}
	|u_{\varepsilon_n}|_{L^\infty(\mathbb{R}^2\setminus\bigcup_{j\in\Gamma}B_n(y_{n,j}))}=0,
	\]
	obtaining a contradiction.
	
	By \eqref{empintj1j2e}, for $\varepsilon$ small enough the balls
	\(B_R(q_{\varepsilon,j})\) are mutually disjoint.
	Moreover, Lemma \ref{smoothlocalization} gives, after decreasing
	\(\varepsilon_*\),
	\[
	\zeta_{j,\varepsilon}:=\zeta_j(\varepsilon \cdot)=1\quad\text{on }B_R(q_{\varepsilon,j}),
	\qquad
	B_R(q_{\varepsilon,j})\cap\operatorname{supp}\zeta_{i,\varepsilon}
	=\emptyset\quad(i\ne j).
	\]

	We also have that, for \(\varepsilon>0\) sufficiently small, 
	\begin{equation}\label{p5.8}
		|u_\varepsilon|_{L^\infty(B_R(q_{\varepsilon,j}))}^2\geq\gamma,
		\qquad j\in\Gamma.
	\end{equation}
	Otherwise, since \(\Gamma\) is finite, there would be an index
	\(\iota\in\Gamma\) and a sequence \(\varepsilon_n\to0^+\) such that
	\(|u_n|_{L^\infty(B_R(q_{\varepsilon_n,\iota}))}^2<\gamma\).
	Since
	\(\operatorname{supp}\zeta_{\iota,\varepsilon_n}\cap B_R(q_{\varepsilon_n,j})=\emptyset\)
	for \(j\ne\iota\), the contradiction assumption and \eqref{p5.7} give $|u_n(x)|^2<\gamma\quad\text{for a.e. }x\in\operatorname{supp}\zeta_{\iota,\varepsilon_n}$.\\
	Testing \eqref{origi} with \(\zeta_{\iota,\varepsilon_n}^2u_n\), and using Young's
	inequality and \eqref{gg}, we obtain, for every \(0<\delta<1\),
	\begin{align*}
		(1-\delta)\int_{\mathbb R^2}\zeta_{\iota,\varepsilon_n}^2
		|\nabla_{A_{\varepsilon_n}}u_n|^2dx
		+\int_{\mathbb R^2}V_{\varepsilon_n}(x)\zeta_{\iota,\varepsilon_n}^2|u_n|^2dx
		&\leq\int_{\mathbb R^2}f(|u_n|^2)\zeta_{\iota,\varepsilon_n}^2|u_n|^2dx
		+C_\delta\int_{\mathbb R^2}|\nabla\zeta_{\iota,\varepsilon_n}|^2|u_n|^2dx\\
		&\leq\frac1k\int_{\mathbb R^2}V_{\varepsilon_n}(x)\zeta_{\iota,\varepsilon_n}^2|u_n|^2dx
		+C_\delta\int_{\mathbb R^2}|\nabla\zeta_{\iota,\varepsilon_n}|^2|u_n|^2dx.
	\end{align*}
	Since by Lemma \ref{smoothlocalization} \(|\nabla\zeta_{\iota,\varepsilon_n}|\leq C\varepsilon_n\) and \(\{u_n\}\) is bounded
	in \(H_{\varepsilon_n}\), the last term is \(o_n(1)\).
	As \(k>1\) and
	\(\zeta_{\iota,\varepsilon_n}=1\) on \(\Lambda_{\iota,\varepsilon_n}\), it follows that $\|u_n\|_{\varepsilon_n,\iota}^2=o_n(1)$, 
	contrary to
	\(\|u_n\|_{\varepsilon_n,\iota}\geq
	m_{\varepsilon_n}(u_n)\geq\rho/2\).

	Assume now that \(A\in C^1(\mathbb{R}^2,\mathbb{R}^2)\), and let
	\(\Omega\Subset\mathbb R^2\) be a domain with \(C^1\) boundary.\\
	By \eqref{uniformLinfty} and the continuity of \(f\),
	\[
	|f(|u_n|^2)u_n|\leq
	C_\infty\max_{0\leq s\leq C_\infty^2}|f(s)|,
	\qquad
	f(|u_n|^2)u_n\in L^\tau(\Omega,\mathbb C)
	\quad(1\leq\tau<+\infty).
	\]
	The equation \eqref{origi} reads, in the sense of distributions,
	\[
	-\Delta u_n
	=f(|u_n|^{2})u_n
	+\dfrac{2}{i}A_{\varepsilon_n}(x)\cdot\nabla u_n
	-i\nabla \cdot A_{\varepsilon_n}(x)u_n-|A_{\varepsilon_n}(x)|^{2}u_n
	-V_{\varepsilon_n}(x)u_n .
	\]
	By Lemma \ref{coro}, \(u_n\in H^1(\Omega,\mathbb{C})\) and, since \(|A|,|\nabla A|,V\in
	L^\infty_{\rm loc}(\mathbb{R}^2,\mathbb{R})\), the
	coefficients \(A_{\varepsilon_n}\), \(\nabla\cdot A_{\varepsilon_n}\) and
	\(V_{\varepsilon_n}\) are bounded on \(\Omega\). Hence all terms in the
	right-hand side except \(A_{\varepsilon_n}\cdot\nabla u_n\) belong to
	\(L^\tau(\Omega,\mathbb{C})\) for every \(1\leq\tau<+\infty\), 
	while
	\(A_{\varepsilon_n}\cdot\nabla u_n\in L^2(\Omega,\mathbb{C})\). Hence, denoting
	the right-hand side by \(F_n\), we have \(F_n\in L^2(\Omega,\mathbb{C})\).
	
	We
	first prove the \(W^{2,2}\)-regularity.
	Take a bounded \(C^1\) domain \(D\) such
	that \(\overline\Omega\subset D\), and choose \(\eta\in C_c^\infty(D)\) with
	\(\eta=1\) in a neighbourhood of \(\overline\Omega\).
	Since
	\(u_n\in H^1(D,\mathbb C)\cap L^\infty(D,\mathbb C)\) and the coefficients are
	bounded on \(D\), the same term-by-term estimates used on \(\Omega\) give
	\(F_n\in L^2(D,\mathbb{C})\). Moreover, in the sense of
	distributions,
	\[
	-\Delta(\eta u_n)=\eta F_n-2\nabla\eta\cdot\nabla u_n-u_n\Delta\eta=:G_n
	\in L^2(D,\mathbb{C}).
	\]
	Let \(w_n\) be the Newtonian potential of \(-G_n\) on \(D\).
	By \cite[Theorem 9.9]{Gilbarg}, \(w_n\in W^{2,2}(D,\mathbb C)\) and
	\(\Delta w_n=-G_n\) a.e. in \(D\), so that, in the sense of distributions,
	\[
	\Delta(\eta u_n-w_n)=0 \quad \text{in }D.
	\]
	
	Thus \(\eta u_n-w_n\) is weakly
	harmonic in \(D\).
	Applying
	\cite[Chapter 5, Theorem 4.3]{SteinShakarchiReal}
	to its real and imaginary parts, \(\eta u_n-w_n\) coincides a.e. in \(D\)
	with a harmonic function. Replacing it by this representative, we obtain
	\(\eta u_n-w_n\in C^\infty(D,\mathbb{C})\), and in particular
	\(\eta u_n-w_n\in W^{2,2}(\Omega,\mathbb{C})\). Therefore
	\(u_n=(\eta u_n-w_n)+w_n\in W^{2,2}(\Omega,\mathbb{C})\).\\
	For \(1<\tau<+\infty\), choose \(p\in(1,2)\) with
	\(\tau\leq2p/(2-p)\). By \cite[Section 5.6.1, Theorem 2]{Evans},
	\[
	\nabla u_n\in W^{1,p}(\Omega,\mathbb C)
	\hookrightarrow L^{2p/(2-p)}(\Omega,\mathbb C)
	\hookrightarrow L^{\tau}(\Omega,\mathbb C),
	\]
	and, since \(\Omega\) is arbitrary, \(\nabla u_n\in L^{\tau}_{\rm loc}(\mathbb R^2,\mathbb C)\).

	Then, taking again a bounded \(C^1\) domain \(D\)
	such that \(\overline\Omega\subset D\),  \(\nabla u_n\in L^\tau(D,\mathbb{C})\). 
	Thus \(F_n\) belongs to
	\(L^\tau(D,\mathbb{C})\) and, repeating the localized Newtonian-potential
	argument for $L^\tau$ and applying
	\cite[Theorem 9.9]{Gilbarg} with \(p=\tau\), we obtain
	\(u_n\in W^{2,\tau}(\Omega,\mathbb{C})\). Hence, since \(\Omega\) is arbitrary, taking
	\(\tau>2\), the Sobolev embedding theorem \cite[Section 5.6, Theorem 6]{Evans} yields
	\(u_n\in C^{1,1-2/\tau}_{\rm loc}(\mathbb{R}^2,\mathbb{C})\) and so, due to the arbitrariness of the sequence \(\varepsilon_n\to0^+\), for
	every sufficiently small
	\(\varepsilon\),  \(u_\varepsilon\in C^1_{\rm loc}(\mathbb R^2,\mathbb C)\).\\
	For each such \(\varepsilon\) and \(j\in\Gamma\), let
	\(p_{\varepsilon,j}\) be a maximum point of \(|u_\varepsilon|\)  on
	\(\overline{B_R(q_{\varepsilon,j})}\),  and keep this choice fixed.
	By \eqref{p5.7}, the disjointness of the balls, and continuity,  we have
	the squared modulus on their boundaries is at most \(\gamma/2\),
	whereas \eqref{p5.8} gives $p_{\varepsilon,j}\in B_R(q_{\varepsilon,j})$ and $|u_\varepsilon(p_{\varepsilon,j})|^2\geq\gamma$.
	Thus these points are mutually distinct local maximum points. Moreover, if along $\varepsilon_n\to 0^+$ we write \(p_{n,j}:=p_{\varepsilon_n,j}\),
	since \(B_R(q_{\varepsilon_n,j})\subset B_{2R}(p_{n,j})\), we obtain
	\begin{equation}\label{p5.10}
		|u_n|_{L^\infty(\mathbb R^2\setminus\bigcup_{j\in\Gamma}B_{2R}(p_{n,j}))}^2
		<\gamma,\qquad
		|u_n|_{L^\infty(B_{2R}(p_{n,j}))}^2\geq\gamma,\qquad j\in\Gamma,
	\end{equation}
	and \eqref{gg} yields
	\begin{equation}\label{p5.11}
		f(|u_n|^2)|u_n|\leq\frac{V_0}{k}|u_n|
		\quad\text{in }\mathbb R^2\setminus\bigcup_{j\in\Gamma}B_{2R}(p_{n,j}).
	\end{equation}
	
	We next compare the maximum points with the profile centers.
	By \eqref{uniformDecayOutside}, there exists
	\(R_1>0\) such that, for all sufficiently large \(n\),
	\[
	|u_n|_{L^\infty(\mathbb R^2\setminus\bigcup_{h\in\Gamma}B_{R_1}(y_{n,h}))}^2
	<\gamma.
	\]
	Since \(|u_n(p_{n,j})|^2\geq\gamma\), each \(p_{n,j}\) belongs to one
	of these profile balls. Its index must be \(j\) for large \(n\). Indeed, since for $n$ large $\varepsilon_n q_{\varepsilon_n,j} \in \Lambda_j$ and $p_{n,j}\in B_R(q_{\varepsilon_n,j})$, then
	\(\operatorname{dist}(\varepsilon_np_{n,j},\overline\Lambda_j)
	\leq\varepsilon_nR\), whereas
	\(\varepsilon_ny_{n,h}\to y_h\in K_h\subset\Lambda_h\), and the
	closures of different wells are mutually disjoint.
	Hence
	\[
	|p_{n,j}-y_{n,j}|<R_1,\qquad
	\operatorname{dist}(\varepsilon_np_{n,j},K_j)
	\leq |\varepsilon_np_{n,j}-y_j|
	\leq\varepsilon_nR_1+
	|\varepsilon_ny_{n,j}-y_j|\longrightarrow0.
	\]
	Then
	\(V(\varepsilon_np_{n,j})\to V_j\) and
	\(\varepsilon_np_{n,j}\in\Lambda_j\) for large \(n\).
	Therefore, for $\varepsilon>0$ small enough, \(\varepsilon p_{\varepsilon,j}\)'s are local maximum points of \(x\mapsto|u_\varepsilon(x/\varepsilon)|\) and the preceding ball inclusions and \eqref{p5.10} give
	\eqref{zhendang} (after replacing \(R\) therein by \(2R\)).
	
	Since $A\in C^1(\mathbb{R}^2,\mathbb{R}^2)$ and
	$u_n\in W^{2,2}_{\rm loc}(\mathbb{R}^2,\mathbb{C})$, we have
	\[\left(\frac{1}{i}\nabla-A_{\varepsilon_n}(x)\right)^{2}u_n
	\in L^{2}_{\rm loc}(\mathbb{R}^2,\mathbb{C})\]
	and Kato's inequality \cite[Lemma A]{Kato} gives
	\[\Delta |u_n|\geq -\operatorname{Re}\left[\operatorname{sgn} \overline{u}_n
	\left(\frac{1}{i}\nabla-A_{\varepsilon_n}(x)\right)^{2}u_n\right],
	\quad
	\text{where }
	\operatorname{sgn} \overline{u}(x) =
	\begin{cases}
		0 & \text{if } u(x) = 0, \\
		\overline{u(x)}/|u(x)| & \text{if } u(x) \neq 0,
	\end{cases} 
	\]
	in the sense of distributions.
	Then, by \eqref{origi} and
	\(V_{\varepsilon_n}\geq V_0\) we obtain
	\[
	\Delta |u_n| \geq V_0 |u_n| - f(|u_n|^2)|u_n|\quad \text{in } \mathbb{R}^2
	\]
	and, using \eqref{p5.11}, 
	\begin{equation}\label{comparison}
		\Delta |u_n|\geq  V_0|u_n|-f(|u_n|^{2})|u_n|\geq \frac{k-1}{k}V_0|u_n|,\quad \text{in } \mathbb{R}^{2}\setminus\bigcup_{j\in \Gamma}B_{2R}(p_{n,j}).
	\end{equation}
	
	Consider the comparison function
	\[
	W(x):=C\sum_{j\in\Gamma}e^{-\sigma_{\mathrm d}|x-p_{n,j}|},
	\qquad x\in\mathbb R^2,
	\]
	where 
	\begin{equation}\label{neibu}
		\sigma_{\mathrm d}\in \left(0,
		\sqrt{\frac{k-1}{k}V_0}\right),
		\qquad
		C\geq\max\{\sqrt{\gamma},C_\infty\}e^{2R\sigma_{\mathrm d}}.
	\end{equation}
	
	By \eqref{comparison}, we have, in the sense of distributions in
	\(\displaystyle \mathbb R^2\setminus\bigcup_{j\in\Gamma}\overline{B_{2R}(p_{n,j})}\),
	\begin{equation}\label{comparisonDifference}
		\begin{split}
			-\Delta (W-|u_n|)(x)+\frac{k-1}{k}V_0(W-|u_n|)(x)
			&\geq C\sum_{j\in\Gamma}
			\left(\frac{k-1}{k}V_0-\sigma_{\mathrm d}^2+\frac{\sigma_{\mathrm d}}{|x-p_{n,j}|}\right)
			e^{-\sigma_{\mathrm d}|x-p_{n,j}|}\\
			&\geq C\left(\frac{k-1}{k}V_0-\sigma_{\mathrm d}^2\right)
			\sum_{j\in\Gamma}e^{-\sigma_{\mathrm d}|x-p_{n,j}|}\geq0
		\end{split}
	\end{equation}
	and, if $\displaystyle x\in\partial\Big(\bigcup_{j\in \Gamma}B_{2R}(p_{n,j})\Big)$, for $\iota\in\Gamma$ such that $|x-p_{n,\iota}|=2R$, combining \eqref{p5.10} and \eqref{neibu}, we have
	\begin{equation*}
		|u_n(x)|
		\leq \sqrt{\gamma}
		\leq C e^{-2\sigma_{\mathrm d} R} 
		=Ce^{-\sigma_{\mathrm d}|x-p_{n,\iota}|}\leq W(x).
	\end{equation*}
	Set
	\[
	D_n:=\mathbb R^2\setminus\bigcup_{j\in\Gamma}
	\overline{B_{2R}(p_{n,j})},\qquad
	\lambda:=\frac{k-1}{k}V_0,\qquad
	v_n:=(|u_n|-W)^+.
	\]
	For large \(n\), the balls defining \(D_n\) are mutually disjoint.
	Since \(|u_n|,W\in H^1(\mathbb R^2,\mathbb R)\) and
	\(|u_n|\leq W\) on \(\partial D_n\), we have
	\(v_n\in H_0^1(D_n,\mathbb R)\).
	Let $M>0$ and  \(\chi_M\in C_c^\infty(\mathbb R^2,[0,1])\) satisfy
	\(\chi_M=1\) on \(B_M(0)\), \(\chi_M=0\) outside
	\(B_{2M}(0)\), and \(|\nabla\chi_M|\leq C/M\).
	By approximation with nonnegative smooth test functions, we may test
	\eqref{comparisonDifference} with \(\chi_M^2v_n\) obtaining
	\[
	\int_{D_n}\chi_M^2|\nabla v_n|^2dx
	+2\int_{D_n}\chi_Mv_n\nabla v_n\cdot\nabla\chi_Mdx
	+\lambda\int_{D_n}\chi_M^2v_n^2dx \leq 0.
	\]
	Then, applying Young's inequality,  we get
	\[
	\int_{D_n}\chi_M^2|\nabla v_n|^2dx
	+\lambda\int_{D_n}\chi_M^2v_n^2dx
	\leq\frac12\int_{D_n}\chi_M^2|\nabla v_n|^2dx
	+2\int_{D_n}|\nabla\chi_M|^2v_n^2dx
	\]
	and so
	\[
	\frac12\int_{D_n}\chi_M^2|\nabla v_n|^2dx
	+\lambda\int_{D_n}\chi_M^2v_n^2dx
	\leq 2\int_{D_n}|\nabla\chi_M|^2v_n^2dx
	\leq\frac{C}{M^2}\int_{D_n\cap(B_{2M}(0)\setminus B_M(0))}v_n^2dx.
	\]
	Since \(v_n\in L^2(D_n)\), letting \(M\to+\infty\) gives
	\[
	\frac12\int_{D_n}|\nabla v_n|^2dx
	+\lambda\int_{D_n}v_n^2dx\leq0.
	\]
	Thus \(v_n=0\) a.e. in \(D_n\). By continuity up to its boundary, we conclude that, for every $\displaystyle x\in\mathbb{R}^{2}\setminus\bigcup_{j\in \Gamma}B_{2R}(p_{n,j})$, $|u_n(x)|\leq W(x)$.
	For \(\displaystyle x\in\bigcup_{j\in\Gamma}B_{2R}(p_{n,j})\), choose
	\(\iota\in\Gamma\) with \(|x-p_{n,\iota}|<2R\). By
	\eqref{uniformLinfty} and \eqref{neibu},
	\begin{equation*}
		|u_n(x)|
		\leq |u_n|_{\infty}
		\leq C_\infty \leq C e^{-2\sigma_{\mathrm d} R}
		< C e^{-\sigma_{\mathrm d} |x-p_{n,\iota}|}
		\leq W(x).
	\end{equation*}
	Thus, taking \(\,y_{\varepsilon,j}:=\varepsilon p_{\varepsilon,j}\,\) for
	\(j\in\Gamma\) and rescaling, we obtain \eqref{decay} for
	the function $u_n(\cdot/\varepsilon_n)$.
	
	It remains to make the estimates uniform in \(\varepsilon\).
	Observe that the constant \(\gamma\) in \eqref{gg} depends
	only on \(f\), \(k\), and \(V_0\).
	Moreover, arguing as in \eqref{bddPS}, 
	\[
	\|u_\varepsilon\|_\varepsilon^2\leq
	\Big(\frac12-\frac1\Theta-\frac1{2k}\Big)^{-1}C_{\varepsilon,\Gamma}
	\]
	that, combined with \eqref{Cepslimit} and \eqref{p3.1}, gives
	\[
	\limsup_{\varepsilon\to0^+}\|u_\varepsilon\|_\varepsilon^2
	\leq\left(\frac12-\frac1\Theta-\frac1{2k}\right)^{-1}
	\sum_{j=1}^s c_{V_j}<\frac18.
	\]
	Hence, if $\varepsilon_*>0$ is small enough, for every $\varepsilon\in(0,\varepsilon_*)$, 
	\[
	|\nabla|u_\varepsilon||_2^2+V_0|u_\varepsilon|_2^2
	\leq\|u_\varepsilon\|_\varepsilon^2\leq\frac18.
	\]
	Furthermore, the normalization parameter in \eqref{sigma} and the exponents
	\(\alpha,q,\tau_0,\tau_1,\tau_2\) can therefore be fixed independently
	of \(\varepsilon\)  small. In particular,  as for \eqref{exp-unif},
	\[
	\sup_{0<\varepsilon<\varepsilon_*}
	\int_{\mathbb R^2}\left(e^{\tau_0\alpha|u_\varepsilon|^2}-1\right)\,dx
	\leq C.
	\]
	Thus the Moser iteration above gives a common constant \(C_\infty\)
	in \eqref{uniformLinfty} for the whole family when $\varepsilon\in(0,\varepsilon_*)$.
	Finally, the radius \(R\) in \eqref{p5.7} is independent of \(\varepsilon\) and  the centers \(q_{\varepsilon,j}\) and the
	maximum points \(p_{\varepsilon,j}\) have already been fixed.  Then, arguing as before, for every \(j\in\Gamma\),
	\[
	\operatorname{dist}(\varepsilon p_{\varepsilon,j},K_j)\to0,
	\qquad V(\varepsilon p_{\varepsilon,j})\to V_j,
	\quad\text{as }\varepsilon\to 0^+.
	\]
	
	Hence, choosing once and for all
	\(0<\sigma_{\mathrm d}<\sqrt{(k-1)V_0/k}\) and then \(C\) as in
	\eqref{neibu}, independently of \(\varepsilon\), we obtain \eqref{decay}
	for every \(\varepsilon\in(0,\varepsilon_*)\), after decreasing
	\(\varepsilon_*\) if necessary.

	\appendix

	\section{Proof of Lemma \ref{existenceC}}\label{proofexistenceC}
	
	Fix $\mu\in(0,V^*]$ and let us consider
	\[
	A_\mu^c:=
	\inf _{u \in \mathcal{P}_\mu^c}  |\nabla u|_2^{2},
	\]
	where
	\[
	\mathcal{P}_\mu^c:=\left\{u\in H^1(\mathbb{R}^2,\mathbb{C})\setminus\{0\}: \int_{\mathbb{R}^{2}} H(|u|^2)dx=0 \right\}
	\text{ with }
	H(s):=F(s)-\mu s.
	\]

	Now we list some results we will use to get our goal.
	
	\begin{lemma}\label{lem5.3}
		$A_\mu^c>0$.
	\end{lemma}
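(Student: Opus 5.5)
We show that every $u\in\mathcal{P}_\mu^c$ satisfies $|\nabla u|_2^2\ge c_0$ for some constant $c_0>0$ depending only on $\mu$ and $f$. We argue by contradiction, combining the smallness of $F$ near zero with the Trudinger--Moser bound. Since the constraint only involves $|u|$ and $|\nabla|u||_2\le|\nabla u|_2$, it is enough to work with $|u|\in H^1(\mathbb{R}^2,\mathbb{R})$. We first note that $\mathcal{P}_\mu^c\neq\emptyset$ (otherwise the infimum is $+\infty>0$).

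The constraint $\int H(|u|^2)\,dx=0$ gives
\[
\mu|u|_2^2=\int_{\mathbb{R}^2}F(|u|^2)\,dx.
\]
We estimate the right-hand side with \eqref{p2.5}, taking $\xi=\mu/2$, some $q>2$ and some $\alpha>4\pi$:
\[
\int F(|u|^2)\,dx\le\frac{\mu}{2}|u|_2^2+C\int|u|^q(e^{\alpha|u|^2}-1)\,dx .
\]
Hence
\[
\frac{\mu}{2}|u|_2^2\le C\int|u|^q(e^{\alpha|u|^2}-1)\,dx .
\]

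Now suppose there is a sequence $u_n\in\mathcal{P}_\mu^c$ with $|\nabla u_n|_2\to0$. The constraint is invariant under the scaling $u(\cdot/t)$: both $|u|_2^2$ and $\int F$ pick up the same factor $t^2$, while $|\nabla u|_2$ is unchanged. We use this to normalize $|u_n|_2=1$, so that $|\nabla u_n|_2\to 0$ and $|u_n|_2\le 1$.

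By Hölder's inequality and \eqref{e},
\[
\int|u_n|^q(e^{\alpha|u_n|^2}-1)\,dx\le|u_n|_{2q}^q\left(\int\bigl(e^{2\alpha|u_n|^2}-1\bigr)\,dx\right)^{1/2}.
\]
For the exponential factor, write $2\alpha|u_n|^2=2\alpha|\nabla u_n|_2^2\,\bigl(|u_n|/|\nabla u_n|_2\bigr)^2$. Since $2\alpha|\nabla u_n|_2^2<4\pi$ for large $n$, Corollary \ref{2.1c} bounds this factor uniformly. For the polynomial factor, the Gagliardo--Nirenberg inequality in dimension two gives
\[
|u_n|_{2q}^q\le C|\nabla u_n|_2^{q-1}|u_n|_2 = C|\nabla u_n|_2^{q-1}\to0.
\]
Together these give $\mu/2\le o_n(1)$, which is a contradiction. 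This proves $A_\mu^c>0$.

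The step that needs care is the Moser step. Corollary \ref{2.1c} requires $|u_n|_2/|\nabla u_n|_2$ to be bounded, and here this ratio blows up. So we do not use the corollary directly. Instead we use the scale-invariant Trudinger--Moser inequality (Adachi--Tanaka or Cao's version, cited in \cite{Cao}): for $\beta<4\pi$ and $|\nabla v|_2\le1$,
\[
\int\bigl(e^{\beta v^2}-1\bigr)\,dx\le C_\beta|v|_2^2 .
\]
Applied with $v=|u_n|/|\nabla u_n|_2$ and $\beta=2\alpha|\nabla u_n|_2^2<4\pi$, this gives $\int(e^{2\alpha|u_n|^2}-1)\,dx\le C|u_n|_2^2/|\nabla u_n|_2^2$. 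The resulting power $|\nabla u_n|_2^{q-1}\cdot|\nabla u_n|_2^{-1}=|\nabla u_n|_2^{q-2}$ still tends to zero because $q>2$.

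If we want to avoid citing that inequality, an alternative is to instead use $q$ large, which increases the Gagliardo--Nirenberg gain.
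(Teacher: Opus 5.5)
Your argument is correct, but it takes a different route from the paper's. The paper proves the lemma by compactness. It replaces a minimizing sequence by its symmetric-decreasing rearrangements and rescales to $|v_n|_2=1$. Then the uniform Trudinger--Moser bound and the uniform decay of radial functions let it apply the Berestycki--Lions compactness lemma. This produces a weak limit $v$ with $\int_{\mathbb{R}^2}F(v^2)\,dx=\mu>0$ but $|\nabla v|_2=0$, a contradiction.

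You close the argument with inequalities only. The constraint and \eqref{p2.5} with $\xi=\mu/2$ give $\frac{\mu}{2}|u|_2^2\le C\int_{\mathbb{R}^2}|u|^q(e^{\alpha|u|^2}-1)\,dx$. After normalizing $|u_n|_2=1$, you combine H\"older, \eqref{e}, a uniform Trudinger--Moser bound and the two-dimensional Gagliardo--Nirenberg inequality $|u|_{2q}^q\le C|\nabla u|_2^{q-1}|u|_2$ to get $\mu/2\le C|\nabla u_n|_2^{q-1}\to0$.

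What your route buys: it avoids rearrangement, radial lemmas, weak limits and the compactness lemma. It is also essentially quantitative, giving an explicit lower bound for $|\nabla u|_2$ on $\mathcal{P}_\mu^c$. What the paper's route buys: it sets up exactly the machinery the paper reuses right afterwards to show that $A_\mu^c$ is attained.

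The difficulty you raise in the Moser step is self-inflicted. Your first version of that step, which normalizes by $|\nabla u_n|_2$ and invokes Corollary \ref{2.1c}, is indeed wrong as written, and you do not need Adachi--Tanaka to fix it. Fix $\varrho>0$ with $2\alpha\varrho<4\pi$ and take $n$ so large that $|\nabla u_n|_2^2\le\varrho$. Then $|u_n|/\sqrt{\varrho}$ has gradient norm at most $1$ and $L^2$ norm $1/\sqrt{\varrho}$, independent of $n$. So Lemma \ref{2.1} bounds $\int_{\mathbb{R}^2}(e^{2\alpha|u_n|^2}-1)\,dx$ uniformly; this is exactly what the paper does.

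Your Adachi--Tanaka variant is also valid, since the best constant is nondecreasing in $\beta$ and hence uniform for $\beta\le 2\alpha\varrho$. The closing remark about taking $q$ large is not a proof, however. Lemma \ref{2.1} says nothing about how $C(\beta,M)$ grows in $M$, so no power of $|\nabla u_n|_2$ is guaranteed to beat it.
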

	
	\begin{proof}
		Clearly $A_\mu^c \geq0$. Assume by contradiction that $A_\mu^c=0$ and let $\{u_{n}\}$ be a minimizing sequence in $\mathcal{P}_\mu^c$.
		Without loss of generality we can take $\{u_{n}\}\subset \mathcal{P}_\mu^c\cap H_{\text{rad}}^{1}(\mathbb{R}^{2},\mathbb{R})$. Indeed, if we denote with $v^*$ the symmetric-decreasing rearrangement of $v\in H^{1}(\mathbb{R}^{2},\mathbb{C})$,
		using also \cite[Theorem 6.17]{Lieb}, we have that $v^*$ is a nonnegative, radially symmetric, and nonincreasing function such that
		\[
		|\nabla v^*|_{2}\leq|\nabla|v||_2\leq |\nabla v|_{2}
		\quad
		\text{and}
		\quad
		\int_{\mathbb{R}^2}H((v^*)^2)dx=\int_{\mathbb{R}^2}H(|v|^2)dx
		\] 
		(see \cite[Sections 3.3 and 7.17]{Lieb}).
		Moreover, for each $\lambda_{n}>0$, the function $v_{n}:=u_{n}\left(\cdot/\lambda_{n}\right)\in \mathcal{P}_\mu^c$ and satisfies $|\nabla v_{n}|_2=|\nabla u_{n}|_2$.
		In addition, taking $\lambda_n=|u_n|_2^{-1}$ we also have that $|v_n|_2=1$.
		Thus, $v_n$ is bounded in $H^{1}_{\text{rad}}(\mathbb{R}^{2},\mathbb{R})$ and up to a subsequence, there exists $v \in H^{1}_{\text{rad}}(\mathbb{R}^{2},\mathbb{R})$ such that $v_{n} \rightharpoonup v$ in $H^{1}_{\text{rad}}(\mathbb{R}^{2},\mathbb{R})$ and  $v_n \rightarrow v$ in $L^r_{\rm loc}(\mathbb{R}^{2},\mathbb{R})$ for $1\leq r<+\infty$ and a.e. in $\mathbb R^2$. 
		Furthermore, by \cite[Lemma A.II]{Berestycki}, $\lim _{|x| \to +\infty}|v_n(x)| = 0$ uniformly in $n$. 
		Now, let $\alpha>4\pi$. Since $|\nabla v_n|_2\to0$, there exist
		$\nu\in\mathbb N$ and $\varrho>0$ such that
		\[
		\sup_{n\geq\nu}|\nabla v_n|_2^2\leq\varrho
		\quad\text{and}\quad \alpha\varrho<4\pi.
		\]
		For $n\geq\nu$, the function
		$v_n/\sqrt{\varrho}$ satisfies
		$|\nabla(v_n/\sqrt{\varrho})|_2\leq1$ and
		$|v_n/\sqrt{\varrho}|_2
		=1/\sqrt{\varrho}$. Lemma \ref{2.1}, with exponent
		$\alpha\varrho<4\pi$, therefore yields
		\[
		\sup_{n\geq\nu}\int_{\mathbb R^2}
		\bigl(e^{\alpha v_n^2}-1\bigr)dx
		=\sup_{n\geq\nu}\int_{\mathbb R^2}
		\left(e^{\alpha\varrho
			(v_n/\sqrt{\varrho})^2}-1\right)dx<+\infty.
		\]
		Hence
		\[
		\sup_{n\in\mathbb{N}}\int_{\mathbb R^2}
		\bigl(e^{\alpha v_n^2}-1\bigr)dx
		<+\infty
		\]
		since, by the first part of Lemma \ref{2.1}, the finitely many integrals for $n<\nu$ are finite.
		Then, setting
		$P(s):=F(s^2)$ and $Q(s):=e^{\alpha s^2}-1$, by \eqref{f1} and \eqref{f2},
		we have\footnote{The second limit follows from \(\lim_{s\to0}P(s)/s^2=0\) and
			\(Q(s)=e^{\alpha s^2}-1\sim\alpha s^2\) as \(s\to0\).}
		\[
		\lim_{|s|\to+\infty}\frac{P(s)}{Q(s)}=0,
		\qquad
		\lim_{s\to0}\frac{P(s)}{Q(s)}=0.
		\]
		Together with the a.e. convergence, the preceding uniform bound for
		$Q(v_n)$, and the uniform radial decay, these are precisely the hypotheses
		of \cite[Theorem A.I]{Berestycki}. Consequently,
		\begin{equation*}
			\int_{\mathbb{R}^2} F(v_n^2) dx\to \int_{\mathbb{R}^2} F(v^2)dx.
		\end{equation*}
		Then, since $\{v_n\}\subset\mathcal{P}_\mu^c$,
		\[
		\int_{\mathbb{R}^{2}} F(v^2)dx=\mu>0,
		\]
		and so $v \neq0$.
		On the other hand, the weak lower semicontinuity of the norm implies
		\[
		0=\liminf _{n \to \infty}  |\nabla v_{n}|_2^{2} \geq  |\nabla v|_2^{2},
		\]
		obtaining a contradiction.
	\end{proof}

	\begin{lemma}\label{lem5.2}
		$\displaystyle \frac{A_\mu^c}{2} \leq  c_\mu^c < \frac{\Theta-2}{16m\Theta}$.
	\end{lemma}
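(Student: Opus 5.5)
The statement has two inequalities, and I would prove them separately.

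\emph{Lower bound $A_\mu^c/2\le c_\mu^c$.} I would compare the Nehari level with the Pohozaev-type constraint defining $\mathcal P_\mu^c$. Take any $u\in\mathcal N_\mu^c$ and consider the dilations $u_\lambda:=u(\cdot/\lambda)$. In dimension two the gradient term is scale invariant, so
\[
I_\mu^c(u_\lambda)=\frac12|\nabla u|_2^2-\frac{\lambda^2}{2}\int_{\mathbb R^2}H(|u|^2)\,dx .
\]
The plan then has three steps.

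First, show $\int H(|u|^2)\,dx>0$ for $u\in\mathcal N^c_\mu$. Using \eqref{f3} and the Nehari identity $|\nabla u|_2^2+\mu|u|_2^2=\int f(|u|^2)|u|^2$, this positivity is not immediate. Instead I would use the fibering map $t\mapsto I^c_\mu(tu)$: since $\int F(t^2|u|^2)/t^2\to+\infty$ by \eqref{f4}, the function $t\mapsto\int H(t^2|u|^2)$ is negative for small $t$ (because $F(s)=o(s)$) and positive for large $t$. So there is $t_0>0$ with $t_0u\in\mathcal P_\mu^c$.

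Second, observe $I^c_\mu(t_0u)=\frac{t_0^2}{2}|\nabla u|_2^2\ge A_\mu^c/2$.

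Third, combine with $I^c_\mu(u)=\max_{t\ge0}I^c_\mu(tu)\ge I^c_\mu(t_0u)$, which holds by \eqref{IcNeh}. This yields $I^c_\mu(u)\ge A^c_\mu/2$, and taking the infimum over $\mathcal N^c_\mu$ and using \eqref{jitaic} gives the bound. This part is routine.

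\emph{Upper bound $c_\mu^c<\frac{\Theta-2}{16m\Theta}$.} This is where \eqref{f4} and the explicit constant $C_p$ enter. By \eqref{jitaic} it suffices to exhibit one test function. Let $\phi\in H^1(\mathbb R^2,\mathbb R)\setminus\{0\}$ attain (or nearly attain) $S_p$. Since $\mu\le V^*$, monotonicity in $\mu$ gives $c^c_\mu\le c^c_{V^*}$. Integrating \eqref{f4} twice with $f(0)=0$ gives $F(s)\ge \frac{C_p}{p}s^{p/2}$. Therefore
\[
c^c_\mu\le \max_{t\ge0}\Big(\frac{t^2}{2}\max\{1,V^*\}\|\phi\|^2-\frac{C_p}{2p}t^p|\phi|_p^p\Big).
\]
For the scaling I would use the rescaled $\phi(\cdot/\lambda)$ optimized in $\lambda$, or simply bound $\int(|\nabla\phi|^2+V^*\phi^2)\le \max\{1,V^*\}\|\phi\|^2$. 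The cleaner route is to use the scaling $\phi(\cdot/\sqrt{V^*})$, which turns the quadratic form into $V^*$ times the $S_p$ quotient with the correct power of $V^*$. Computing the one-variable maximum gives
\[
c^c_\mu\le \frac{p-2}{2p}\,\frac{(V^*S_p)^{p/(p-2)}}{C_p^{2/(p-2)}} .
\]
The chosen lower bound on $C_p$ in \eqref{f4} is then exactly what makes this strictly smaller than $\frac{\Theta-2}{16m\Theta}$.

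\emph{Main obstacle.} I expect the bookkeeping of the powers of $V^*$ and the factor $1/2$ in front of $F$ to be the main obstacle. I would carry out the scaling $x\mapsto x/\sqrt{V^*}$ explicitly to verify that the exponents match the hypothesis. If they do not match exactly, I would fall back on taking $\phi$ as a near-minimizer and letting the approximation error be absorbed by the strict inequality in \eqref{f4}.
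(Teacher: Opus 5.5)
Your lower bound is the paper's argument: you use the fibering map $t\mapsto\int_{\mathbb R^2}H(t^2|u|^2)\,dx$ to land on $\mathcal P^c_\mu$, and then $I^c_\mu(t_0u)=\frac{t_0^2}{2}|\nabla u|_2^2\geq A^c_\mu/2$. Reducing the upper bound to $\mu=V^*$ by monotonicity is also harmless.

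The gap is in the upper bound. The threshold on $C_p$ in \eqref{f4} is calibrated so that the one-variable maximization lands exactly below $\frac{\Theta-2}{16m\Theta}$. So the constants are the entire content of this step, and the ones you wrote do not work, for two separate reasons.

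First, integrating \eqref{f4} with $f(0)=0$ gives $f(t)\geq C_pt^{(p-2)/2}$, hence $F(s)\geq\frac{2C_p}{p}s^{p/2}$, not $\frac{C_p}{p}s^{p/2}$. With your weaker bound, the maximum of $\frac a2t^2-\frac{C_p}{2p}t^p$ is $2^{2/(p-2)}$ times too large. The hypothesis then only yields $c^c_\mu<2^{2/(p-2)}\frac{\Theta-2}{16m\Theta}$.

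Second, the scaling is wrong. The map $x\mapsto\phi(x/\sqrt{V^*})$ goes in the wrong direction: the mass term becomes $(V^*)^2|\phi|_2^2$. The correct rescaling $\phi(\sqrt{V^*}\,\cdot\,)$, normalized in $L^p$, gives $|\nabla\psi|_2^2+V^*|\psi|_2^2=(V^*)^{2/p}S_p$, not $V^*S_p$. Your final formula therefore carries $(V^*)^{p/(p-2)}$ where $(V^*)^{2/(p-2)}$ belongs. Combined with the hypothesis, it only gives $c^c_\mu<\frac{\Theta-2}{16m\Theta}V^*$, which is insufficient when $V^*>1$. The crude bound through $\max\{1,V^*\}\|\phi\|^2$ fails for the same reason.

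Your fallback does not repair either error. The strict inequality on $C_p$ does absorb the arbitrarily small error from using a near-minimizer of $S_p$. It cannot absorb a fixed factor such as $2^{2/(p-2)}$ or $V^*$.

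The correct bookkeeping, which is what the paper does, is as follows. Take $\psi$ with $|\psi|_p=1$ and $|\nabla\psi|_2^2+\mu|\psi|_2^2=S^\mu_p=\mu^{2/p}S_p$, obtained by the $\sqrt{\mu}$-rescaling of a (near-)minimizer of $S_p$. Then
\[
c^c_\mu\leq\max_{t\geq0}\Bigl(\frac{t^2}{2}S^\mu_p-\frac{C_p}{p}t^p\Bigr)
=\frac{p-2}{2p}\Bigl(\frac{\mu}{C_p}\Bigr)^{\frac{2}{p-2}}S_p^{\frac{p}{p-2}}
<\frac{\Theta-2}{16m\Theta}\Bigl(\frac{\mu}{V^*}\Bigr)^{\frac{2}{p-2}}
\leq\frac{\Theta-2}{16m\Theta}.
\]
The strict inequality is exactly the hypothesis on $C_p$ raised to the power $2/(p-2)$. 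The last inequality uses $\mu\leq V^*$, so no separate monotonicity step is needed.
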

	
	\begin{proof}
		For each $v \in H^{1}(\mathbb{R}^{2},\mathbb{C})\setminus\{0\}$, we consider the function $\xi: [0,+\infty)\to \mathbb{R}$ by
		\[
		\xi(t)=\int_{\mathbb{R}^{2}} H(|t v|^2)dx.
		\]
		By the assumptions (\ref{f1}), (\ref{f3}), and (\ref{f4}), we can conclude that $\xi(t)<0$ for $t$ small enough and $\xi(t)>0$ for $t$ large enough. Thus, there exists $t_{0}>0$ such that $\xi(t_{0})=0$, namely $t_{0} v \in \mathcal{P}_\mu^c$. Hence,
		\[
		A_\mu^c \leq |\nabla(t_{0} v)|_2^{2}=2 I_\mu^c\left(t_{0} v\right) \leq 2\max _{t \geq 0} I_\mu^c(t v),
		\]
		and so, passing to the infimum on $v \in H^{1}\left(\mathbb{R}^{2},\mathbb{C}\right)\setminus\{0\}$, we get the first inequality.
		
		To conclude, let $\psi\in H^{1} (\mathbb{R}^{2},\mathbb{C})\setminus\{0\}$ be such that $
		|\nabla\psi|_2^2+\mu|\psi|_2^2=S_p^\mu=\mu^{2/p}S_p$ (see \cite[Lemma 3.2 and Remark 3.3]{d'Avenia2023}) and $|\psi|_p=1$.
		Using \eqref{f4} we have 
		\begin{equation*}
			c_\mu^c
			\leq \max_{t \geq 0} I_\mu^c(t \psi) 
			\leq \frac{1}{2} \max_{t \geq 0} \left\{ t^2 S_p^\mu - \frac{2C_p }{p}t^p \right\}
			=\frac{p - 2}{2p} \left( \frac{\mu}{C_p} \right)^{\frac{2}{p - 2}}S_p^{\frac{p}{p-2}}
			<\frac{\Theta-2}{16m\Theta} \left( \frac{\mu}{V^*} \right)^{\frac{2}{p - 2}}
			\leq \frac{\Theta-2}{16m\Theta}.
		\end{equation*}
	\end{proof}

	Now we are ready to prove Lemma \ref{existenceC}.
	
	\begin{proof}
		First we prove that $A_\mu^c$ is attained by a nonnegative and nonincreasing function in $H^{1}_{\text{rad}}(\mathbb{R}^{2},\mathbb{R})$.\\
		Arguing as in Lemma \ref{lem5.3}, let $\{u_{n}\}$ be a minimizing sequence
		of nonnegative and nonincreasing functions in
		$H^{1}_{\text{rad}}(\mathbb{R}^{2},\mathbb{R})$ for $A_\mu^c$ with
		$|u_{n}|_2=1$.
		Then, by Lemma \ref{lem5.2},
		\[
		\limsup _{n \to \infty} |\nabla u_{n}|_2^{2} \leq  A_\mu^c \leq  2 c_\mu^c<\frac{\Theta-2}{8m\Theta}<1.
		\]
		Moreover, passing to a subsequence, there exists a nonnegative and nonincreasing
		$u\in H_{\rm rad}^{1}(\mathbb{R}^{2},\mathbb{R})$ such that
		$u_n\rightharpoonup u$ in $H^{1}(\mathbb{R}^{2},\mathbb{R})$.
		
		Then, if we fix $\varrho\in(A_\mu^c,1)$ and
		$\alpha\in(4\pi,4\pi/\varrho)$, since for $n$ sufficiently large, we may assume that $|\nabla u_n|_2^2\leq\varrho$, applying Lemma \ref{2.1} to
		$u_n/\sqrt{\varrho}$ and using
		$|u_n|_2=1$, we obtain
		\[
		\sup_{n\in\mathbb{N}}\int_{\mathbb R^2}
		\left(e^{\alpha u_n^2}-1\right)dx<+\infty.
		\]
		Therefore, arguing as in the proof of Lemma \ref{lem5.3},
		\[
		\int_{\mathbb R^2}F(u_n^2)\,dx
		\longrightarrow
		\int_{\mathbb R^2}F(u^2)\,dx
		=\mu
		\]
		and so $u \neq0$. Moreover, by the weak lower semicontinuity of the norm,
		\[
		\int_{\mathbb{R}^{2}}|\nabla u|^{2}dx \leq A_\mu^c.
		\]
		If we show that $u\in\mathcal{P}_\mu^c$, we can conclude this part of the proof. To this end, observe that
		\[
		1=\liminf _{n \to \infty} |u_{n}|_2^{2} \geq |u|_2^{2}
		\]
		and so,
		\[
		\int_{\mathbb{R}^{2}} H(u^2)dx=\int_{\mathbb{R}^{2}} F(u^2)dx- \mu |u|_2^{2}=\mu(1- |u|_2^{2}) \geq 0.
		\]
		Assume by contradiction that the strict inequality holds.
		Then, consider the function $\xi$ defined in the proof of Lemma \ref{lem5.2} and obtained starting from $u$, which is negative for $t$ small enough and
		\[
		\xi(1)=\int_{\mathbb{R}^{2}} H(u^2)dx>0.
		\]
		Thus, there exists $t_{0} \in(0,1)$ such that
		\[
		\xi(t_{0})=\int_{\mathbb{R}^{2}} H\left(t_{0}^2 u^2\right)dx=0.
		\]
		Hence,
		\[
		0<|\nabla (t_{0} u)|_2^{2} \leq t_{0}^{2} A_\mu^c<A_\mu^c,
		\]
		which is absurd, and so $u\in\mathcal{P}_\mu^c$ and $|\nabla u|_2^2=A_\mu^c$.

		Then, since for every $u\in\mathcal{P}_\mu^c$, by \eqref{f3},
		\begin{equation}\label{codim1}
			2\int_{\mathbb R^2}\bigl(f(|u|^2)|u|^2-\mu |u|^2\bigr)dx
			\geq\int_{\mathbb R^2}
			\left(\Theta F(|u|^2)-2\mu |u|^2\right)dx
			=(\Theta-2)\mu \int_{\mathbb R^2}|u|^2dx>0,
		\end{equation}
		we have that $\mathcal{P}_\mu^c$ is a manifold of codimension $1$ and so, by the Lagrange multiplier theorem,
		there exists $\vartheta \in \mathbb{R}$ such that
		\begin{equation*}
			\operatorname{Re}\int_{\mathbb{R}^{2}}\nabla u\,\overline{\nabla v}\,dx
			=\vartheta\operatorname{Re}\int_{\mathbb{R}^{2}}
			\bigl(f(u^2)-\mu\bigr)u\overline v\,dx,
			\qquad v\in H^{1}\left(\mathbb{R}^{2},\mathbb{C}\right).
		\end{equation*}
		Hence, taking $v=u$,
		\[
		|\nabla u|_2^2
		=\vartheta\int_{\mathbb R^2}\bigl(f(u^2)-\mu\bigr)u^2dx
		\]
		and so, using \eqref{codim1}, $\vartheta>0$.
		
		Hence, the rescaled function $u_{\sqrt{\vartheta}}:=u\left(\cdot/\sqrt{\vartheta}\right)$ is a nontrivial solution of \eqref{fude} and satisfies
		\[
		|\nabla u_{\sqrt{\vartheta}}|_2^{2}=|\nabla u|_2^{2} \quad \text{and} \quad \int_{\mathbb{R}^{2}} H(u_{\sqrt{\vartheta}}^2)dx=0.
		\]
		Thus, since $u_{\sqrt{\vartheta}}\in\mathcal{N}_\mu^c$,
		\[
		c_\mu^c 
		\leq I_\mu^c(u_{\sqrt{\vartheta}})
		= \frac{|\nabla u|_2^{2}}{2}
		=\frac{A_\mu^c}{2}
		\]
		and so, by Lemma \ref{lem5.2} we have that $c_\mu^c$ is attained by $u_{\sqrt{\vartheta}}$ which is a nonnegative and nonincreasing function in $H^{1}_{\text{rad}}(\mathbb{R}^{2},\mathbb{R})$.
		
		Moreover, since $u_{\sqrt{\vartheta}}\in\mathcal{N}_\mu\subset \mathcal{N}_\mu^c$, we have
		\[
		c_\mu^c
		=I_\mu^c(u_{\sqrt{\vartheta}})
		= I_\mu(u_{\sqrt{\vartheta}})
		\geq c_\mu
		=\inf_{\mathcal{N}_\mu} I_\mu
		=\inf_{\mathcal{N}_\mu} I_\mu^c
		\geq \inf_{\mathcal{N}_\mu^c} I_\mu^c
		=c_\mu^c
		\]
		concluding the proof.
	\end{proof}

	\subsection*{Conflict of Interest}
	The authors declare no conflict of interest.
	
	\subsection*{Data Availability Statements}
	Data sharing not applicable to this article as no datasets were generated or analysed during the current study.
	
	\subsection*{Acknowledgements}
	Pietro d'Avenia is member of GNAMPA (INdAM) and is partially supported by the GNAMPA Project 2026 (CUP E53C25002010001).
	He is also supported by the Italian Ministry of University and Research under the Program Department of Excellence L. 232/2016 (CUP D93C23000100001).
	Chao Ji is supported by the National Natural Science Foundation of China (No. 12571117).

\end{document}